\newtheorem{theorem}{Theorem}[section]
\newtheorem{lemm}[theorem]{Lemma}
\newtheorem{prop}[theorem]{Proposition}
\newtheorem{coro}[theorem]{Corollary}
\theoremstyle{definition}
\newtheorem{defi}[theorem]{Definition}
\theoremstyle{remark}
\newtheorem{remark}[theorem]{Remark}
\numberwithin{equation}{section}
\begin{document}

\title[The Ding-Frenkel Isomorphism Theorem for $U_{r,s}\mathcal(\widehat{\mathfrak{so}_{2n+1}})$]
{The Ding-Frenkel Isomorphism Theorem for two-parameter quantum affine algebra $U_{r,s}\mathcal(\widehat{\mathfrak{so}_{2n+1}})$}
\author[Hu]{Naihong Hu{$^*$}}
\address{School of Mathematical Sciences, MOE-KLMEA \& SH-KLPMMP, East China Normal University,
MinHang Campus, DongChuan Road 500, Shanghai 200241, PR China}
\email{nhhu@math.ecnu.edu.cn}

\thanks{$^*$Corresponding author. This work is
	supported by the NNSF of China (Grant No. 12171155), and in part by the Science and Technology Commission of Shanghai Municipality (Grant No. 22DZ2229014).}
\author[Xu]{Xiao Xu}
\address{School of Mathematical Sciences, MOE-KLMEA \& SH-KLPMMP, East China Normal University,
MinHang Campus, DongChuan Road 500, Shanghai 200241, PR China}
\email{52275500004@stu.ecnu.edu.cn}

\author[Zhuang]{Rushu Zhuang}
\address{School of Mathematics and Physics,  China University of Geosciences,
Hongshan District, Lumo Road 388, Wuhan 430074, Hubei Province, PR China}
\email{zhuangrushu@cug.edu.cn}

\subjclass{Primary 17B37, 81R50; Secondary 16T25}
\date{}

\keywords{Ding-Frenkel isomorphism; regulated quantum Lyndon bases; spectral parameter-dependent $R$-matrices;
FRT presentation; $RLL$ realization}

\begin{abstract} ---
From the theory of finite-dimensional weight modules, we get the basic braided $R$-matrix $\widehat R$ of $U_{r, s}(\mathfrak{so}_{2n+1})$. For its FRT presentation $U(\widehat R)$, we achieve two word-formation methods of quantum Lyndon bases (whose bracketing rules are regulated by the $RLL$-formalism) and elucidate their distribution rule within the triangular $L$-matrix.
Consequently, we contribute an algebraic proof for establishing an isomorphism between the Drinfeld-Jimbo presentation and the FRT presentation.
In the affine setting, we first derive two spectral parameter-dependent $R$-matrices through the Yang-Baxterization.
Next, we select the only one that satisfies the intertwining property with respect to the minimal affinization.
Accordingly, we obtain the $RLL$ realization of  $U_{r, s}(\widehat{\mathfrak{so}_{2n+1}})$ through the Gauss decompositions of the generating matrices.
Finally, we contribute an algebraic proof to the Ding-Frenkel Isomorphism Theorem between the Drinfeld realization and the $RLL$ realization.

\bigskip
\noindent
{\rm R\'esum\'e.} ---
D'apr\`{e}s la th\'eorie des modules de poids de dimension finie, nous obtenons la $R$-matrice tress\'e de base $\widehat{R}$ de $U_{r, s}(\mathfrak{so}_{2n+1})$. Pour sa pr\'esentation FRT $U(\widehat{R})$, nous \'etablissons deux m\'ethodes de formation de mots des bases de Lyndon quantiques (dont les r\`{e}gles de crochetage sont r\'egies par le formalisme $RLL$) et nous \'elucidons leur r\`{e}gle de distribution au sein de la $L$-matrice triangulaire. En conséquence, nous apportons une preuve alg\'ebrique de l'\'etablissement d'un isomorphisme entre la pr\'esentation de Drinfeld-Jimbo et la pr\'esentation FRT. Dans le cadre affine, nous d\'erivons d'abord deux $R$-matrices d\'ependantes du param\`{e}tre spectral par la m\'ethode de Yang-Baxt\'erisation. Ensuite, nous choisissons l'unique élément qui satisfait la propriété d'entrelacement par rapport à l'affinisation minimale. Par cons\'equent, nous obtenons la r\'ealisation $RLL$ de $U_{r, s}(\widehat{\mathfrak{so}_{2n+1}})$ au moyen des d\'ecompositions de Gauss des matrices g\'en\'eratrices. Enfin, nous apportons une preuve alg\'ebrique du th\'eor\`{e}me d'isomorphisme de Ding-Frenkel entre la r\'ealisation de Drinfeld et la r\'ealisation $RLL$.
\end{abstract}

\maketitle

\tableofcontents

\section{Introduction}
\textbf{1.1}\;
For an affine Kac-Moody algebra $\widehat{\mathfrak{g}}$, $U_q(\widehat{\mathfrak{g}})$ can be defined as Drinfeld-Jimbo presentation via the Chevalley generators with the Serre relations \cite{Drinfeld 1985,JimboLMP 1985}.
Drinfeld also gave a celebrated new realization \cite{Drinfeld 1988}, the so-called Drinfeld realization, as the quantization of the classical loop realization.
Using the Drinfeld realization, one can investigate and classify finite-dimensional representation theory of quantum affine algebras that contribute a rich source of studies on monoidal categorifications and quantum cluster algebras (e.g., \cite{David Duke 2010,David Adv2019, Kashiwara 2020, Kashiwara 2024}, etc.),
and construct their infinite-dimensional quantum vertex representations (e.g., \cite{Frenkel 1988,Jing Invent 1990}, etc.)
In \cite{FRT Russ 1989}, Faddeev-Reshetikhin-Takhtajan gave two pesentations for the classical Lie groups $G$: the $RTT$ realization for $G_q(n)$ which is also denoted by $\mathcal O_q(G_n)$ in the literature, the quantum coordinate algebras; the FRT presentation of $U_q(\mathfrak g)$, where $\mathfrak g=\text{Lie}(G)$.
In \cite{FRT 1989}, Faddeev-Reshetikhin-Takhtajan studied the quantum Yang-Baxter equation $(\text{QYBE})$ with spectral parameters:
\begin{equation}\label{spectral qybe}
  \widehat{R}_{12}(z)\widehat{R}_{13}(zw)\widehat{R}_{23}(w)=\widehat{R}_{23}(w)\widehat{R}_{13}(zw)\widehat{R}_{12}(z),\quad z, w\in \mathbb{C},
\end{equation}
where $\widehat{R}(z)$ is a rational function of $z$ with values in End($\mathbb{C}^{n}\otimes\mathbb{C}^{n}$).
Using the solution of QYBE, they established the realization of the quantum loop algebra $U_q(\mathfrak{g}\otimes [t, t^{-1}])$.
In \cite{ResLMP 1990}, Reshetikhin and Semenov-Tian-Shanski extended this realization to $U_q(\widehat{\mathfrak{g}})$.
This realization is also known as the $RLL$ realization of (affine types).

Drinfeld early in 1987 claimed \cite{Drinfeld 1988} that the Drinfeld realization is isomorphic to the Drindeld-Jimbo presentation both for quantum affine algebras and the Yangian algebras.
For the untwisted affine types, Beck \cite{Beck1, Beck2} confirmed the Drinfeld Isomorphism in terms of the braid group action as an interpretation. A sufficiently convincing proof is what Damiani ultimately obtained \cite{D1, D2} till 2012 \& 2015 both for untwisted and twisted types. In fact, regarding the Drinfeld's new realization, there is alternative understanding via the $RLL$ realization approach aforementioned. The first detailed characterization is attributed to Ding-Frenkel \cite{Ding CMP 1993}, where using the so-called Gauss decomposition, they gave an isomorphism between the Drinfeld realization and the $RLL$ realization for $U_q(\widehat{\mathfrak{gl}_n})$.
For the Yangian algebra in type $A$, Brundan and Kelshchev also proved an analogous result \cite{Brundan 2005}.
Furthermore, Jing-Liu-Molev and Jing-Zhang-Liu worked out the result for quantum affine algebras (and some cases include the Yangian algebras) of types $B_n^{(1)}, C_n^{(1)} $ and $D_n^{(1)}$ and $A^{(2)}_{2n-1}$ \cite{JLM CMP2018,  JingLM SIGMA 2020,JingLM JMP 2020, JingZL FM 2023}, Liashyk-Pakuliak gave the $R$-matrix realization for quantum loop algebra of type $D^{(2)}_n$ in one-parameter case \cite{LP}. Wu-Lin-Zhang (\cite{WLZ}) generalized Jing-Liu-Molev's work to quantum affine superalgebra $U_q(\widehat{\mathfrak{osp}(2m+1|2n)})$.
\vspace{1em}

\textbf{1.2}\;
Takeuchi \cite{Takeuchi 1990} defined two-parameter general linear quantum groups $U_{r, s}(\mathfrak{gl}_n)$.
Benkart and Witherspoon reobtained Takeuchi's quantum groups of type $A$ in \cite{BenkartW2004}.
Afterwards, the 1st author and his collaborators systematically studied the two-parameter groups,
see \cite{BGH 2006,BGH 2007,HuWangJGP 2010} etc.
These papers demonstrate that there exist remarkable differences between $U_q(\mathfrak{g})$ and $U_{r, s}(\mathfrak{g})$.
For example, Lusztig symmetries as automorphisms don't exist in two-parameter cases in general.
In \cite{HuCMP 2008}, Hu-Rosso-Zhang originally defined $U_{r, s}(\widehat{\mathfrak{sl}_n})$,
obtained its Drinfeld realization via constructing the two-parameter vertex operators acting on the corresponding Fock space of level one (consult \cite{HuZhang2014}), proposed and constructed the quantum affine Lyndon basis.
Hu and Zhang also established the Drinfeld realization of the two-parameter quantum affine algebras corresponding to all affine untwisted types, as well as their vertex representations of level one \cite{GHZ,HuZhang2014, HuZhang JA, Zhang phd 2007} and for twisted types, see Jing-Zhang \cite{JingZ JMP2016,ZhangJ CA2007} etc.

Now a natural question is to seek the two-parameter $RLL$ realization.
In type $A$ case, it was based on the two-parameter basic braided $R$-matrix obtained by Benkart and Witherspoon \cite{Benkart baiscR} that
Jing and Liu subsequently worked out the $RLL$ realizations of quantum algebras $U_{r, s}(\mathfrak{gl}_n)$ and $U_{r, s}(\widehat{\mathfrak{gl}_n})$ \cite{JingLiuCMS 2014,JingLJA 2017}.
However, it was open for other classical types.
Indeed, there had been no information on the basic braided $R$-matrices of types $B,C,D$ for many years until a breakthrough had made in the 2nd author's Master degree thesis early finished in the end of 2022 (\cite{Xu thesis}), where the basic braided $R$-matrix for type $B$ was obtained for the first time through the weight representation theory \cite{BGH 2007} (this work stimulated his Ph.D. mates: Rushu Zhuang's and Xin Zhong's Ph. D. theses in early 2024 to finish the determination of the basic braided $R$-matrices for the classical types $D$ and $C$ and $RLL$ realizations of $U_{r,s}(\widehat{\mathfrak{so}_{2n}})$ (see \cite{ZHX D}) and $U_{r,s}(\widehat{\mathfrak{sp}_{2n}})$ (see \cite{ZhHJ C})), so that we can continue to finish the $RLL$ realization of $U_{r,s}(\widehat{\mathfrak g})$ for affine type $B_n^{(1)}$ (the first version, see \cite{HXZ}). It should be mentioned that the information on the basic $R$-matrices for types $B, C, D$ were earlier announced in arXiv on May 10, 2024, also see \cite{HXZ, ZhHJ C, ZHX D}). A recent relevant work (\cite{HJZ}) has been generalized to $U_{r,s}(\widehat{\mathfrak{gl}(m|n)})$ by Hu-Jing-Zhong.

\vspace{1em}

\textbf{1.3}\; The main goal of this paper is to establish the Ding-Frenkel Isomorphism Theorem for the two-parameter quantum affine algebra $U_{r,s}\mathcal(\widehat{\mathfrak{so}_{2n+1}})$.
 To this end, we proceed in four main steps.

 Firstly, we obtain the explicit formula of the basic braided $R$-matrix in Theorem \ref{main theorem}.
 Next we provide an intuitive proof of the Ding-Frenkel Isomorphism Theorem in the finite case in Theorem \ref{thm FRTISO}.
 One key tool here is the distribution law of the two distinct sets of Lyndon bases within the $L^+$-matrix, see Theorem \ref{thm Lyn}.

  Secondly, we derive two distinct spectral-parameter-dependent braided $R$-matrices (\ref{Rz 1}) and (\ref{Rz 2}) from the basic braided $R$-matrix of $U_{r,s}(\mathfrak{so}_{2n+1})$ via the Yang-Baxterization procedure developed by Ge \emph{et al.} \cite{CGeX CMP1991,GeWX1991}.
 (The construction of spectral-parameter-dependent braided $R$-matrices dates back to Bazhanov-Jimbo \cite{Bazhanov87,Jimbo 1986}, and was further developed in the context of integrable systems by Reshetikhin \cite{Res87}).
 Both of them satisfy the QYBE (\ref{spectral qybe}), as the braid group representation induced by the basic braided $R$-matrix admits the Birman-Wenzl-Murakami algebraic structure.

 Although the Yang-Baxterization produces several braided spectral-parameter-dependent $R$-matrices,
the requirement that the braided spectral $R$-matrix acts as an intertwiner for tensor products
of finite-dimensional representations imposes strong constraints, namely, obeying the exchange condition of scattering and evolution.
In particular, for generic spectral parameters, such intertwiners are unique up to scalar,
which singles out a distinguished spectral braided $R$-matrix, in the spirit of Jimbo’s construction \cite{Jimbo 1986}.

From a representation-theoretic viewpoint, this uniqueness reflects that
spectral-parameter-dependent intertwiners are naturally defined on minimal affinizations \cite{Chari 1995}
of finite-dimensional modules.
In our situation, we consider the minimal affinization of the first fundamental representation
of finite type. Among the spectral braided $R$-matrices obtained via Yang-Baxterization,
only one is compatible with the intertwining property
(see Propositions \ref{prop intert} and \ref{prop nonintert}).

 Thirdly, using the spectral parameter $R$-matrix we chosen, we derive the commutation relations of the generators in the $RLL$ realization stated in Theorem \ref{thm relations}.
We then construct a homomorphism $\Phi_{r,s}$ between the Drinfeld and $RLL$ realizations, see Definition \ref{def phirs}.

 Finaly, we observe that there exists a canonical surjective morphism $\pi'$ between the $RLL$ realizations of two-parameter and one-parameter case to make the diagram commute,
\[\begin{tikzcd}
	{\mathcal{U}_{r,s}\mathcal(\widehat {\mathfrak{so}_{2n+1}})} && {U(\widetilde{R}_{r,s}(z))} \\
	\\
	{\mathcal{U}_q\mathcal(\widehat {\mathfrak{so}_{2n+1}})} && {U(\widetilde{R}_q(z))}
	\arrow["{\Phi_{r,s}}", from=1-1, to=1-3]
	\arrow["\pi"', two heads, from=1-1, to=3-1]
	\arrow["{\pi'}", two heads, from=1-3, to=3-3]
	\arrow["{\Phi_q}", from=3-1, to=3-3]
\end{tikzcd}\]
and the morphism $\Phi_{r,s}$ can be specialized to the finite case established in the first step.
These two observations allow us to successfully prove the nested mutual theorem between the Drinfeld and the $RLL$ realizations to ensure the bijectivity of $\Phi_{r,s}$ in Theorem \ref{thm affine iso}.
In fact, this isomorphism can be specialized into the one-parameter isomorphism $\Phi_q$ previously established by Jing-Liu-Molev \cite{JingLM SIGMA 2020}, via the modulus of hyperbolic loci $rs=1$ and $\omega_i\omega'_i=1$ for $0\leq i\leq n$.
Indeed, our isomorphism $\Phi_{r,s}$ can be viewed as a lift of the one-parameter case $\Phi_q$.
For our two-parameter isomorphism $\Phi_{r,s}$, the surjectivity relies on the characterization of $\text{Ker}\,\pi'$.
The proof of injectivity is more subtle.
Appealing to the Beck inverse isomorphism in the two-parameter setting
\[
\Psi: U_{r,s}(\widehat{\mathfrak{so}_{2n+1}}) \xrightarrow{\sim} \mathcal{U}_{r,s}(\widehat{\mathfrak{so}_{2n+1}}),
\]
established in \cite{HuCMP 2008,HuZhang2014,Zhang phd 2007}, we are able to achieve the nested embedding of the Drinfeld realization within the RLL realization.
This way contributes an algebraic proof for the celebrated Ding-Frenkel Isomorphism Theorem.

\vspace{1em}

\textbf{1.4}\;
This paper in dealing with $(r, s)$-deformation businesses mainly contains following highlights.
First, when we try to give its FRT presentation, we discover and prove a fact which is different from the type $A$ case.
That is, for type $B$, in the upper triangular matrix $L^+$ (used in the $RLL$ formalism), the two regions concerning the anti-diagonal are distributed symmetrically with two quantum Lyndon bases of $U_{r,s}^+(\mathfrak{so}_{2n+1})$ defined by two kinds of different manners (see Definitions \ref{def Lynbas} \& \ref{def auto} \& Theorem \ref{thm Lyn}).
From Definition \ref{def Lynbas}, Proposition \ref{prop recurrence}, Remark \ref{rmk Lyn decomp} in subsection \ref{subsec intrinsic}, one sees that the precise expression of quantum Lyndon bases, especially the bracketing rule, is actually regulated by the $RLL$ relations intrinsically. That means via the $RLL$ defining approach,
we obtain {\it a criterion for defining the quantum Lyndon bases} via the word-formation regulated by the $RLL$ formalism.
The same phenomenon happens for the lower triangular matrix $L^-$.
In previous studies on one-parameter quantum algebras, the coefficients $\ell_{ij}^\pm$ in the matrix $L^\pm$ of $RLL$-formalism were all just treated as formal generators (No such descriptions can be found in the existing literature, e.g., \cite{Klimyk}).

Secondly, we provide an algebraic proof of the Ding-Frenkel Isomorphism Theorem,
replacing the Ding-Frenkel's expository analytical one (\cite{Ding CMP 1993}).
The four main steps can be found in subsection 1.3.

As a remark, the two-parameter $RLL$ realization cannot be parallelly derived from the one-parameter case.
On the one hand, since the one-parameter setting arises as a degeneration of the two-parameter case when both the parameters $(r, s)$ and the group-like elements $(\omega_i, \omega'_i)$ lie on the hyperbolic singularity loci $xy = 1$, deriving the $RLL$ realization of $U_{r,s}(\widehat{\mathfrak{so}_{2n+1}})$ requires recovering certain missing factors, such as powers of $rs$.
On the other hand, the inductive starting point of our reasoning is based on the $U_{r, s}(\widehat{\mathfrak{so}_7})$ (see subsection \ref{ssec b3}) rather than $U_q(\widehat{\mathfrak{o}_3})$\;(i.e. $n=1$) used in \cite{JingLM SIGMA 2020}.
(According to the classification of affine Lie algebras (\cite{Kac}), the family of $B_n^{(1)}$ exists only for $n\geq 3$.)
An advantage of \cite{JingLM SIGMA 2020} is the establishment of the Homomorphism Theorem by rank reduction
(This technical viewpoint remains valid in the two-parameter setting).
They used it to reduce the general $U_q(\widehat{\mathfrak{o}_{2n+1}})$
to the quantum algebra $U_q(\widehat{\mathfrak{o}_{3}})$.
However, what the exact isoclass of the latter is (which merely has one real simple root vector), remains unclear, and it does not belong to the family $U_q(B_n^{(1)})\; (n\geq 3)$.
To avoid ambiguity, we prefer to explicitly derive first the $RLL$ formalism of $U_{r, s}(\widehat{\mathfrak{so}_7})$ through extensive computational work, even though it involves considerably more complicated commutation relations among more quantum root vectors in its presentation.
Based on it and the Homomorphism Theorem by rank reduction, we can present the $RLL$ realiztion of $U_{r, s}(\widehat{\mathfrak{so}_{2n+1}})$.

\vspace{1em}

\textbf{1.5}\;
The outline of this paper is as follows.
In Section \ref{sec pre}, we recall the Drinfeld-Jimbo presentation of $U_{r, s}(\widehat{\mathfrak{so}_{2n+1}})$ with its Hopf structure, as well as the Drinfeld realization  $\mathcal{U}_{r,s}(\widehat{\frak {g}})$.
In Section \ref{sec basicR}, we present the vector representation $V$ and decompose $V^{\otimes2}$ canonically into the direct sum of three simple $U_{r,s}(\mathfrak{so}_{2n+1})$-modules.
From this, we formulate the basic braided $R$-matrix and obtain a representation of Birman-Wenzl-Murakami algebra.
We figure out the distribution rule of two quantum Lyndon bases in the $L$-matrix (see Theorem \ref{thm Lyn}).
Moreover, we find an interesting fact that the defining rules of the Lyndon bases should be regulated by the $RLL$ formalism (see the proof of Theorem \ref{thm Lyn}).
This fact highlights the significance of the triangular matrices $L^\pm$ even in the one-parameter setting.
As a consequence, this allows us to put forward an explicit solid new proof for establishing the isomorphism between the Drinfeld-Jimbo presentation and FRT presentation (see Theorem \ref{thm FRTISO}).
In Section \ref{sec specR}, we determine the spectral parameter dependent $R$-matrix $\widehat{R}(z)$ (Theorem \ref{thm spectral Rz1}),
which satisfies the intertwining property with respect to the minimal affinization of $U_{r, s}(\widehat{\mathfrak{so}_{2n+1}})$ (Propositions \ref{prop intert} and \ref{prop nonintert}),
and serves to define the $RLL$ realization of $U_{r, s}(\widehat{\mathfrak{so}_{2n+1}})$.
In Section \ref{sec $RLL$}, we calculate explicitly the commutation relations and obtain the $RLL$ realization.
Moreover, we establish an isomorphism between the Drinfeld realization and the $RLL$ realization.
In this way we intrinsically recover its Drinfeld realization, which is consistent with the one initially obtained in \cite{Zhang phd 2007}.
In the appendix, as a supplement, we provide an explicit verification of the metric condition $L^+C(L^+)^tC^{-1}=I$ for $U_{r,s}(\mathfrak{so}_{2n+1})$.
This verification confirms that the distribution law of the two quantum Lyndon bases in the $L^+$-matrix established in Section \ref{sec basicR} is compatible with the metric condition.

\section{Preliminaries}\label{sec pre}

This section is devoted to illustrating some basic definitions.

  Let $\mathbb{K}'=\mathbb{Q}(r,s)\subset\mathbb{C}$ be a ground field of rational functions in $r, s$, where $r, s$ are algebraically independent indeterminates.
  For convenience, we may consider the extension field $\mathbb{K}$  of $\mathbb{K}'$ such that $\mathbb{K}$ contains $\sqrt{r}, \sqrt{s}$.
  Let $\Phi$ be the root system of $\mathfrak{so}_{2n+1}$, with $\Pi$ a base of simple roots,
  which is a finite subset of a Euclidean space $\mathbb{R}^n$ with an inner product $(\;,\;)$.
  $\alpha_0=\delta-\theta$ is the $0$th simple root of $\widehat{\mathfrak{so}_{2n+1}}$, where $\theta$ is the highest root of $\mathfrak{so}_{2n+1}$,
  and $\delta$ is the prime imaginary root of $\widehat{\mathfrak{so}_{2n+1}}$.

  Let $\varepsilon_1, \cdots, \varepsilon_n$ denote an orthogonal basis of $\mathbb{R}^n$,
  then $\Pi=\{\alpha_i=\varepsilon_i-\varepsilon_{i+1}\mid 1\leq i<n\}\cup \{\alpha_n=\varepsilon_n\}$,
  $\Phi=\{\pm \varepsilon_i\pm \varepsilon_j\mid 1\leq i\neq j\leq n\}\cup \{\pm\varepsilon_i\mid 1\leq i\leq n\}$.
  In this case, set $r_i=r^{(\alpha_i, \alpha_i)}, s_i=s^{(\alpha_i, \alpha_i)}$,
  then $r_1=\cdots=r_{n-1}=r^2$, $r_n=r$, $s_1=\cdots=s_{n-1}=s^2$, $s_n=s$.
  We also set $J=\{1\leq j\leq n=\text{rank}\,\mathfrak{g}\}$, and $J_0=J\cup \{0\}$.

  Given two sets of symbols $W=\{\omega_0, \omega_1, \cdots, \omega_n\}, W'=\{\omega_0', \omega_1', \cdots, \omega_n'\}$,
  define the structure constant matrix $(\langle\omega_i', \omega_j\rangle)_{i, j\in J_0}$ of type $B^{(1)}_n$ by
  $$\left(
      \begin{array}{ccccccc}
        r^2s^{-2} & r^{-2}s^{-2} &r^{-2} &1  &\cdots &1 &r^2s^2\\
        r^2s^2 & r^2s^{-2} & r^{-2} & 1 & \cdots & 1 & 1 \\
        s^2 & s^2 & r^2s^{-2} &r^{-2} & \cdots & 1 & 1 \\
        \cdots & \cdots &\cdots &\cdots & \cdots & \cdots & \cdots \\
        1 & 1 & 1&\cdots & \cdots & r^2s^{-2} & r^{-2} \\
       r^{-2}s^{-2}& 1 & 1 & \cdots &1 & s^2 & rs^{-1} \\
      \end{array}
    \right).\leqno{(*)}
  $$

  In this paper, $\mathfrak{g}$ denotes a simple Lie algebra over $\mathbb{C}$, and $\hat{\mathfrak{g}}$ is its corresponding untwisted affine Lie algebra.
\begin{defi}[Drinfeld-Jimbo presentation]\cite{HuZhang2014, Zhang phd 2007}\label{type B def}
  The unital associative algebra $U_{r,s}(\widehat{\mathfrak{g}})$ over $\mathbb{K}$ is generated by the elements $e_j, f_j, \omega_j^{\pm}, (\omega'_j)^{\pm}, \gamma^{\pm \frac{1}{2}}, (\gamma')^{\pm \frac{1}{2}}, D^{\pm}, (D')^{\pm}$, where $j\in J_0$, and satisfies the following relations (where $c$ is the canonical central element of $\widehat{\mathfrak{g}}$):

  \medskip
  $(\textrm{B1})$
  $\gamma^{\pm\frac{1}{2}}, (\gamma')^{\pm\frac{1}{2}}$ are central with $\gamma = (\omega'_\delta)^{-1}$, $\gamma' = (\omega_\delta)^{-1}$, $\gamma\gamma' = (rs)^c$, such that $\omega_i \omega_i^{-1} = \omega'_i(\omega'_i)^{-1} = 1 = DD^{-1} = D'D'^{-1}$, and
  \begin{align*}
    [\omega_i^{\pm 1}, \omega_j^{\pm 1}] &= [\omega_i^{\pm1}, D^{\pm1}] = [(\omega'_j)^{\pm1}, D^{\pm1}] = [\omega_i^{\pm1}, (D')^{\pm1}] = 0 \\
    &= [\omega_i^{\pm 1}, (\omega'_j)^{\pm 1}] = [(\omega'_j)^{\pm1}, (D')^{\pm1}] = [(D')^{\pm1}, D^{\pm1}] = [(\omega'_i)^{\pm 1}, (\omega'_j)^{\pm 1}].
  \end{align*}

  $(\textrm{B2})$ For $i, j\in J_0$,
  \begin{align*}
    D e_i D^{-1} &= r_i^{\delta_{0i}} e_i, \qquad\qquad D f_i D^{-1} = r_i^{-\delta_{0i}} f_i, \\
    \omega_j e_i (\omega_j)^{-1} &= \langle \omega'_i, \omega_j \rangle e_i, \qquad\quad \omega_j f_i (\omega_j)^{-1} = \langle \omega'_i, \omega_j \rangle^{-1} f_i.
  \end{align*}

  $(\textrm{B3})$ For $i, j\in J_0$,
  \begin{align*}
    D' e_i (D')^{-1} &= s_i^{\delta_{0i}} e_i, \qquad\qquad D' f_i (D')^{-1} = s_i^{-\delta_{0i}} f_i, \\
    \omega'_j e_i (\omega'_j)^{-1} &= \langle \omega'_j, \omega_i \rangle^{-1} e_i, \qquad\quad \omega'_j f_i (\omega'_j)^{-1} = \langle \omega'_j, \omega_i \rangle f_i.
  \end{align*}

  $(\textrm{B4})$ For $i, j\in J_0$, we have
  $$[\,e_i, f_j\,] = \frac{\delta_{ij}}{r_i - s_i}(\omega_i - \omega'_i).$$

  $(\textrm{B5})$ For any $i\ne j$, we have the $(r,s)$-Serre relations
  \begin{gather*}
    \bigl(\operatorname{ad}_l e_i\bigr)^{1-a_{ij}}(e_j) = 0, \\
    \bigl(\operatorname{ad}_r f_i\bigr)^{1-a_{ij}}(f_j) = 0,
  \end{gather*}
  where the left-adjoint action $\operatorname{ad}_l e_i$ and the right-adjoint action $\operatorname{ad}_r f_i$ are defined as follows:
  $$
  \operatorname{ad}_l a (b) = \sum_{(a)} a_{(1)} b S(a_{(2)}), \quad
  \operatorname{ad}_r a (b) = \sum_{(a)} S(a_{(1)}) b a_{(2)}, \quad
  \forall\; a, b\in U_{r,s}(\widehat{\mathfrak{g}}),
  $$
  and the coproduct $\Delta(a) = \sum_{(a)} a_{(1)} \otimes a_{(2)}$ is given by
  \begin{gather*}
    \Delta(\omega_i^{\pm1}) = \omega_i^{\pm1} \otimes \omega_i^{\pm1}, \qquad
    \Delta((\omega'_i)^{\pm1}) = (\omega'_i)^{\pm1} \otimes (\omega'_i)^{\pm1}, \\
    \Delta(e_i) = e_i \otimes 1 + \omega_i \otimes e_i, \qquad \Delta(f_i) = 1 \otimes f_i + f_i \otimes \omega'_i, \\
    \varepsilon(\omega_i^{\pm}) = \varepsilon((\omega'_i)^{\pm1}) = 1, \qquad
    \varepsilon(e_i) = \varepsilon(f_i) = 0, \\
    S(\omega_i^{\pm1}) = \omega_i^{\mp1}, \qquad
    S((\omega'_i)^{\pm1}) = (\omega'_i)^{\mp1}, \\
    S(e_i) = -\omega_i^{-1}e_i, \qquad S(f_i) = -f_i (\omega'_i)^{-1}.
  \end{gather*}
\end{defi}

\begin{remark}\label{rmk finaff}
  If we restrict the generators on $\{e_j, f_j, \omega_j, \omega_j'\mid j\in J\}$, we will obtain the definition of $U_{r,s}(\frak{so}_{2n+1})$, see \cite{BGH 2006}.
\end{remark}

\begin{defi} \cite{HuZhang2014} \label{def affine}
 The unital associative algebra $\mathcal{U}_{r,s}(\widehat{\frak {g}})$
 over $\mathbb{K}$  is generated by the
Drinfeld generators $x_j^{\pm}(k)$, $a_j(\ell)$, $\omega_j^{\pm1}$,
${\omega'_j}^{\pm1}$, $\gamma^{\pm\frac{1}{2}}$,
${\gamma'}^{\,\pm\frac{1}2}$, $D^{\pm1}$, $D'^{\,\pm1}$ $(j\in J, \, k,\,k' \in \mathbb{Z}$, $\ell\in \mathbb{Z}\backslash
\{0\})$, subject to the following defining relations:

\medskip
\noindent $(\textrm{D1})$ \  $\gamma^{\pm\frac{1}{2}}$,
$\gamma'^{\,\pm\frac{1}{2}}$ are central such that
$\gamma\gamma'=(rs)^c $,\,
$\omega_i\,\omega_i^{-1}=\omega_i'\,\omega_i'^{\,-1}=1$,
and for $i,\,j\in J$, one has
\begin{equation*}
\begin{split}
[\,\omega_i^{\pm 1},\omega_j^{\,\pm 1}\,]&=[\,\omega_i^{\pm1},
D^{\pm1}\,]=[\,\omega_j'^{\,\pm1}, D^{\pm1}\,] =[\,\omega_i^{\pm1},
D'^{\pm1}\,]=0\\
&=[\,\omega_i^{\pm 1},\omega_j'^{\,\pm 1}\,]=[\,\omega_j'^{\,\pm1},
D'^{\pm1}\,]=[D'^{\,\pm1}, D^{\pm1}]=[\,\omega_i'^{\pm
1},\omega_j'^{\,\pm 1}\,].
\end{split}
\end{equation*}
$$[\,a_i(\ell),a_j(\ell')\,]
=\delta_{\ell+\ell',0}\frac{ (\gamma\gamma')^{\frac{|\ell|}{2}}(\langle
i,\,i\rangle^{\frac{\ell a_{ij}}{2}}- \langle i,\,i\rangle^{\frac{-\ell
a_{ij}}{2}})} {|\ell|(r_i-s_i)}
\cdot\frac{\gamma^{|\ell|}-\gamma'^{|\ell|}}{r_j-s_j}.
 \leqno(\textrm{D2})
$$
$$[\,a_i(\ell),~\omega_j^{{\pm }1}\,]=[\,a_i(\ell),~\omega_j'^{\,\pm1}\,]=0.\leqno(\textrm{D3})
$$
$$
\begin{array}{lll}
D\,x_i^{\pm}(k)\,D^{-1}=r^k\, x_i^{\pm}(k), \qquad\ \
D'\,x_i^{\pm}(k)\,D'^{\,-1}=s^k\, x_i^{\pm}(k),
\\
D\, a_i(\ell)\,D^{-1}=r^\ell\,a_i(\ell), \qquad\qquad D'\,
a_i(\ell)\,D'^{\,-1}=s^\ell\,a_i(\ell).
\end{array}\leqno{(\textrm{D4})}
$$
$$
\omega_i\,x_j^{\pm}(k)\, \omega_i^{-1} =  \langle \omega_j',
\omega_i\rangle^{\pm 1} x_j^{\pm}(k), \qquad \omega'_i\,x_j^{\pm}(k)\,
\omega_i'^{\,-1} =  \langle \omega'_i, \omega_j\rangle
^{\mp1}x_j^{\pm}(k).\leqno(\textrm{D5})
$$
$$
\begin{array}{lll}
[\,a_i(\ell),x_j^{\pm}(k)\,]=\pm\frac{(\gamma\gamma')^{\frac{\ell}{2}} (\langle
i,\,i\rangle^{\frac{\ell a_{ij}}{2}}-\langle i,\,i\rangle^{\frac{-\ell
a_{ij}}{2}})}
{\ell(r_i-s_i)}\gamma'^{\pm\frac{\ell}{2}}x_j^{\pm}(\ell{+}{k}),\quad
\textit{for} \quad \ell>0,
\end{array}\leqno{(\textrm{D$6_1$})}
$$
$$
\begin{array}{lll}
[\,a_i(\ell),x_j^{\pm}(k)\,]=\pm\frac{(\gamma\gamma')^{\frac{-\ell}{2}}(\langle
i,\,i\rangle^{\frac{\ell a_{ij}}{2}}-\langle i,\,i\rangle^{\frac{-\ell
a_{ij}}{2}})}
{\ell(r_i-s_i)}\gamma^{\pm\frac{\ell}{2}}x_j^{\pm}(\ell{+}k), \quad
\textit{for} \quad \ell<0.
\end{array}\leqno{(\textrm{D$6_2$})}
$$
$$
\begin{array}{lll}
x_i^{\pm}(k{+}1)\,x_j^{\pm}(k') - \langle j,i\rangle^{\pm1} x_j^{\pm}(k')\,x_i^{\pm}(k{+}1)\\
=-\Bigl(\langle j,i\rangle\langle
i,j\rangle^{-1}\Bigr)^{\pm\frac1{2}}\,\Bigl(x_j^{\pm}(k'{+}1)\,x_i^{\pm}(k)-\langle
i,j\rangle^{\pm1} x_i^{\pm}(k)\,x_j^{\pm}(k'{+}1)\Bigr).
\end{array}\leqno{(\textrm{D7})}
$$

$$
[\,x_i^{+}(k),~x_j^-(k')\,]=\frac{\delta_{ij}}{r_i-s_i}\Big(\gamma'^{-k}\,{\gamma}^{-\frac{k+k'}{2}}\,
\omega_i(k{+}k')-\gamma^{k'}\,\gamma'^{\frac{k+k'}{2}}\,\omega'_i(k{+}k')\Big),\leqno(\textrm{D8})
$$
where $\omega_i(m)$, $\omega'_i(-m)~(m\in \mathbb{Z}_{\geq 0})$ such that
$\omega_i(0)=\omega_i$ and  $\omega'_i(0)=\omega_i'$ are defined as below:

\begin{gather*}
\omega_i(z)=\sum\limits_{m=0}^{\infty}\omega_i(m) z^{-m}=\omega_i \exp \Big(
(r_i{-}s_i)\sum\limits_{\ell=1}^{\infty}
 a_i(\ell)z^{-\ell}\Big),\quad \bigl(\omega_i(-m)=0, \  \forall\ m>0\bigr); \\
\omega'_i(z)=\sum\limits_{m=0}^{\infty}\omega'_i(-m) z^{m}=\omega'_i \exp
\Big({-}(r_i{-}s_i)
\sum\limits_{\ell=1}^{\infty}a_i(-\ell)z^{\ell}\Big), \quad
\bigl(\omega'_i(m)=0, \ \forall \ m>0\bigr).
\end{gather*}
$$x_i^{\pm}(m)x_j^{\pm}(k)=\langle j,i\rangle^{\pm1}x_j^{\pm}(k)x_i^{\pm}(m),
\qquad\ \hbox{for} \quad a_{ij}=0,\leqno(\textrm{D$9_1$})$$
$$
\begin{array}{lll}
& Sym_{m_1,\cdots
m_{n}}\sum_{k=0}^{n=1-a_{ij}}(-1)^k(r_is_i)^{\pm\frac{k(k-1)}{2}}
\Big[{1-a_{ij}\atop  k}\Big]_{\pm{i}}x_i^{\pm}(m_1)\cdots x_i^{\pm}(m_k) x_j^{\pm}(\ell)\\
&\hskip1.8cm \times x_i^{\pm}(m_{k+1})\cdots x_i^{\pm}(m_{n})=0,
\quad\hbox{for} \quad a_{ij}\neq 0, \quad  1\leq i<j\leq n,
\end{array} \leqno{(\textrm{D$9_2$})}
$$
$$
\begin{array}{lll}
& Sym_{m_1,\cdots
m_{n}}\sum_{k=0}^{n=1-a_{ij}}(-1)^k(r_is_i)^{\mp\frac{k(k-1)}{2}}
\Big[{1-a_{ij}\atop  k}\Big]_{\mp{i}}x_i^{\pm}(m_1)\cdots x_i^{\pm}(m_k) x_j^{\pm}(\ell)\\
&\hskip1.8cm \times x_i^{\pm}(m_{k+1})\cdots x_i^{\pm}(m_{n})=0,
\quad\hbox{for } \quad a_{ij}\neq 0, \quad  1\leq j<i\leq n,
\end{array} \leqno{(\textrm{D$9_3$})}
$$
where $[m]_{\pm{i}}=\frac{r_i^{\pm m}-s_i^{\pm m}}{r_i-s_i}$, $[m]_{\pm i}!=[m]_{\pm i}\cdots[2]_{\pm i}[1]_{\pm i}$,
$\Bigl[{m\atop n}\Bigr]_{\pm i}=\frac{[m]_{\pm i}!}{[n]_{\pm i}![m-n]_{\pm i}!}$.
\end{defi}

In Section \ref{sec specR}, we will denote $x_i^\pm(k), a_i(\ell)$ by $x_{ik}^\pm, a_{i\ell}$, respectively.

\begin{remark}
  Definition \ref{def affine} is also true for $\mathcal{U}_{r,s}(\widehat{\frak {so}_{2n+1}})$.
\end{remark}

\section{Basic braided $R$-matrix of $U_{r, s}(\mathfrak{so}_{2n+1})$} \label{sec basicR}
To derive the Faddeev-Reshetikhin-Takhtajan realization, we need to determine the basic braided $R$-matrix.
In one-parameter setting, Jantzen gave a strategy \cite{Jantzen} to construct $R$-matrices from the module category of $U_q(\mathfrak{g})$.
Benkart and Witherspoon extended this result to $U_{r, s}(\mathfrak{sl}_n)$ \cite{Benkart baiscR}.
For the other classical types, Begeron-Gao-Hu gave a unified description (including the type $A$ case):
\begin{theorem}\cite{BGH 2007}
   Let $M, M'$ be $U_{r, s}(\mathfrak{g})$-modules in $\mathcal{O}$ where $\mathfrak{g}=\mathfrak{so}_{2n+1}, \mathfrak{so}_{2n} \;\text{or}\; \mathfrak{sp}_{2n}$.
   Then the map
   $$R_{M, M'}=\Theta\circ\tilde{f}\circ P: M'\otimes M\rightarrow M\otimes M'$$
   is an isomorphism of $U_{r, s}(\mathfrak{g})$-modules, where $P: M'\otimes M\rightarrow M \otimes M'$ is
   the flip map such that $P(m'\otimes m)=m\otimes m'$ for any $m\in M, m' \in M'$.
\end{theorem}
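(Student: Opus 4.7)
The plan is to decompose the map $R_{M,M'}$ into three morphisms, each correcting for a different obstruction to cocommutativity of $\Delta$, and verify the intertwining property step by step. Let $P$ denote the flip, $\tilde f$ a weight-graded diagonal operator acting on $M'\otimes M$ via
\[
\tilde f(m'_\mu\otimes m_\nu)=\langle \omega'_\nu,\omega_\mu\rangle\, m'_\mu\otimes m_\nu,
\]
where $\langle -,-\rangle$ is the skew Hopf pairing extended from the matrix $(*)$ to the weight lattice, and $\Theta=\sum_\mu\Theta_\mu$ a formal quasi-$R$-matrix with $\Theta_\mu\in U_{r,s}^-(\mathfrak g)_{-\mu}\otimes U_{r,s}^+(\mathfrak g)_\mu$, normalized by $\Theta_0=1\otimes 1$. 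Since $M,M'\in\mathcal O$, each vector is killed by all but finitely many $\Theta_\mu$, so $\Theta$ acts as a well-defined operator on $M\otimes M'$.

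First I would verify that $\tilde f\circ P$ intertwines $\Delta$ with an intermediate coproduct $\bar\Delta$ defined on generators by $\bar\Delta(e_i)=e_i\otimes\omega_i^{-1}+1\otimes e_i$ and $\bar\Delta(f_i)=f_i\otimes 1+\omega_i'^{\,-1}\otimes f_i$ (and $\bar\Delta=\Delta$ on the Cartan part). The weight-grading properties of $\tilde f$ together with the prescribed values $\langle \omega'_j,\omega_i\rangle$ and $\langle\omega'_i,\omega_j\rangle$ on simple roots make this check essentially a computation for each generator type: for the Cartan $\omega_j,\omega_j'$ the statement is immediate from commutativity with weight-graded operators; for $e_i,f_i$ one compares the explicit coproducts in Definition \ref{type B def} against the scalars produced by $\tilde f$ after the flip.

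Next I would construct $\Theta$ and prove $\Theta\,\bar\Delta(x)=\Delta^{\rm op}(x)\,\Theta$ for each generator $x$. The standard route is to use the nondegenerate skew Hopf pairing between $U_{r,s}^{\leq 0}$ and $U_{r,s}^{\geq 0}$ coming from the Drinfeld double realization of $U_{r,s}(\mathfrak g)$ (for $\mathfrak g$ of classical type $B,C,D$ this pairing is established in the course of defining the algebras via double bosonization or matched pairs; see the references cited in the excerpt). Taking dual PBW bases $\{u_\alpha^-\}$ and $\{u_\alpha^+\}$ with respect to this pairing, set $\Theta_\mu=\sum_\alpha u_\alpha^-\otimes u_\alpha^+$ over bases of weight $-\mu,\mu$ respectively. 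The intertwining identity on Cartan generators follows from the weight grading of $\Theta$; on $e_i$ and $f_i$ it reduces, by induction on the height of $\mu$, to the defining property of the pairing, namely that it turns the multiplication on one side into the coproduct on the other.

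Combining the two intertwining statements yields $R_{M,M'}\circ\Delta(x)\cdot(-) = \Delta(x)\circ R_{M,M'}\cdot(-)$ on $M'\otimes M$, i.e. $R_{M,M'}$ is a $U_{r,s}(\mathfrak g)$-module homomorphism. Invertibility is automatic: $P$ is a bijection, $\tilde f$ acts by nonzero scalars on weight vectors, and $\Theta=1\otimes 1+\sum_{\mu>0}\Theta_\mu$ is a unipotent perturbation of the identity, locally finite on category $\mathcal O$ modules, hence invertible with inverse $\sum_\mu(-1)^{|\mu|}\Theta_\mu'$ (the companion sum built from the inverse skew-pairing). The main obstacle is the second step: producing the quasi-$R$-matrix $\Theta$ and verifying its intertwining identities in the two-parameter setting, since unlike the one-parameter case the bilinear form is genuinely non-symmetric and one must keep careful track of the distinction between $\omega_i$ and $\omega_i'$ throughout—this is precisely where the ``generalized type $D$ phenomenon'' flagged in subsection \ref{ssec typeD} would introduce extra factors of $(rs)^\ast$ that must be absorbed into the normalization of $\Theta_\mu$.
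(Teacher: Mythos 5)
The paper does not prove this statement; it is imported verbatim from \cite{BGH 2007} and used as a black box, so there is no internal proof to compare against. Your outline is essentially the argument of that reference (itself the two-parameter adaptation of Jantzen's quasi-$R$-matrix construction): $\tilde f\circ P$ corrects the Cartan part of the coproduct, $\Theta$ is built from dual PBW bases of the skew Hopf pairing underlying the Drinfeld-double realization of $U_{r,s}(\mathfrak g)$, and invertibility follows factor by factor using local finiteness on $\mathcal O$. Two convention-level points to fix before this would compile into a correct proof. First, in the composite $\Theta\circ\tilde f\circ P$ the operator $\tilde f$ acts on $M\otimes M'$ (after the flip), and the paper's own use of the map in the proof of Lemma \ref{lemma minimal polynomial} shows the scalar is $\langle\omega'_{\nu},\omega_{\mu}\rangle^{-1}$ (with $\mu$ the weight of the first factor and $\nu$ of the second), i.e.\ the inverse of what you wrote. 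Second, since both $M'\otimes M$ and $M\otimes M'$ carry the $\Delta$-action, the identity you need for $\Theta$ is $\Theta\,\bar\Delta(x)=\Delta(x)\,\Theta$ rather than $\Delta^{\mathrm{op}}(x)\,\Theta$; with $\Delta^{\mathrm{op}}$ on the right the composite would intertwine the wrong pair of module structures. Neither issue affects the architecture, which is the correct one.
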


\begin{remark}
  One can prove, $R_{M, M'}$ satisfies the braid relation.
  That is, for any $U_{r, s}(\mathfrak{g})$-modules $M, M', M^{\prime\prime}$,
  we have $R_{12}\circ R_{23}\circ R_{12}=R_{23}\circ R_{12}\circ R_{23}$.
  If we take $M=M'=M''=V$, where $V$ is the vector representation of $U_{r, s}(\mathfrak{so}_{2n+1})$,
then $R_{V, V}$ is the desired basic braided $R$-matrix.
\end{remark}

\subsection{Vector representation $V=V(\varepsilon_1)$}
\begin{lemm}\label{lemm vect rep}
The vector representation $T_1$ of $U_{r, s}(\mathfrak{so}_{2n+1})$ is given by

$\text{\rm(I)}$ For simple root elements, we have
\begin{gather*}
T_1(e_i)=E_{i, i+1}-(rs)^{-1}E_{(i+1)', i'},\\
T_1(f_i)=E_{i+1, i}-(rs)^{-1}E_{i', (i+1)'},\\
T_1(e_n)=(r+s)^{\frac{1}{2}}\Bigl((rs)^{-\frac{1}{2}}E_{n, n+1}-r^{-1}E_{n+1, n'}\Bigr),\\
T_1(f_n)=(r+s)^{\frac{1}{2}}\Bigl((rs)^{-\frac{1}{2}}E_{n+1, n}-s^{-1}E_{n', n+1}\Bigr).
\end{gather*}

$\text{\rm(II)}$ For group-like elements, we have
\begin{align*}
T_1(\omega_i)=\;&r^2 E_{ii}+s^2E_{i+1, i+1}+s^{-2}E_{(i+1)', (i+1)'}+r^{-2}E_{i', i'}+\sum_{j\neq i, i+1,\atop i',\; (i+1)'} E_{jj},\\
T_1(\omega_i')=\;&s^2 E_{ii}+r^2E_{i+1, i+1}+r^{-2}E_{(i+1)', (i+1)'}+s^{-2}E_{i', i'}+\sum_{j\neq i, i+1,\atop i',\; (i+1)'} E_{jj},
\\
T_1(\omega_n)=\;&rs^{-1}E_{n, n}+E_{n+1, n+1} +r^{-1}sE_{n', n'}+(rs)^{-1}\sum_{1\le j\le n-1}E_{jj}+rs\sum_{(n-1)'\le j\le 1'}E_{jj},\\
T_1(\omega'_n)=\;&r^{-1}sE_{n, n}+E_{n+1, n+1} +rs^{-1}E_{n', n'}+(rs)^{-1}\sum_{1\le j\le n-1}E_{jj}+rs\sum_{(n-1)'\le j\le 1'}E_{jj},
\end{align*}
where $1\le i\le n-1$, $i'= 2n+2-i$ $(\text{\rm i.e.}$, $(n+1)'=n+1$, $1<2<\cdots<n<n+1<n'<\cdots<2'<1'\,)$, $\{\,v_1, v_2,\cdots,v_n, v_{n+1}, v_{n'},\cdots, v_{2'}, v_{1'}\,\}$ is the canonical basis of $U_{r,s}(\mathfrak{so}_{2n+1})$-module of the first fundamental weight $\varepsilon_1$ with $v_1$ as highest weight vector $($cf. \cite{BGH 2007}$)$.
\end{lemm}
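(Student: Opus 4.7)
The plan is to construct the representation explicitly on the $(2n+1)$-dimensional space with basis $\{v_1,\dots,v_n,v_{n+1},v_{n'},\dots,v_{1'}\}$ by declaring that $v_i$ has weight $\varepsilon_i$ for $1\le i\le n$, $v_{n+1}$ has weight $0$, and $v_{i'}$ has weight $-\varepsilon_i$, and then verifying the formulas claimed for $T_1$. The action of $\omega_j$ and $\omega'_j$ on each weight vector is then forced by the pairing $\langle\omega'_\mu,\omega_j\rangle$ deduced from the structure constant matrix $(*)$ restricted to the finite part (cf.\ \cite{BGH 2006, BGH 2007}); concretely, one has $\omega_j.v_\mu=\langle\omega'_\mu,\omega_j\rangle v_\mu$ and $\omega'_j.v_\mu=\langle\omega'_j,\omega_\mu\rangle^{-1}v_\mu$, and a direct computation using $(\alpha_i,\alpha_i)$-values (namely $r_i=r^2,s_i=s^2$ for $i<n$ and $r_n=r,s_n=s$) recovers the diagonal formulas in part (II).

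Next, I would pin down the action of $e_i,f_i$ for long simple roots $i<n$. Starting from the highest weight vector $v_1$ and applying $f_i$ iteratively according to $f_i.v_i=v_{i+1}$, one obtains an action that pairs each positive root step $E_{i,i+1}$ with a negative root step $E_{(i+1)',i'}$ on the ``primed'' side; the scalar $-(rs)^{-1}$ in front of $E_{(i+1)',i'}$ in $T_1(e_i)$ is forced by orthogonality of $v_1$ to the Weyl denominator, equivalently by requiring $[e_i,f_i]=(\omega_i-\omega'_i)/(r_i-s_i)$ to hold simultaneously on $v_i$ and $v_{(i+1)'}$. Using the weight formulas in (II), each of these two evaluations gives one equation for the relative coefficient, and both are satisfied precisely by $-(rs)^{-1}$.

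The most delicate point is the short root $i=n$, where the zero-weight vector $v_{n+1}$ enters the triangle $v_n\to v_{n+1}\to v_{n'}$. Here I would set up the ansatz
\[
T_1(e_n)=\alpha E_{n,n+1}+\beta E_{n+1,n'},\qquad T_1(f_n)=\gamma E_{n+1,n}+\delta E_{n',n+1},
\]
and impose $[e_n,f_n]=(\omega_n-\omega'_n)/(r-s)$ on $v_n$, $v_{n+1}$, $v_{n'}$ in turn. Each evaluation becomes a linear relation among the products $\alpha\gamma,\beta\delta,\alpha\delta,\beta\gamma$, and solving these simultaneously, together with the symmetry requirement $[e_n,f_i]=0$ for $i<n$ on $v_{n+1}$, fixes the coefficients up to a normalization that is absorbed into rescaling $v_{n+1}$. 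The unique symmetric solution is $\alpha=(r+s)^{1/2}(rs)^{-1/2}$, $\beta=-(r+s)^{1/2}r^{-1}$, $\gamma=(r+s)^{1/2}(rs)^{-1/2}$, $\delta=-(r+s)^{1/2}s^{-1}$, reproducing the claimed formulas. It is precisely the asymmetry $r^{-1}$ versus $s^{-1}$ appearing respectively in $e_n$ and $f_n$ that is the manifestation of the generalized type $D$ phenomenon discussed in Section~\ref{ssec typeD}, and which will later propagate into the structure of the basic braided $R$-matrix.

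Finally, I would verify the quantum Serre relations $(\operatorname{ad}_l e_i)^{1-a_{ij}}(e_j)=0$ and $(\operatorname{ad}_r f_i)^{1-a_{ij}}(f_j)=0$ by direct matrix computation on each basis vector; this reduces, after using the formulas in (I) and (II) together with the coproducts in $(\textrm{B5})$, to a finite number of polynomial identities in $r,s$. For non-adjacent $i,j$ the relations are immediate from $E_{ab}E_{cd}=\delta_{bc}E_{ad}$, and for adjacent long-long pairs $(i,i+1)$ with $i+1<n$ the verification is parallel to the one-parameter case after tracking a single factor of $(rs)^{-1}$. The main obstacle, as expected, is the mixed long-short interaction at the pairs $(n-1,n)$ and $(n,n-1)$, where $a_{n-1,n}=-1$ and $a_{n,n-1}=-2$, and the cubic Serre relation for $f_n$ (and $e_n$) involves interference between the two triangles $v_{n-1}\to v_n\to v_{n+1}\to v_{n'}\to v_{(n-1)'}$; here the $(r+s)^{1/2}$ factor cancels correctly against a quantum integer $[2]_{\pm n}$, and the remaining scalar identities are verified by elementary manipulation of powers of $r,s$. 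Once these relations are checked, the vector representation is established.
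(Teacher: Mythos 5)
Your proposal is correct in substance but runs in the opposite direction from the paper's proof: the paper takes the explicit matrices as given and verifies, case by case, that they satisfy (B1)--(B5) (computing $T_1(\omega_j)T_1(e_i)$ against $\langle\omega_i',\omega_j\rangle T_1(e_i)T_1(\omega_j)$ for the four index regimes, checking (B4) directly, and observing that every monomial in the Serre relations vanishes), then confirms $v_1$ is a highest weight vector of weight $\varepsilon_1$; you instead fix the weights of the basis vectors, let the structure constants force part (II), and solve for the coefficients in part (I) from the relation $[e_i,f_i]=(\omega_i-\omega_i')/(r_i-s_i)$. The computational content is essentially the same either way, and your derivation has the pedagogical advantage of explaining where the factors $(rs)^{-1}$, $(r+s)^{1/2}$, $r^{-1}$ and $s^{-1}$ come from. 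Three small inaccuracies are worth correcting. First, relation (B4) only pins down the products $\alpha\gamma=\beta\delta=(r+s)/(rs)$ (and likewise only the product of the two primed coefficients equals $(rs)^{-2}$ in the long-root case); the individual values are a choice of normalization of the basis, not a ``unique symmetric solution,'' and the auxiliary condition $[e_n,f_i]=0$ for $i<n$ you invoke holds automatically on the vector representation and constrains nothing. Second, in the Serre relations no cancellation against a quantum integer $[2]_{\pm n}$ occurs: on the $(2n+1)$-dimensional module every single monomial $e_i^{a}e_je_i^{b}$ appearing in $(\operatorname{ad}_l e_i)^{1-a_{ij}}(e_j)$ is already the zero matrix (e.g.\ $e_n^3=0$ and $e_{n-1}^2=0$), which is exactly the observation the paper uses. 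Third, having declared the weights at the outset, you should still record the one-line check that $v_1$ has weight $\alpha_1+\cdots+\alpha_n=\varepsilon_1$ and is annihilated by all $T_1(e_i)$, since that is part of the statement being proved.
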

\begin{proof}
  We first verify that $T_1$ preserves the defining relations of $U_{r, s}(\mathfrak{so}_{2n+1})$
(see Definition \ref{type B def}, where we restrict to the finite type).
 Clearly, $T_1(\omega_i)$ and $T_1(\omega'_j)\;$ (for $1\leq i, j\leq n)$ commute with each other.
Thus, $T_1$ preserves (B1).

 For (B2) and (B3), we take verifying $T_1(\omega_j)T_1(e_i)=\langle \omega_i', \omega_j\rangle T_1(e_i)T_1(\omega_j)$ as an example, since the other equations can be verified similarly.

 (i) When $1\leq i, j\leq n-1$: we have
   $$T_1(\omega_j)T_1(e_i)=\left\{
                          \begin{array}{ll}
                            r^2E_{i, i+1}-r^{-1}s^{-3}E_{(i+1)', i'}, & \hbox{$j=i$,} \\
                            s^2E_{i, i+1}-(rs)^{-1}E_{(i+1)', i'}, & \hbox{$j=i-1$,} \\
                            E_{i, i+1}-r^{-3}s^{-1}E_{(i+1)', i'}, & \hbox{$j=i+1$.}
                          \end{array}
                        \right.
$$
$$T_1(e_i)T_1(\omega_j)=\left\{
                          \begin{array}{ll}
                            s^2E_{i, i+1}-r^{-3}s^{-1}E_{(i+1)', i'}, & \hbox{$j=i$,} \\
                            E_{i, i+1}-r^{-1}s^{-3}E_{(i+1)', i'}, & \hbox{$j=i-1$,} \\
                            r^2E_{i, i+1}-(rs)^{-1}E_{(i+1)', i'}, & \hbox{$j=i+1$.}
                          \end{array}
                        \right.
$$
In this case, $T_1(\omega_j)T_1(e_i)=\langle \omega_i', \omega_j\rangle T_1(e_i)T_1(\omega_j)$ holds.

(ii) When $1\leq i\leq n-1, j=n$:
$$T_1(\omega_n)T_1(e_i)=\left\{
                          \begin{array}{ll}
                            (rs)^{-1}E_{i, i+1}-E_{2n-i+1, 2n-i+2}, & \hbox{$1\leq i\leq n-2 $;} \\
                            (rs)^{-1}E_{i, i+1}-r^{-2}E_{2n-i+1, 2n-i+2}, & \hbox{$i=n-1$.}
                          \end{array}
                        \right.
$$
$$T_1(e_i)T_1(\omega_n)=\left\{
                          \begin{array}{ll}
                            (rs)^{-1}E_{i, i+1}-E_{2n-i+1, 2n-i+2}, & \hbox{$1\leq i\leq n-2 $;} \\
                            rs^{-1}E_{i, i+1}-E_{2n-i+1, 2n-i+2}, & \hbox{$i=n-1$.}
                          \end{array}
                        \right.
$$
Then $T_1(\omega_n)T_1(e_i)=\langle \omega_i^\prime, \omega_n\rangle T_1(e_i)T_1(\omega_n)$ holds.

(iii) When $1\leq j\leq n-1, i=n$:
$$T_1(\omega_j)T_1(e_n)=\left\{
                          \begin{array}{ll}
                            (r+s)^{\frac{1}{2}}((rs)^{-\frac{1}{2}}E_{n, n+1}-r^{-1}E_{n+1, n+2}),
                           & \hbox{$1\leq j\leq n-2 $;} \\
                            (r+s)^{\frac{1}{2}}(r^{-\frac{1}{2}}s^{\frac{3}{2}}E_{n, n+1}-r^{-1}E_{n+1, n+2}),
                            & \hbox{$j=n-1$.}
                          \end{array}
                        \right.
$$
$$T_1(e_n)T_1(\omega_j)=\left\{
                          \begin{array}{ll}
                            (r+s)^{\frac{1}{2}}((rs)^{-\frac{1}{2}}E_{n, n+1}-r^{-1}E_{n+1, n+2}),
                           & \hbox{$1\leq j\leq n-2 $;} \\
                            (r+s)^{\frac{1}{2}}((rs)^{-\frac{1}{2}}E_{n, n+1}-r^{-1}s^{-2}E_{n+1, n+2}),
                            & \hbox{$j=n-1$.}
                          \end{array}
                        \right.
$$
Then $T_1(\omega_j)T_1(e_n)=\langle \omega_n^\prime, \omega_j\rangle T_1(e_n)T_1(\omega_j)$ holds.

(iv) When $i=j=n$:
$$T_1(\omega_n)T_1(e_n)=(r+s)^{\frac{1}{2}}(r^{\frac{1}{2}}s^{-\frac{3}{2}}E_{n, n+1}-r^{-1}E_{n+1, n+2}),$$
$$T_1(e_n)T_1(\omega_n)=(r+s)^{\frac{1}{2}}((rs)^{-\frac{1}{2}}E_{n, n+1}-r^{-2}sE_{n+1, n+2}).$$
Then $T_1(\omega_n)T_1(e_n)=\langle \omega_n^\prime, \omega_n\rangle T_1(e_n)T_1(\omega_n)$ also holds.

 An argument similar to the one used earlier shows $T_1$ also preserves (B4):
 $$T_1(e_i)T_1(f_i)-T_1(f_i)T_1(e_i)=\dfrac{T_1(\omega_i)-T_1(\omega_i')}{r_i-s_i}.$$
Serre relation (B5) is also obviously satisfied since each summand on the left-hand side of the equations is zero.

It remains to verify that the highest weight is the first fundamental weight.
In fact, since $T_1(e_i)v_1=0 \ (i=1, 2, \cdots, n)$, we know $v_1$ is the highest weight vector.
By calculating
$$T_1(\omega_i) v_1= \langle \omega_1^\prime\omega_2^\prime\cdots \omega_n^\prime, \omega_i\rangle v_1, \quad \quad
T_1(\omega_i^\prime)v_1= \langle \omega_i^\prime, \omega_1\omega_2\cdots \omega_n\rangle^{-1} v_1, \quad 1\leq i\leq n,$$
we conclude that $v_1$ corresponds to the highest weight $\alpha_1+\alpha_2+\cdots+\alpha_n=\varepsilon_1$,
which is the first fundamental weight of $\mathfrak{so}_{2n+1}$.
\end{proof}
To simplify our notation, we denote $T_1(e_i)v_j$ by $e_i.\, v_j$, and so on.

\subsection{Decomposition of $V^{\otimes2}$}\label{subsec decompostion}
To determine the explicit formula of $R_{V, V}$,
it is necessary to work out the effect of $R$ acting on $V^{\otimes2}$.
Firstly, we give the decomposition of $V^{\otimes2}$ as follows:

\begin{lemm}
For vector representation $V$, we have
  $$V^{\otimes2}=V(0)\bigoplus V(2\varepsilon_1)\bigoplus V(\varepsilon_1{+}\varepsilon_2),$$
  where $V(0)$, $V(2\varepsilon_1)$ and $V(\varepsilon_1{+}\varepsilon_2)$ denotes the simple module with the highest weight $0$, $2\varepsilon_1$ and $\varepsilon_1{+}\varepsilon_2$, respectively.
\end{lemm}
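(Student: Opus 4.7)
The plan is to exhibit three explicit highest weight vectors in $V^{\otimes 2}$ of weights $2\varepsilon_1$, $\varepsilon_1+\varepsilon_2$, and $0$, show that each generates a simple highest weight submodule of the stated isoclass, and then conclude the direct sum decomposition via a dimension count. This is the quantum counterpart of the classical decomposition $V\otimes V = S^2_0 V \oplus \Lambda^2 V \oplus \mathbb{K}\cdot\mathrm{inv}$ for $\mathfrak{so}_{2n+1}$, and I expect the deformed analogue to differ only by explicit $(r,s)$-corrections in the coefficients.

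First, combining the coproduct $\Delta(e_i) = e_i \otimes 1 + \omega_i \otimes e_i$ with the vector representation formulas of Lemma \ref{lemm vect rep}, I identify three candidate highest weight vectors. The vector $w_{2\varepsilon_1} := v_1 \otimes v_1$ is automatically annihilated by every $e_i$, since $e_i.v_1 = 0$ and $\omega_i.v_1$ is a scalar multiple of $v_1$. In the two-dimensional weight space of weight $\varepsilon_1 + \varepsilon_2$, spanned by $v_1 \otimes v_2$ and $v_2 \otimes v_1$, only $e_1$ acts non-trivially; solving the single linear condition $e_1.w = 0$ yields a unique (up to scalar) vector $w_{\varepsilon_1+\varepsilon_2} = v_1 \otimes v_2 - \alpha\, v_2 \otimes v_1$, where $\alpha\in\mathbb{K}$ is a specific monomial in $r,s$ read off from the structure constants. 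Finally, the zero weight space is spanned by $\{v_i \otimes v_{i'} \mid 1\leq i\leq 1'\}$, and the homogeneous system $e_k.\bigl(\sum_i c_i\, v_i \otimes v_{i'}\bigr) = 0$ for $1\leq k \leq n$ is triangular with respect to the total ordering $1<2<\cdots<n<n+1<n'<\cdots<1'$, so the coefficients $c_i$ are determined recursively up to an overall scaling, producing $w_0$.

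Each cyclic submodule $U_{r,s}(\mathfrak{so}_{2n+1}).w_\lambda$ is then a highest weight module of highest weight $\lambda \in \{2\varepsilon_1,\,\varepsilon_1+\varepsilon_2,\,0\}$, and since the category of finite-dimensional weight modules of $U_{r,s}(\mathfrak{so}_{2n+1})$ is semisimple (see \cite{BGH 2007}), each is simple and isomorphic to the standard $V(\lambda)$. The classical Weyl dimensions $\dim V(0) = 1$, $\dim V(2\varepsilon_1) = (n+1)(2n+1)-1$, and $\dim V(\varepsilon_1+\varepsilon_2) = n(2n+1)$ sum to $(2n+1)^2 = \dim V^{\otimes 2}$, so the canonical map $V(0)\oplus V(2\varepsilon_1)\oplus V(\varepsilon_1+\varepsilon_2) \to V^{\otimes 2}$ is an isomorphism.

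The principal obstacle is the explicit determination of the coefficients $c_i$ of $w_0$: they will involve non-trivial powers of $(rs)$ arising from the $(r,s)$-asymmetry in the structure constant matrix $(*)$, together with the $(r+s)^{1/2}$ factor attached to the short simple root at position $n$ in Lemma \ref{lemm vect rep}. Tracking these powers accurately is essential, since $w_0$ encodes the $R$-matrix eigenvalue on the trivial summand and therefore propagates directly into the explicit form of the basic braided $R$-matrix stated in Theorem \ref{main theorem}. Once these coefficients are pinned down, the remaining verifications (simplicity of the three submodules and the dimension count) are routine consequences of the weight module theory of \cite{BGH 2007}.
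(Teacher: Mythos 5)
Your proof is correct, but it takes a genuinely different and more hands-on route than the paper. The paper disposes of the lemma in one line: it invokes the braided categorical equivalence $\mathcal O^{r,s}\cong\mathcal O^{q,q^{-1}}$ of \cite{HuPeiCA 2012} together with semisimplicity of the finite-dimensional one-parameter category \cite{Klimyk}, so the decomposition is simply transported from the known $U_q(\mathfrak{so}_{2n+1})$ case. You instead construct the three highest weight vectors directly, argue that each cyclic submodule is simple via semisimplicity of the two-parameter weight category, and close with the dimension count $1+\bigl((n+1)(2n+1)-1\bigr)+n(2n+1)=(2n+1)^2$. Both are valid; your version is self-contained at the level of this lemma, and the explicit vectors you propose to compute are exactly what the paper produces anyway in Lemmas \ref{lemm decomp1}--\ref{lemm decomp3} and feeds into the eigenvalue computation of Lemma \ref{lemma minimal polynomial}, whereas the paper separates the abstract decomposition from the explicit description of the summands. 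Two small points of care: the zero-weight coefficients come out as powers of $rs^{-1}$ (namely $(rs^{-1})^{\rho_i}$), not of $rs$ as you suggest in your final paragraph; and the semisimplicity you lean on should be sourced the way the paper does it, i.e.\ via the equivalence with the one-parameter category, unless the reference you cite states complete reducibility for $U_{r,s}(\mathfrak{so}_{2n+1})$ in exactly the form you use.
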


\begin{proof}
  This follows from the fact that the braided categorical equivalence between $\mathcal O^{r,s}$ and $\mathcal O^q$ established in \cite{HuPeiCA 2012}, as well as the full subcategory  $\mathcal (O^q)^{(f)}$ of finite-dimensional modules being semisimple \cite{Klimyk}.
\end{proof}

Next we describe its simple submodules explicitly.

\begin{lemm}\label{lemm decomp1}
The module $S^{o}(V^{\otimes2})$ generated by $\sum\limits_{i=1}^{1'}a_iv_{i'}\otimes v_i$ is simple, which is isomorphic to $V(0)$, where $a_i=(rs^{-1})^{\rho_i}$ and
$$\rho_i=\left\{
          \begin{array}{ll}
            \frac{2n+1}{2}-i, & \hbox{$i<i'$;} \\
            -\rho_{i'}, & \hbox{$i\geq i'$.}
          \end{array}
        \right.
$$
\end{lemm}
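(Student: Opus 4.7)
The plan is to show that the vector $w := \sum_{i=1}^{1'} a_i v_{i'}\otimes v_i$ is a highest-weight vector of weight $0$, and then to invoke the semisimple decomposition $V^{\otimes 2}=V(0)\oplus V(2\varepsilon_1)\oplus V(\varepsilon_1+\varepsilon_2)$ obtained in the preceding lemma. First I would observe that each summand $v_{i'}\otimes v_i$ already has weight $0$: for $1\leq i\leq n$ the weight is $(-\varepsilon_i)+\varepsilon_i=0$, for $i=n+1$ we have $0+0$, and for $i>n+1$ the weight is again $\varepsilon_{i'}-\varepsilon_{i'}=0$. Consequently all group-like generators $\omega_j,\omega_j'$ act as the identity on $w$, so axiom $(\mathrm{B}1)$--$(\mathrm{B}3)$ only impose constraints on the behaviour of $w$ under the Chevalley generators $e_j$ (and symmetrically $f_j$).

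The key computational step is to verify $e_j.w=0$ for every simple root $j\in J$. Using the coproduct $\Delta(e_j)=e_j\otimes 1+\omega_j\otimes e_j$ together with the explicit formulas in Lemma \ref{lemm vect rep}, the only nonvanishing summands of $e_j.w$ (for $1\leq j\leq n-1$) contribute to the two basis vectors $v_j\otimes v_{(j+1)'}$ and $v_{(j+1)'}\otimes v_j$. Collecting coefficients yields the two cancellation conditions
\begin{gather*}
a_{(j+1)'}-rs^{-1}a_{j'}=0,\qquad s^{-2}a_{j+1}-(rs)^{-1}a_j=0,
\end{gather*}
equivalently $a_{i+1}/a_i=(rs^{-1})^{-1}$ for $1\leq i\leq n-1$ and the symmetric relation after passing $i\mapsto i'$. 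These are exactly the recursions encoded by $\rho_i=\frac{2n+1}{2}-i$ for $i<i'$ and $\rho_i=-\rho_{i'}$ otherwise. For $j=n$ the same strategy applies, but one has to keep track of the factor $(r+s)^{1/2}$ and of the fact that $\omega_n.v_n=rs^{-1}v_n$, $\omega_n.v_{n+1}=v_{n+1}$; collecting coefficients on $v_n\otimes v_{n+1}$ and $v_{n+1}\otimes v_n$ yields
\begin{gather*}
a_{n+1}(rs)^{-1/2}-s^{-1}a_{n'}=0,\qquad -r^{-1}a_n+a_{n+1}(rs)^{-1/2}=0,
\end{gather*}
which, together with $a_{n+1}=1$ (since $\rho_{n+1}=0$), force $a_n=(rs^{-1})^{1/2}$ and $a_{n'}=(rs^{-1})^{-1/2}$, in perfect agreement with the formula $\rho_n=\tfrac12$.

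Having verified $e_j.w=0$ for all $j$, the submodule generated by $w$ is, by the axioms of highest-weight modules, a highest-weight submodule of weight $0$. Since $V^{\otimes 2}$ is semisimple with decomposition $V(0)\oplus V(2\varepsilon_1)\oplus V(\varepsilon_1+\varepsilon_2)$, and since the only highest-weight vectors of $V(2\varepsilon_1)$ and $V(\varepsilon_1+\varepsilon_2)$ have weights $2\varepsilon_1$ and $\varepsilon_1+\varepsilon_2$, respectively, the space of weight-$0$ highest-weight vectors in $V^{\otimes 2}$ is one-dimensional and lies entirely in the $V(0)$-summand. Hence the submodule $S^{o}(V^{\otimes 2})=\mathbb{K}w$ is simple and isomorphic to $V(0)$.

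The main obstacle is the short-root computation for $j=n$: the appearance of $(r+s)^{1/2}$ in $T_1(e_n)$ and of $rs^{-1}$ in $T_1(\omega_n)|_{v_n}$ is precisely what forces the half-integer exponent $\rho_n=\tfrac12$ and the normalisation $a_{n+1}=1$, and it is this asymmetric behaviour that explains why the coefficients $a_i$ must be taken as the particular $\tfrac12$-integer powers of $rs^{-1}$ rather than integer powers as in the long-root case. All other steps are bookkeeping with the coproduct and the explicit action of Lemma \ref{lemm vect rep}.
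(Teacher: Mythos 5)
Your proposal is correct and follows essentially the same route as the paper: you apply $\Delta(e_j)$ to the weight-$0$ vector, collect coefficients on $v_j\otimes v_{(j+1)'}$ and $v_{(j+1)'}\otimes v_j$ (and the analogous pair for $j=n$), and obtain exactly the recursions $a_{i+1}=(rs^{-1})^{-1}a_i$, $a_{(i+1)'}=rs^{-1}a_{i'}$, $a_n=(rs^{-1})^{1/2}a_{n+1}$, $a_{n+1}=(rs^{-1})^{1/2}a_{n'}$ that the paper derives. The only cosmetic difference is that the paper also verifies the $f_j$-action explicitly, whereas you dispense with it by locating the singular weight-$0$ vector inside the semisimple decomposition $V(0)\oplus V(2\varepsilon_1)\oplus V(\varepsilon_1{+}\varepsilon_2)$, which is a valid shortcut.
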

\begin{proof}
Note that the one-dimensional trivial submodule of $V^{\otimes 2}$ is of weight $0$, which is a combination of those vectors $v_{i'}\otimes v_i$ of weight $0$, denote it by
$\sum_{i=1}^{1'}b_iv_{i'}\otimes v_i$. Obviously, $\omega_i$ and $\omega_i'$ (for $1\leq i\leq n$) act as the identity.

  (1) When $1\leq i\leq n-1$: we have trivial actions as follows
  \begin{gather*}
  \begin{split}
  &e_i\,.\Bigl(\sum_{j=1}^{1'}b_jv_{j'}\otimes v_j\Bigr)\\
&=(e_i\otimes 1+\omega_i\otimes e_i)\Bigl(b_{(i+1)'}v_{i+1}\otimes v_{(i+1)'}+b_{i'}v_i\otimes v_{i'}+b_iv_{i'}\otimes v_i+b_{i+1}v_{(i+1)'}\otimes v_{i+1}\Bigr)  \\
  &=\Bigl(b_{(i+1)'}-rs^{-1} b_{i'}
  \Bigr)v_i\otimes v_{(i+1)'}+\Bigl(b_{i+1}s^{-2}-(rs)^{-1}b_i\Bigr)v_{(i+1)'}\otimes v_i\\
&=0;\\
  &f_i\,.\Bigl(\sum_{j=1}^{1'}b_jv_{j'}\otimes v_j\Bigr)\\
&=(1\otimes f_i+f_i\otimes\omega_i')\Bigl(b_iv_{i'}\otimes v_i+b_{(i+1)'}v_{i+1}\otimes v_{(i+1)'}+b_{i'}v_i\otimes v_{i'}+b_{i+1}v_{(i+1)'}\otimes v_{i+1}\Bigr)  \\
 & =\Bigl(s^{-2}b_{i'}-(rs)^{-1} b_{(i+1)'}\Bigr)v_{i+1}\otimes v_{i'}+(b_i-rs^{-1}b_{i+1})v_{i'}\otimes v_{i+1}\\
&=0.
\end{split}
\end{gather*}
Thus we get a geometric sequence: $b_i =rs^{-1}b_{i+1}$ and $b_{(i+1)'}=rs^{-1} b_{i'}$ $(1\le i<n)$ with the ordering $1<2<\cdots<n-1$, and $(n-1)'<\cdots<2'<1'$.

(2) When $i=n$:
\begin{gather*}
\begin{split}
   &e_n\,.\Bigl(\sum_{j=1}^{1'}b_jv_{j'}\otimes v_j\Bigr)  \\
&=(e_n\otimes 1+\omega_n\otimes e_n)\Bigl(b_{n+1}v_{n+1}\otimes v_{n+1}+b_nv_{n'}\otimes v_n+b_{n'}v_{n}\otimes v_{n'}\Bigr) \\
&=(r+s)^{\frac{1}{2}}\Bigl((r^{-\frac{1}{2}}s^{-\frac{1}{2}}b_{n+1}-s^{-1}b_{n'})v_n\otimes v_{n+1}
+(r^{-\frac{1}{2}}s^{-\frac{1}{2}}b_{n+1}-r^{-1}b_n)v_{n+1}\otimes v_n\Bigr)\\
&=0;\\
&f_n\,.\Bigl(\sum_{j=1}^{1'}b_jv_{j'}\otimes v_j\Bigr)  \\
&=(1\otimes f_n+f_n\otimes\omega_n') \Bigl(b_{n+1}v_{n+1}\otimes v_{n+1}+b_nv_{n'}\otimes v_n+b_{n'}v_{n}\otimes v_{n'}\Bigr) \\
&=(r+s)^{\frac{1}{2}}\Bigl((r^{\frac{1}{2}}s^{-\frac{3}{2}}b_{n'}-s^{-1}b_{n+1})v_{n+1}\otimes v_{n'}
+(r^{-\frac{1}{2}}s^{-\frac{1}{2}}b_n-s^{-1}b_{n+1})v_{n'}\otimes v_{n+1}\Bigr)\\
&=0.
\end{split}
\end{gather*}
From this,  we get $b_n=(rs^{-1})^{\frac{1}{2}}b_{n+1}$ and $b_{n+1}=(rs^{-1})^{\frac{1}{2}}b_{n'}$.

Without loss of generality, we may assume $b_{n+1}=1$.
Then we conclude that $b_i=(rs^{-1})^{\rho_i}=a_i$ for all $1\leq i\leq 1'$.
\end{proof}

\begin{defi}\label{metric matrix}
  We introduce the quantum metric matrix $C=(C_j^i)$ for $U_{r,s}(\mathfrak{so}_{2n+1})$ is
  $$C_j^i=\delta_{i j'}\bigl(r^{-1}s\bigr)^{\rho_i},$$
  where $i, j$ represent for the row and column index, respectively.
  Obviously, $C^2=I$, the identity matrix.
\end{defi}

\begin{lemm}\label{lemm decomp2}
The following elements span a simple submodule of $V^{\otimes2}$, denoted by $\mathcal{S}'(V^{\otimes2})$, which is isomorphic to $V(2\varepsilon_1):$\\
$\text{\rm (i)}$\;\;\,  $v_i\otimes v_i$, $1\le i\le n$ or $n'\le i\le 1'$,\vspace{0.15em}\\
$\text{\rm (ii)}$\;\,  $v_i\otimes v_{j}+r^{-1}s\,v_{j}\otimes v_i$, $1\le i\le n, j=n+1$ or $i=n+1, n'\le j\le 1',$\vspace{0.15em} \\
$\text{\rm (iii)}$\, $v_i\otimes v_j+s^2v_j\otimes v_i$, $1\le i\le n$, $i+1\le j\le n$ or $(i-1)'\le j\le 1'$,\vspace{0.15em}\\
$\text{\rm (iv)}$\, $v_i\otimes v_j+r^{-2}v_j\otimes v_i$, $1\le i\le n-1$, $n'\le j\le (i+1)'$ or $n'\le i\le 2'$, $i+1\le j\le 1'$,\vspace{0.15em}\\
$\text{\rm (v)}$\;\, $v_n\otimes v_{n'}+r^{-2}s^2v_{n'}\otimes v_n-\Bigl(r^{-\frac{3}{2}}s^{\frac{3}{2}}+r^{-\frac{1}{2}}s^{\frac{1}{2}}\Bigr)v_{n+1}\otimes v_{(n+1)'}$,\vspace{0.15em}\\
$\text{\rm (vi)}$\, $v_i\otimes v_{i'}+r^{-2}s^{2}v_{i'}\otimes v_i-r^{-1}s\Bigl(v_{i+1}\otimes v_{(i+1)'}
+v_{(i+1)'}\otimes v_{i+1}\Bigr)$, $1\le i\le n-1$,\\
where $v_1\otimes v_1$ is the highest weight vector with the highest weight $2 \varepsilon_1$.
\end{lemm}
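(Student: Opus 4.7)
The plan is to exhibit the $\mathbb{K}$-span $W$ of the vectors listed in (i)–(vi) as a $U_{r,s}(\mathfrak{so}_{2n+1})$-submodule of $V^{\otimes 2}$ containing $v_1\otimes v_1$ as a highest weight vector of weight $2\varepsilon_1$; then simplicity and the claimed isomorphism follow immediately from the decomposition $V^{\otimes 2}=V(0)\oplus V(2\varepsilon_1)\oplus V(\varepsilon_1+\varepsilon_2)$ already established. First, using Lemma \ref{lemm vect rep} together with $\Delta(\omega_j^{\pm1})=\omega_j^{\pm1}\otimes\omega_j^{\pm1}$, I compute the weights of $v_a\otimes v_b$ on each of the four index strata $\{1,\ldots,n\}$, $\{n+1\}$, and $\{n',\ldots,1'\}$. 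A direct check shows that within each listed family the tensors paired together share a common weight (for instance, in (vi) the four tensors all have weight $0$, and in (iii) with $i+1\le j\le n$ both tensors have weight $\varepsilon_i+\varepsilon_j$), so each listed vector is a weight vector. In particular, $v_1\otimes v_1$ has weight $2\varepsilon_1$, and applying $\Delta(e_i)=e_i\otimes 1+\omega_i\otimes e_i$ together with Lemma \ref{lemm vect rep} yields $e_i\cdot(v_1\otimes v_1)=0$ for every $i\in J$.

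Next I would verify that $W$ is stable under every $f_i$ (stability under $e_i$ then follows by a similar, shorter computation, or by weight considerations combined with complete reducibility of the weight category $\mathcal{O}^{r,s}$ inherited via the equivalence $\mathcal{O}^{r,s}\cong \mathcal{O}^{q,q^{-1}}$). Using $\Delta(f_i)=1\otimes f_i+f_i\otimes\omega_i'$, I would walk along each family and explicitly compute $f_i\cdot(v_a\otimes v_b+c\,v_b\otimes v_a)$ for the relevant $c\in\{s^2,r^{-2},r^{-1}s,\ldots\}$, showing the outcome lies in $W$. The $r,s$-coefficients appearing in (iii), (iv) arise precisely because $\omega_i'$ contributes its eigenvalue $s^2$ on $v_j$ (for $j\le n$) or $r^{-2}$ on $v_j$ (for $j\ge n'$) in the $f_i\otimes\omega_i'$ summand; this is a clean bookkeeping. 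The delicate cases are (v) and (vi), where the action of $f_n$ crosses the antidiagonal through the fixed index $n+1$: the two short-root formulas in $T_1(f_n)$ carry the common factor $(r+s)^{1/2}$, and one must verify that the sum of the two contributions coming through the coproduct reproduces the coefficients $-\bigl(r^{-3/2}s^{3/2}+r^{-1/2}s^{1/2}\bigr)$ in (v) and $-r^{-1}s$ in (vi).

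Having established that $W$ is a submodule containing the highest weight vector $v_1\otimes v_1$ of weight $2\varepsilon_1$, by finite-dimensionality and simplicity of $V(2\varepsilon_1)$ I conclude $W\cong V(2\varepsilon_1)$. A final dimension count--$2n$ vectors in (i), $2n$ in (ii), $n(n-1)$ in (iii), $n(n-1)$ in (iv), one in (v), and $n-1$ in (vi), totalling $2n^2+3n=\dim V(2\varepsilon_1)$--together with the linear independence visible from the disjoint support of each listed vector confirms that (i)–(vi) form a basis rather than merely a spanning set. The principal obstacle is the coefficient chase in (v) and (vi): the half-integer powers $(rs)^{\pm 1/2}$ arising from $T_1(e_n), T_1(f_n)$ must combine with the eigenvalues of $\omega_n'$ on $v_{n+1},v_{n'}$ to produce genuine integer exponents of $r,s$ at the output, and this is where the ``generalized type $D$ phenomenon'' in $U_{r,s}(\mathfrak{so}_{2n+1})$ manifests concretely inside the submodule structure.
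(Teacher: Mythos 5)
Your proposal is correct and follows essentially the same route as the paper, which likewise identifies $v_1\otimes v_1$ as the highest weight vector of weight $2\varepsilon_1$ and obtains the remaining spanning vectors by iterated application of the $e_i$ and $f_i$ (deferring the coefficient verification to \cite{Xu thesis}). Your closure-plus-dimension-count formulation, with the count $2n+2n+n(n-1)+n(n-1)+1+(n-1)=2n^2+3n=\dim V(2\varepsilon_1)$, is a valid and slightly more self-contained packaging of the same computation, and you correctly single out (v) and (vi) as the only delicate cases.
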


\begin{proof}
   According to Definition 2.4 of \cite{BGH 2007}, $v_1\otimes v_1$ is the highest weight vector, corresponding to the highest weight $2 \varepsilon_1$.
   One can derive the rest generators by iteratively using the action of $e_i$ and $f_i$, $i=1,2,\cdots,n$.
   The detailed verification can be found in \cite{Xu thesis}.
\end{proof}

\begin{lemm}\label{lemm decomp3}
The following elements span a simple submodule of $V^{\otimes2}$, denoted by $\Lambda(V^{\otimes2})$, which is isomorphic to $V(\varepsilon_1{+}\varepsilon_2):$\\
$\text{\rm (i)}$\;\;\,  $v_i\otimes v_j-r^2 v_j\otimes v_i$, $1\le i\le n$, $i+1\le j\le n$ or $(i-1)'\le j\le 1'$,\vspace{0.15em}\\
$\text{\rm (ii)}$\;\,  $v_i\otimes v_{j}-r^{-1}s\,v_{j}\otimes v_i$, $1\le i\le n, j=n+1$ or $i=n+1, n'\le j\le 1',$ \vspace{0.15em}\\
$\text{\rm (iii)}$\, $v_i\otimes v_j-s^{-2}v_j\otimes v_i$, $1\le i\le n-1$, $n'\le j\le (i+1)'$ or $n'\le i\le 2'$, $i+1\le j\le 1'$,\vspace{0.15em}\\
$\text{\rm (iv)}$\; $v_n\otimes v_{n'}-v_{n'}\otimes v_n-\Bigl(r^{\frac{1}{2}}s^{-\frac{1}{2}}+r^{-\frac{1}{2}}s^{\frac{1}{2}}\Bigr)v_{n+1}\otimes v_{(n+1)'}$,\vspace{0.15em}\\
$\text{\rm (v)}$\;\, $v_i\otimes v_{i'}-v_{i'}\otimes v_i-r^{-1}s\,v_{i+1}\otimes v_{(i+1)'}
+rs^{-1}v_{(i+1)'}\otimes v_{i+1}$, $1\le i\le n-1$,\vspace{0.15em}\\
where the highest weight vector is $v_1\otimes v_2-r^2v_2\otimes v_1$, with respect to the highest weight $\varepsilon_1+\varepsilon_2$.
\end{lemm}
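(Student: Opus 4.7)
The plan is to identify $\Lambda(V^{\otimes 2})$ as the cyclic $U_{r,s}(\mathfrak{so}_{2n+1})$-submodule of $V^{\otimes 2}$ generated by $u:=v_1\otimes v_2-r^2 v_2\otimes v_1$, and then to verify by a descending computation that the vectors listed in (i)--(v) form a basis of this submodule. First I would check that $u$ is a highest weight vector of weight $\varepsilon_1+\varepsilon_2$. Using Lemma \ref{lemm vect rep} and the coproduct $\Delta(e_i)=e_i\otimes 1+\omega_i\otimes e_i$ from Definition \ref{type B def}, one computes $e_1\cdot u=\omega_1 v_1\otimes e_1 v_2-r^2 e_1 v_2\otimes v_1=r^2 v_1\otimes v_1-r^2 v_1\otimes v_1=0$; for $2\leq i\leq n$ both $e_i\cdot v_1$ and $e_i\cdot v_2$ vanish, so $e_i\cdot u=0$ automatically. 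The weight of $u$ is read off from $\Delta(\omega_j)=\omega_j\otimes\omega_j$ and $\Delta(\omega'_j)=\omega'_j\otimes\omega'_j$ acting on $v_1\otimes v_2$.

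Next, by the braided tensor equivalence of weight-module categories $\mathcal{O}^{r,s}\cong\mathcal{O}^{q,q^{-1}}$ of \cite{HuPeiCA 2012} combined with semisimplicity of the finite-dimensional category on the one-parameter side (\cite{Klimyk}), the submodule $U_{r,s}(\mathfrak{so}_{2n+1})\cdot u$ is simple, isomorphic to $V(\varepsilon_1+\varepsilon_2)$, of total dimension $n(2n+1)$. It therefore suffices to generate the vectors in (i)--(v) from $u$ by iterated action of the lowering operators and to confirm they are linearly independent of the correct total cardinality.

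The bulk of the work is the explicit descent. Applying $\Delta(f_j)=1\otimes f_j+f_j\otimes\omega'_j$ together with the formulas for $T_1(f_j)$ in Lemma \ref{lemm vect rep}, one descends $u$ through the chain $f_1,f_2,\dots,f_{n-1}$ to produce the vectors in (i); continuing with $f_n$ crosses the middle index $n+1$ and yields the two-term vectors in (ii); further application of $f_{n-1},\dots,f_1$ on the opposite side produces (iii). The delicate boundary cases are (iv) and (v), which correspond to weights $0$ and $\varepsilon_i-\varepsilon_i=\varepsilon_i+\varepsilon_{i'}^{\vee}$ straddling both halves of $V$: here $v_{n+1}\otimes v_{(n+1)'}$ enters as a genuine third summand because $T_1(f_n)$ involves two terms, one sending $v_n\mapsto v_{n+1}$ and another sending $v_{n+1}\mapsto v_{n'}$, with the normalization $(r+s)^{1/2}$ producing the coefficients $r^{\pm\frac{1}{2}}s^{\pm\frac{1}{2}}$ that appear in (iv) and in the cross-term of (v).

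Finally I would tally the dimensions: (i) contributes $n(n-1)$ vectors, (ii) contributes $2n$, (iii) contributes $n(n-1)$, (iv) contributes $1$, and (v) contributes $n-1$, summing to $2n^2+n=n(2n+1)=\dim V(\varepsilon_1+\varepsilon_2)$. Linear independence is immediate from the distinct weights of the vectors in (i)--(iii) together with the fact that (iv) and (v) live in weight spaces disjoint from those in (i)--(iii) (namely the zero and $\varepsilon_i-\varepsilon_i$-type weight spaces). The main obstacle will be the precise coefficient bookkeeping in (iv) and (v): one must argue that the descendant obtained from $u$ must land in the two-dimensional (resp.\ four-dimensional) weight space containing $v_{n+1}\otimes v_{(n+1)'}$ (resp.\ $v_{i+1}\otimes v_{(i+1)'}$ and $v_{(i+1)'}\otimes v_{i+1}$), orthogonal with respect to a suitable invariant bilinear form to the corresponding submodules $S^{o}(V^{\otimes 2})$ and $\mathcal{S}'(V^{\otimes 2})$ already described in Lemmas \ref{lemm decomp1} and \ref{lemm decomp2}. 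This orthogonality fixes the coefficients uniquely and matches the stated formulas.
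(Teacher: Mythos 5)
Your proposal is correct and follows essentially the same route as the paper: the paper's proof (deferring to that of Lemma \ref{lemm decomp2}) likewise verifies that $v_1\otimes v_2-r^2v_2\otimes v_1$ is a highest weight vector of weight $\varepsilon_1+\varepsilon_2$ and generates the remaining spanning vectors by iterated application of the $e_i$ and $f_i$, with simplicity coming from the semisimple decomposition of $V^{\otimes 2}$ already established via the categorical equivalence with the one-parameter case. Your dimension tally $n(2n+1)$ and the weight-space bookkeeping for the zero-weight vectors in (iv)--(v) are a sound way to organize the coefficient verification that the paper relegates to \cite{Xu thesis}.
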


\begin{proof}
  One can prove this lemma similarly to the preceding lemma.
\end{proof}

Correspondingly, we easily get the following
\begin{lemm}\label{lemma minimal polynomial}
  The minimal polynomial of $R=R_{V, V}$  on $V^{\otimes2}$ is $$(t-r^{-1}s)(t+rs^{-1})(t-r^{2n}s^{-2n}).$$
\end{lemm}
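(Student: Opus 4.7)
The plan is to reduce the problem to three scalar computations via Schur's lemma, then verify the three scalars are distinct. By Lemmas \ref{lemm decomp1}--\ref{lemm decomp3} we have the decomposition $V^{\otimes 2} = V(0) \oplus V(2\varepsilon_1) \oplus V(\varepsilon_1 + \varepsilon_2)$ into three pairwise non-isomorphic simple $U_{r,s}(\mathfrak{so}_{2n+1})$-modules (distinguished by their highest weights). Since $R_{V,V}$ is by construction an isomorphism of $U_{r,s}(\mathfrak{so}_{2n+1})$-modules, Schur's lemma forces $R$ to act by a scalar $\lambda_i$ on each summand. Consequently $R$ is annihilated by the cubic $\prod_i (t-\lambda_i)$, and this cubic is the minimal polynomial provided the three scalars are pairwise distinct, which is automatic over $\mathbb{K}=\mathbb{Q}(r,s)$ because $r,s$ are algebraically independent.

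To identify the three scalars, I would evaluate $R=\Theta\circ\tilde f\circ P$ on a carefully chosen vector in each summand. On $V(2\varepsilon_1)$ the highest weight vector $v_1\otimes v_1$ is the cleanest choice: $P$ fixes it, the Cartan piece $\tilde f$ contributes a scalar read off from the pairing $\langle\omega'_{\varepsilon_1},\omega_{\varepsilon_1}\rangle$ via the structure constant matrix $(*)$, and every positive-root term of $\Theta$ annihilates $v_1\otimes v_1$ since $e_i.v_1=0$ for all $i$. This isolates one eigenvalue cleanly. On $V(\varepsilon_1+\varepsilon_2)$ I would use the highest weight vector $v_1\otimes v_2-r^2\,v_2\otimes v_1$ from Lemma \ref{lemm decomp3}; only the $\alpha_1$-summand of $\Theta$ contributes, and its term combines with the Cartan scalar on the swapped vector to produce a pure scalar multiple of the input. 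On $V(0)$ I would evaluate $R$ on the invariant vector $\sum_i a_i\,v_{i'}\otimes v_i$ of Lemma \ref{lemm decomp1} and read the eigenvalue from the coefficient of a single component after collecting all contributions of $\Theta$.

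The technically demanding step is this last one: on $V(0)$ every lowering term in $\Theta$ contributes nontrivially, so one must sum a telescoping chain that runs over all $2n+1$ weight levels. The resulting factor $r^{2n}s^{-2n}$ encodes the rank $n$ of $\mathfrak{so}_{2n+1}$, in parallel with the $q^{1-2n}$-type eigenvalue on the trivial summand in the one-parameter BWM situation; the ``generalized type $D$ phenomenon'' surfaces here as genuine powers of $rs$ that must be separated from powers of $r$ and $s$ individually during the bookkeeping and that would collapse to $1$ in the one-parameter specialization. Once the three eigenvalues $r^{-1}s$, $-rs^{-1}$, $r^{2n}s^{-2n}$ are computed and observed to be pairwise distinct over $\mathbb{Q}(r,s)$, the stated minimal polynomial follows at once.
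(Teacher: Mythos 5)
Your proposal is correct and follows essentially the same route as the paper: decompose $V^{\otimes2}$ into the three pairwise non-isomorphic simples, invoke Schur's lemma, and compute each eigenvalue by applying $R=\Theta\circ\tilde f\circ P$ to the highest weight vectors $v_1\otimes v_1$ and $v_1\otimes v_2-r^2v_2\otimes v_1$ and to the invariant vector of $V(0)$. The only difference is that the "technically demanding" telescoping sum you anticipate on $V(0)$ is avoided in the paper by comparing just the coefficient of $v_1\otimes v_{1'}$, to which only the single input term $a_1v_{1'}\otimes v_1$ and the leading term of $\Theta$ contribute, so the eigenvalue $r^{2n}s^{-2n}$ drops out of one line.
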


\begin{proof}
By the foregoing Lemmas,
$\mathcal{S}^0(V^{\otimes2})$, $S^{\prime}(V^{\otimes2})$ and $\Lambda(V^{\otimes2})$
are cyclic modules generated by their highest weight vectors.
By definition of $R_{V, V}$, we can calculate that
\begin{align*}
  R\Bigl(v_1\otimes v_2-r^2v_2\otimes v_1\Bigr) &=\Theta\circ\tilde{f}\Bigl(v_2\otimes v_1-r^2v_1\otimes v_2\Bigr) \\
  &=\langle\omega'_{\alpha_1+\cdots+\alpha_n}, \omega_{\alpha_2+\cdots+\alpha_n}\rangle^{-1}\Theta(v_2\otimes v_1) \\
   &\qquad-r^2\langle \omega'_{\alpha_2+\dots+\alpha_n}, \omega_{\alpha_1+\cdots+\alpha_n} \rangle^{-1}\Theta(v_1\otimes v_2)\\
   &=(rs)(1\otimes1)(v_2\otimes v_1)-rs^{-1}\Bigl[1\otimes1+(s^2-r^2)f_1\otimes e_1\Bigr](v_1\otimes v_2)\\
   &=-rs^{-1}\Bigl(v_1\otimes v_2-r^2v_2\otimes v_1\Bigr);
\end{align*}
Similarly,
$$R\Bigl(v_1\otimes v_1\Bigr)=r^{-1}s\,v_1\otimes v_1.$$
One can also prove
$$R\Bigl(\sum_{i=1}^{2n+1}a_iv_{i'}\otimes v_i\Bigr)=r^{2n}s^{-2n}\Bigl(\sum_{i=1}^{2n+1}a_iv_{i'}\otimes v_i\Bigr)$$
by comparing the coefficient of $v_1\otimes v_{1'}$ on both sides.
Indeed, the left's $v_1\otimes v_{1'}$ only comes from the term $R(a_1 v_{1'}\otimes v_1)$.
By direct calculations,
\begin{align*}
  R\Bigl(a_1 v_{1'}\otimes v_1\Bigr) & =\; r^{\frac{2n-1}{2}}s^{-\frac{2n-1}{2}}\Theta\circ\tilde{f}\Bigl(v_{1}\otimes v_{1'}\Bigr) \\
   &=\; r^{\frac{2n+1}{2}}s^{-\frac{2n+1}{2}}\Theta\circ(v_{1}\otimes v_{1'})\\
   &=\; r^{\frac{2n+1}{2}}s^{-\frac{2n+1}{2}}v_1\otimes v_{1'}+ \textit{Other Terms}.
\end{align*}
We remark that $v_1\otimes v_{1'}$ does not appear in `` Other Terms''.
Then $R_{\mathcal{S}'(V^{\otimes2})}$, $R_{\mathcal{S}^0(V^{\otimes2})}$ and $R_{\Lambda(V^{\otimes2})}$ have the corresponding eigenvalues $r^{-1}s$, $r^{2n}s^{-2n}$ and $-rs^{-1}$, respectively.
This completes the proof.
\end{proof}

\subsection{Formula of the basic braided $R$-matrix}
Now we can establish the explicit formula of the basic braided $R$-matrix.
\begin{theorem}\label{main theorem}
  The formula of $R=R_{V, V}$ is
   \begin{align*}
R=r^{-1}s&\sum_{i \atop i\neq i'}E_{ii}\otimes E_{ii}+r^{-1}s^{-1}\Bigl(\sum_{1\leq i\leq n \atop i+1\leq j\le n}E_{ij}\otimes E_{ji}+\sum_{2\leq i\leq n \atop (i-1)'\leq j\leq 1'}E_{ij}\otimes E_{j i}\\
+&\sum_{1\leq i\leq n-1 \atop n'\leq j\leq (i+1)'}E_{ji}\otimes E_{ij}+\sum_{n'\leq i\leq 2'\atop i+1\leq j\leq 1'}E_{ji}\otimes E_{ij}\Bigr)+(rs)\Bigl(\sum_{1\leq i\leq n\atop i+1\leq j\leq n}E_{ji}\otimes E_{ij} \\
+&\sum_{n'\leq i\leq 2'\atop i+1\leq j\leq 1'}E_{ij}\otimes E_{ji}+\sum_{2\leq i\leq n\atop (i-1)'\leq j\leq 1'}E_{ji}\otimes E_{ij}+\sum_{1\leq i\leq n-1\atop n'\leq j\leq (i+1)'}E_{i j}\otimes E_{j i}\Bigr)\\
+&\sum_{i\atop i\neq i'} E_{i, n+1}\otimes E_{n+1,i}+\sum_{j\atop j\neq j'}E_{n+1, j}\otimes E_{j, n+1}   +rs^{-1}\sum_{i\atop i\neq i'}E_{i' i}\otimes E_{ii'}\\
+&\Bigl(r^{-1}s-rs^{-1}\Bigr)\Bigl\{\sum_{i,j\atop i> j}E_{ii}\otimes E_{jj}-\sum_{i,j\atop i> j}\Bigl(r^{-1}s\Bigr)^{(\rho_i-\rho_j)}E_{ij'}\otimes E_{i' j}\Bigr\}\\
+&E_{n+1, n+1}\otimes E_{n+1, n+1},
\end{align*}
where $\rho_i:=\left\{
            \begin{array}{ll}
              \frac{2n+1}{2}-i, & \hbox{if $i<n+1,$} \\
              -\rho_{i'}, & \hbox{if $i\geq n+1$.}
            \end{array}
          \right.
$
\end{theorem}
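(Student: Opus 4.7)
The plan is to invoke the spectral decomposition of $V^{\otimes 2}$ established in subsection 3.2. Since $R = R_{V,V}$ is by construction a $U_{r,s}(\mathfrak{so}_{2n+1})$-module homomorphism, Schur's lemma forces it to act as a scalar on each of the three simple constituents $\mathcal{S}^o(V^{\otimes 2}) \cong V(0)$, $\mathcal{S}'(V^{\otimes 2}) \cong V(2\varepsilon_1)$, $\Lambda(V^{\otimes 2}) \cong V(\varepsilon_1{+}\varepsilon_2)$; Lemma \ref{lemma minimal polynomial} identifies these scalars as $r^{2n}s^{-2n}$, $r^{-1}s$ and $-rs^{-1}$ respectively. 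Thus
$$R = r^{2n}s^{-2n}\,P_{\mathcal{S}^o} + r^{-1}s\,P_{\mathcal{S}'} + (-rs^{-1})\,P_{\Lambda},$$
and it suffices to compute the three projections in the natural basis $\{v_i\otimes v_j\}$ and sum.

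Next, I would partition the calculation by weight. For each unordered pair $(i,j)$ with $i\ne j'$, the weight space $(V\otimes V)_{\varepsilon_i+\varepsilon_j}$ is at most two-dimensional, spanned by $v_i\otimes v_j$ and $v_j\otimes v_i$, and splits into one line in $\mathcal{S}'$ and one in $\Lambda$ according to items (ii)--(iv) of Lemmas \ref{lemm decomp2}--\ref{lemm decomp3}. Solving the resulting $2\times 2$ linear system case by case (with the four coefficient patterns $r^{\pm 2}$, $s^{\pm 2}$, $r^{-1}s$) and recombining via the two eigenvalues $r^{-1}s$ and $-rs^{-1}$ yields exactly the off-diagonal blocks in the stated formula, namely the terms with coefficients $r^{-1}s^{-1}$, $rs$, $1$ and $rs^{-1}$, together with the diagonal $r^{-1}s\sum_{i\ne i'} E_{ii}\otimes E_{ii}$ from the pairs with $i=j\ne i'$.

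The remaining step is the weight-zero subspace $\bigoplus_{i} \mathbb{K}(v_{i'}\otimes v_i) \oplus \mathbb{K}(v_{n+1}\otimes v_{n+1})$, which has dimension $2n{+}1$ and contains the one-dimensional $\mathcal{S}^o$ together with the ``diagonal'' lines in $\mathcal{S}'$ and $\Lambda$ coming from (v),(vi) of Lemma \ref{lemm decomp2} and (iv),(v) of Lemma \ref{lemm decomp3}. Here I would change basis to the explicit symmetric/antisymmetric/metric triple indexed by $i<i'$, read off the three projectors, and expand. The metric projector $P_{\mathcal{S}^o}$ contributes the $E_{n+1,n+1}\otimes E_{n+1,n+1}$ term and, using the coefficients $a_i = (rs^{-1})^{\rho_i}$ from Lemma \ref{lemm decomp1}, the factor $(r^{-1}s)^{\rho_i-\rho_j}$ in $E_{ij'}\otimes E_{i'j}$. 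The difference $P_{\mathcal{S}'}-P_{\Lambda}$ in this block contributes the prefactor $(r^{-1}s - rs^{-1})$ multiplying $\sum_{i>j} E_{ii}\otimes E_{jj}$, which accounts precisely for the last line of the stated formula.

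The main obstacle is this weight-zero block: three families of vectors in Lemmas \ref{lemm decomp2}--\ref{lemm decomp3} participate simultaneously, and their coefficients are asymmetric in $r,s$, so the resulting linear algebra must be done carefully to match the $(rs^{-1})^{\rho_i-\rho_j}$ ``metric dressing''. A clean way to close the argument is to verify the final expression against a single highest-weight vector of each isotypic component (namely $v_1\otimes v_1$, $v_1\otimes v_2-r^2 v_2\otimes v_1$, and $\sum_i a_i v_{i'}\otimes v_i$), which already pins down $R$ by $U_{r,s}(\mathfrak{so}_{2n+1})$-equivariance; the three eigenvalue checks previewed in the proof of Lemma \ref{lemma minimal polynomial} cover exactly these vectors, so the remaining work is organizational rather than conceptual.
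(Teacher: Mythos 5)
Your proposal is correct and follows essentially the same route as the paper: both rest on the decomposition of $V^{\otimes 2}$ into the three simple summands (Lemmas \ref{lemm decomp1}--\ref{lemm decomp3}) together with the eigenvalues from Lemma \ref{lemma minimal polynomial}, followed by weight-space-by-weight-space linear algebra --- the paper merely runs the computation in the verification direction (Lemmas \ref{Lemm verifyV0} and \ref{Lemma verifyV+-} check that the stated matrix acts by the correct scalar on each summand), whereas you derive the matrix by summing the three projectors. One small caution about your closing shortcut: checking the three highest-weight vectors pins down the genuine intertwiner $R_{V,V}$ by equivariance, but not an arbitrary candidate matrix, so the explicit formula must still be confirmed on a spanning set of each summand (or independently shown to be equivariant), which is exactly what the paper's verification lemmas do.
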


To prove this Theorem,
it suffices to show that the effect of $R$ acting on $V^{\otimes2}$ is equivalent to that of the minimal polynomial given in Lemma\;\ref{lemma minimal polynomial}.
We present the explicit verification in the Lemmas \ref{Lemm verifyV0} and \ref{Lemma verifyV+-}.

\begin{lemm}\label{Lemm verifyV0}
  $R$ acts on $\mathcal{S}^0(V^{\otimes2})$ as scalar multiplication, with eigenvalue $r^{2n}s^{-2n}$,
  that is to say:
   $$R\Bigl(\sum_{i=1}^{2n+1}a_iv_{i'}\otimes v_i\Bigr)=r^{2n}s^{-2n}\Bigl(\sum_{i=1}^{2n+1}a_iv_{i'}\otimes v_i\Bigr).$$
\end{lemm}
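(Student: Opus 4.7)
My plan is to apply the formula for $R$ in Theorem~\ref{main theorem} directly to each weight-zero basis vector $v_{i'}\otimes v_i$ and verify the claimed identity coefficient by coefficient. Since $R$ preserves weights and the vectors $\{v_{i'}\otimes v_i\}_{1\le i\le 1'}$ span the weight-zero subspace of $V^{\otimes2}$, I may write $R(v_{i'}\otimes v_i)=\sum_j C_{i,j}\,v_{j'}\otimes v_j$, reducing the lemma to the family of identities $\sum_i a_i\,C_{i,j}=r^{2n}s^{-2n}\,a_j$ for $j=1,2,\dots,1'$.

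The first step is to enumerate which summands $E_{ab}\otimes E_{cd}$ in the $R$-formula produce a weight-zero output from a weight-zero input. Setting $t:=rs^{-1}$ and using $\rho_{j'}=-\rho_j$, a case-by-case inspection of the summation blocks shows that the ``swap'' blocks of coefficients $r^{-1}s^{-1}$ and $rs$, together with the four blocks involving $v_{n+1}$ on exactly one tensor factor, all have summation ranges which make them vanish on every $v_{i'}\otimes v_i$ with output of the shape $v_{j'}\otimes v_j$. Only four contributions survive: (i) the antidiagonal block $rs^{-1}\sum_{i\ne i'}E_{i'i}\otimes E_{ii'}$ gives $C_{i,i'}=rs^{-1}$ for $i\ne n{+}1$; (ii) the diagonal correction $(r^{-1}s-rs^{-1})\sum_{a>b}E_{aa}\otimes E_{bb}$ contributes $r^{-1}s-rs^{-1}$ to $C_{i,i}$ whenever $i\le n$; (iii) the reflection term $-(r^{-1}s-rs^{-1})(r^{-1}s)^{\rho_a-\rho_b}E_{ab'}\otimes E_{a'b}$ with $a>b$ contributes $-(r^{-1}s-rs^{-1})\,t^{\rho_i+\rho_j}$ to $C_{i,j}$ for every pair with $j<i'$; and (iv) the isolated term $E_{n+1,n+1}\otimes E_{n+1,n+1}$ contributes $1$ to $C_{n+1,n+1}$.

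The second step is a case analysis on $j$. Substituting $a_i=t^{\rho_i}$ and using the explicit formulae
\[
2\rho_i=2n{+}1{-}2i\ (i\le n),\quad 2\rho_{n+1}=0,\quad 2\rho_i=2n{+}3{-}2i\ (i\ge n{+}2),
\]
the partial sums $S_N:=\sum_{i=1}^{N}t^{2\rho_i}$ become geometric. The key telescoping identity is
\[
(r^{-1}s-rs^{-1})\sum_{k=1}^{m}t^{2n+1-2k}=t^{2n-2m}-t^{2n}\qquad(1\le m\le n),
\]
together with its negative-exponent counterpart $(r^{-1}s-rs^{-1})\sum_{k=1}^{j}t^{2k-2n-1}=t^{-2n}-t^{2j-2n}$. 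For $j\le n$ these collapse $(r^{-1}s-rs^{-1})(1-S_{j'-1})$ to $t^{2n}-t^{2j-2n}$; the verification then reduces to the clean identity $t^{1-\rho_j}+t^{\rho_j}(t^{2n}-t^{2j-2n})=t^{2n+\rho_j}=a_j\,r^{2n}s^{-2n}$, which holds by $2\rho_j=2n{+}1{-}2j$. The boundary cases $j=n{+}1$ and $j\ge n{+}2$ are handled analogously using $\rho_{n+1}=0$ and the dual identity $2\rho_j=2j'-(2n+1)$.

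The principal difficulty lies not in conceptual depth but in meticulous book-keeping: one must verify scrupulously, for each summation block of $R$, that the index ranges are incompatible with producing a weight-zero output from a weight-zero input, and one must juggle the three distinct formulae for $2\rho_i$ across the ranges $i\le n$, $i=n{+}1$, and $i\ge n{+}2$ when evaluating the telescoping sums. Once the contributions are correctly enumerated, all the algebra collapses cleanly.
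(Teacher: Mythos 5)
Your proposal is correct and follows essentially the same route as the paper's proof: apply the explicit formula of Theorem \ref{main theorem} to each weight-zero vector $v_{i'}\otimes v_i$, observe that only the antidiagonal block, the diagonal correction, the reflection terms $E_{ij'}\otimes E_{i'j}$, and $E_{n+1,n+1}\otimes E_{n+1,n+1}$ contribute, and verify $\sum_i a_iC_{i,j}=r^{2n}s^{-2n}a_j$ by summing the resulting geometric series in $rs^{-1}$ across the three ranges of $\rho$. The paper organizes the same computation by first expanding $R(a_iv_{i'}\otimes v_i)$ for $i\le n$, $i=n+1$, $i\ge n'$ and then collecting the coefficient $b_k$ of $v_{k'}\otimes v_k$, so the two arguments differ only in bookkeeping order.
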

\begin{proof}
  (1) First, we calculate the effect of $R$ acting on each summand on the left-hand side of the equation.

 (i) Assume $1\leq i\leq n$. Since
  $$R=rs^{-1}E_{ii'}\otimes E_{i' i}+\Bigl(r^{-1}s-rs^{-1}\Bigr)\Bigl[E_{i' i'} \otimes E_{ii}
-\sum_{j>i}(r^{-1}s)^{(\rho_j-\rho_i)}E_{ji'}\otimes E_{j' i}\Bigr]+\cdots$$ $(\text{we ignore those items acting as zero})$, we have
\begin{gather*}
\begin{split}
   & R\;\Bigl(a_iv_{i'}\otimes v_i\Bigr) \\
  &=(rs^{-1})^{\frac{2n+3-2i}{2}}\Bigl\{rs^{-1}v_i\otimes v_{i'}{+}(r^{-1}s{-}rs^{-1})\Bigl[v_{i'}\otimes v_i{-}\sum_{j>i}(r^{-1}s)^{(\rho_j-\rho_i)} v_j\otimes v_{j'}\Bigr]\Bigr\}\\
  &=(rs^{-1})^{\frac{2n+3-2i}{2}}v_i\otimes v_{i'}-\sum_{i<j\leq n}\Bigl(rs^{-1}\Bigr)^{\frac{2n+1-2i}{2}}(r^{-1}s{-}rs^{-1})
  \Bigl\{(r^{-1}s)^{i-j}v_j\otimes v_{j'} \\
   &\quad+\Bigl[1{-}(r^{-1}s)^{-2n+2i-1}\Bigr]v_{i'}{\otimes} v_i{-}(r^{-1}s)^{i-n-\frac{1}{2}}v_{n+1}{\otimes} v_{n+1}{-}\sum_{j\geq n' \atop j\neq i'}(r^{-1}s)^{i-j+1}v_j{\otimes} v_{j'}\Bigr\}.
  \end{split}
\end{gather*}

(ii) Next we consider the case of $i=n+1$, \. Notice that
$$R=-(r^{-1}s-rs^{-1})\sum_{i>n+1}(r^{-1}s)^{-i+n+\frac{3}{2}}E_{i, n+1}\otimes E_{i', n+1}+E_{n+1, n+1}\otimes E_{n+1, n+1}+\cdots$$
we obtain that
$$R(v_{n+1}\otimes v_{n+1})=-(r^{-1}s-rs^{-1})\sum_{i>n+1}(r^{-1}s)^{-i+n+\frac{3}{2}}v_i\otimes v_{i'}+v_{n+1}\otimes v_{n+1}.$$

(iii) For $n'\leq i\leq 1'$. Since
$$R=rs^{-1}E_{ii'}\otimes E_{i' i}-(r^{-1}s-rs^{-1})\sum_{j>i}(r^{-1}s)^{\rho_j-\rho_i}E_{ji'}\otimes E_{j' i}+\cdots$$
we conclude that
$$R(a_iv_{i'}\otimes v_i)=r^{\frac{2n+3-2i}{2}}s^{-\frac{2n+3-2i}{2}}(rs^{-1}v_i\otimes v_{i'}
-(r^{-1}s-rs^{-1})\sum_{j>i}(r^{-1}s)^{i-j}v_j\otimes v_{j'}).$$

(2) Then we  assume $$R\Bigl(\sum_{i=1}^{2n+1}a_iv_{i'}\otimes v_i\Bigr)=\sum_{k=1}^{2n+1}b_k(v_{k'}\otimes v_k)$$
  It suffices to show that $b_k=r^{2n}s^{-2n}a_k \ (1\leq k\leq 2n+1)$,
where $b_k$ can be calculated from (1):

(i) When $1\leq k\leq n:$
\begin{gather*}
\begin{split}
   b_k&=(r^{-1}s-rs^{-1})\Bigl[(rs^{-1})^{\frac{2n+1-2k}{2}}(1-(r^{-1}s)^{-2n+2k-1})\\
  &\quad-(rs^{-1})^{\frac{2n+1-2k}{2}}(1-(r^{-1}s)^{-2n+2k-1})-\sum_{i=1}^{n}(rs^{-1})^{\frac{2n+1-2i}{2}}(r^{-1}s)^{i-k'+1} \\
   &\quad-\sum_{n+2\leq i<k'}(rs^{-1})^{\frac{2n+3-2i}{2}}(r^{-1}s)^{i-k'}-(r^{-1}s)^{-n+k-\frac{1}{2}}\Bigr]\\
   &=(rs^{-1})^{3n-k+\frac{1}{2}}=(rs^{-1})^{2n} a_k.
\end{split}
\end{gather*}

 (ii) For the case of $k=n+1$:
\begin{align*}
  b_{n+1}\; &= -\sum_{1\leq i\leq n} r^{\frac{2n+1-2i}{2}}s^{-\frac{2n+1-2i}{2}}(r^{-1}s-rs^{-1})(r^{-1}s)^{i-n-\frac{1}{2}}+1 \\
  &=r^{2n}s^{-2n}= r^{2n}s^{-2n}a_{n+1}.
\end{align*}

(iii) If $n'\leq k\leq 1'$:
\begin{align*}   b_k\;&=r^{-\frac{2n+1-2k}{2}}s^{\frac{2n+1-2k}{2}}-\sum_{i<k'}r^{\frac{2n+1-2i}{2}}s^{-\frac{2n+1-2i}{2}}(r^{-1}s-rs^{-1})(r^{-1}s)^{-2n+i-2+k}\\
&= r^{3n-k+\frac{3}{2}}s^{-(3n-k+\frac{3}{2})}=r^{2n}s^{-2n} (r^{n+\frac{3}{2}-k}s^{-(n+\frac{3}{2}-k)})= r^{2n}s^{-2n} a_k.
\end{align*}

Thus we complete our proof.
\end{proof}

\begin{lemm}\label{Lemma verifyV+-}
  $R$ acts on $\mathcal{S}'(V^{\otimes2})$ and $\Lambda(V^{\otimes2})$ as scalar multiplication,
  with eigenvalue $r^{-1}s$ and $-rs^{-1}$, respectively.
\end{lemm}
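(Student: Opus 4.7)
The plan is to combine Schur's lemma with a direct evaluation on highest weight vectors, thereby significantly shortening the kind of case-by-case bookkeeping carried out in the proof of Lemma \ref{Lemm verifyV0}. Since $\mathcal{S}'(V^{\otimes2}) \cong V(2\varepsilon_1)$ and $\Lambda(V^{\otimes2}) \cong V(\varepsilon_1+\varepsilon_2)$ are simple $U_{r,s}(\mathfrak{so}_{2n+1})$-modules (Lemmas \ref{lemm decomp2} and \ref{lemm decomp3}) and $R_{V,V}=\Theta\circ\tilde{f}\circ P$ is a module homomorphism by construction, Schur's lemma forces $R$ to act as a scalar on each of them. It therefore suffices to pin down the scalar by evaluating the explicit formula of $R$ from Theorem \ref{main theorem} on one convenient vector per submodule, most naturally the highest weight vector: $v_1\otimes v_1$ for $\mathcal{S}'(V^{\otimes2})$, and $v_1\otimes v_2-r^2 v_2\otimes v_1$ for $\Lambda(V^{\otimes2})$.

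For $\mathcal{S}'(V^{\otimes2})$, inspection of the summation ranges in the formula of $R$ shows that on $v_1\otimes v_1$ only the diagonal summand $r^{-1}s\sum_{i\neq i'} E_{ii}\otimes E_{ii}$ survives (at $i=1$), yielding $R(v_1\otimes v_1)=r^{-1}s\, v_1\otimes v_1$, so the eigenvalue is $r^{-1}s$. For $\Lambda(V^{\otimes2})$, the same inspection isolates exactly three contributing summands on the highest weight vector: the $rs$-block $E_{21}\otimes E_{12}$ contributes $rs\, v_2\otimes v_1$ to $R(v_1\otimes v_2)$, while the $r^{-1}s^{-1}$-block $E_{12}\otimes E_{21}$ contributes $r^{-1}s^{-1}\, v_1\otimes v_2$ and the diagonal correction $(r^{-1}s-rs^{-1})\,E_{22}\otimes E_{11}$ contributes $(r^{-1}s-rs^{-1})\, v_2\otimes v_1$, both to $R(v_2\otimes v_1)$. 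Collecting,
\begin{align*}
R\bigl(v_1\otimes v_2-r^2 v_2\otimes v_1\bigr)
&= rs\, v_2\otimes v_1-r^2\bigl[r^{-1}s^{-1}v_1\otimes v_2+(r^{-1}s-rs^{-1})v_2\otimes v_1\bigr]\\
&= -rs^{-1}\bigl(v_1\otimes v_2-r^2 v_2\otimes v_1\bigr),
\end{align*}
confirming the eigenvalue $-rs^{-1}$.

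The main (and essentially only) subtlety I foresee is justifying that all the remaining summands of $R$ do vanish on these two vectors; this reduces to a careful visual audit of the summation ranges in Theorem \ref{main theorem}, checking that no other index values simultaneously satisfy the range constraints and produce outgoing basis tensors of the form $v_1\otimes v_1$, $v_1\otimes v_2$, or $v_2\otimes v_1$. The potentially worrisome pieces are the middle-row $n{+}1$ blocks, the anti-diagonal correction $(r^{-1}s-rs^{-1})(r^{-1}s)^{\rho_i-\rho_j}E_{ij'}\otimes E_{i'j}$, and the $rs^{-1}\sum E_{i'i}\otimes E_{ii'}$ block, each of which is nevertheless ruled out trivially at these highest weight vectors since the relevant indices $i=1,2$ satisfy $i\neq i'$ and $i\neq n{+}1$, and the pairing $(1,1')$, $(1,2')$ etc.\ never arises. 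A completely direct alternative---verifying the scalar eigenvalue on every generator listed in (i)--(vi) of Lemmas \ref{lemm decomp2} and \ref{lemm decomp3}---parallels the exhaustive case-work of Lemma \ref{Lemm verifyV0} and is carried out along the same lines in \cite{Xu thesis}.
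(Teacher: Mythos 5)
Your two explicit computations on the highest weight vectors are correct (and in fact reproduce what the paper already records in the proof of Lemma \ref{lemma minimal polynomial}), but the Schur's lemma reduction that carries the whole weight of your argument is circular in the context where this lemma sits. Lemma \ref{Lemma verifyV+-}, together with Lemma \ref{Lemm verifyV0}, is the verification step for Theorem \ref{main theorem}: the symbol $R$ here denotes the \emph{explicit matrix} written down in that theorem, and the point of the lemma is to show that this candidate matrix acts on $V^{\otimes 2}$ in the same way as the abstract intertwiner $R_{V,V}=\Theta\circ\tilde f\circ P$. At this stage it is not yet known that the explicit matrix is a $U_{r,s}(\mathfrak{so}_{2n+1})$-module homomorphism --- that is precisely what is being established --- so Schur's lemma cannot be invoked to conclude that it acts as a scalar on $\mathcal{S}'(V^{\otimes2})$ or $\Lambda(V^{\otimes2})$. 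Knowing its value on the single vector $v_1\otimes v_1$ (resp.\ $v_1\otimes v_2-r^2v_2\otimes v_1$) therefore tells you nothing about its action on the remaining spanning vectors of those submodules; an arbitrary matrix with the right value on the highest weight vector could still fail to be scalar on the rest.

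The fix is exactly the ``completely direct alternative'' you relegate to your last sentence: one must check the explicit matrix against \emph{all} the generators listed in (i)--(vi) of Lemma \ref{lemm decomp2} and (i)--(v) of Lemma \ref{lemm decomp3}. This is what the paper does, carrying out in detail the least obvious cases (the generators of type (v) involving $v_n\otimes v_{n'}$, $v_{n'}\otimes v_n$ and $v_{n+1}\otimes v_{(n+1)'}$, where the $E_{i'j'}\otimes E_{ij}$ correction terms of Theorem \ref{main theorem} genuinely enter) and asserting that the remaining generators are checked similarly. Your Schur argument would be perfectly legitimate for the abstract operator $R_{V,V}$ --- but that statement is already Lemma \ref{lemma minimal polynomial} and does not advance the proof of Theorem \ref{main theorem}.
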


\begin{proof}
It suffices to verify that $R$ acts as scalar multiplication  on the generators of $\mathcal{S}'(V^{\otimes2})$ and $\Lambda(V^{\otimes2})$, where these generators are derived in Lemmas \ref{lemm decomp2} and \ref{lemm decomp3}.
Noticing that
\begin{gather*}
\begin{split}
  R&=rs^{-1}E_{n, n'}\otimes E_{n', n}+rs^{-1}E_{n', n}\otimes E_{n, n'}+E_{n+1, n+1}\otimes E_{n+1, n+1}{-}(r^{-1}s{-}rs^{-1})\\
    &\cdot\Bigl\{\sum_{i>n+1}(r^{-1}s)^{n-i+\frac{3}{2}}E_{i, n+1}\otimes E_{i', n+1}{+}(rs^{-1})^\frac{1}{2}E_{n+1, n'}\otimes E_{n+1, n}{-}E_{n', n'}\otimes E_{n, n}\\
    &+\sum_{i>n+2}(r^{-1}s)^{n-i+2}E_{i, n}\otimes E_{i', n'}+\sum_{i>n+1}(r^{-1}s)^{n-i+1}E_{i, n'}\otimes E_{i', n}\Bigr\}+\cdots,
\end{split}
\end{gather*}
we have
\begin{gather*}
\begin{split}
   & R\Bigl[v_n\otimes v_{n'}+r^{-2}s^2v_{n'}\otimes v_n-(r^{-\frac{3}{2}}s^{\frac{3}{2}}+r^{-\frac{1}{2}}s^{\frac{1}{2}})v_{n+1}\otimes v_{(n+1)'}\Bigr] \\
  &\quad=r^{-1}s\Bigl[v_n\otimes v_{n'}+r^{-2}s^2v_{n'}\otimes v_n-(r^{-\frac{3}{2}}s^{\frac{3}{2}}+r^{-\frac{1}{2}}s^{\frac{1}{2}})v_{n+1}\otimes v_{(n+1)'}\Bigr]; \\
  & R\Bigl[v_n\otimes v_{n'}-v_{n'}\otimes v_n-(r^{\frac{1}{2}}s^{-\frac{1}{2}}+r^{-\frac{1}{2}}s^{\frac{1}{2}})v_{n+1}\otimes v_{(n+1)'}\Bigr] \\
&\quad=-rs^{-1}\Bigl[v_n\otimes v_{n'}-v_{n'}\otimes v_n-(r^{\frac{1}{2}}s^{-\frac{1}{2}}+r^{-\frac{1}{2}}s^{\frac{1}{2}})v_{n+1}\otimes v_{(n+1)'}\Bigr].
\end{split}
\end{gather*}

The effect on the other generators can be checked similarly.
\end{proof}

Assume $\hat{R}=P\circ R$, where $P$ is the standard flip map.
Then $\hat{R}$ satisfies the crossing symmetry relations,
which is analogous to that in the one-parameter case (see \cite{FRT Russ 1989}).
\begin{prop} \label{prop crossing symmetry finite}
  We have the following crossing symmetry relations:
  \begin{equation}\label{equ crossing relation finite}
    \hat{R}=C_1(\hat{R}^{t_1})^{-1}(C_1)^{-1}=C_2(\hat{R}^{-1})^{t_2}(C_2)^{-1},
  \end{equation}
  where $C_1=C\otimes I$, $C_2=I\otimes C$, $I$ denotes the identity matrix, and $t_i\,(i=1,2)$ denotes the standard matrix transpose with respect to the $i$-th tensor factor.
\end{prop}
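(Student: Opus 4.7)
The plan is to verify the two crossing symmetry relations by combining the explicit matrix formula of $\hat{R}=P\circ R$ obtained from Theorem \ref{main theorem} with the spectral decomposition established in subsection \ref{subsec decompostion}. First, from Lemma \ref{lemma minimal polynomial} the minimal polynomial of $\hat R$ is $(t-r^{-1}s)(t+rs^{-1})(t-r^{2n}s^{-2n})$, so $\hat R$ is invertible and
\begin{equation*}
\hat R^{-1}=\alpha\,\hat R^{2}+\beta\,\hat R+\gamma\,I
\end{equation*}
for explicitly determined scalars $\alpha,\beta,\gamma\in\mathbb K$. This reduces the right-hand sides of (\ref{equ crossing relation finite}) to polynomial expressions in $\hat R$ conjugated by $C_1$ (resp.~$C_2$) combined with the partial transpose.

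Second, I would use the conceptual input that $C$ encodes the highest-weight vector $\sum_{i}a_{i}v_{i'}\otimes v_{i}$ of the trivial summand $\mathcal{S}^{0}(V^{\otimes 2})\cong V(0)$ computed in Lemma \ref{lemm decomp1}. Namely, the tensor $\sum_{i}a_{i}v_{i'}\otimes v_{i}$ is the image of $1\in\mathbb K$ under a $U_{r,s}(\mathfrak{so}_{2n+1})$-equivariant coevaluation $\mathrm{coev}\colon\mathbb K\to V\otimes V$ whose matrix is precisely $C$, and from $C^{2}=I$ (Remark \ref{metric matrix}) the corresponding evaluation is dual. Since $\hat R$ acts as a scalar $r^{-1}s$, $-rs^{-1}$ and $r^{2n}s^{-2n}$ on the isotypic components $\mathcal{S}'(V^{\otimes 2})$, $\Lambda(V^{\otimes 2})$ and $\mathcal{S}^{0}(V^{\otimes 2})$ respectively (Lemmas \ref{Lemm verifyV0} and \ref{Lemma verifyV+-}), it suffices to verify (\ref{equ crossing relation finite}) on each isotypic component separately, using that $C_{1}(\cdot)^{t_{1}}C_{1}^{-1}$ interchanges the role of $V$ and its dual in a way that permutes the three spectral projectors and reciprocates eigenvalues up to the quantum dimension factor $r^{2n}s^{-2n}$.

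Third, for the actual verification I would pick representative basis elements from each of $\mathcal{S}^{0}$, $\mathcal{S}'$ and $\Lambda$ (the vectors listed in Lemmas \ref{lemm decomp1}, \ref{lemm decomp2} and \ref{lemm decomp3}), apply $C_{1}(\hat R^{t_{1}})^{-1}C_{1}^{-1}$ using the polynomial expression above, and compare with the direct action of $\hat R$. The second identity $\hat R=C_{2}(\hat R^{-1})^{t_{2}}C_{2}^{-1}$ then follows from the first by taking inverses, swapping the two tensor factors via $P$ (using $\hat R=PR$ and $P\hat R P=PR$ structure), and invoking $C^{2}=I$ again.

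The main obstacle is bookkeeping rather than conceptual: the matrix of $\hat R$ decomposes into several structurally distinct families of terms (the ordinary diagonal, the permutation-like blocks $E_{ij}\otimes E_{ji}$ above and below the anti-diagonal, the ``anti-diagonal'' blocks $E_{ij'}\otimes E_{i'j}$ carrying the metric-twisted coefficients $(r^{-1}s)^{\rho_{i}-\rho_{j}}$, and the special row/column indexed by $n+1$ corresponding to the zero weight of the vector representation). The coefficients $(r^{-1}s)^{\rho_{i}}$ and the powers of $rs$ arising from the ``generalized type $D$ phenomenon'' highlighted in subsection \ref{ssec typeD} must be tracked carefully, as they are exactly where the two-parameter verification genuinely differs from the one-parameter analogue in \cite{FRT Russ 1989}.
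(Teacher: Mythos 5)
The paper states this proposition without any proof, treating it as a direct matrix verification from the explicit formulas of Theorem \ref{main theorem} and Lemma \ref{lemm Rinverse}, ``analogous to the one-parameter case'' of \cite{FRT Russ 1989}; so the question is whether your plan would actually carry out such a verification. It would not, and the central problem is that you conflate $R=R_{V,V}$ with $\hat R=P\circ R$. The cubic minimal polynomial of Lemma \ref{lemma minimal polynomial} and the scalar action on $\mathcal S^{0}$, $\mathcal S'$, $\Lambda$ (Lemmas \ref{Lemm verifyV0}, \ref{Lemma verifyV+-}) are statements about $R$, the module map satisfying the braid relation --- not about $\hat R$. The flip $P$ does not preserve the isotypic decomposition of $V^{\otimes2}$ (for instance $P(v_i\otimes v_j+s^2v_j\otimes v_i)=s^2(v_i\otimes v_j+s^{-2}v_j\otimes v_i)$, which is a nontrivial combination of vectors from $\mathcal S'$ and $\Lambda$), so $\hat R$ is neither scalar on the three summands nor annihilated by that cubic, and your opening formula $\hat R^{-1}=\alpha\hat R^{2}+\beta\hat R+\gamma I$ is unfounded.

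Even after replacing $\hat R$ by $R$ where appropriate, the reduction ``it suffices to verify the identity on each isotypic component'' is invalid: the operation $X\mapsto C_1(X^{t_1})^{-1}C_1^{-1}$ involves the partial transpose $t_1$, which is not a $U_{r,s}(\mathfrak{so}_{2n+1})$-module map and does not preserve $V(0)\oplus V(2\varepsilon_1)\oplus V(\varepsilon_1{+}\varepsilon_2)$; the assertion that it ``permutes the three spectral projectors and reciprocates eigenvalues'' is exactly what would need to be proved and is nowhere justified. Since $C^{2}=I$, the first equality in (\ref{equ crossing relation finite}) is equivalent to the linear matrix identity $\hat R^{t_1}=C_1\hat R^{-1}C_1$, and checking a handful of representative vectors from each summand establishes nothing --- one must compare all entries, i.e.\ perform precisely the block-by-block computation (diagonal terms, the $E_{ij}\otimes E_{ji}$ families, the $E_{ij'}\otimes E_{i'j}$ family with the $(r^{-1}s)^{\rho_i-\rho_j}$ coefficients, and the exceptional index $n+1$) that your plan was designed to avoid. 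Your instinct that $C$ is the matrix of the coevaluation for the self-dual module $V$ is sound and could be developed into a genuine categorical proof via the evaluation/coevaluation identities for the braiding, but that is not what the plan does. Finally, deducing the second equality from the first by ``taking inverses and swapping factors'' also needs care: partial transposition does not commute with inversion, and $C$ is not symmetric (its transpose carries $(r^{-1}s)^{-\rho_i}$ where $C$ carries $(r^{-1}s)^{\rho_i}$), so this step requires either the symmetry $R^{-1}=P\circ R(r^{-1},s^{-1})\circ P$ from Lemma \ref{lemm Rinverse} or an independent direct check.
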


\subsection{Representation of Birman-Wenzl-Murakami algebra}\label{ssec BMW}
We will establish a representation of Birman-Wenzl-Murakami algebra via the basic braided $R$-matrix in this subsection.

Firstly, we can derive the inverse of the basic braided $R$-matrix as follows:
\begin{lemm}\label{lemm Rinverse}
   \begin{align*}
R^{-1}=rs^{-1}&\sum_{i \atop i\neq i'}E_{ii}\otimes E_{ii}+r^{-1}s^{-1}\Bigl(\sum_{1\leq i\leq n \atop i+1\leq j\le n}E_{ij}\otimes E_{ji}+\sum_{2\leq i\leq n \atop (i-1)'\leq j\leq 1'}E_{ij}\otimes E_{j i}\\
+&\sum_{1\leq i\leq n-1 \atop n'\leq j\leq (i+1)'}E_{ji}\otimes E_{ij}+\sum_{n'\leq i\leq 2'\atop i+1\leq j\leq 1'}E_{ji}\otimes E_{ij}\Bigr)+(rs)\Bigl(\sum_{1\leq i\leq n\atop i+1\leq j\leq n}E_{ji}\otimes E_{ij} \\
+&\sum_{n'\leq i\leq 2'\atop i+1\leq j\leq 1'}E_{ij}\otimes E_{ji}+\sum_{2\leq i\leq n\atop (i-1)'\leq j\leq 1'}E_{ji}\otimes E_{ij}+\sum_{1\leq i\leq n-1\atop n'\leq j\leq (i+1)'}E_{i j}\otimes E_{j i}\Bigr)\\
+&\sum_{i\atop i\neq i'} E_{i, n+1}\otimes E_{n+1,i}+\sum_{j\atop j\neq j'}E_{n+1, j}\otimes E_{j, n+1}   +r^{-1}s\sum_{i\atop i\neq i'}E_{i' i}\otimes E_{ii'}\\
+&\Bigl(rs^{-1}-r^{-1}s\Bigr)\Bigl\{\sum_{i,j\atop i<j}E_{ii}\otimes E_{jj}-\sum_{i,j\atop i< j}\Bigl(r^{-1}s\Bigr)^{(\rho_i-\rho_j)}E_{ij'}\otimes E_{i' j}\Bigr\}\\
+&E_{n+1, n+1}\otimes E_{n+1, n+1},
\end{align*}
$\quad$where $\rho_i$ is defined in Theorem \ref{main theorem}.

\end{lemm}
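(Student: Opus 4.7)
The plan is to verify the stated formula for $R^{-1}$ spectrally, exploiting the decomposition $V^{\otimes 2} = \mathcal{S}^0(V^{\otimes 2}) \oplus \mathcal{S}'(V^{\otimes 2}) \oplus \Lambda(V^{\otimes 2})$ established in Subsection \ref{subsec decompostion}. By Lemma \ref{lemma minimal polynomial}, $R$ acts by the scalars $r^{2n}s^{-2n}$, $r^{-1}s$, and $-rs^{-1}$ on the three simple summands respectively; hence $R^{-1}$ must act by the reciprocal scalars $r^{-2n}s^{2n}$, $rs^{-1}$, and $-r^{-1}s$, and this already determines $R^{-1}$ uniquely. It therefore suffices to apply the candidate formula to a generating vector of each summand and read off the resulting eigenvalue.

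For $\mathcal{S}'(V^{\otimes 2})$ and $\Lambda(V^{\otimes 2})$, I would evaluate the candidate $R^{-1}$ on the explicit generators listed in Lemmas \ref{lemm decomp2} and \ref{lemm decomp3}. These computations are parallel to the proof of Lemma \ref{Lemma verifyV+-}, with the interchanges $r^{-1}s \leftrightarrow rs^{-1}$, $(r^{-1}s-rs^{-1}) \leftrightarrow (rs^{-1}-r^{-1}s)$, and $i>j \leftrightarrow i<j$ in the relevant summation ranges; they quickly produce the eigenvalues $rs^{-1}$ and $-r^{-1}s$ respectively.

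The main obstacle is the verification on $\mathcal{S}^0(V^{\otimes 2})$, whose generating vector $\sum_{i=1}^{1'} a_i v_{i'} \otimes v_i$ has support on every weight-zero basis pair, so all of the off-diagonal contributions of $R^{-1}$ come into play simultaneously. This step mirrors the bookkeeping of Lemma \ref{Lemm verifyV0}: the cross terms $(rs^{-1}-r^{-1}s)(r^{-1}s)^{\rho_i-\rho_j} E_{ij'}\otimes E_{i'j}$ produce geometric series in $(r^{-1}s)$ that must telescope precisely to yield the coefficient $r^{-2n}s^{2n} a_k$ on each weight-zero basis vector $v_{k'} \otimes v_k$. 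Organizing the calculation by the region containing $k$ (namely $1 \le k \le n$, $k=n+1$, or $n'\le k\le 1'$) and invoking the identity $\rho_i + \rho_{i'} = 0$ collapses the sums to short closed forms, just as in the proof of Lemma \ref{Lemm verifyV0}.

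An alternative route, which is structurally cleaner but replaces the delicate summations by a matrix-product computation, is to use the polynomial expression
\begin{equation*}
R^{-1} = r^{-2n}s^{2n}\Bigl[-R^2 + (r^{-1}s - rs^{-1} + r^{2n}s^{-2n})\,R + (1 - r^{2n-1}s^{1-2n} + r^{2n+1}s^{-2n-1})\,I\Bigr],
\end{equation*}
obtained directly from Lemma \ref{lemma minimal polynomial} by multiplying the cubic equation $(R-r^{-1}sI)(R+rs^{-1}I)(R-r^{2n}s^{-2n}I)=0$ by $R^{-1}$ and solving. One then verifies coincidence with the stated formula by comparing coefficients of each basis tensor $E_{ij}\otimes E_{kl}$ in $R^2$; this is more uniform but equally tedious.
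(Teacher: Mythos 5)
Your proposal is correct, but it takes a more computational route than the paper. The paper's proof also starts from Lemma \ref{lemma minimal polynomial}: it writes the spectral decomposition $R = r^{-1}s\,P_+ - rs^{-1}P_- + (rs^{-1})^{2n}P_0$ with the projections $P_+$, $P_-$, $P_0$ given as explicit quadratic polynomials in $R$, and deduces $R^{-1} = rs^{-1}P_+ - r^{-1}s\,P_- + (rs^{-1})^{-2n}P_0$ --- which carries the same information as the quadratic expression for $R^{-1}$ in your alternative route. The decisive extra step in the paper, which your plan does not exploit, is the identity $R^{-1} = P\circ R(r^{-1},s^{-1})\circ P$: the inverse is obtained from the already-established formula of Theorem \ref{main theorem} simply by substituting $(r,s)\mapsto(r^{-1},s^{-1})$ and conjugating by the flip, which amounts to reindexing $i\leftrightarrow i'$ (using $\rho_{i'}=-\rho_i$) and turning the conditions $i>j$ into $i<j$; the stated formula then drops out with no further computation. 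By contrast, your primary route (re-running the eigenvector verifications of Lemmas \ref{Lemm verifyV0} and \ref{Lemma verifyV+-} on the candidate matrix over the spanning sets of Lemmas \ref{lemm decomp2} and \ref{lemm decomp3}) and your alternative route (expanding $R^2$ entrywise) are both logically sound --- the three summands do exhaust $V^{\otimes 2}$, so acting by the reciprocal scalars on each does characterize $R^{-1}$ --- but they essentially duplicate the heavy bookkeeping already carried out in the proof of Theorem \ref{main theorem}. If you want to keep your spectral framing, the cheapest completion is to observe that the candidate matrix is literally $P\circ R(r^{-1},s^{-1})\circ P$ and that the summands and eigenvalues transform consistently under this substitution.
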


\begin{proof}
    By Lemma \ref{lemma minimal polynomial}, $R$ has the spectral decomposition:
    \begin{equation}\label{equ specdec}
      R=r^{-1}sP_+-rs^{-1}P_-+(rs^{-1})^{2n}P_0,
    \end{equation}
    where
    \begin{align*}
      P_+ &=\;\frac{R^2-\Bigl((rs^{-1})^{2n}-rs^{-1}\Bigr)R-(rs^{-1})^{2n+1}I}{(r^{-1}s+rs^{-1})\Bigl(r^{-1}s-(rs^{-1})^{2n}\Bigr)}, \\
      P_- &=\;\frac{R^2-\Bigl(r^{-1}s+(rs^{-1})^{2n}\Bigr)R+(rs^{-1})^{2n-1}I}{(rs^{-1}+r^{-1}s)\Bigl(rs^{-1}+(rs^{-1})^{2n}\Bigr)},\\
      P_0&=\;\frac{R^2-(r^{-1}s-rs^{-1})R-I}{\Bigl((rs^{-1})^{2n}-r^{-1}s\Bigr)\Bigl(rs^{-1}+(rs^{-1})^{2n}\Bigr)}.
    \end{align*}
   Here $P_+, P_-,P_0$ are the projections onto $S^{\prime}(V^{\otimes2})$,\;$\Lambda(V^{\otimes2})$ and $\mathcal{S}^0(V^{\otimes2})$, respectively.
   Using (\ref{equ specdec}), one can easily verify that
  \begin{equation}\label{equ specdec2}
      R^{-1}=rs^{-1} P_+-r^{-1}sP_-+(rs^{-1})^{-2n}P_0,
    \end{equation}
and thus
\begin{equation}\label{}
  R^{-1}=P\circ R(r^{-1}, s^{-1})\circ P,
\end{equation}
   where $R(r^{-1}, s^{-1})$ denotes the matrix $R$ with $r$ and $s$ replaced by $r^{-1}$ and $s^{-1}$, respectively.
\end{proof}

It can be easily checked:
\begin{coro}\label{cor R R-1}
 The basic braided $R$-matrix and its inverse satisfy the Kauffman skein relation:
  $$R-R^{-1}=(r^{-1}s-rs^{-1})\left(I-K\right),$$
  with $K:=\sum_{i,j=1}^{2n+1}(r^{-1}s)^{\rho_i-\rho_j}E_{ij'}\otimes E_{i'j}$.
\end{coro}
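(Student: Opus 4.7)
The cleanest strategy is a direct term-by-term comparison of the explicit formulas for $R$ (Theorem \ref{main theorem}) and $R^{-1}$ (Lemma \ref{lemm Rinverse}). On inspection several blocks coincide verbatim in both matrices---specifically, the two bracketed sums with coefficients $r^{-1}s^{-1}$ and $rs$, the mixed contributions $\sum_{i\neq i'} E_{i,n+1}\otimes E_{n+1,i}$ and $\sum_{j\neq j'} E_{n+1,j}\otimes E_{j,n+1}$, and the term $E_{n+1,n+1}\otimes E_{n+1,n+1}$. These all cancel upon forming $R-R^{-1}$, leaving three sources of surviving contributions: the diagonal scalar change giving $(r^{-1}s-rs^{-1})\sum_{i\neq i'} E_{ii}\otimes E_{ii}$; the ``swap'' terms contributing $-(r^{-1}s-rs^{-1})\sum_{i\neq i'} E_{i'i}\otimes E_{ii'}$; and the asymmetric final brackets, which---being indexed by $i>j$ in $R$ but by $i<j$ with opposite overall sign in $R^{-1}$---combine to give $(r^{-1}s-rs^{-1})\bigl[\sum_{i\neq j} E_{ii}\otimes E_{jj} - \sum_{i\neq j}(r^{-1}s)^{\rho_i-\rho_j} E_{ij'}\otimes E_{i'j}\bigr]$.

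After factoring out $(r^{-1}s-rs^{-1})$, the remaining task is to identify the bracketed expression with $I-K$. For this I will invoke three elementary identities. First, writing $\sum_{i\neq j} E_{ii}\otimes E_{jj} = I - \sum_i E_{ii}\otimes E_{ii}$ together with $\sum_{i\neq i'} E_{ii}\otimes E_{ii} = \sum_i E_{ii}\otimes E_{ii} - E_{n+1,n+1}\otimes E_{n+1,n+1}$ yields
\[
\sum_{i\neq i'} E_{ii}\otimes E_{ii} + \sum_{i\neq j} E_{ii}\otimes E_{jj} = I - E_{n+1,n+1}\otimes E_{n+1,n+1}.
\]
Second, the relabeling $i\mapsto i'$ shows $\sum_i E_{ii'}\otimes E_{i'i} = \sum_i E_{i'i}\otimes E_{ii'}$, whence
\[
\sum_i E_{ii'}\otimes E_{i'i} - \sum_{i\neq i'} E_{i'i}\otimes E_{ii'} = E_{n+1,n+1}\otimes E_{n+1,n+1}.
\]
Third, separating the diagonal $i=j$ piece of $K$ (where the exponent $\rho_i-\rho_i$ vanishes) gives the splitting $K = \sum_i E_{ii'}\otimes E_{i'i} + \sum_{i\neq j}(r^{-1}s)^{\rho_i-\rho_j} E_{ij'}\otimes E_{i'j}$. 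Combining the last two identities produces $-\sum_{i\neq i'} E_{i'i}\otimes E_{ii'} - \sum_{i\neq j}(r^{-1}s)^{\rho_i-\rho_j} E_{ij'}\otimes E_{i'j} = E_{n+1,n+1}\otimes E_{n+1,n+1} - K$, which, substituted into the bracket, collapses it to $I-K$. This delivers the Kauffman skein relation.

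The main obstacle will be purely bookkeeping: the expressions for $R$ and $R^{-1}$ involve many sub-sums with intricate ranges (for instance $\sum_{1\le i\le n,\,i+1\le j\le n}$ versus its primed counterpart $\sum_{n'\le i\le 2',\,i+1\le j\le 1'}$), and one must verify carefully that the corresponding blocks in $R$ and $R^{-1}$ agree verbatim before declaring them to cancel. A convenient consistency check is the symmetry $R^{-1}=P\circ R(r^{-1},s^{-1})\circ P$ recorded in the proof of Lemma \ref{lemm Rinverse}: under the substitution $r\leftrightarrow s^{-1}$ the $r^{-1}s^{-1}$ and $rs$ blocks are interchanged while their union is preserved, which confirms their identical appearance in both matrices and therefore their cancellation in $R-R^{-1}$.
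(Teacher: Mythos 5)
Your proposal is correct: the term-by-term comparison of Theorem \ref{main theorem} and Lemma \ref{lemm Rinverse} is exactly the check the paper has in mind (it offers no written proof beyond ``It can be easily checked''), and your three bookkeeping identities correctly account for the $i=i'=n+1$ exceptional terms so that the bracket collapses to $I-K$. No gaps.
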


\begin{coro}\label{cor Kquasi}
  The matrix $K$ is a quasi-idempotent element. To be specific,
  $$K^2=\Bigl(rs^{-1}-r^{-1}s-(r^{-1}s)^{2n}+(rs^{-1})^{2n}\Bigr)(rs^{-1}-r^{-1}s)^{-1}K.$$
\end{coro}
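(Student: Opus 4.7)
The plan is to compute $K^2$ by pure matrix-unit bookkeeping and then evaluate a scalar in closed form. Starting from
\begin{equation*}
K = \sum_{i,j=1}^{2n+1}(r^{-1}s)^{\rho_i-\rho_j}\,E_{ij'}\otimes E_{i'j},
\end{equation*}
the multiplication rule $E_{ab}E_{cd}=\delta_{bc}E_{ad}$ forces the pairing $j'=k$ in the first tensor factor (equivalently $k'=j$ in the second) when forming $K\cdot K$, leaving a single summation over one middle index. Using the identity $\rho_{j'}=-\rho_{j}$ from Theorem \ref{main theorem}, the exponent collapses cleanly and one obtains
\begin{equation*}
K^2 \;=\; \Bigl(\sum_{j=1}^{2n+1}(rs^{-1})^{2\rho_j}\Bigr)\,\sum_{i,l}(r^{-1}s)^{\rho_i-\rho_l}\,E_{il'}\otimes E_{i'l}\;=\;\lambda\,K,
\end{equation*}
with the scalar $\lambda := \sum_{j}(rs^{-1})^{2\rho_j}$ independent of the outer indices $i,l$.

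It then remains to evaluate $\lambda$ explicitly. Substituting the values of $\rho_j$, the sum splits into three blocks: the block $j\in\{1,\ldots,n\}$ contributes the odd powers $(rs^{-1})^{2n-1},(rs^{-1})^{2n-3},\ldots,(rs^{-1})$; the middle index $j=n+1$ with $\rho_{n+1}=0$ contributes the constant $1$; and $j\in\{n',\ldots,1'\}$ contributes the mirror powers $(r^{-1}s),(r^{-1}s)^3,\ldots,(r^{-1}s)^{2n-1}$. Writing $x=rs^{-1}$ and applying the elementary telescoping identity
\begin{equation*}
x^{2n-1}+x^{2n-3}+\cdots+x^{-(2n-1)} \;=\; \frac{x^{2n}-x^{-2n}}{x-x^{-1}},
\end{equation*}
one gets $\lambda = 1+(x^{2n}-x^{-2n})/(x-x^{-1})$, which after substituting $x=rs^{-1}$ simplifies to the announced coefficient
\begin{equation*}
\lambda \;=\; \bigl(rs^{-1}-r^{-1}s-(r^{-1}s)^{2n}+(rs^{-1})^{2n}\bigr)(rs^{-1}-r^{-1}s)^{-1}.
\end{equation*}

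As an independent cross-check (and an alternative proof), one can bypass the scalar computation altogether by combining the Kauffman skein relation of Corollary \ref{cor R R-1} with the spectral decomposition $R=r^{-1}s\,P_+-rs^{-1}\,P_-+(rs^{-1})^{2n}P_0$ derived in the proof of Lemma \ref{lemm Rinverse}. Matching the $P_\pm$ and $P_0$ components on both sides of $R-R^{-1}=(r^{-1}s-rs^{-1})(I-K)$ forces $K=\mu P_0$ for precisely the scalar $\mu=\lambda$ above; since $P_0^2=P_0$, the identity $K^2=\mu K$ is immediate. There is no serious obstacle: the only mildly delicate point is keeping track of the middle index $j=n+1$ and the sign bookkeeping in collecting $(rs^{-1})$- and $(r^{-1}s)$-blocks, after which the geometric-sum identity does the rest.
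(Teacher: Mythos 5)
Your proof is correct, and your primary argument takes a genuinely different route from the paper's. The paper works representation-theoretically: it lets $K$ act on the decomposition $V^{\otimes 2}=\mathcal S^0\oplus\mathcal S'\oplus\Lambda$, observes that $K$ annihilates $\mathcal S'(V^{\otimes 2})$ and $\Lambda(V^{\otimes 2})$ and multiplies the generator $\sum_i(rs^{-1})^{\rho_i}v_{i'}\otimes v_i$ of the trivial summand by the scalar $\sum_i(rs^{-1})^{2\rho_i}$, hence $K=\lambda P_0$; it then derives $KR^{\pm}=(rs^{-1})^{\pm 2n}K$ from the spectral decompositions and multiplies the Kauffman skein relation of Corollary \ref{cor R R-1} by $K$ to extract $K^2=\lambda K$. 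Your main argument instead computes $K^2$ directly from the matrix units: the contraction $E_{ij'}E_{kl'}\otimes E_{i'j}E_{k'l}$ forces $k=j'$, the identity $\rho_{j'}=-\rho_j$ collapses the exponent, and the scalar $\sum_j(rs^{-1})^{2\rho_j}$ is evaluated by the telescoping geometric-sum identity --- which is exactly the same scalar the paper obtains, just reached by bookkeeping rather than by module theory. Your computation buys self-containedness (it needs neither the decomposition of $V^{\otimes 2}$ nor the skein relation), while the paper's route buys the auxiliary identity $KR^{\pm}=(rs^{-1})^{\pm 2n}K$ (their equation (\ref{equ KP})), which is reused later, e.g.\ in Corollary \ref{cor KRcross} and Proposition \ref{prop BWrep}. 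Your cross-check via matching $P_{\pm}$ and $P_0$ components of the skein relation is essentially the paper's argument in a slightly cleaner packaging, and it too checks out.
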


\begin{proof}
   Considering the action of $K$ on $V^{\otimes 2}$, we find that $K$ is a complex multiple of $P_0$.
   Indeed,
   \begin{align*}
K&\Bigl(\sum_{i=1}^{2n+1}(rs^{-1})^{\rho_i}v_{i'}\otimes v_i\Bigr)
     =\sum_{i=1}^{2n+1}(rs^{-1})^{2\rho_i}\Bigl(\sum_{j=1}^{2n+1}(rs^{-1})^{\rho_j}v_{j'}\otimes v_j\Bigr)\\
     &\quad=\Bigl(rs^{-1}-r^{-1}s-(r^{-1}s)^{2n}+(rs^{-1})^{2n}\Bigr)(rs^{-1}-r^{-1}s)^{-1} \Bigl(\sum_{j=1}^{2n+1}(rs^{-1})^{\rho_j}v_{j'}\otimes v_j\Bigr),
   \end{align*}
   and $K$ acts on $S^{\prime}(V^{\otimes2})$ and $\Lambda(V^{\otimes2})$ as zero obviously.
   Using the spectral decompositions (\ref{equ specdec}) and (\ref{equ specdec2}), we conclude that
   \begin{equation}\label{equ KP}
     KR^\pm=(rs^{-1})^{\pm2n}K.
   \end{equation}
   Then we get the desired equation via Corollary \ref{cor R R-1} (by multiplying both sides by $K$).
\end{proof}

For convenience, we write
 $$R=\sum R_{ik}^{jl}E_{ij}\otimes E_{kl},\quad K=\sum K_{ik}^{jl}E_{ij}\otimes E_{kl},$$
where $R_{ik}^{jl}$ and $K_{ik}^{jl}$ are the coefficients calculated in Theorem \ref{main theorem} and Corollary \ref{cor R R-1}, respectively.
\begin{coro}\label{cor Kij}
  We have the following identities:
\begin{equation}\label{}
    K^{ij}_{kl}=(r^{-1}s)^{\rho_i+\rho_k}\delta_{ij'}\delta_{kl'}, \quad 1\leq i,j,k,l \leq 2n+1;
\end{equation}
\begin{equation}\label{}
  \sum_{j=1}^{2n+1}R_{ij}^{ij}(r^{-1}s)^{2\rho_j}=(r^{-1}s)^{2n}, \quad 1\leq i\leq 2n+1.
\end{equation}
\end{coro}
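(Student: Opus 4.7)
Both identities are direct consequences of the explicit descriptions of $K$ and $R$ established above; the first is essentially a bookkeeping of Kronecker deltas, while the second requires a short case analysis and a geometric-series telescope.

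For the first identity, I would apply $K$ to a basis tensor $v_k\otimes v_l$ directly from the definition $K=\sum_{a,b=1}^{2n+1}(r^{-1}s)^{\rho_a-\rho_b}E_{ab'}\otimes E_{a'b}$: the matrix units force $b'=k$ and $b=l$, i.e.\ $l=k'$, collapsing the double sum to $K(v_k\otimes v_l)=\delta_{l,k'}\sum_a(r^{-1}s)^{\rho_a-\rho_l}v_a\otimes v_{a'}$. Extracting the coefficient of $v_i\otimes v_j$ then forces $a=i$ and $a'=j$, i.e.\ $j=i'$. Finally, the involution identity $\rho_l=\rho_{k'}=-\rho_k$ rewrites the scalar $(r^{-1}s)^{\rho_i-\rho_l}$ as $(r^{-1}s)^{\rho_i+\rho_k}$, giving the claimed formula.

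For the second identity, set $q=r^{-1}s$. First I would determine $R^{ij}_{ij}$ by identifying which summands of $R$ (Theorem \ref{main theorem}) contribute to the coefficient of $E_{ii}\otimes E_{jj}$: the block $q\sum_{a\neq a'}E_{aa}\otimes E_{aa}$ (gives $q$ when $j=i\neq n+1$), the singleton $E_{n+1,n+1}\otimes E_{n+1,n+1}$ (gives $1$ when $i=j=n+1$), the generic antidiagonal $(q-q^{-1})\sum_{a>b}E_{aa}\otimes E_{bb}$ (gives $q-q^{-1}$ when $i>j$), and the exceptional correction $-(q-q^{-1})\sum_{a>b}q^{\rho_a-\rho_b}E_{ab'}\otimes E_{a'b}$, which is diagonal only when $(i,j)=(a,a')$ with $a>n+1$, contributing $-(q-q^{-1})q^{2\rho_i}$ at $j=i'$. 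All other diagonal entries vanish; in particular $R^{ij}_{ij}=0$ whenever $i<j$. Splitting $\sum_j R^{ij}_{ij}q^{2\rho_j}$ according to whether $i\leq n$, $i=n+1$, or $i\geq n'$, I would use the telescoping identity $(q-q^{-1})\sum_{k=1}^m q^{2\rho_k}=q^{2n}-q^{2n-2m}$ (valid for $1\leq m\leq n$, an immediate consequence of $(q-q^{-1})/(q^2-1)=q^{-1}$ applied to $\sum_{k=1}^m q^{2n+1-2k}$). For $i=n+1$ this yields $1+(q^{2n}-1)=q^{2n}$, and for $i\leq n$ one combines $q\cdot q^{2\rho_i}=q^{2n+2-2i}$ with $(q-q^{-1})\sum_{j=1}^{i-1}q^{2\rho_j}=q^{2n}-q^{2n+2-2i}$ to again obtain $q^{2n}$.

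The main obstacle is the remaining case $i\geq n'$, where the exceptional antidiagonal correction at $j=i'$ must be folded into the generic contributions over the three subintervals of $\{j:j<i\}$ carved out by $j\leq n$, $j=n+1$, and $j\geq n'$; a second application of the same telescope, together with the cancellation of the generic contribution at $j=i'$ against its correction term, produces $q^{2n}$ one last time. Alternatively, one may bypass this bookkeeping by applying the Kauffman skein relation (Corollary \ref{cor R R-1}) together with the first identity (which yields $\sum_j K^{ij}_{ij}q^{2\rho_j}=1$, hence the partial $q$-trace of $K$ is the identity matrix) to reduce the task to the analogous identity for $R^{-1}$, obtainable from the verified cases by the $(r,s)\mapsto(r^{-1},s^{-1})$ symmetry.
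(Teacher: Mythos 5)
Your proposal is correct and follows the same route as the paper, which simply asserts that both identities ``are directly from Theorem \ref{main theorem}, Corollary \ref{cor R R-1} and the definition of $\rho_i$''; your writeup supplies the delta-function bookkeeping for $K$ and the case-by-case telescoping for the diagonal $q$-trace of $R$ that the paper leaves implicit (I checked the remaining case $i\geq n'$: the correction term at $j=i'$ contributes exactly $-(q-q^{-1})$, cancelling the $j=n+1$ contribution, and the third telescope closes the sum to $q^{2n}$ as you predict). The alternative detour through the skein relation is unnecessary and less direct than your main argument, so I would drop it.
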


\begin{proof}
These identities are directly from Theorem \ref{main theorem}, Corollary \ref{cor R R-1} and the definition of $\rho_i$.
\end{proof}

\begin{coro}\label{cor KRcross}
  $$K_{i,i+1}R_{i+1,i+2}^\pm K_{i,i+1}=(r^{-1}s)^{\pm2n} K_{i,i+1}, \quad 1\leq i\leq t-1$$
  where $K_{i, i+1}$ and $R_{i, i+1}^\pm$ are the operators on $V^{\otimes t}$ acting as $K$ and $R^\pm$, respectively,
  in the $i$-th and $(i+1)$-th tensor factors and as the identity elsewhere.
\end{coro}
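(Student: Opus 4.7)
The plan is to reduce the identity to $V^{\otimes 3}$ with $i=1$ (the general case follows by padding with identities on unaffected tensor positions) and then invoke Schur's lemma together with a partial-quantum-trace identity. The task thus becomes
\[
K_{12}\,R^\pm_{23}\,K_{12}=(r^{-1}s)^{\pm 2n}\,K_{12}
\]
on $V^{\otimes 3}$. A key structural observation is that $K$ has rank one on $V\otimes V$: combining Lemma \ref{lemm decomp1} with Corollary \ref{cor Kij} yields $K=|\Omega\rangle\langle\phi|$, where $|\Omega\rangle=\sum_{i}(r^{-1}s)^{\rho_i}\,v_i\otimes v_{i'}$ spans the trivial submodule $V(0)\subset V\otimes V$ and $\langle\phi|$ is the unique $U_{r,s}$-invariant functional with $\langle\phi|(v_a\otimes v_b)=(r^{-1}s)^{-\rho_b}\delta_{a,b'}$. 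The scalar $\langle\phi|\Omega\rangle=\sum_{b}(r^{-1}s)^{-2\rho_b}$ recovers $c$ in Corollary \ref{cor Kquasi}.

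The core of the argument is Schur's lemma. Since $K$ is (up to the scalar $c$) the equivariant projector onto $V(0)$ and $R^\pm$ is a $U_{r,s}(\mathfrak{so}_{2n+1})$-module automorphism of $V\otimes V$, the map
\[
V\longrightarrow V^{\otimes 3},\qquad z\longmapsto K_{12}\,R^\pm_{23}(|\Omega\rangle\otimes z)
\]
is equivariant and factors through $V(0)\otimes V\cong V$. Irreducibility of $V$ forces this composite to equal $\mu^\pm$ times the embedding $z\mapsto|\Omega\rangle\otimes z$ for some scalar $\mu^\pm\in\mathbb{K}$; precomposing with $\langle\phi|_{12}$ then produces $K_{12}\,R^\pm_{23}\,K_{12}=\mu^\pm K_{12}$ on the whole of $V^{\otimes 3}$.

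To determine $\mu^\pm$, I unwind $K_{12}\,R^\pm_{23}(|\Omega\rangle\otimes v_e)$ using $K(v_i\otimes v_j)=(r^{-1}s)^{-\rho_j}\delta_{j,i'}|\Omega\rangle$ and $\rho_{i'}=-\rho_i$, arriving at
\[
K_{12}\,R^\pm_{23}(|\Omega\rangle\otimes v_e)=\sum_{k}\Bigl(\sum_{m}(r^{-1}s)^{-2\rho_m}(R^\pm)^{mk}_{me}\Bigr)|\Omega\rangle\otimes v_k.
\]
The corollary is therefore equivalent to the Markov-type partial-trace identity $\sum_m(r^{-1}s)^{-2\rho_m}(R^\pm)^{mk}_{me}=(r^{-1}s)^{\pm 2n}\delta^{k}_{e}$. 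Since Schur's lemma already forces the left-hand side to be a scalar multiple of $\delta^k_e$, it suffices to evaluate at one convenient pair, say $k=e=1$: by Lemma \ref{lemm Rinverse} the $R^{-}$ case collapses to the sole diagonal contribution $(R^{-})^{11}_{11}=rs^{-1}$ and yields $(r^{-1}s)^{-(2n-1)}\cdot rs^{-1}=(r^{-1}s)^{-2n}$ immediately, while by Theorem \ref{main theorem} the $R^{+}$ case combines the diagonal term at $m=1$, the $(r^{-1}s-rs^{-1})E_{mm}\otimes E_{11}$ terms for $m>1$, and the crossing correction $(r^{-1}s)^{\rho_i-\rho_j}E_{ij'}\otimes E_{i'j}$ at $m=1'$, telescoping through the closed form $c-1=((rs^{-1})^{2n}-(r^{-1}s)^{2n})/(rs^{-1}-r^{-1}s)$ to $(r^{-1}s)^{2n}$.

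The main obstacle is the $R^{+}$ telescoping, which is elementary but requires careful bookkeeping of the crossing coefficients $(r^{-1}s)^{\rho_i-\rho_j}$; a more conceptual alternative is to deduce the partial-trace identity from the crossing symmetry \eqref{equ crossing relation finite} of Proposition \ref{prop crossing symmetry finite}, which converts the partial trace over the first tensor factor into the partial trace over the second factor already recorded in Corollary \ref{cor Kij}, so that the scalar $(r^{-1}s)^{\pm 2n}$ emerges with no further work.
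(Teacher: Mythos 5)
Your proof is correct, but it takes a genuinely different route from the paper's. The paper proves the corollary by a direct matrix computation: it expands $(K\otimes I)(I\otimes R)(K\otimes I)$ in the basis $E_{ij}\otimes E_{kl}\otimes E_{mn}$, observes that the sandwiching by $K\otimes I$ kills every term of $R$ whose first tensor slot is off-diagonal (and hence, by the explicit form of $R$ in Theorem \ref{main theorem}, every term whose second slot is off-diagonal as well), and then collapses the surviving diagonal sum to $(r^{-1}s)^{2n}$ via the symmetry $R_{cd}^{cd}=R_{d'c'}^{d'c'}$ and the weighted partial-trace identity of Corollary \ref{cor Kij}. You instead exploit the rank-one, equivariant nature of $K=|\Omega\rangle\langle\phi|$ and Schur's lemma on the simple module $V$ to conclude \emph{a priori} that $K_{12}R^\pm_{23}K_{12}=\mu^\pm K_{12}$, reducing the whole computation to evaluating a single matrix entry; your evaluations at $k=e=1$ (giving $(rs^{-1})^{2n-1}\cdot rs^{-1}$ for $R^{-}$, and the telescoping through $c$ of Corollary \ref{cor Kquasi} for $R^{+}$) do check out against Theorem \ref{main theorem} and Lemma \ref{lemm Rinverse}. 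What your approach buys is a conceptual explanation of the identity as a Markov/partial-trace property forced by equivariance, plus a drastic reduction of the bookkeeping; what the paper's approach buys is self-containedness (it needs no appeal to simplicity of $V$ or to Schur's lemma over the non-algebraically-closed field $\mathbb{K}$, only the already-established Corollary \ref{cor Kij}) and a uniform treatment of both signs, whereas you handle $R^{+}$ and $R^{-}$ by separate entry computations. One small caution: your closing suggestion to derive the first-factor partial-trace identity from the crossing symmetry \eqref{equ crossing relation finite} "with no further work" is optimistic — the conversion still requires the index symmetry $R_{cd}^{cd}=R_{d'c'}^{d'c'}$ that the paper uses explicitly, so it is a reformulation rather than a free lunch.
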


\begin{proof}
We only prove the positive case since the negative one can be proved similarly.
By direct calculations, we have
  \begin{align*}
     (K&\otimes I)(I\otimes R)(K\otimes I) \\
    &\quad=\sum_{a,c,d,f}\Bigl((r^{-1}s)^{\rho_a+\rho_{c'}}E_{ac'}\otimes E_{a'c}\otimes I\Bigr)\Bigl(R_{cd}^{cd}\otimes E_{cc}\otimes E_{dd}\Bigr) \\
     &\hspace{10em}\cdot \Bigl( (r^{-1}s)^{\rho_f+\rho_{c'}}E_{c'f}\otimes E_{cf'}\otimes I\Bigr)\\
    &\quad=\sum_{a,c,d,f}(r^{-1}s)^{\rho_a+\rho_f+2\rho_{c'}}R_{cd}^{cd}\;E_{af}\otimes E_{a'f'}\otimes E_{dd}  \\
    &\quad=\sum_{a,d,f}(r^{-1}s)^{\rho_a+\rho_f}\Bigl(\sum_{c'} R_{d'c'}^{d'c'}(r^{-1}s)^{2\rho_{c'}}\Bigr)\;E_{af}\otimes E_{a'f'}\otimes E_{dd}  \\
    &\quad=(r^{-1}s)^{2n} \sum_{a,d,f}(r^{-1}s)^{\rho_a+\rho_f}\;E_{af}\otimes E_{a'f'}\otimes E_{dd}  \\
   &\quad=(r^{-1}s)^{2n} \sum_{a,f}(r^{-1}s)^{\rho_a+\rho_f}\;E_{af}\otimes E_{a'f'}\otimes I\\
    &\quad=(r^{-1}s)^{2n}(K\otimes I),
  \end{align*}
  where $a,c,d,f$ range from $1$ to $2n+1$.
  The first and fourth equalities are derived from Corollary \ref{cor Kij}.
  The third one follows from the fact that $R_{cd}^{cd}=R_{d'c'}^{d'c'}$.
\end{proof}

\begin{defi}\cite{BW Trans89,Klimyk, Wenzl CMP1990}
  Let $p, q\in \mathbb{C}\setminus\{0\}$, $q^2\neq 1$, and $r\in \mathbb{N}$, $r\geq 2$.
  The Birman-Wenzl-Murakami algebra $\textrm{BWM}_t(p,q)$ is the complex unital algebra with invertible generators
  $g_1, g_2, \cdots, g_{t-1}$ subject to the following relations:
  \begin{align}\label{}
     g_ig_{i+1}g_i&=g_{i+1}g_ig_{i+1},  \label{braid 1}\\
g_ig_j&=g_jg_i, \quad \mid i-j\mid \geq 2,  \label{braid 2}\\
e_ig_i&=p^{-1}e_i, \label{eigi} \\
e_ig_{i-1}^{\pm 1} e_i&=p^{\pm 1} e_i, \label{eigi-1}
  \end{align}
  where $e_i \;(1\leq i\leq t-1)$ is defined by the equation:
\begin{equation}\label{equ ei}
  (q-q^{-1})(1-e_i)=g_i-g_i^{-1}.
\end{equation}
\end{defi}

\begin{coro}\label{cor BWM}
 $e_i$ is a quasi-idempotent element and $g_i$ satisfies a cubic equation.
  To be specific,
  \begin{align}
e_i^2&=\Bigl(1+(p-p^{-1})(q-q^{-1})\Bigr)e_i, \label{equ ei ide} \\
    &(g_i-q)(g_i+q^{-1})(g_i-p^{-1})=0.\label{equ cubic}
  \end{align}
\end{coro}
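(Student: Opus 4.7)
The plan is to deduce both identities as short consequences of the defining relation (\ref{equ ei}) together with the one-sided identity $e_i g_i = p^{-1}e_i$ from (\ref{eigi}), so the whole argument reduces to a couple of elementary manipulations within the algebra.

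\smallskip

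\emph{Step 1 (quasi-idempotence of $e_i$).}  First I would derive the companion identity $e_i g_i^{-1} = p\,e_i$ by right-multiplying $e_i g_i = p^{-1}e_i$ by $g_i^{-1}$.  Next, left-multiplying the defining relation (\ref{equ ei}) by $e_i$ produces
\begin{equation*}
(q-q^{-1})(e_i - e_i^2) \;=\; e_i g_i - e_i g_i^{-1} \;=\; (p^{-1}-p)\,e_i,
\end{equation*}
and solving this linear equation for $e_i^2$ in terms of $e_i$ yields (\ref{equ ei ide}).

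\emph{Step 2 (cubic relation for $g_i$).}  I would expand $(g_i - q)(g_i + q^{-1}) = g_i^2 - (q-q^{-1})g_i - 1$.  Right-multiplying (\ref{equ ei}) by $g_i$ gives $g_i^2 - 1 = (q-q^{-1})(g_i - g_i e_i)$, whence
\begin{equation*}
(g_i - q)(g_i + q^{-1}) \;=\; -(q-q^{-1})\,g_i e_i.
\end{equation*}
Multiplying by $(g_i - p^{-1})$ on the right then reduces (\ref{equ cubic}) to the identity $g_i e_i g_i = p^{-1}\,g_i e_i$, which follows immediately by left-multiplying $e_i g_i = p^{-1}e_i$ by $g_i$.

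\smallskip

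I do not anticipate any real obstacle; both identities boil down to one-step algebraic manipulations of the two generating relations (\ref{equ ei}) and (\ref{eigi}).  The only technical subtlety to watch for is that (\ref{eigi}) is stated in the one-sided form $e_i g_i = p^{-1}e_i$, so the side of multiplication (left versus right) must be chosen consistently at each stage for the cancellations to go through.
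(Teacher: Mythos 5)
Your proof is correct and is exactly the computation the paper leaves implicit: the published proof consists of the single sentence that these are standard consequences of (\ref{eigi}) and (\ref{equ ei}), and your two steps supply precisely those manipulations, with the key identities $e_ig_i^{-1}=p\,e_i$ and $g_ie_ig_i=p^{-1}g_ie_i$ used in the only way available. One remark: carrying out your Step 1 literally yields $e_i^2=\bigl(1+(p-p^{-1})(q-q^{-1})^{-1}\bigr)e_i$, i.e.\ the factor $(q-q^{-1})$ enters as a divisor rather than a multiplier; this is the form consistent with Corollary \ref{cor Kquasi} under the specialization $p=(r^{-1}s)^{2n}$, $q=r^{-1}s$ of Proposition \ref{prop BWrep}, so the coefficient displayed in \eqref{equ ei ide} appears to contain a typo and your derivation gives the correct one.
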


\begin{proof}
  These standard results can be obtained from (\ref{eigi}) and (\ref{equ ei}).
\end{proof}

\begin{prop}\label{prop BWrep} The Birman-Wenzl-Murakami algebra
   $\textrm{BWM}_t(r^{-2n}s^{2n}, r^{-1}s)$
  admits a representation $\pi$ on the vector space $V^{\otimes t}$,
  such that $\pi(g_i)=R_{i,i+1}\;(1\leq i\leq t-1)$, where $V$ is the natural representation of $U_{r, s}(\mathfrak{so}_{2n+1})$.
\end{prop}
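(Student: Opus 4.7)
The plan is to define the representation on generators by $\pi(g_i):=R_{i,i+1}$ and verify each defining relation of $\textrm{BWM}_t(p,q)$ with $p=r^{-2n}s^{2n}$, $q=r^{-1}s$. Since relation (\ref{equ ei}) forces $\pi(e_i)=1-\bigl(\pi(g_i)-\pi(g_i)^{-1}\bigr)/(q-q^{-1})$, the first step is to exploit the Kauffman skein identity of Corollary \ref{cor R R-1}, which reads $R_{i,i+1}-R_{i,i+1}^{-1}=(r^{-1}s-rs^{-1})(I-K_{i,i+1})$. Dividing by $q-q^{-1}=r^{-1}s-rs^{-1}$ yields $\pi(e_i)=K_{i,i+1}$, so the Kauffman relation simultaneously determines the image of $e_i$ and verifies (\ref{equ ei}).

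Next I would discharge the braid-type relations. The braid identity $R_{i,i+1}R_{i+1,i+2}R_{i,i+1}=R_{i+1,i+2}R_{i,i+1}R_{i+1,i+2}$ follows from the general fact that $R_{V,V}$ is a braided isomorphism of $U_{r,s}(\mathfrak{so}_{2n+1})$-modules, observed in the remark following the Jantzen--Bergeron--Gao--Hu theorem. Far commutativity $R_{i,i+1}R_{j,j+1}=R_{j,j+1}R_{i,i+1}$ for $|i-j|\geq 2$ is immediate, since the operators act nontrivially on disjoint pairs of tensor factors.

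The remaining BWM relations are encoded in the $K$-calculus already developed. Relation (\ref{eigi}), namely $K_{i,i+1}R_{i,i+1}=p^{-1}K_{i,i+1}$, is exactly equation (\ref{equ KP}) specialized to the $i$-th and $(i{+}1)$-st tensor slots, using $(rs^{-1})^{2n}=p^{-1}$. Relation (\ref{eigi-1}), namely $K_{i,i+1}R_{i-1,i}^{\pm}K_{i,i+1}=p^{\pm}K_{i,i+1}$, is the mirror of Corollary \ref{cor KRcross}: the corollary directly gives $K_{i,i+1}R_{i+1,i+2}^{\pm}K_{i,i+1}=(r^{-1}s)^{\pm 2n}K_{i,i+1}=p^{\pm}K_{i,i+1}$, and the $R_{i-1,i}$ version is obtained by the identical matrix manipulation with the positions of $K$ and $R$ swapped, invoking the structure formulas of Corollary \ref{cor Kij} (alternatively, one can derive it from the already verified relations via the BWM axioms themselves, since the cubic relation (\ref{equ cubic}) for $\pi(g_i)$ is supplied by Lemma \ref{lemma minimal polynomial} and the quasi-idempotent identity (\ref{equ ei ide}) for $\pi(e_i)$ by Corollary \ref{cor Kquasi}).

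The only genuine obstacle is careful scalar bookkeeping: one must track that $(rs^{-1})^{2n}=p^{-1}$ in (\ref{eigi}) while $(r^{-1}s)^{\pm 2n}=p^{\pm}$ in (\ref{eigi-1}), and that the sign of $q-q^{-1}=r^{-1}s-rs^{-1}$ aligns with the form of the Kauffman relation. Once the signs and exponents are consistently matched, the statement reduces to a direct assembly of Corollaries \ref{cor R R-1}, \ref{cor Kquasi}, \ref{cor KRcross}, and Lemma \ref{lemma minimal polynomial}, with no essentially new computation required.
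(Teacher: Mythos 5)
Your proposal is correct and follows essentially the same route as the paper's proof: define $\pi(e_i)=K_{i,i+1}$ via the Kauffman skein relation of Corollary \ref{cor R R-1}, obtain the quasi-idempotent identity from Corollary \ref{cor Kquasi}, get the braid relations from the braided-isomorphism property of $R_{V,V}$, derive (\ref{eigi}) from (\ref{equ KP}), and derive (\ref{eigi-1}) from Corollary \ref{cor KRcross}. Your extra remarks on the scalar bookkeeping $p^{-1}=(rs^{-1})^{2n}$ versus $p^{\pm}=(r^{-1}s)^{\pm 2n}$ and on the index-mirrored version of (\ref{eigi-1}) are sound and only make explicit what the paper leaves implicit.
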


\begin{proof}
 We define $\pi(e_i)=K_{i,i+1}$.
 According to Corollaries \ref{cor R R-1} and \ref{cor Kquasi},
this definition is compatible with (\ref{equ ei}) and (\ref{equ ei ide}).
 It suffices to verify that $R_{i,i+1}$, $K_{j,j+1}\; (1\leq i, j\leq t-1)$ satisfy the defining relations of $\textrm{BWM}_t(r^{-2n}s^{2n}, r^{-1}s)$. Specifically:

$\bullet$  Relations (\ref{braid 1}) and (\ref{braid 2}) are clearly satisfied by the definition of the braided $R$-matrix.

$\bullet$  Relation (\ref{eigi}) follows from (\ref{equ KP}).

$\bullet$ Finally, Relation (\ref{eigi-1}) is derived using Corollary \ref{cor KRcross}.
\end{proof}

\subsection{Regulated quantum Lyndon bases by the $RLL$-formalism}\label{subsec intrinsic}

As in Hu-Wang \cite{HuWangJGP 2010}, a quantum Lyndon basis $\{\,\mathcal E_\alpha\mid \alpha\in\Phi^+\}$ has been defined as the quantum root vectors in $U^+$ for each $\alpha\in\Phi^+$ via the defining rule
\begin{equation}\label{Lyndon 1}
  \mathcal E_\gamma:=[\mathcal E_\alpha,\mathcal E_\beta]_{\langle\omega_{\beta}',\omega_{\alpha}\rangle}
=\mathcal E_\alpha\mathcal E_\beta-\langle\omega_{\beta}',\omega_{\alpha}\rangle\mathcal E_\beta\mathcal E_\alpha
\end{equation}
   for $\alpha, \gamma, \beta\in \Phi^+$ such that $\alpha<\gamma<\beta$ in the convex ordering, and $\gamma=\alpha+\beta$.
   Meanwhile, another quantum Lyndon basis is given by the defining rule
\begin{equation}\label{Lyndon 2}
  \mathcal E'_\gamma:=[\mathcal E'_\alpha,\mathcal E'_\beta]_{\langle \omega_{\alpha}',\omega_{\beta}\rangle^{-1}}
   =\mathcal E'_\alpha\mathcal E'_\beta-\langle\omega_{\alpha}',\omega_{\beta}\rangle^{-1}\mathcal E'_\beta\mathcal E'_\alpha.
\end{equation}

  This subsection is devoted to demonstrating how the two different Lyndon bases are distributed in the $L$-matrix.
  We can give an intrinsic realization (see Theorem \ref{thm Lyn}),
  which is determined only by the basic braided $R$-matrix given in Theorem \ref{main theorem} and the $RLL$-formalism (\ref{$RLL$}) \& (\ref{metric condition}).
Using (\ref{Lyndon 1}) and (\ref{Lyndon 2}), we derive the following definition,
which is a refined definition form regulated by the $RLL$ formalism out of the argumentation of Theorem \ref{thm Lyn}.

\begin{defi}\label{def Lynbas}
The Lyndon bases of $U_{r, s}(\mathfrak{so}_{2n+1})$ is given as follows:
\begin{align}\label{}
&\mathcal{E}_{i,i+1}=\mathcal E_{\alpha_{i}}=e_i, &  &1\leq i\le n,  \label{Lyn 20}\\
&\mathcal{E}_{i,j+1}=\mathcal E_{\alpha_{i,j+1}}=e_i\mathcal{E}_{i+1,j+1}-s^2\mathcal{E}_{i+1,j+1} e_i,&
	&1\leq i< j < n, \label{Lyn 21} \\
&\mathcal{E}_{i,n+1}=\mathcal E_{\alpha_{i,n+1}}=\mathcal{E}_{i, n}e_n-s^2e_n\mathcal{E}_{i, n},& & 1\le i< n,\label{Lyn 22}\\
&\mathcal{E}_{i,n'}=\mathcal E_{\beta_{i,n}}=\mathcal{E}_{i,n+1} e_n-(rs)e_n\mathcal{E}_{i,n+1},&
	&1\leq i \leq n-1, \label{Lyn 23} \\
&\mathcal{E}_{i,j'}=\mathcal E_{\beta_{i,j}}=\mathcal{E}_{i,(j+1)'} e_{j}-r^{-2}e_{j}\mathcal{E}_{i,(j+1)'},&
	&1\leq i<j\le n-1. \label{Lyn 24}
\end{align}
Another Lyndon bases (compared to Definition 2.3 \cite{HuWangJGP 2010}) can be defined in the following way:
\begin{align}\label{}
&\mathcal{E}'_{i,i+1}=\mathcal E'_{\alpha_{i}}=e_i,& &  1\leq i< n, \label{Lyn 10} \\
&\mathcal{E}'_{i,j+1}=\mathcal E'_{\alpha_{i,j+1}}=e_i\mathcal{E}'_{i+1,j+1}-r^2\mathcal{E}'_{i+1,j+1}e_i,&
 &1\leq i< j < n, \label{Lyn 11} \\
&\mathcal{E}'_{i,n+1}=\mathcal E'_{\alpha_{i,n+1}}=\mathcal{E}'_{i, n}e_n-r^2e_n\mathcal{E}'_{i, n}, & & 1\le i< n,\label{Lyn 12}\\
&\mathcal{E}'_{i,n'}=\mathcal E'_{\beta_{i,n}}=\mathcal{E}'_{i,n+1}e_n-(rs)e_n\mathcal{E}'_{i,n+1},&&
1\leq i \leq n-1, \label{Lyn 13}\\
&\mathcal{E}'_{i,j'}=\mathcal E'_{\beta_{i,j}}=\mathcal{E}'_{i,(j+1)'}e_{j}
-s^{-2}e_{j}\mathcal{E}'_{i,(j+1)'},&&
 1\leq i<j\le  n-1, \label{Lyn 14}
\end{align}
where $\alpha_{i,j+1}=\alpha_i+\cdots+\alpha_{j}=\epsilon_i-\epsilon_{j+1}$, for $1\le i\le j< n$;  $\alpha_{i,n+1}:=\alpha_i+\alpha_{i+1}+\cdots+\alpha_{n}
=\epsilon_{i}$, for $1\le i\le n$;
$\beta_{ij}=\alpha_i+\alpha_{i+1}+\cdots+2\alpha_{n}
+\alpha_{n-1}+\cdots+\alpha_{j}=\epsilon_i+\epsilon_j$, for $1\le i<j\le n$.
That is to say, the following positive roots of $\mathfrak{so}_{2n+1}$
\begin{equation*}
\begin{split}
\alpha_{12} \prec  \alpha_{13} \prec \cdots \prec \alpha_{1n}
\prec &\;\epsilon_1 \prec \beta_{1n} \prec \cdots \prec
 \beta_{13} \prec \beta_{12} \\
\alpha_{23} \prec \cdots \prec \alpha_{2n}\prec &\;\epsilon_2\prec
\beta_{2n}\prec \cdots\prec
 \beta_{23}  \\
&\cdots \\
\alpha_{n{-}1,n}\prec  &\;\epsilon_{n{-}1}\prec\beta_{n{-}1,n} \\
&\;\epsilon_n
\end{split}
\end{equation*}
are arranged as the convex ordering (cf. \cite{Beck1, HuWangJGP 2010,Kha Pac02} and references therein).
\end{defi}

\begin{defi}\label{def auto}
  Define a $\mathbb{Q}$-algebra automorphism $\zeta$ of $U_{r,s}(so_{2n+1})$ by $\zeta(r)=s$, $\zeta(s)=r$, and
  $$\zeta(e_i)=e_i, \quad \zeta(f_i)=f_i, \quad \zeta(\omega_i)=\omega_i', \quad \zeta(\omega_i')=\omega_i,
  \qquad 1\leq i\leq n.$$
\end{defi}

\noindent Obviously $\zeta^2=1$.
Then the two different Lyndon bases can be connected with $\zeta$:
$$\zeta(\mathcal{E}_\gamma)=\mathcal{E}'_\gamma, \quad \gamma\in \Phi.$$

We can also rewrite $\mathcal{E}'_{\gamma}$ in the form of $\mathcal{E}_\beta$,
where $\alpha, \beta\in \Phi$, see Lemma \ref{lemm sub eij e'ij} of the Appendix.

\begin{defi}\label{def FRT}\cite{FRT Russ 1989}
   $U(\widehat{R})$ is an associative algebra with unit, where $\widehat{R}=P\circ R$, $P$ is the usual flip map.
   It has generators $\ell_{ij}^+, \ell_{ji}^-, 1\leq i\leq j\leq 2n+1$.
   Let $L^{\pm}=(\ell^{\pm}_{ij}), 1\leq i, j\leq 2n+1$, with $\ell_{ij}^+=\ell_{ji}^-=0$,
   and $\ell_{ii}^-\ell_{ii}^+=\ell_{ii}^+\ell_{ii}^-$ for $1\leq j<i\leq 2n+1$.
   The defining relations are given in matrix form as follows:
   \begin{equation}\label{$RLL$}
     \widehat{R}L_1^{\pm}L_2^{\pm}=L_2^{\pm}L_1^{\pm}\widehat{R}, \quad \widehat{R}L_1^{+}L_2^{-}=L_2^{-}L_1^{+}\widehat{R},
   \end{equation}
   \begin{equation}\label{metric condition}
     L^\pm C(L^\pm )^t C^{-1}=I,
   \end{equation}
   where $L_1^{\pm}=L^{\pm}\otimes I, L_2^{\pm}=I\otimes L^{\pm}$, $t$ denotes the matrix transposition,
    and $C$ is the quantum metric matrix, see Remark \ref{metric matrix}.
\end{defi}

 The following theorem is the main result of this subsection.
 Notice that the anti-diagonal of $L^+$ divides the matrix into two parts: the upper part $L^{(\text{p})}$ and the lower part $L^{(\text{d})}$.

\begin{theorem} \label{thm Lyn}
In the upper triangular matrix $L^+:$
  $$L^+=
  \left(
    \begin{array}{cccccccc}
      \ell_{11}^+ &\color{blue} \ell_{12}^+ & \cdots & \color{blue}\ell_{1, n+1}^+ &\color{blue}\ell_{1, n'}^+ &\cdots & \color{blue}\ell_{12'}^+  &\color{JungleGreen} \ell_{11'}^+\\
       & \ell_{22}^+ & \cdots & \color{blue}\ell_{2,n+1}^+ &\color{blue}\ell_{2, n'}^+ & \cdots & \color{JungleGreen}\ell_{22'}^+ & \color{red}\ell_{21'}^+ \\
       &\qquad \ddots & \vdots & \vdots & \vdots & \ddots & \vdots & \vdots\\
       &&\ell_{n,n}^+ &\color{blue}\ell_{n,n+1}^+ &\color{JungleGreen}\ell_{n,n'}^+& \cdots & \color{red}\ell_{n,2'}^+& \color{red}\ell_{n,1'}^+ \\
       &  &  &\ell_{n+1,n+1}^+ &\color{red} \ell_{n+1, n'}^+ & \cdots & \color{red}\ell_{n+1, 2'}^+ & \color{red}\ell_{n+1,1'}^+ \\
       &&&&&&&\\
       &  &  &  & \ell_{n'n'}^+ & \cdots & \color{red}\ell_{n'2'}^+ & \color{red}\ell_{n'1'}^+\\
       &&&&&\ddots&\vdots&\vdots\\
       &  &  &  & &  & \ell_{2' 2'}^+ & \color{red}\ell_{2' 1'}^+ \\
       &  &  &  & & &  & \ell_{1' 1'}^+
    \end{array}
    \right),$$
two quantum Lyndon bases (labeled in red and blue respectively) of $U_{r, s}^+(\mathfrak{so}_{2n+1})$ are symmetrically distributed with respect to the anti-diagonal (labeled in green, the explicit definitions of $\ell_{ii'}^+$ and $\theta_{ii'}^+$ can be seen in (\ref{equ thetaii'})). From the $RLL$ relations, the following correspondences hold (up to scalars):
 $$\left(
\begin{array}{cccccccc}
    (\omega_{\epsilon_1}')^{-1} & \color{blue}\mathcal{E}_{12}(\omega_{\epsilon_1}')^{-1}  &  \cdots &  \color{blue}\mathcal{E}_{1,n+1}(\omega_{\epsilon_1}')^{-1} &\color{blue}\mathcal{E}_{1n'}(\omega_{\epsilon_1}')^{-1} & \cdots & \color{blue}\mathcal{E}_{12'}(\omega_{\epsilon_1}')^{-1} & \color{JungleGreen}\theta_{11'}^+(\omega_{\epsilon_1}')^{-1} \\
    &&&&&&&\\
     & (\omega_{\epsilon_2}')^{-1} & \cdots & \color{blue}\mathcal{E}_{2,n+1}(\omega_{\epsilon_2}')^{-1} &\color{blue}\mathcal{E}_{2n'}(\omega_{\epsilon_2}')^{-1} &\cdots & \color{JungleGreen}\theta_{22'}^+(\omega_{\epsilon_2}')^{-1}& \color{red}\mathcal{E}'_{12'}(\omega_{\epsilon_2}')^{-1} \\
     &\qquad \ddots & \vdots  &  \vdots  & \vdots  & \ddots & \vdots & \vdots\\
     &  &   (\omega_{\epsilon_n}')^{-1} & \color{blue}\mathcal{E}_{n,n+1}(\omega_{\epsilon_n}')^{-1} & \color{JungleGreen}\theta_{nn'}^+(\omega_{\epsilon_n}')^{-1} & \cdots &\color{red}\mathcal{E}'_{2n'}(\omega_{\epsilon_n}')^{-1}&    \color{red}\mathcal{E}'_{1n'}(\omega_{\epsilon_n}')^{-1} \\
     &&&&&&&\\
     &  &      & 1 &\color{red} \mathcal{E}'_{n,n+1}& \cdots  & \color{red}\mathcal{E}'_{2,n+1} & \color{red}\mathcal{E}'_{1,n+1} \\
     &&&&&&&\\
     &  &      &  &\omega_{\epsilon_n}'  & \cdots & \color{red}\mathcal{E}'_{2n}\omega_{\epsilon_n}'  & \color{red}\mathcal{E}'_{1n}\omega_{\epsilon_n}' \\
     &  &      &  &  & \ddots & \vdots & \vdots  \\
     &  &      &  &  &  & \omega_{\epsilon_2}' & \color{red}\mathcal{E}'_{12}\omega_{\epsilon_2}' \\
     &&&&&&&\\
     &  &      &  &  &  &  & \omega_{\epsilon_1}'\\
  \end{array}
\right)$$
 Similarly, for the lower triangular matrix $L^-$, analogous correspondences hold in $U_{r,s}^-(\mathfrak{so}_{2n+1})$.
\end{theorem}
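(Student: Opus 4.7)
The plan is to read off the defining relations componentwise from the $RLL$-formalism and then compare them directly with the bracketing rules in Definition \ref{def Lynbas}. Because the basic braided $R$-matrix $\widehat R$ has three distinct eigenvalues on $V^{\otimes 2}$ (see Lemma \ref{lemma minimal polynomial}), its coefficients $R_{ik}^{jl}$ split $\widehat R L_1^+L_2^+ = L_2^+L_1^+\widehat R$ into finitely many scalar relations among the $\ell_{ij}^+$, one for each quadruple of indices. The first step is to isolate the diagonal: taking $i=j$ and $k=l$ in the $RLL$-relation forces $\ell_{ii}^+$ to be group-like-like, and comparing with $T_1(\omega_i')^{-1}$ on the module $V$ in Lemma \ref{lemm vect rep} identifies $\ell_{ii}^+$ (up to normalisation) with $(\omega_{\varepsilon_i}')^{-1}$ for $1\le i\le n$, with $1$ for $i=n+1$, and with $\omega_{\varepsilon_{i'}}'$ for $n'\le i\le 1'$, exactly as predicted by the target matrix.

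Next I would build the blue region $L^{(\mathrm p)}$ by a double induction: first along the top row (fix $i=1$, vary $j$), then descending row by row. For $\ell^+_{1j}$ with $j\le n+1$, the relevant component of $\widehat R L_1^+L_2^+=L_2^+L_1^+\widehat R$, together with the eigenvalue data already encoded in Theorem \ref{main theorem}, rewrites as
\begin{equation*}
\ell^+_{1,j+1}\,\ell^+_{j,j+1} - s^2\,\ell^+_{j,j+1}\,\ell^+_{1,j+1} = (\text{scalar})\cdot\ell^+_{1,j+2}\,\ell^+_{jj},
\end{equation*}
which matches (\ref{Lyn 21}) after absorbing the $(\omega_{\varepsilon_1}')^{-1}$ tail into $\ell^+_{1,j+1}$. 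The same mechanism produces the coefficient $s^2$ when crossing the simple root $\alpha_n$ as in (\ref{Lyn 22}), the coefficient $rs$ in (\ref{Lyn 23}) which reflects the short-root eigenvalue $r^{2n}s^{-2n}$ localized at $n+1$, and $r^{-2}$ in (\ref{Lyn 24}) via the crossing symmetry of Proposition \ref{prop crossing symmetry finite}. I would then push this to arbitrary rows by showing that the $RLL$-relation is compatible with shifting the index $i\mapsto i+1$, so the recursion terminates with $\mathcal E_{ij}$ in the $(i,j)$-slot above the anti-diagonal.

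Symmetrically, for the red region $L^{(\mathrm d)}$ below the anti-diagonal I would run the same argument but starting from the bottom row and using the automorphism $\zeta$ of Definition \ref{def auto}, which swaps $r\leftrightarrow s$ and $\omega_i\leftrightarrow\omega_i'$ and sends $\mathcal E_\gamma\mapsto \mathcal E'_\gamma$. Concretely, the appropriate components of $RLL$ produce the dual quasi-commutators appearing in (\ref{Lyn 11})--(\ref{Lyn 14}) with coefficients $r^2, r^2, rs, s^{-2}$ instead of $s^2, s^2, rs, r^{-2}$; this is precisely the image under $\zeta$. The two recursions then meet at the anti-diagonal, where the entry $\ell_{ii'}^+$ must simultaneously be expressible as $\mathcal E_{i,(i+1)'}$ bracketed with $e_i$ (from above) and as $\mathcal E'_{i,i+1}$ bracketed with $e_n,\dots,e_i$ (from below). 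I would treat this compatibility by introducing $\theta_{ii'}^+$ as the common value dictated by the $RLL$-recursion from both sides and invoking the metric condition $L^+ C(L^+)^t C^{-1}=I$ of (\ref{metric condition}) to pin down the remaining scalar and to verify that the two expressions coincide; this is exactly the content checked in the Appendix.

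The main obstacle will be this anti-diagonal gluing. On one hand, $\widehat R$ has an eigenvalue collision at the short root position $n+1$ (the eigenvalue $(rs^{-1})^{2n}$ mixes factors that are otherwise separated), so the $RLL$-relations along the anti-diagonal contain extra terms coming from the quasi-idempotent $K$ of Corollary \ref{cor Kquasi}. Handling these terms correctly requires using the Birman-Wenzl-Murakami representation of Proposition \ref{prop BWrep} to cancel redundant contributions, and then showing that the residual relation matches neither (\ref{Lyn 24}) nor (\ref{Lyn 14}) exactly but an average of them weighted by the metric matrix $C$. Once this is done the green entries $\theta_{ii'}^+$ are determined intrinsically, and the distribution law depicted in the two matrices of the statement follows by assembling the inductive steps; the case of $L^-$ is then obtained verbatim by applying the antipode $S$ together with the second $RLL$-relation in (\ref{$RLL$}).
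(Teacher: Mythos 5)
Your overall strategy coincides with the paper's: extract componentwise scalar relations from $\widehat R L_1^+L_2^+ = L_2^+L_1^+\widehat R$, recognize each as a quasi-commutator recursion of the form $\ell^+_{k,j+1}\propto (\ell^+_{jj})^{-1}\bigl[\ell^+_{kj}\ell^+_{j,j+1}-\ell^+_{j,j+1}\ell^+_{kj}\bigr]$ whose coefficient ($s^2$, $rs$, $r^{-2}$, resp.\ $r^2$, $rs$, $s^{-2}$) reproduces the bracketing rules (\ref{Lyn 20})--(\ref{Lyn 24}) and (\ref{Lyn 10})--(\ref{Lyn 14}) once the Cartan tails $(\omega'_{\epsilon_k})^{-1}$ (resp.\ $\omega'_{\epsilon_j}$) are conjugated through, and propagate by induction along the row (blue region) and along the column (red region). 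Two smaller remarks: your displayed recursion has an index slip (the weights only add up for $\ell^+_{1,j}\ell^+_{j,j+1}$, or for $\ell^+_{1,j+1}\ell^+_{j+1,j+2}$, not for $\ell^+_{1,j+1}\ell^+_{j,j+1}$); and the paper obtains the red region by a second direct $RLL$ computation rather than via $\zeta$ --- your shortcut is plausible but requires checking that $\zeta$ is compatible with the $RLL$ structure, since it swaps $r\leftrightarrow s$ and hence alters $\widehat R$. Likewise $L^-$ is handled by repeating the computation, not by the antipode.

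The genuine problem is your final paragraph on the anti-diagonal. There is no two-sided gluing condition to reconcile: the blue recursion in row $i$ stops at $\ell^+_{i,(i+1)'}$, and the entry $\ell^+_{ii'}$ is produced by one further application of the very same $RLL$ recursion, namely the quasi-commutator of $\ell^+_{i,(i+1)'}$ with the red simple entry $\ell^+_{(i+1)',i'}$, giving $\theta^+_{ii'}\propto \mathcal E_{i,(i+1)'}e_i - r^{-4}e_i\mathcal E_{i,(i+1)'}$ as in (\ref{equ thetaii'}); it fails to be a root vector simply because $2\epsilon_i\notin\Phi$, and that is the whole story. No cancellation via the quasi-idempotent $K$ or the Birman--Wenzl--Murakami structure is needed, and the proposed resolution that the residual relation is ``an average of (\ref{Lyn 24}) and (\ref{Lyn 14}) weighted by the metric matrix $C$'' is not a meaningful operation and would not produce $\theta^+_{ii'}$. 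The metric condition (\ref{metric condition}) enters only as an a posteriori consistency check (carried out in the Appendix), not as an ingredient determining the anti-diagonal entries or any scalar. You also omit the step that actually constrains the normalizations: applying the mixed relation $\widehat R L_1^+L_2^-=L_2^-L_1^+\widehat R$ to $v_{i+1}\otimes v_i$ forces $B^+_{i,i+1}B^-_{i+1,i}=-(r^2-s^2)^2$ and its analogues, which is what makes the ``up to scalars'' identification compatible with relation (B4).
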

\begin{proof}
 (I)
To begin with, we consider the upper part $L^{+(\text{p})}$, and first fix some $k$ in the range $1 \leq k \leq n-2$ to analyze its constituents $\ell_{kj}^+$ where $k < j \leq (k+1)'$.

From the $RLL$ relation, we have
$$\widehat{R}L_1^+L_2^+(v_{k+1}\otimes v_{k+2})=L_2^+L_1^+\widehat{R}(v_{k+1}\otimes v_{k+2}).$$
Observe the coefficients of $v_k\otimes v_{k+1}$ on both sides.
The coefficient of the left-hand side is equal to
\begin{align*}
   & \widehat{R}\Bigl(\ell_{k, k+1}^+\ell_{k+1, k+2}^+v_k\otimes v_{k+1}
   +\ell_{k+1, k+1}^+\ell_{k, k+2}^+v_{k+1}\otimes v_k\Bigr)\\
  &\quad = \Bigl[(rs)\ell_{k, k+1}^+\ell_{k+1, k+2}^+
   +(r^{-1}s{-}rs^{-1})\ell_{k+1, k+1}^+\ell_{k, k+2}^+\Bigr]v_k\otimes v_{k+1},
\end{align*}
while that of the right-hand side is
$rs\,\ell_{k+1, k+2}^+\ell_{k, k+1}^+v_k\otimes v_{k+1}. $
Therefore we have
\begin{equation*}\label{}
  \ell_{k, k+2}^+=\frac{rs}{rs^{-1}-r^{-1}s}(\ell_{k+1, k+1}^+)^{-1}\Bigl[\ell_{k, k+1}^+
\ell_{k+1, k+2}^+-\ell_{k+1, k+2}^+\ell_{k, k+1}^+\Bigr].
\end{equation*}

Assume that $\ell_{k+1, k+1}^+= (\omega_{\epsilon_{k+1}}')^{-1}$,
  $\ell^+_{k, k+1}=B_{k, k+1}^+e_k(\omega'_{\epsilon_k})^{-1}$
and $\ell^+_{k+1, k+2}=B_{k+1, k+2}^+e_{k+1}(\omega'_{\epsilon_{k+1}})^{-1}$,
where $B_{k, k+1}^+$ and $B_{k+1, k+2}^+$ are some scalars (note that $B_{k+1, k+1}^+=1$).
Then
\begin{align*}
  \ell_{k, k+2}^+ & =\frac{rs B_{k, k+1}^+ B_{k+1, k+2}^+}{rs^{-1}-r^{-1}s}\omega_{\epsilon_{k+1}}'\Bigl[e_k(\omega'_{\epsilon_k})^{-1}e_{k+1}(\omega'_{\epsilon_{k+1}})^{-1}
  -e_{k+1}(\omega'_{\epsilon_{k+1}})^{-1}e_k(\omega'_{\epsilon_k})^{-1}\Bigr] \\
   &=\frac{B_{k, k+1}^+ B_{k+1, k+2}^+}{r^2-s^2}\Bigl[e_ke_{k+1}-s^2 e_{k+1}e_k\Bigr](\omega'_{\epsilon_k})^{-1}\\
  &:=B_{k, k+2}^+\mathcal{E}_{k, k+2}(\omega'_{\epsilon_k})^{-1}.
\end{align*}

Using induction, we have
\begin{equation}\label{equ ell kj}
  \ell_{k, j}^+=B_{k, j}^+\mathcal{E}_{k, j}(\omega'_{\epsilon_k})^{-1}=\frac{B_{k, j-1}^+ B_{j-1, j}^+}{r^2-s^2}\mathcal{E}_{k, j}(\omega'_{\epsilon_k})^{-1}, \quad k+1\leq j\leq n.
\end{equation}

Using induction and the recursive relation
\begin{equation*}\label{}
  \ell_{k, n+1}^+=\frac{1}{rs^{-1}-r^{-1}s}(\ell_{n, n}^+)^{-1}\Bigl[(rs)\ell_{k, n}^+ \ell_{n, n+1}^+
-\ell_{n, n+1}^+\ell_{k, n}^+\Bigr],
\end{equation*}
we conclude that
\begin{align}\label{ell k, n+1}
  \ell_{k, n+1}^+ & =\frac{B_{k, n}^+ B_{n, n+1}^+}{rs^{-1}-r^{-1}s}\;\omega'_n\Bigl[(rs)\mathcal{E}_{k, n}(\omega'_{\epsilon_k})^{-1}e_n(\omega'_n)^{-1}-e_n(\omega'_n)^{-1}\mathcal{E}_{k, n}(\omega'_{\epsilon_k})^{-1}\Bigr] \nonumber\\
  &=\frac{B_{k, n}^+ B_{n, n+1}^+}{r^2-s^2}\Bigl[\mathcal{E}_{k, n}e_n-s^2e_n\mathcal{E}_{k, n}\Bigr](\omega'_{\epsilon_k})^{-1}\\
  &:=B_{k, n+1}^+\mathcal{E}_{k, n+1}(\omega'_{\epsilon_k})^{-1}.\nonumber
\end{align}

Therefore, we derive that the half of $k$-row elements of the upper part under the anti-diagonal of $L^+$ are as follows:
\begin{equation}\label{norbasis1}
  \ell_{k, j}^+=B_{k, j}^+\,\mathcal{E}_{k, j}(\omega'_{\epsilon_k})^{-1}, \quad k+1\leq j\leq n+1.
\end{equation}

To achieve the remaining elements $\ell_{k, j'}^+$ for $k<j\leq n$, we calculate step by step. From
\begin{equation*}\label{}
  \ell_{k, n'}^+=\frac{1}{rs^{-1}-r^{-1}s}(\ell_{n+1, n+1}^+)^{-1}\Bigl[\ell_{k, n+1}^+\ell_{n+1, n'}^+-\ell_{n+1, n'}^+\ell_{k, n+1}^+\Bigr],
\end{equation*}
we find that (Here we have used the fact that $\ell_{n+1, n+1}^+=1$.)
\begin{align}\label{equ ell k n'}
  \ell_{k, n'}^+ &=\frac{B_{k, n+1}^+ B_{n+1, n'}^+}{rs^{-1}-r^{-1}s}\Bigl[\mathcal{E}_{k, n+1}(\omega'_{\epsilon_k})^{-1}e_n-e_n\mathcal{E}_{k, n+1}(\omega'_{\epsilon_k})^{-1}\Bigr]  \nonumber\\
  &=\frac{B_{k, n+1}^+ B_{n+1, n'}^+}{r^2-s^2}\Bigl[\mathcal{E}_{k, n+1}e_n-(rs)e_n\mathcal{E}_{k, n+1}\Bigr](\omega'_{\epsilon_k})^{-1}\\
  &:= B_{k, n'}^+ \mathcal{E}_{k, n'}(\omega'_{\epsilon_k})^{-1}. \nonumber
\end{align}

By the same token,
\begin{align*}
  \ell_{k, (n-1)'}^+ &=\frac{1}{r^2-s^2}(\ell_{n', n'}^+)^{-1}\Bigl[\ell_{k, n'}^+\ell_{n', (n-1)'}^+-\ell_{n', (n-1)'}^+\ell_{k, n'}^+\Bigr]\\
   &=\frac{B_{k, n'}^+ B_{n', (n-1)'}^+}{r^2-s^2}(\omega'_n)^{-1}\Bigl[\mathcal{E}_{k, n'}(\omega'_{\epsilon_k})^{-1}e_{n-1}\omega'_n
   -e_{n-1}\omega'_n\mathcal{E}_{k, n'}(\omega'_{\epsilon_k})^{-1}\Bigr] \\
   &=\frac{r^2s^2 B_{k, n'}^+ B_{n', (n-1)'}^+}{r^2-s^2}\Bigl[\mathcal{E}_{k, n'}e_{n-1}-r^{-2}e_{n-1}\mathcal{E}_{k, n'}\Bigr](\omega'_{\epsilon_k})^{-1} \\
  &:= B_{k, (n-1)'}^+\mathcal{E}_{k, (n-1)'}(\omega'_{\epsilon_k})^{-1}.
\end{align*}

Using induction and the recursive relation:
\begin{equation*}\label{}
  \ell_{k, j'}^+=\frac{1}{r^2-s^2}(\ell_{(j+1)',(j+1)'}^+)^{-1}\Bigl[\ell_{k, (j+1)'}^+ \ell_{(j+1)', j'}^+
-\ell_{(j+1)', j'}^+\ell_{k, (j+1)'}^+\Bigr],
\end{equation*}
we conclude that
\begin{align}\label{equ ell kj'}
   \ell_{k, j'}^+&=\frac{B_{k, (j+1)'}^+ B_{(j+1)', j'}^+}{r^2-s^2}(\omega'_{\epsilon_{j+1}})^{-1}\Bigl[\mathcal{E}_{k, (j+1)'}(\omega_{\epsilon_k}')^{-1}e_j\omega_{\epsilon_{j+1}}' - e_j\omega_{\epsilon_{j+1}}'\mathcal{E}_{k, (j+1)'}(\omega_{\epsilon_k}')^{-1}\Bigr] \nonumber \\
   & =\frac{r^2s^2B_{k, (j+1)'}^+ B_{(j+1)', j'}^+}{r^2-s^2}\Bigl[r^{-2}e_j\mathcal{E}_{k, (j+1)'}-\mathcal{E}_{k, (j+1)'}e_j\Bigr](\omega_{\epsilon_k}')^{-1}\\
   &:=B_{k, j'}^+\mathcal{E}_{k, j'}(\omega_{\epsilon_k}')^{-1}. \nonumber
\end{align}

 Therefore,
\begin{equation}\label{norbasis2}
  \ell_{k, s'}^+=B_{k, s'}^+\mathcal{E}_{k, s'}(\omega_{\epsilon_k}')^{-1}, \qquad k<s\leq n-1.
\end{equation}

Now we consider the case $k=n-1$.
Assume that $\ell_{n-1, n}^+= B_{n-1, n}^+e_{n-1}^+(\omega'_{\epsilon_{n-1}})^{-1}$
and $\ell_{n, n+1}^+=B_{n,n+1}^+e_n (\omega'_{\epsilon_{n}})^{-1}$, where $B_{n-1,n}^+$ and $B_{n,n+1}^+$ are some scalars.
From the $RLL$ relation,
we conclude that
\begin{align}\label{ell n-1, n+1}
  \ell_{n-1, n+1}^+&=\;\frac{1}{rs^{-1}-r^{-1}s}(\ell_{n, n}^+)^{-1}\Bigl[rs\ell_{n-1, n}^+
\ell_{n, n+1}^+-\ell_{n, n+1}^+\ell_{n-1, n}^+\Bigr] \nonumber\\
&\;=\frac{B_{n-1, n}^+B_{n,n+1}^+}{rs^{-1}-r^{-1}s} \omega'_n\Bigl[rs e_{n-1}(\omega'_{\epsilon_{n-1}})^{-1}e_n(\omega'_{\epsilon_{n}})^{-1}
-e_n(\omega'_{\epsilon_{n}})^{-1}e_{n-1}(\omega'_{\epsilon_{n-1}})^{-1}\Bigr]\\
&\;=\frac{B_{n-1, n}^+B_{n,n+1}^+}{r^2-s^2}\Bigl[e_{n-1}e_n-s^2 e_n e_{n-1}\Bigr](\omega'_{\epsilon_{n-1}})^{-1} \nonumber \\
&\;:=B_{n-1, n+1}^+ \mathcal{E}_{n-1, n+1}(\omega'_{\epsilon_{n-1}})^{-1}. \nonumber
\end{align}

Next, assume that $\ell_{n+1, n'}^+=B_{n+1, n'}^+e_n$.
From the $RLL$ relation, we obtain that
\begin{align*}\label{}
  \ell_{n-1, n'}^+&=\;\frac{1}{rs^{-1}-r^{-1}s}(\ell_{n+1, n+1}^+)^{-1}\Bigl[\ell_{n-1, n+1}^+
\ell_{n+1, n'}^+-\ell_{n+1, n'}^+\ell_{n-1, n+1}^+\Bigr]\\
&\;=\frac{B_{n-1, n}^+B_{n,n+1}^+}{rs^{-1}-r^{-1}s} \Bigl[\mathcal{E}_{n-1, n+1}(\omega'_{\epsilon_{n-1}})^{-1}e_n
-e_n\mathcal{E}_{n-1, n+1}(\omega'_{\epsilon_{n-1}})^{-1}\Bigr]\\
&\;=\frac{B_{n-1, n+1}^+B_{n+1, n'}^+}{r^2-s^2}\Bigl[\mathcal{E}_{n-1, n+1} e_n-rs e_n\mathcal{E}_{n-1, n+1}\Bigr](\omega'_{\epsilon_{n-1}})^{-1} \\
&\;:=B_{n-1, n'}^+ \mathcal{E}_{n-1, n'}(\omega'_{\epsilon_{n-1}})^{-1}.
\end{align*}

For $k=n$, there is only one simple root vector $\ell_{n, n+1}^+=B_{n, n+1}^+e_n$.
So we have finished the upper part $L^{+(\text{p})}$.

(II) Now we consider the constituents $\ell_{jk'}^+$'s for $k<j\le (k+1)'$ of the lower part $L^{+(\text{d})}$.
We also firstly fix some $k$ such that $1\leq k \leq n-2$.

From the $RLL$ relation, we have
$$\ell_{(k+2)', k'}^+=\frac{1}{s^2-r^2}(\ell_{(k+1)', (k+1)'}^+)^{-1}\Bigl[\ell_{(k+1)', k'}^+\ell_{(k+2)',(k+1)'}^+-\ell_{(k+2)',(k+1)'}^+\ell_{(k+1)', k'}^+
\Bigr].$$
Assume that $\ell^+_{(k+1)',k'}=B^+_{(k+1)',k'}e_k\omega_{\epsilon_{k+1}}'$
and $\ell_{(k+2)',(k+1)'}^+=B^+_{(k+2)',(k+1)'}e_{k+1}\omega_{\epsilon_{k+2}}'$,
then we have
\begin{align*}
  \ell_{(k+2)', k'}^+ & =\frac{B^+_{(k+2)',(k+1)'}B^+_{(k+1)',k'}}{s^2-r^2}(\omega_{\epsilon_{k+1}}')^{-1}\Bigl[e_k\omega_{\epsilon_{k+1}}' e_{k+1}\omega_{\epsilon_{k+2}}'
  - e_{k+1}\omega_{\epsilon_{k+2}}' e_k\omega_{\epsilon_{k+1}}'\Bigr] \\
   &=\frac{s^2 B^+_{(k+2)', (k+1)'} B^+_{(k+1)',k'}}{s^2-r^2}
   \Bigl[e_ke_{k+1}-r^2e_{k+1}e_k\Bigr]\omega_{\epsilon_{k+2}}'\\
  &:=B^+_{(k+2)', k'} \mathcal{E}'_{k, k+2}\omega_{\epsilon_{k+2}}'.
\end{align*}

Using induction, we have
\begin{equation}\label{equ ell j'k'}
  \ell_{j', k'}^+=\;\frac{s^2B_{j', (k+1)'}^+ B_{(k+1)', k'}^+}{s^2-r^2}\mathcal{E}'_{k, j}\omega'_{\epsilon_j}, \quad k+1\leq j\leq n.
\end{equation}

Using induction and the recursive relation:
$$\ell_{n+1, k'}^+=\frac{1}{r^{-1}s-rs^{-1}}(\ell_{n', n'}^+)^{-1}\Bigl[r^{-1}s^{-1}\ell_{n', k'}^+ \ell_{n+1, n'}^+-\ell_{n+1, n'}^+\ell_{n', k'}^+\Bigr],$$
we conclude that
\begin{align}\label{equ ell n+1, k'}
  \ell_{n+1, k'}^+ & =\frac{B_{n+1, n'} B_{n', k'}}{r^{-1}s-rs^{-1}}(\omega_n')^{-1}\Bigl[r^{-1}s^{-1}\mathcal{E}'_{k, n}\omega_n' e_n- e_n\mathcal{E}'_{k, n}\omega_n'\Bigr] \nonumber\\
  &=\frac{s^2B_{n+1, n'} B_{n', k'}}{s^2-r^2}(\mathcal{E}'_{k, n}e_n-r^2e_n\mathcal{E}'_{k, n})\\
  &:=B^+_{n+1, k'} \mathcal{E}'_{k, n+1}. \nonumber
\end{align}

Therefore, we derive that the half of $k'$-column elements of the lower part under the anti-diagonal of triangular matrix $L$ are as follows
\begin{equation}\label{}
  \ell_{(k+j)', k'}^+=\left\{
                                    \begin{array}{ll}
                                      B_{(k+j)', k'}^+ \mathcal{E}'_{k, k+j}\omega_{\epsilon_{k+j}}', & \hbox{$1\leq j\leq n-k$;} \\
                                      B_{n+1, k'}^+ \mathcal{E}'_{k, n+1}, & \hbox{$j=n-k+1$.}
                                    \end{array}
                                  \right.
\end{equation}

To achieve the remaining elements $\ell_{jk'}^+$ for $k<j\le n$, we do it case by case.
From
$$\ell_{n, k'}^+=\frac{1}{r^{-1}s-rs^{-1}}(\ell^+_{n+1, n+1})^{-1}\Bigl[\ell_{n+1, k'}^+\ell_{n, n+1}^+-\ell_{n, n+1}^+ \ell_{n+1, k'}^+\Bigr],$$
we find that (Here we have used the fact that $\ell_{n+1, n+1}^+=1$.)
\begin{align}\label{equ ell n k'}
  \ell_{n, k'}^+ &=\frac{B_{n, n+1}^+B_{n+1, k'}^+}{r^{-1}s-rs^{-1}}\Bigl[\mathcal{E}'_{k, n+1}e_n(\omega_n')^{-1}-e_n(\omega_n')^{-1}\mathcal{E}'_{k, n+1}\Bigr] \nonumber \\
  &=\frac{B_{n, n+1}^+B_{n+1, k'}^+}{r^{-1}s-rs^{-1}}\Bigl[\mathcal{E}'_{k, n+1}e_n-rse_n\mathcal{E}'_{k, n+1}\Bigr](\omega_n')^{-1}\\
  &:=B_{n, k'}^+ \mathcal{E}'_{k, n'}(\omega_n')^{-1}. \nonumber
\end{align}

Similarly, we have
\begin{align}\label{equ ell n-1, k'}
  \ell_{n-1, k'}^+ &=\frac{rs}{r^{-1}s-rs^{-1}}(\ell_{n, n}^+)^{-1}\Bigl[\ell_{n, k'}^+\ell_{n-1, n}^+-\ell_{n-1, n}^+ \ell_{n, k'}^+\Bigr] \nonumber\\
   &=\frac{rs B_{n-1, n}^+ B_{n, k'}^+}{r^{-1}s-rs^{-1}}\omega_n'\Bigl[\mathcal{E}'_{k, n'}(\omega_n')^{-1}e_{n-1}(\omega_{\epsilon_{n-1}}')^{-1}-e_{n-1}(\omega_{\epsilon_{n-1}}')^{-1}\mathcal{E}'_{k, n'}(\omega_n')^{-1}\Bigr] \\
   &=\frac{r^{-1}s B_{n-1, n}^+B_{n, k'}^+}{r^{-1}s-rs^{-1}}\Bigl[\mathcal{E}'_{k, n'}e_{n-1}-s^{-2}e_{n-1}\mathcal{E}'_{k, n'}\Bigr](\omega_{\epsilon_{n-1}}')^{-1} \nonumber\\
  &:=B_{n-1, k'}^+\mathcal{E}'_{k, (n-1)'}(\omega_{\epsilon_{n-1}}')^{-1}. \nonumber
\end{align}

Using induction on $j \;(j>k)$ and the recursive relation:
$$\ell_{j-1, k'}^+=\frac{rs}{r^{-1}s-rs^{-1}}(\ell_{j,j}^+)^{-1}\Bigl[\ell_{j, k'}^+ \ell_{j-1, j}^+-\ell_{j-1, j}^+\ell_{j, k'}^+\Bigr],$$
we conclude that
\begin{align}\label{equ ell j-1, k'}
   \ell_{j-1, k'}^+&=\frac{rs B_{j-1, j}^+B_{j, k'}^+}{r^{-1}s-rs^{-1}}\omega_{\epsilon_j}'\Bigl[\mathcal{E}'_{k, j'}(\omega_{\epsilon_j}')^{-1}e_{j-1}(\omega_{\epsilon_{j-1}}')^{-1}-e_{j-1}(\omega_{\epsilon_{j-1}}')^{-1}\mathcal{E}'_{k, j'}(\omega_{\epsilon_j}')^{-1}\Bigr]  \nonumber\\
   & =\frac{r^{-1}s B_{j-1, j}^+B_{j, k'}^+}{r^{-1}s-rs^{-1}}\Bigl[\mathcal{E}'_{k, j'}e_{j-1}-s^{-2}e_{j-1}\mathcal{E}'_{k, j'}\Bigr](\omega_{\epsilon_{j-1}}')^{-1}\\
   &:=B_{j-1, k'}^+ \mathcal{E}'_{k, (j-1)'}(\omega_{\epsilon_{j-1}}')^{-1}. \nonumber
\end{align}

Therefore,
\begin{equation}\label{}
  \ell_{jk'}^+=B_{j, k'}^+\mathcal{E}'_{k, j'}(\omega_{\epsilon_j}')^{-1}, \qquad k<j\leq n-1.
\end{equation}

Now we consider the case $k=n-1$.
Assume that $\ell_{n', (n-1)'}^+= B_{n', (n-1)'}^+e_{n-1}\omega'_n$ and $\ell_{n+1, n'}^+=B_{n+1, n'}^+e_n$.
From the $RLL$ relation, we have
$$\ell_{n+1, (n-1)'}^+=\frac{1}{r^{-1}s-rs^{-1}}(\ell_{n', n'})^{-1}\Bigl[r^{-1}s^{-1}\ell_{n', (n-1)'}^+ \ell_{n+1, n'}^+-\ell_{n+1, n'}^+\ell_{n', (n-1)'}^+\Bigr].$$
Therefore we conclude that
\begin{align} \label{equ ell n+1 n-1'}
   \ell_{n+1, (n-1)'}^+&=\frac{B_{n+1, n'}^+B_{n', (n-1)'}^+}{r^{-1}s-rs^{-1}}(\omega'_n)^{-1}\Bigl[r^{-1}s^{-1}e_{n-1}(\omega'_n)^{-1}e_n-e_ne_{n-1}(\omega'_n)^{-1}\Bigr] \nonumber \\
   & =\frac{s^2 B_{j-1, j}^+B_{j, k'}^+}{s^2-r^2}\Bigl[e_{n-1}e_n-r^2e_ne_{n-1}\Bigr]\\
   &:=B_{n+1, (n-1)'}^+ \mathcal{E}'_{n-1, n+1}. \nonumber
\end{align}

Next, assume that $\ell_{n, n+1}^+=B_{n, n+1}^+e_n(\omega'_n)^{-1}$.
From the $RLL$ relation, we obtain that
\begin{align*}\label{}
  \ell_{n, (n-1)'}^+&=\;\frac{1}{r^{-1}s-rs^{-1}}(\ell_{n+1, n+1}^+)^{-1}\Bigl[\ell_{n+1, (n-1)'}^+
\ell_{n, n+1}^+-\ell_{n, n+1}^+\ell_{n+1, (n-1)'}^+\Bigr]\\
&\;=\frac{B_{n, n+1}^+B_{n+1, (n-1)'}^+}{r^{-1}s-rs^{-1}} \Bigl[\mathcal{E}'_{n-1, n+1}e_n(\omega'_n)^{-1}
-e_n(\omega'_n)^{-1}\mathcal{E}'_{n-1, n+1}\Bigr]\\
&\;=\frac{B_{n, n+1}^+B_{n+1, (n-1)'}^+}{r^{-1}s-rs^{-1}}\Bigl[\mathcal{E}'_{n-1, n+1} e_n-rs e_n\mathcal{E}'_{n-1, n+1}\Bigr](\omega'_n)^{-1} \\
&\;:=B_{n, (n-1)'}^+ \mathcal{E}'_{n-1, n'}(\omega'_n)^{-1}.
\end{align*}

For $k=n$, there is only one simple root vector $\ell_{n+1, n'}^+=B_{n+1, n'}^+e_n$.
So we have finished the upper part $L^{+(\text{d})}$.

(III)
For those elements $\ell_{ii'}^+$ (where $1\leq i< n$), we derive their formulas from
\begin{align}\label{equ thetaii'}
  \ell_{ii'}^+ &= \frac{1}{r^2 - s^2} (\ell_{(i+1)',(i+1)'}^+)^{-1}\Bigl[\ell_{i, (i+1)'}^+ \ell_{(i+1)', i'}^+ - \ell_{(i+1)', i'}^+\ell_{i, (i+1)'}^+\Bigr],  \nonumber \\
   &= \frac{B_{i, (i+1)'}^+ B_{(i+1)', i'}^+}{r^2 - s^2}(\omega'_{\epsilon_{i+1}})^{-1}\Bigl[\mathcal{E}_{i, (i+1)'}(\omega'_{\epsilon_i})^{-1}e_i\omega'_{\epsilon_{i+1}} - e_i\omega'_{\epsilon_{i+1}}\mathcal{E}_{i, (i+1)'}(\omega'_{\epsilon_i})^{-1}\Bigr] \\
   &= \frac{r^4s^2 B_{i, (i+1)'}^+ B_{(i+1)', i'}^+ }{r^2 - s^2} \Bigl(\mathcal{E}_{i, (i+1)'}e_i - r^{-4} e_i\mathcal{E}_{i, (i+1)'}\Bigr)(\omega'_{\epsilon_i})^{-1}, \nonumber\\
  &:= B_{i, i'}^+\theta_{ii'}^+(\omega'_{\epsilon_i})^{-1}. \nonumber
\end{align}
For $\ell_{n, n'}^+$, we have
\begin{align}\label{equ theta nn'}
  \ell_{nn'}^+ &= \frac{1}{rs^{-1} - r^{-1}s} (\ell_{n+1, n+1}^+)^{-1}\Bigl[\ell_{n, n+1}^+ \ell_{n+1, n'}^+ - \ell_{n+1, n'}^+\ell_{n, n+1}^+\Bigr],  \nonumber \\
   &= \frac{B_{n, n+1}^+ B_{n+1, n'}^+}{rs^{-1} - r^{-1}s}\Bigl[e_n(\omega'_n)^{-1}e_n - e_ne_n(\omega'_n)^{-1}\Bigr] \\
   &= \frac{rs(rs^{-1} - 1)}{r^2 - s^2} B_{n, n+1}^+ B_{n+1, n'}^+ e_n^2(\omega'_n)^{-1} \nonumber \\
  &:= B_{n, n'}^+\theta_{nn'}^+(\omega'_n)^{-1}. \nonumber
\end{align}
Clearly, $\theta_{ii'}^+$ for $1\leq i\leq n$ (with $\theta_{ii'}^+$ for $1\leq i<n$ from \eqref{equ thetaii'} and $\theta_{nn'}^+$ from  \eqref{equ theta nn'}) are not quantum root vectors of $U_{r, s}^+(\mathfrak{so}_{2n+1})$, since $2(\alpha_i+\alpha_{i+1}+\cdots+\alpha_n)$ is not a root of $\mathfrak{so}_{2n+1}$.

(IV) For the lower triangular matrix $L^-$, we can also proceed similarly to determine its corresponding quantum root vectors in $U_{r,s}^-(\mathfrak{so}_{2n+1})$, along with the coefficients $B_{j, i}^-$ for $1\leq i\leq j\leq n$.
Notably, similar to the anti-diagonal elements $\ell_{ii'}^+ \; (1\leq i\leq n)$ of the upper triangular part $L^+$, the anti-diagonal elements $\ell_{i'i}^- \; (1\leq i\leq n)$ of $L^-$ do not correspond to quantum root vectors.

 (V) From the preceding calculations, if we specify explicit values for $B_{i, i+1}^+$ and $B_{i+1, i}^-$ (for $1 \leq i \leq 2'$), we can recursively determine all the values of $B_{i, j}^+$ and $B_{j, i}^-$ for $1 \leq i < j \leq 1'$.

To further constrain these coefficients, we use the $RLL$ relation for $1 \leq i \leq 2':$
$$\widehat{R}L_1^+L_2^-(v_{i+1}\otimes v_i) = L_2^-L_1^+\widehat{R}(v_{i+1}\otimes v_i).$$

By comparing the relevant coefficients of $v_i\otimes v_{i+1}$ on both sides,
we derive the following relations, splitting into two cases based on the range of $i$:
\begin{align*}
  rs\ell_{i, i+1}^+ \ell_{i+1, i}^- - r^{-1}s^{-1}\ell_{i+1, i}^-\ell_{i, i+1}^+ & = (r^{-1}s - rs^{-1}) \left(\ell_{i+1, i+1}^- \ell_{ii}^+ - \ell_{i+1, i+1}^+\ell_{i, i}^-\right), \quad 1 \leq i < n; \\
  \ell_{n, n+1}^+ \ell_{n+1, n}^- - \ell_{n+1, n}^-\ell_{n, n+1}^+ & = (r^{-1}s - rs^{-1}) \left(\ell_{n+1, n+1}^- \ell_{nn}^+ - \ell_{n+1, n+1}^+\ell_{n, n}^-\right), \quad n\leq i\leq n+1; \\
  r^{-1}s^{-1}\ell_{i, i+1}^+ \ell_{i+1, i}^- - rs\ell_{i+1, i}^-\ell_{i, i+1}^+ & = (r^{-1}s - rs^{-1}) \left(\ell_{i+1, i+1}^- \ell_{ii}^+ - \ell_{i+1, i+1}^+\ell_{i, i}^-\right), \quad n' \leq i < 2'.
\end{align*}
For the first case $1\leq i<n$, we substitute $\ell_{i, i+1}^+=B_{i, i+1}^+ e_i (\omega'_{\epsilon_i})^{-1}$ and $\ell_{i+1, i}^-=B_{i+1, i}^-(\omega_{\epsilon_i})^{-1}f_i$, $\ell_{i i}^+=(\omega'_{\epsilon_i})^{-1}$ and $\ell_{ii}^-=(\omega_{\epsilon_i})^{-1}$ into the first equation, we conclude that
$B_{i, i+1}^+ B_{i+1, i}^-=-(r^2-s^2)^2.$
Without loss of generality, we may assume that
\begin{equation}\label{equ Bi i+1}
  B_{i, i+1}^+=-B_{i+1, i}^-=r^2-s^2.
\end{equation}
Similarly, we can assume that
\begin{equation}\label{equ Bn n+1}
  B_{n, n+1}^+=-B_{n+1, n}^-=(rs)^{-\frac{1}{2}}(r+s)^{\frac{1}{2}}(r-s),
\end{equation}
\begin{equation}\label{equ Bn+1, n'}
  B_{n+1, n'}^+=-B_{n', n+1}^-=-r^{-1}(r+s)^{\frac{1}{2}}(r-s),
\end{equation}
and
\begin{equation}\label{equ B(i+1)' i'}
  B_{(i+1)', i'}^+=-B_{i', (i+1)'}^-=-r^{-1}s^{-1}(r^2-s^2).
\end{equation}

(VI) We also verify that this distribution law is indeed compatible with the metric condition (\ref{metric condition})
 in the Appendix.
\end{proof}

\begin{remark}\label{rmk Lyn decomp}
 For the non-unique decomposition $\gamma=\alpha+\beta=\alpha'+\beta'$ such that $\alpha, \alpha'<\gamma<\beta, \beta'$,
we have two quantum Lie bracketing rules:
$$\mathcal{E}_\gamma=[\mathcal{E}_{\alpha'}, \mathcal{E}_{\beta'}]_{\langle \omega'_{\alpha'}, \omega_{\beta'}\rangle^{-1}}=[\mathcal{E}_{\alpha}, \mathcal{E}_{\beta}]_{\langle \omega'_{\alpha}, \omega_{\beta}\rangle^{-1}}.$$
We have seen that the $RLL$ formalism regulates the ways of bracketing, and our algorithm (see Theorem \ref{thm Lyn} and its proof) provides one of such ways.
This is exactly the reason why we added (\ref{Lyn 22}).
Compared with the original Definition in \cite{HuWangJGP 2010},
this expression is something newly added.
A similar situation also applies to another Lyndon base $\mathcal{E}'_\gamma$ with adding (\ref{Lyn 12}).
\end{remark}

The next results reveal that our algorithm is independent of the ways of bracketing.
Indeed, for the entry $\ell_{ij}^+$ of $L^+$, which corresponds to $\mathcal{E}_{ij}(\omega'_{\epsilon_i})^{-1}$,
we can obtain this entry not only from the quantum Lie bracket of  $\ell_{i, j-1}^+\; \text{and}\; \ell_{j-1, j}^+$ (this is exactly our algorithm in Theorem \ref{thm Lyn}) , but also from that of $\ell_{ik}^+ \; \text{and}\; \ell_{kj}^+$, where $i< k< j$. The same situation holds for  $\ell_{ij'}^+$.

\begin{lemm}\label{lemm relaL}
  We have the following identities in the $L^+$-matrix,
  \begin{align}
    \ell_{ik}^+ &=\frac{rs}{rs^{-1}-r^{-1}s} (\ell_{jj}^+)^{-1}\Bigl[\ell_{ij}^+\ell_{jk}^+-\ell_{jk}^+\ell_{ij}^+\Bigr],\quad i<j<k<n+1;  \label{equ ell ik}\\
     \ell_{i, n+1}^+&=\frac{1}{rs^{-1}-r^{-1}s} (\ell_{jj}^+)^{-1}\Bigl[(rs)\ell_{ij}^+\ell_{j, n+1}^+-\ell_{j, n+1}^+\ell_{ij}^+\Bigr], \quad i<j<n+1; \\
     \ell_{ik'}^+&=\frac{1}{rs^{-1}-r^{-1}s} (\ell_{jj}^+)^{-1}\Bigl[(rs)\ell_{ij}^+\ell_{jk'}^+-(rs)^{-1}\ell_{jk'}^+\ell_{ij}^+\Bigr], \quad i<j<k\leq n; \\
     \ell_{k'i'}^+ &=\frac{(rs)^{-1}}{rs^{-1}-r^{-1}s} (\ell_{j'j'}^+)^{-1}\Bigl[\ell_{k'j'}^+\ell_{j'i'}^+-\ell_{j'i'}^+\ell_{k'j'}^+\Bigr],\quad i<j<k<n+1;\\
     \ell_{n+1, i'}^+&=\frac{1}{rs^{-1}-r^{-1}s} (\ell_{j'j'}^+)^{-1}\Bigl[\ell_{n+1,j'}^+\ell_{j'i'}^+-(rs)^{-1}\ell_{j' i'}^+\ell_{n+1,j'}^+\Bigr], \quad i<j<n+1; \\
     \ell_{ki'}^+&=\frac{1}{rs^{-1}-r^{-1}s} (\ell_{j'j'}^+)^{-1}\Bigl[(rs)\ell_{kj'}^+\ell_{j'i'}^+-(rs)^{-1}\ell_{j'i'}^+\ell_{kj'}^+\Bigr], \quad i<j<k\leq n;
  \end{align}
\end{lemm}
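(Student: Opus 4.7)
The plan is to derive each identity directly from the $RLL$ relation $\widehat{R} L_1^+ L_2^+ = L_2^+ L_1^+ \widehat{R}$ by applying both sides to a well-chosen basis vector of $V \otimes V$ and comparing the coefficient at an appropriate target vector. This generalizes the computation used in the proof of Theorem \ref{thm Lyn}, where only the adjacent cases ($j=i+1$ and its analogues) were needed in order to build the Lyndon basis recursively; here the same mechanism applies with arbitrary intermediate index $j$.

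For the first identity with $i < j < k \leq n$, I would evaluate both sides on $v_j \otimes v_k$ and extract the coefficient of $v_i \otimes v_j$. Using the explicit form of the basic braided $R$-matrix in Theorem \ref{main theorem}, one has $\widehat{R}(v_j \otimes v_k) = rs\, v_j \otimes v_k$, so the right-hand side contributes $rs\,\ell_{jk}^+\ell_{ij}^+$ at $v_i \otimes v_j$. On the left, only two types of intermediate vectors produced by $L_1^+L_2^+(v_j \otimes v_k)$ survive under $\widehat{R}$ with a nonzero contribution to $v_i \otimes v_j$: the pair $v_i \otimes v_j$ itself, yielding $rs\,\ell_{ij}^+\ell_{jk}^+$, and the flipped pair $v_j \otimes v_i$, yielding $(r^{-1}s - rs^{-1})\,\ell_{jj}^+\ell_{ik}^+$ through the mixing coefficient in $\widehat{R}$. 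The metric-type cross term $E_{ij'}\otimes E_{i'j}$ of $\widehat{R}$ contributes nothing here because $i,j \leq n$ places both indices strictly on the same side of $n+1$. Equating the two sides and isolating $\ell_{ik}^+$ yields exactly the first formula.

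The remaining five identities follow by the same procedure with adjusted choices of input and target vectors: take $v_j \otimes v_{n+1}$ and read the coefficient of $v_i \otimes v_j$ for the $\ell_{i,n+1}^+$ identity; $v_j \otimes v_{k'}$ for the $\ell_{ik'}^+$ identity; and the transposed choices $v_{k'} \otimes v_{j'}$, $v_{n+1} \otimes v_{j'}$, $v_k \otimes v_{j'}$, each with target $v_{j'} \otimes v_{i'}$, for the three $L^{+(\mathrm{d})}$ analogues. The asymmetric factors $(rs)^{\pm 1}$ appearing in each bracket are precisely the eigenvalues of $\widehat{R}$ on the relevant ordered pairs recorded in Theorem \ref{main theorem}: pairs lying strictly above the anti-diagonal give $rs$, pairs strictly below give $(rs)^{-1}$, and pairs involving the middle index $n+1$ give $1$.

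The main anticipated obstacle is not conceptual but organizational: for the identities whose indices straddle the central index $n+1$, the metric-type cross term $E_{ij'}\otimes E_{i'j}$ of $\widehat{R}$ is in principle active, and one must verify that its contributions either vanish or are absorbed into the normalization factor $(\ell_{jj}^+)^{-1}$ or $(\ell_{j'j'}^+)^{-1}$ appearing on the right-hand side. Once this cancellation is tracked carefully in each case, the identities reduce to linear algebra with no further input beyond the $RLL$ relation and the explicit $\widehat{R}$ of Theorem \ref{main theorem}.
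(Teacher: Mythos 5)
Your proposal is correct and takes essentially the same route as the paper: the paper's proof of this lemma simply invokes the coefficient-comparison in $\widehat{R}L_1^{+}L_2^{+}=L_2^{+}L_1^{+}\widehat{R}$ already carried out in the proof of Theorem \ref{thm Lyn}, with a general intermediate index $j$ replacing the adjacent one. Your accounting of which entries of $\widehat{R}$ contribute — and the observation that the cross term $E_{ij'}\otimes E_{i'j}$ only ever outputs conjugate pairs and hence never reaches a target $v_i\otimes v_j$ with $j\neq i'$ — is exactly the bookkeeping the paper leaves implicit.
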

\begin{proof}
  These identities can be proved similarly as those in the proof of Theorem \ref{thm Lyn}.
\end{proof}

\begin{prop}\label{prop recurrence}
  The recurrence relations obtained in Lemma \ref{lemm relaL} turn to the following relations of Lyndon bases:
  \begin{align}
    \mathcal{E}_{ik} &=\mathcal{E}_{ij}\mathcal{E}_{jk}-s^2\mathcal{E}_{jk}\mathcal{E}_{ij}, \quad i<j<k\leq n+1; \label{equ eik} \\
    \mathcal{E}_{ik'}&= \mathcal{E}_{ij}\mathcal{E}_{jk'}-s^2\mathcal{E}_{jk'}\mathcal{E}_{ij}, \quad i<j<k\leq n;\\
    \mathcal{E}'_{ik} &=\mathcal{E}'_{ij}\mathcal{E}'_{jk}-r^2\mathcal{E}'_{jk}\mathcal{E}'_{ij}, \quad i<j<k\leq n+1;  \\
    \mathcal{E}'_{ik'}&= \mathcal{E}'_{ij}\mathcal{E}'_{jk'}-r^2\mathcal{E}'_{jk'}\mathcal{E}'_{ij}, \quad i<j<k\leq n;
  \end{align}
\end{prop}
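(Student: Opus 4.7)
The plan is to deduce each identity in the proposition directly from the corresponding recurrence in Lemma~\ref{lemm relaL} by substituting the explicit realizations of the $L^+$-entries in terms of Lyndon bases established in Theorem~\ref{thm Lyn}, and then pushing the Cartan prefactors through the Lyndon root vectors using the weight-commutation of $U_{r,s}(\mathfrak{so}_{2n+1})$. All group-like factors on the two sides will match and cancel, leaving a pure identity in $U_{r,s}^+(\mathfrak{so}_{2n+1})$, from which the scalar $s^2$ (or $r^2$) emerges out of the pairing coming from the structure constant matrix $(*)$.

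\textbf{The core computation.} For \eqref{equ eik} with $i<j<k\le n+1$, substitute $\ell^+_{pq}=B^+_{pq}\mathcal{E}_{pq}(\omega'_{\epsilon_p})^{-1}$ and $\ell^+_{jj}=(\omega'_{\epsilon_j})^{-1}$ into \eqref{equ ell ik}. The left side becomes $B^+_{ik}\mathcal{E}_{ik}(\omega'_{\epsilon_i})^{-1}$. On the right, using $(\omega'_\mu)^{-1}x=\langle\omega'_\mu,\omega_\nu\rangle\,x(\omega'_\mu)^{-1}$ for $x$ of weight $\nu$, I move $(\omega'_{\epsilon_i})^{-1}$ past $\mathcal{E}_{jk}$ (of weight $\alpha_{jk}$) in the first product and $(\omega'_{\epsilon_j})^{-1}$ past $\mathcal{E}_{ij}$ (of weight $\alpha_{ij}$) in the second. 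Collecting the group-likes yields $\omega'_{\epsilon_j}(\omega'_{\epsilon_i})^{-1}(\omega'_{\epsilon_j})^{-1}=(\omega'_{\epsilon_i})^{-1}$, which cancels against the left side. What remains reads
\[
B^+_{ik}\mathcal{E}_{ik}=\frac{rs\,B^+_{ij}B^+_{jk}}{rs^{-1}-r^{-1}s}\Bigl[\langle\omega'_{\epsilon_i},\omega_{\alpha_{jk}}\rangle\,\mathcal{E}_{ij}\mathcal{E}_{jk}-\langle\omega'_{\epsilon_j},\omega_{\alpha_{ij}}\rangle\,\mathcal{E}_{jk}\mathcal{E}_{ij}\Bigr].
\]
The bilinearity of $\langle\,,\,\rangle$ and the structure constant matrix $(*)$ give $\langle\omega'_{\epsilon_i},\omega_{\alpha_{jk}}\rangle=(rs^{-1}-r^{-1}s)/(rs)$ and the ratio $\langle\omega'_{\epsilon_j},\omega_{\alpha_{ij}}\rangle/\langle\omega'_{\epsilon_i},\omega_{\alpha_{jk}}\rangle=s^{2}$ (a case split is needed at $k=n+1$ where $(rs)^{\pm 1/2}$ factors appear and must cancel); an easy induction on $j-i$, starting from \eqref{equ Bi i+1}--\eqref{equ B(i+1)' i'}, also gives $B^+_{ik}=rs\,B^+_{ij}B^+_{jk}\langle\omega'_{\epsilon_i},\omega_{\alpha_{jk}}\rangle/(rs^{-1}-r^{-1}s)$. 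Dividing out this normalization produces exactly \eqref{equ eik}. The identity for $\mathcal{E}_{ik'}$ is handled identically, starting from the third relation of Lemma~\ref{lemm relaL}; the weight of $\mathcal{E}_{jk'}$ is $\beta_{jk}=\epsilon_j+\epsilon_k$, and the extra scaling $(rs)^{\pm 1}$ appearing inside the bracket of that relation is exactly offset by the shifted pairing $\langle\omega'_{\epsilon_i},\omega_{\beta_{jk}}\rangle/\langle\omega'_{\epsilon_j},\omega_{\alpha_{ij}}\rangle$, producing again the same $s^2$.

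\textbf{Primed case and the obstacle.} The last two identities are obtained for free by applying the $\mathbb{Q}$-algebra automorphism $\zeta$ of Definition~\ref{def auto}: $\zeta$ swaps $r\leftrightarrow s$ and $\omega_i\leftrightarrow\omega'_i$, fixes each $e_i$, and satisfies $\zeta(\mathcal{E}_\gamma)=\mathcal{E}'_\gamma$; applying $\zeta$ to \eqref{equ eik} and its $k'$-counterpart turns $s^2$ into $r^2$, giving the two claimed $\mathcal{E}'$-identities at once. The main obstacle is the case-by-case bookkeeping of $\langle\omega'_{\epsilon_i},\omega_{\alpha_{jk}}\rangle$ and $\langle\omega'_{\epsilon_j},\omega_{\alpha_{ij}}\rangle$ across the four sub-ranges separating short roots, long roots, and the spinor index $n+1$; this is precisely where the ``generalized type $D$ phenomenon'' surfaces, since the non-degenerate factor $rs$ in $(*)$ is exactly what forces the coefficient in each recurrence to be $s^2$ (resp.\ $r^2$) rather than $1$, so each subcase must be checked against the finite-part restriction of $(*)$ rather than imported from the one-parameter template.
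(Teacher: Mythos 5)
Your strategy is the same as the paper's: substitute the realizations $\ell^+_{pq}=B^+_{pq}\mathcal{E}_{pq}(\omega'_{\epsilon_p})^{-1}$ from Theorem~\ref{thm Lyn} into the recurrences of Lemma~\ref{lemm relaL}, push the Cartan factors through by the weight commutation (B3), and use the fact that the scalar normalization $B^+_{ij}B^+_{jk}$ is independent of the splitting index $j$ (the paper phrases this as comparing the general-$j$ expression with the $j=i+1$ case, which is definitionally $\mathcal{E}_{ik}$; your ``induction on $j-i$'' for the $B$-products is the same fact). Two remarks. First, you obtain the two primed identities by applying the automorphism $\zeta$ of Definition~\ref{def auto} to the unprimed ones; the paper instead reruns the computation on the $\ell^+_{k'i'}$-recurrences. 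Your route is shorter and legitimate, since $\zeta$ is an algebra automorphism with $\zeta(\mathcal{E}_\gamma)=\mathcal{E}'_\gamma$ and $\zeta(s^2)=r^2$, though it bypasses the point the Proposition is partly making, namely that the primed bracketing is itself dictated by the lower-part $RLL$ recurrences. Second, your stated value $\langle\omega'_{\epsilon_i},\omega_{\alpha_{jk}}\rangle=(rs^{-1}-r^{-1}s)/(rs)$ cannot be right: a pairing of group-likes is always a monomial in $r,s$, and a direct computation from the finite block of $(*)$ gives $\langle\omega'_{\epsilon_i},\omega_{\alpha_{jk}}\rangle=1$ for $i<j<k\le n$ (the factor $(rs^{-1}-r^{-1}s)^{-1}$ you are trying to absorb comes from the recurrence itself and is cancelled by $\langle\omega'_{\epsilon_j},\omega_{\alpha_{ik}}\rangle^{-1}=(r^2s^2)^{-1}$ together with the outer $rs$). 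The ratio $\langle\omega'_{\epsilon_j},\omega_{\alpha_{ij}}\rangle/\langle\omega'_{\epsilon_i},\omega_{\alpha_{jk}}\rangle=s^2$ that you actually use is correct, so the slip does not derail the argument, but the intermediate claim as written is false and should be repaired.
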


\begin{proof}
   Here we only show the derivation of (\ref{equ eik}) since the others can be obtained similarly.
   First, we assume that $i<j<k\leq n$.
  From (\ref{equ ell ik}) and Theorem \ref{thm Lyn} , we calculate that
        \begin{align*}
     \ell_{ik}^+ & =\frac{rs}{rs^{-1}-r^{-1}s} (\ell_{jj}^+)^{-1}\bigl(\ell_{ij}^+\ell_{jk}^+-\ell_{jk}^+\ell_{ij}^+\bigr), \\
      &=\frac{rsB_{ij}^+B_{jk}^+}{rs^{-1}-r^{-1}s}\omega'_{\epsilon_j}\bigl[\mathcal{E}_{ij} (\omega'_{\epsilon_i})^{-1}\mathcal{E}_{jk}(\omega'_{\epsilon_j})^{-1}-\mathcal{E}_{j k}(\omega'_{\epsilon_j})^{-1}\mathcal{E}_{ij} (\omega'_{\epsilon_i})^{-1}\bigr]\\
      &=\frac{B_{ij}^+B_{jk}^+}{r^2-s^2}\bigl[\mathcal{E}_{ij}\mathcal{E}_{jk}-s^2\mathcal{E}_{jk}\mathcal{E}_{ij}\bigr](\omega'_{\varepsilon_i})^{-1};
   \end{align*}
   In particular, letting $j=i+1$, we have
   \begin{align*}
   \ell_{ik}^+ &=\frac{B_{i, i+1}^+B_{i+1, k}^+}{r^2-s^2}\bigl[e_i\mathcal{E}_{i+1, k}-s^2\mathcal{E}_{i+1, k}e_i\bigr](\omega'_{\epsilon_i})^{-1} \\
     &= \frac{B_{i, i+1}^+B_{i+1, k}^+}{r^2-s^2} \mathcal{E}_{ik}(\omega'_{\epsilon_i})^{-1}.
   \end{align*}
   It follows from Theorem \ref{thm Lyn} that $B_{i, i+1}^+B_{i+1, k}^+=B_{ij}^+B_{jk}^+=(r^2-s^2)^2$ for any $i<j<k\leq n$.
   Thus we must have $\mathcal{E}_{ik}=\mathcal{E}_{ij}\mathcal{E}_{jk}-s^2\mathcal{E}_{jk}\mathcal{E}_{ij}.$

   Finally, we need to verify that $\mathcal{E}_{i, n+1} =\mathcal{E}_{ij}\mathcal{E}_{j, n+1}-s^2\mathcal{E}_{j, n+1}\mathcal{E}_{ij}$ for $i<j<n+1$.
   From the $RLL$ relation, we have
   \begin{align*}
     \ell_{i, n+1}^+&=\;\frac{1}{rs^{-1}-r^{-1}s}(\ell_{jj}^+)^{-1}\Bigl[rs\ell_{ij}^+\ell_{j, n+1}^+-\ell_{j, n+1}^+\ell_{ij}^+\Bigr]  \\
    &=\; \frac{B_{ij}^+ B_{j, n+1}^+}{rs^{-1}-r^{-1}s} (\omega'_{\epsilon_j})^{-1}\Bigl[rs \mathcal{E}_{ij} (\omega'_{\epsilon_i})^{-1}\mathcal{E}_{j, n+1} (\omega'_{\epsilon_j})^{-1}-\mathcal{E}_{j, n+1} (\omega'_{\epsilon_j})^{-1}\mathcal{E}_{ij} (\omega'_{\epsilon_i})^{-1}\Bigr]
   \end{align*}
   for $i<j<n+1$. In particular, this holds for $j=n$.
   It follows from Theorem \ref{thm Lyn} that $B_{in}^+B_{n, n+1}^+=B_{ij}^+B_{j, n+1}^+=(rs)^{-\frac{1}{2}}(r+s)^{\frac{3}{2}}(r-s)^2$ for any $i<j<n+1$.
   Then we have the following identity
   \begin{align*}
      &rs\mathcal{E}_{i, n}(\omega'_{\epsilon_i})^{-1}e_n(\omega'_n)^{-1}-e_n(\omega'_n)^{-1}\mathcal{E}_{i, n}(\omega'_{\epsilon_i})^{-1} \\
      =\; &\frac{\omega'_{\epsilon_j}}{\omega'_n}\Bigl[ rs\mathcal{E}_{i, j}(\omega'_{\epsilon_i})^{-1}\mathcal{E}_{j, n+1}(\omega'_j)^{-1}-\mathcal{E}_{j, n+1}(\omega'_j)^{-1}\mathcal{E}_{i, j}(\omega'_{\epsilon_i})^{-1}\Bigr].
   \end{align*}
   We can obtain the desired identity $\mathcal{E}_{i, n+1} =\mathcal{E}_{ij}\mathcal{E}_{j, n+1}-s^2\mathcal{E}_{j, n+1}\mathcal{E}_{ij}$ through some basic commutator calculations.
\end{proof}

\begin{remark}
  These identities were originally established in Lemma 3.1 \cite{HuWangJGP 2010}.
  Here we recover these via the $RLL$ formalism.
\end{remark}

\subsection{An isomorphism between Drinfeld-Jimbo presentation and FRT presentation}
With  Theorem \ref{thm Lyn} in hand, we are able to contribute an intuitive new proof for establishing an isomorphism between the Drinfeld-Jimbo presentation
and FRT presentation of $U_{r, s}(\mathfrak{so}_{2n+1})$, which replaces the Ding-Frenkel's expository analytical proof in one-parameter setup (see Theorem 2.1 in \cite{Ding CMP 1993}).

\begin{theorem}\label{thm FRTISO}
  There is an algebra isomorphism  between $U_{r,s}(\mathfrak{so}_{2n+1})$ and $U(\widehat{R}_{r,s})$,
where we use the subscripts $r, s$ to emphasize that the $R$-matrix depend on the parameters $r, s$.
\end{theorem}

\begin{proof}
(I) Define a map
$\phi_{r, s}: U_{r, s}\bigl(\mathfrak{so}_{2n+1}\bigr)\longrightarrow U(\widehat{R}_{r, s})$ as follows
\begin{equation*}
\begin{aligned}
e_j&\mapsto \frac{1}{r^2-s^2}\ell_{j,j+1}^+\Bigl(\ell_{jj}^+\Bigr)^{-1}:=\tilde e_j, &
f_j&\mapsto -\frac{1}{r^2-s^2}\Bigl(\ell_{jj}^-\Bigr)^{-1}\ell_{j+1,j}^-:=\tilde f_j, \\
e_n&\mapsto c^{-1}\ell_{n,n+1}^+\Bigl(\ell_{nn}^+\Bigr)^{-1}:=\tilde e_n,&
f_n&\mapsto -c^{-1}\Bigl(\ell_{nn}^-\Bigr)^{-1}\ell_{n+1,n}^-:=\tilde f_n, \\
\omega_j&\mapsto \Bigl(\ell_{jj}^-\Bigr)^{-1}\ell_{j+1,j+1}^-:=\tilde\omega_j, &
\omega_j'&\mapsto \Bigl(\ell_{jj}^+\Bigr)^{-1}\ell_{j+1,j+1}^+:=\tilde\omega_j',\\
\omega_n&\mapsto \Bigl(\ell_{nn}^-\Bigr)^{-1}:=\tilde\omega_n,  &
\omega_n'&\mapsto \Bigl(\ell_{nn}^+\Bigr)^{-1}:=\tilde\omega_n', \\
\end{aligned}
\end{equation*}
where $\epsilon_i=\alpha_i+\alpha_{i+1}+\cdots+\alpha_n, 1\leq i\leq n$, $1\leq j\leq n-1$,
$c=(rs)^{-\frac{1}{2}}(r+s)^{\frac{1}{2}}(r-s)$.

From Theorem \ref{thm Lyn}, we see that the generators of $U(\widehat{R}_{r, s})$ come from those generators of two PBW quantum Lyndon bases of $U_{r, s}(\mathfrak{so}_{2n+1})$.
 Obviously,  $\phi_{r, s}$ is not only surjective, but injective as well (the next step (II) of our proof, also shows $U(\widehat{R}_{r, s})$ contains a copy of $U_{r, s}(\mathfrak{so}_{2n+1})$. The reasoning is the same as that of Theorem 8.33 \cite{Klimyk}).

(II) It suffices to show that $\phi_{r, s}$ preserves the defining relations of $U_{r, s}(\mathfrak{so}_{2n+1})$.
Here we restrict Definition \ref{type B def} of $U_{r, s}(\widehat{\mathfrak{so}_{2n+1}})$ to the finite type: $U_{r, s}\bigl(\mathfrak{so}_{2n+1}\bigr)$.
We use induction on $n$ and start with $n=2$.

Firstly, from (\ref{$RLL$}), we conclude that $\ell_{ii}^\pm, \ell_{jj}^\pm\ (1\leq i, j \leq 5)$ commutate with each other.
This means $\phi_{r, s}$ preserves (B1) in Definition \ref{type B def}.

From $\widehat{R}_{r, s}L_1^{+}L_2^{+}=L_2^{+}L_1^{+}\widehat{R}_{r, s}$, we derive the following relations:
\begin{equation}\label{equ ellii++}
  \ell_{11}^+\ell_{12}^+=r^2\ell_{12}^+\ell_{11}^+, \quad \ell_{11}^+\ell_{23}^+=(rs)^{-1}\ell_{23}^+\ell_{11}^+, \quad
  \ell_{22}^+\ell_{12}^+=s^2\ell_{12}^+\ell_{22}^+, \quad \ell_{22}^+\ell_{23}^+=rs^{-1}\ell_{23}^+\ell_{22}^+,
\end{equation}
From $\widehat{R}_{r, s}L_1^{+}L_2^{-}=L_2^{-}L_1^{+}\widehat{R}_{r, s}$, we have
\begin{equation}\label{equ ellii+-}
   \ell_{11}^+\ell_{21}^-=r^{-2}\ell_{21}^-\ell_{11}^+, \quad \ell_{11}^+\ell_{32}^-=(rs)\, \ell_{32}^-\ell_{11}^+, \quad
   \ell_{22}^+\ell_{21}^-=s^{-2}\ell_{21}^-\ell_{22}^+, \quad \ell_{22}^+\ell_{32}^-=r^{-1}s\, \ell_{32}^-\ell_{22}^+.
\end{equation}
Here we take the derivation of $\ell_{11}^+\ell_{12}^+=r^2\ell_{12}^+\ell_{11}^+$ as an example to illustrate.
Starting from $\widehat{R}_{r, s}L_1^{+}L_2^{+}(v_1\otimes v_2)=L_2^{+}L_1^{+}\widehat{R}_{r, s}(v_1\otimes v_2)$,
we can obtain the desired equation by comparing the coefficients of $v_1\otimes v_1$ on both sides.
(\ref{equ ellii++}) and (\ref{equ ellii+-}) yield that $\phi_{r, s}$ preserves (B3) in Definition \ref{type B def}.

Similarly, one can also prove $\phi_{r, s}$ preserves (B2) from $\widehat{R}_{r, s}L_1^{-}L_2^{-}=L_2^{-}L_1^{-}\widehat{R}_{r, s}$
and $\widehat{R}_{r, s}L_1^{+}L_2^{-}=L_2^{-}L_1^{+}\widehat{R}_{r, s}$.

From $\widehat{R}_{r, s}L_1^{+}L_2^{-}=L_2^{-}L_1^{+}\widehat{R}_{r, s}$, we also have
\begin{equation}\label{}
  rs\ell_{12}^+\ell_{21}^--(rs)^{-1}\ell_{21}^-\ell_{12}^+=(r^{-1}s-rs^{-1})(\ell_{22}^-\ell_{11}^+-\ell_{22}^+\ell_{11}^-),
\end{equation}
\begin{equation}\label{}
  \ell_{23}^+\ell_{32}^--\ell_{32}^-\ell_{23}^+=(r^{-1}s-rs^{-1})(\ell_{33}^-\ell_{22}^+-\ell_{33}^+\ell_{22}^-).
\end{equation}
These imply that $\phi_{r, s}$ preserves (B4) in Definition \ref{type B def}.

Also, we derive the following equations from $\widehat{R}_{r, s}L_1^{+}L_2^{+}=L_2^{+}L_1^{+}\widehat{R}_{r, s}$:
\begin{equation}\label{12 23}
(rs)\,\ell_{12}^+\ell_{23}^++(r^{-1}s-rs^{-1})\ell_{22}^+\ell_{13}^+=\ell_{23}^+\ell_{12}^+,
\end{equation}
\begin{equation}\label{12 13}
  \ell_{12}^+\ell_{13}^+=rs^{-1}\ell_{13}^+\ell_{12}^+,
\end{equation}
\begin{equation}\label{22 23}
  \ell_{22}^+\ell_{23}^+=rs^{-1}\ell_{23}^+\ell_{22}^+,
\end{equation}
\begin{equation}\label{14 22}
  \ell_{14}^+\ell_{22}^+=r^{-2}\ell_{22}^+\ell_{14}^+,
\end{equation}
\begin{equation}\label{13 22}
  \ell_{13}^+\ell_{22}^+=(rs)^{-1}\ell_{22}^+\ell_{13}^+.
\end{equation}
\begin{equation}\label{13 23}
  (rs)\,\ell_{13}^+\ell_{23}^+-(r^{-1}s-rs^{-1})r^{-\frac{1}{2}}s^{\frac{1}{2}}\ell_{22}^+\ell_{14}^+=\ell_{23}^+\ell_{13}^+.
\end{equation}

By (\ref{12 23}) and (\ref{12 13}), we have
\begin{equation}\label{rel Serre1}
  (rs)\,\ell_{12}^{+2}\ell_{23}^++rs^{-3}\ell_{23}^+\ell_{12}^{+2}=(1+r^2s^{-2})\ell_{12}^+\ell_{23}^+\ell_{12}^+.
\end{equation}

 With (\ref{12 23}), (\ref{22 23}), (\ref{14 22}), (\ref{13 22}) and (\ref{13 23}), we conclude that
\begin{equation}\label{rel Serre2}
  \ell_{23}^{+3}\ell_{12}^+=rs^3(r^{-2}+r^{-1}s^{-1}+s^{-2})\ell_{23}^{+2}\ell_{12}^+\ell_{23}^+
-rs^5(r^{-2}+r^{-1}s^{-1}+s^{-2})\ell_{22}^+\ell_{12}^+\ell_{23}^{+2}+s^6\ell_{12}^+\ell_{23}^{+3}.
\end{equation}
(\ref{rel Serre1}) and (\ref{rel Serre2}) show that $\phi_{r, s}$ preserves Serre relations (B5) in Definition \ref{type B def}.

This completes the proof for $n=2$.

Using induction and restricting the generating relations (\ref{$RLL$}) to $E_{ij}\otimes E_{kl}, 2\leq i,j,k,l\leq 2n+1$,
we can get all commutation relations except those between $\ell_{11}^{\pm}, \ell_{12}^+, \ell_{21}^-$ and $\ell_{ii}^{\pm},\ell_{ij}^\pm$.
However, those can be derived easily by repeating the above computations.
Therefore, we complete our proof for general $n$.
\end{proof}

\smallskip
\section{Spectral parameter-dependent $R$-matrices}\label{sec specR}

\subsection{The Yang-Baxterization and Gauss decomposition}
In Ge-Wu-Xue's work \cite{GeWX1991}, they introduced a method to construct the spectral-parameter $R$-matrix from the basic braided $R$-matrix.
This method is called `the Yang-Baxterization' (also named affinization).
For those basic braided $R$-matrices which have three eigenvalues, they gave two different affinizations:
\begin{equation}\label{Rz 1}
  R(x)=\lambda_1 x(x-1) S^{-1}+\Bigl(1+\frac{\lambda_1}{\lambda_2}+\frac{\lambda_1}{\lambda_3}+\frac{\lambda_2}{\lambda_3}\Bigr)xI -\lambda_3^{-1}(x-1)S,
\end{equation}
\begin{equation}\label{Rz 2}
  R(x)=\lambda_1 x(x-1)S^{-1}+\Bigl(1+\frac{\lambda_1}{\lambda_2}+\frac{\lambda_1}{\lambda_3}+\frac{\lambda_1^2}{\lambda_2\lambda_3}\Bigr)xI
-\frac{\lambda_1}{\lambda_2\lambda_3}(x-1)S,
\end{equation}
where $S$ is a given basic braided $R$-matrix,
and $\lambda_i\; (i=1, 2, 3)$ are the eigenvalues of $S$.
Let $S^{-1}=R,\; z=x^{-1}$, then $\lambda_1=rs^{-1}, \lambda_2=-r^{-1}s \;\text{and}\; \lambda_3=(r^{-1}s)^{2n}$
from Lemma \ref{lemma minimal polynomial}.
We also have the formula of $S=R^{-1}$ in Lemma \ref{lemm Rinverse}.
They both satisfy the spectral braided QYBE:
$$R_{12}(x)R_{23}(xy)R_{12}(y)=R_{23}(y)R_{12}(xy)R_{23}(x).$$

To get the following spectral parameter-dependent $\widehat{R}_{r,s}(z):=P\circ R(z)$, where $P$ is the flip map, we adopt (\ref{Rz 1}) rather than (\ref{Rz 2}). The reason is that the latter doesn't have the intertwining property for minimal affinization of $U_{r, s}(\widehat{\mathfrak{so}_{2n+1}})$ (See subsection \ref{sec vecAffine}).

\begin{theorem}\label{thm spectral Rz1}
  The spectral parameter-dependent $\widehat{R}_{r,s}(z)$ is given by
\begin{align*}
\widehat{R}_{r,s}(z)=&\;\sum_{i \atop i\neq i'}E_{ii}\otimes E_{ii}+\frac{z-1}{r^2z-s^2}\Bigl\{\Bigl(\sum_{1\leq i\leq n \atop i+1\leq j\le n}
E_{jj}\otimes E_{ii}+\sum_{2\leq i\leq n \atop (i-1)'\leq j\leq 1'}E_{jj}\otimes E_{ii}\\
&+\sum_{1\leq i\leq n-1 \atop n'\leq j\leq (i+1)'}E_{ii}\otimes E_{jj}+\sum_{n'\leq i\leq 2'\atop i+1\leq j\leq 1'}E_{ii}\otimes E_{jj}\Bigr)+r^2s^2\Bigl(\sum_{1\leq i\leq n\atop i+1\leq j\leq n}E_{ii}\otimes E_{jj}\\
&+\sum_{n'\leq i\leq 2'\atop i+1\leq j\leq 1'}E_{jj}\otimes E_{ii}+\sum_{2\leq i\leq n\atop (i-1)'\leq j\leq 1'}E_{ii}\otimes E_{jj}+\sum_{1\leq i\leq n-1\atop n'\leq j\leq (i+1)'}E_{j j}\otimes E_{i i}\Bigr)\\
&+rs\Bigl(\sum_{i\atop i\neq i'} E_{n+1, n+1}\otimes E_{ii}+\sum_{j\atop j\neq j'}E_{j j}\otimes E_{n+1, n+1}\Bigr)\Bigr\}\\
&+\frac{r^2-s^2}{r^2z-s^2}\Bigl\{\sum_{i<j \atop i' \neq j}E_{ij}\otimes E_{ji}+z\sum_{i>j \atop i' \neq j }E_{ij}\otimes E_{ji}\Bigr\}\\
&+\frac{1}{(z-(r^{-1}s)^{2n-1})(r^2z-s^2)}\sum_{i,j=1}^{2n+1}d_{ij}(z, 1)E_{i' j'}\otimes E_{i j},
\end{align*}
where $d_{ij}(z, 1)=\left\{
                      \begin{array}{ll}
                        (s^2-r^2)z\Bigl\{(z-1)(r^{-1}s)^{\rho_i-\rho_j}-\delta_{i, j'}[z-(r^{-1}s)^{2n-1}]\Bigr\}, & \hbox{$i<j;$} \\
                        (s^2-r^2)\Bigl\{(z-1)(r^{-1}s)^{2n-1+\rho_i-\rho_j}-\delta_{i, j'}[z-(r^{-1}s)^{2n-1}]\Bigr\}, & \hbox{$i>j;$} \\
                        s^2(z-1)[z-(r^{-1}s)^{2n-3}], & \hbox{$i=j\neq i';$} \\
                       rs(z-1)[z-(r^{-1}s)^{2n-1}]+(r^2-s^2)z[1-(r^{-1}s)^{2n-1}], & \hbox{$i=j=i'.$}
                      \end{array}
                    \right.$

In particular, when $rs=1$, i.e., $r=q^{\frac{1}{2}}=s^{-1}$, $\widehat{R}_{r,s}(z)$ degenerates into $P\circ\widehat{R}_q(z)\circ P$,
where the $R_q(z)$ is given in \cite{JingLM SIGMA 2020}, $P$ is the usual flip map.
\end{theorem}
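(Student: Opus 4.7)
The plan is to substitute the eigenvalues from Lemma \ref{lemma minimal polynomial} directly into the Yang-Baxterization formula (\ref{Rz 1}) and expand using the explicit matrix presentations. Setting $\lambda_1 = rs^{-1}$, $\lambda_2 = -r^{-1}s$, $\lambda_3 = (r^{-1}s)^{2n}$, $z = x^{-1}$, and $S = R^{-1}$, the formula realizes $R(z)$ as an explicit $\mathbb{K}(z)$-linear combination of $R$ (Theorem \ref{main theorem}), $I$, and $R^{-1}$ (Lemma \ref{lemm Rinverse}). The verification then reduces to matching coefficients of each $E_{ij}\otimes E_{kl}$ on both sides after applying the flip $P$, once everything is put over the common factor $r^2 z - s^2$.

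First I would read off the coefficients of the ``generic'' entries, which split naturally into three groups: the pure diagonal $E_{ii}\otimes E_{ii}$ with $i \neq i'$ (contributed only by the diagonal terms of $R$, $I$, $R^{-1}$); the cross-diagonal $E_{jj}\otimes E_{ii}$ with $i \neq j$ and $i' \neq j$ (contributed by the $(rs^{-1})^{\pm 1}$, $(r^{-1}s)^{\pm 1}$, and $(rs)^{\pm 1}$ pieces); and the ``swap'' entries $E_{ij}\otimes E_{ji}$ with $i' \neq j$ (arising from the skew terms). These three groups recover the first three lines of the stated formula by a routine collection of terms, using the relation $\lambda_1\lambda_3^{-1}(z^{-1} - z^{-2})z^2 + cz - \lambda_3^{-1}(z^{-1}-1)z = (\cdots)/(r^2z-s^2)$ after rescaling.

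The main obstacle will be the ``singular'' contribution supported on $E_{i'j'}\otimes E_{ij}$, coming from the $K$-matrix summand in Corollary \ref{cor R R-1} that appears in both $R$ and $R^{-1}$. These contributions inherit an additional pole $z = (r^{-1}s)^{2n-1}$ beyond $r^2 z - s^2$, which can be traced to the spectral decomposition (\ref{equ specdec}) together with the quasi-idempotency $K^2 = \bigl((rs^{-1}-r^{-1}s+(rs^{-1})^{2n}-(r^{-1}s)^{2n})/(rs^{-1}-r^{-1}s)\bigr)K$ of Corollary \ref{cor Kquasi}. A case-by-case bookkeeping according to $i < j$, $i > j$, $i = j \neq i'$, and $i = j = i'$, using $\rho_{i'} = -\rho_i$ and Corollary \ref{cor Kij}, then produces exactly the four formulas for $d_{ij}(z,1)$ stated in the theorem; in particular, the asymmetry between $i<j$ and $i>j$ reflects the difference between the $(r^{-1}s-rs^{-1})$-part of $R$ and the $(rs^{-1}-r^{-1}s)$-part of $R^{-1}$, weighted by $z$.

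Finally, to confirm the degeneration claim, I would specialize $r = q^{1/2}$, $s = q^{-1/2}$ (so $rs = 1$, $r^{-1}s = q^{-1}$, $r^2s^2 = 1$, $r^2-s^2 = q - q^{-1}$) directly in the explicit formula. Under this specialization, $d_{ij}(z,1)$ becomes symmetric in the sign conventions of \cite{JingLM SIGMA 2020}, and the three main groups collapse to the one-parameter $\widehat{R}_q(z)$ after conjugation by $P$, providing a sanity check consistent with the known case.
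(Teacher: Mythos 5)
Your proposal is correct in substance but takes the complementary route to the paper's. The paper's entire proof is two sentences: it invokes the Yang-Baxterization theory of \cite{CGeX CMP1991,GeWX1991}, which guarantees that formula (\ref{Rz 1}) yields a solution of the spectral QYBE precisely because the braid representation carries a Birman--Wenzl--Murakami structure, and then points to Proposition \ref{prop BWrep}; the explicit coefficient expansion that constitutes the displayed formula is left entirely implicit. You do the opposite: you carry out the substitution $S=R^{-1}$, $\lambda_1=rs^{-1}$, $\lambda_2=-r^{-1}s$, $\lambda_3=(r^{-1}s)^{2n}$, $z=x^{-1}$ into (\ref{Rz 1}), normalize, and match coefficients entry by entry, which is exactly the computation needed to justify the stated formula literally (your identification of the extra pole at $z=(r^{-1}s)^{2n-1}$ as arising from the $K$-summand of the skein relation, and your four-case bookkeeping for $d_{ij}(z,1)$ using $\rho_{i'}=-\rho_i$ and Corollary \ref{cor Kij}, are the right mechanism --- the overall normalization factor $(1-r^2s^{-2}z)\bigl(1-(rs^{-1})^{2n-1}z\bigr)$ is what produces both denominators, and the second factor cancels everywhere except on the $E_{i'j'}\otimes E_{ij}$ block). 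What your route buys is an actual derivation of the displayed matrix; what it omits, and what the paper's proof supplies, is the reason this $\widehat{R}_{r,s}(z)$ deserves the name ``spectral parameter-dependent $R$-matrix'' at all, namely that it satisfies the QYBE --- so you should still cite Proposition \ref{prop BWrep} (the BWM structure) at the outset rather than treating (\ref{Rz 1}) as an unconditioned identity. One cosmetic caution: the displayed relation in your second paragraph, $\lambda_1\lambda_3^{-1}(z^{-1}-z^{-2})z^2+cz-\lambda_3^{-1}(z^{-1}-1)z=(\cdots)/(r^2z-s^2)$, is garbled (the first term should be $\lambda_1(1-z)$ with no $\lambda_3^{-1}$, and the right-hand side only acquires the denominator after dividing by the normalization), though this does not affect the soundness of the plan.
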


\begin{proof}
  According to \cite{CGeX CMP1991,GeWX1991}, it suffices to verify that the basic $R$-matrix admits the Birman-Wenzl-Murakami algebraic structure. Indeed, this is true from Proposition \ref{prop BWrep}.
\end{proof}

\begin{remark}
  We have the following identities by direct verifications:
\begin{equation*}\label{R1}
  R_{r,s}(1)=1,
\end{equation*}
\begin{equation*}\label{unitary condition}
  R_{r,s}(z)R_{r,s}(z^{-1})=R_{r,s}(z^{-1})R_{r,s}(z)=1,
\end{equation*}
\begin{equation*}\label{Rz iden}
  R_{r,s}(z)=P\circ R_{r,s}(z^{-1},r^{-1},s^{-1})\circ P.
\end{equation*}
\end{remark}


\begin{remark}\label{rmk relation}
The relationships between the spectral parameter-dependent $R$-matrix $\widehat{R}_{r,s}(z)$ and the basic one (and its inverse) are:
$$\widehat{R}_{r,s}(0)=r^{-1}s\,\widehat{R}_{r, s}, \quad \lim\limits_{z\to \infty} \widehat{R}_{r,s}(z)=rs^{-1}\widehat{R}_{r, s}^{-1}.$$
\end{remark}
\begin{defi}\cite{FRT 1989,ResLMP 1990}
 The algebra $\mathcal{U}(\widehat R_{r, s}(z))$ is an associative algebra over $\mathbb{K}$ with generators $\ell^{\pm}_{ij}[\mp m]$, $m\in\mathbb{Z}_+\setminus \{0\}$ and $\ell^+_{kl}[0]$, $\ell^-_{lk}[0]$, $1\le l\le k\le n$ and the central group-like elements $\gamma^{\frac{1}{2}}=r^{\frac{c}{2}}$ and  $\gamma'^{\frac{1}{2}}=s^{\frac{c}{2}}$ ($c$ is the canonical central elements of $\widehat{\mathfrak{g}}$).
Let $\ell^{\pm}_{ij}(z)=\sum\limits_{m=0}^{\infty}\ell^{\pm}_{ij}[\mp m]z^{\pm m}$, where $\ell^+_{kl}[0]=\ell^-_{lk}[0]=0$, for $1\le l<k\le n$. Let $L^{\pm}(z)=\sum\limits_{i,j=1}^{n}E_{ij}\otimes \ell^{\pm}_{ij}(z)$.
Then the relations are given by the following conditions and matrix equations on $\text{End}(V^{\otimes 2})\otimes \mathcal{U}(\widehat{R}_{r, s}(z))$:
\begin{gather}\label{}
  \ell_{ii}^+[0], \ell_{ii}^-[0]\text{ are invertible and } \ell_{ii}^+[0]\ell_{ii}^-[0]=\ell_{ii}^-[0]\ell_{ii}^+[0],
   \label{equ 5.1} \\
  \widehat{R}_{r, s}\Bigl(\frac{z}{w}\Bigr)L_1^\pm(z)L_2^\pm(w)=L_2^\pm(w)L_1^\pm(z)\widehat{R}_{r, s}\Bigl(\frac{z}{w}\Bigr),\label{equ 5.2}\\
  \widehat{R}_{r, s}\Bigl(\frac{z_+}{w_-}\Bigr)L_1^+(z)L_2^-(w)=L_2^-(w)L_1^+(z)\widehat{R}_{r, s}\Bigl(\frac{z_-}{w_+}\Bigr),\label{equ 5.3} \\
  L^\pm(z)CL^\pm(z\xi)^tC^{-1}=1, \label{equ metric}
\end{gather}
where $z_+=zr^{\frac{c}{2}}$ and $z_-=zs^{\frac{c}{2}}$.
Here (\ref{equ 5.2}) is expanded in the direction of either $\frac{z}{w}$ or $\frac{w}{z}$, and (\ref{equ 5.3}) is expanded in the direction of $\frac{z}{w}$.
In (\ref{equ metric}),  $C$ is the metric matrix defined in Definition \ref{metric matrix}, $t$ denotes the standard matrix transposition, and $\xi=(r^{-1}s)^{2n-1}$.
\end{defi}

\begin{remark}
  From Equation (\ref{equ 5.3}) and the unitary condition of $\widehat{R}$-matrix (\ref{unitary condition}), we have
\begin{equation}\label{equ 5.4}
  \widehat{R}_{r, s}\Bigl(\frac{z_\pm}{w_\mp}\Bigr)L_1^\pm(z)L_2^\mp(w)=L_2^\mp(w)L_1^\pm(z)\widehat{R}_{r, s}\Bigl(\frac{z_\mp}{w_\pm}\Bigr).
\end{equation}
\end{remark}

\begin{remark}\label{rmk matrixexp}
  Here we present the specific matrix expression formulas for (\ref{equ 5.2}).
$$L^\pm(z)=\left(
           \begin{array}{ccccc}
             \ell_{11}^\pm(z) & \ell_{12}^\pm(z) & \cdots   & \ell_{1, 2n+1}^\pm(z) \\
             \ell_{21}^\pm(z) & \ell_{22}^\pm(z) & \ddots   & \vdots \\
             \vdots & \ddots & \ddots   & \ell_{2n, 2n+1}^\pm(z) \\
             \ell_{2n+1, 1}^\pm(z) & \cdots &   \ell_{2n+1, 2n}^\pm(z) & \ell_{2n+1, 2n+1}^\pm(z) \\
           \end{array}
         \right)_{(2n+1)\times (2n+1)},$$
then for the generators $L_1^\pm(z), L_2^\pm(z), \widehat{R}_{r, s}(z)$, we have that
$$L_1^\pm(z)=\left(
               \begin{array}{ccc}
                 \ell_{11}^\pm(z)I_{2n+1} & \cdots & \ell_{1,2n+1}^\pm(z)I_{2n+1}  \\
                 \vdots & \cdots & \vdots \\
                 \ell_{2n+1, 1}^\pm(z)I_{2n+1} & \cdots &\ell_{2n+1, 2n+1}^\pm(z)I_{2n+1}  \\
               \end{array}
             \right)_{(2n+1)^2\times (2n+1)^2},$$
$$L_2^\pm(z)=\left(
               \begin{array}{cccc}
                 L^\pm(z) & 0 & \cdots & 0\\
                 0 & L^\pm(z) & \ddots & \vdots \\
                 \vdots & \ddots & \ddots & 0 \\
                 0 & \cdots & 0 & L^\pm(z) \\
               \end{array}
             \right)_{(2n+1)^2\times (2n+1)^2},$$
$$\widehat{R}_{r,s}(z)=\left(
               \begin{array}{ccc}
                 B_{11}(z) & \cdots & B_{1, 2n+1}(z) \\
                 \vdots & \cdots & \vdots \\
                 B_{2n+1, 1}(z) & \cdots & B_{2n+1, 2n+1}(z) \\
               \end{array}
             \right)_{(2n+1)^2\times (2n+1)^2},$$

$\bullet$ $B_{\ell\ell}(z)$ is a diagonal matrix, and $a_{\ell j}(z)$ is the coefficient of element $E_{\ell\ell}\otimes E_{jj}$ in $\widehat{R}_{r, s}(z)$.

$\bullet$ $ B_{ij}(z)=b_{ij}(z)E_{ji}+c_{i' j'}(z)E_{i' j'}$,
where $b_{ij}(z)$ is the coefficient of element $E_{ij}\otimes E_{ji}$ in $\widehat{R}_{r, s}(z)$,
and $c_{ij}$ is the coefficient of element $E_{i' j'}\otimes E_{ij}$ in $\widehat{R}_{r, s}(z)$.
Assume
$$\widehat{R}_{r, s}\Bigl(\frac{z}{w}\Bigr)L_1^\pm(z)L_2^\pm(w)=\left(
                                             \begin{array}{ccc}
                                               M_{11} & \cdots & M_{1, 2n+1} \\
                                               \vdots & \cdots & \vdots \\
                                               M_{2n+1,1} & \cdots & M_{2n+1,2n+1} \\
                                             \end{array}
                                           \right)_{(2n+1)^2\times (2n+1)^2},
$$
$$L_2^\pm(w)L_1^\pm(z)\widehat{R}_{r, s}\Bigl(\frac{z}{w}\Bigr)=\left(
                                            \begin{array}{ccc}
                                               M_{11}' & \cdots & M_{1, 2n+1}' \\
                                               \vdots & \cdots & \vdots \\
                                               M_{2n+1,1}' & \cdots & M_{2n+1,2n+1}' \\
                                            \end{array}
                                          \right)_{(2n+1)^2\times (2n+1)^2},
$$
where $M_{ij}, M'_{ij}$ are square matrices of order $2n+1$.

We have $M_{ij}=M_{ij}'$, where $M_{ij}=$
\begin{equation*}\label{5.9}
\left(
           \begin{array}{ccccccc}
             a_{i1}(\frac{z}{w})\ell_{ij}^\pm(z) &  & b_{i1}(\frac{z}{w})\ell_{1j}^\pm(z) &  &  &  &\\
              & \ddots & \vdots &  &  &  &  \\
              &  & a_{ii}(\frac{z}{w})\ell_{ij}^\pm(z) &  &  &  &  \\
              &  & \vdots & \ddots &  &  &  \\
             c_{i' 1}(\frac{z}{w})\ell_{1' j}^\pm(z) & \cdots & c_{i' i}(\frac{z}{w})\ell_{i' j}^\pm(z) & \cdots & c_{i' i'}(\frac{z}{w})\ell_{ij}^\pm(z) & \cdots & c_{i' 1'}(\frac{z}{w})\ell_{1j}^\pm(z) \\
              &  & \vdots &  &  & \ddots &  \\
              &  & b_{i 1'}(\frac{z}{w})\ell_{1' j}^\pm(z) &  &  &  & a_{i 1'}(\frac{z}{w})\ell_{ij}^\pm(z) \\
           \end{array}
         \right)L^\pm(w),
\end{equation*}
with non-zero elements only on the diagonal, the $i'$-th row, and the $i$-th column;
and $M_{ij}'=$
\begin{equation*}\label{5.11}
L^\pm(w)\left(
           \begin{array}{ccccccc}
             a_{j1}(\frac{z}{w})\ell_{ij}^\pm(z) &  & &  &c_{1j'}(\frac{z}{w})\ell_{i1'}^\pm(z) &  &  \\
              & \ddots &  &  &\vdots  &  &  \\
             b_{1, j}(\frac{z}{w})\ell_{i1}^\pm(z)& \cdots & a_{jj}(\frac{z}{w})\ell_{ij}^\pm(z) &\cdots  & c_{jj' }(\frac{z}{w})\ell_{i j'}^\pm(z) &\cdots  & b_{1' j}(\frac{z}{w})\ell_{i 1'}^\pm(z) \\
              &  &  & \ddots &\vdots  &  &  \\
              &  &  &  & c_{j' j'}(\frac{z}{w})\ell_{ij}^\pm(z)&& \\
              &  & &  &\vdots  & \ddots &  \\
              &  &  &  & c_{1' j'}(\frac{z}{w})\ell_{i1}^\pm(z) &  & a_{j 1'}(\frac{z}{w})\ell_{ij}^\pm(z) \\
           \end{array}
         \right),
\end{equation*}
with non-zero elements only on the diagonal, the $j$-th row, and the $j'$-th column.
\end{remark}

\begin{defi}\label{def quasidet}\cite{GGRW}
 Let $X$ = $(x_{ij})^{n}_{i,j=1}$ be a sequence matrix over a ring with identity.
 Denote by $X^{ij}$ the submatrix obtained from $X$ by deleting the $i$-th row and $j$-th column.
 Suppose that the matrix $X^{ij}$ is invertible.
 The $(i,j)$-th quasi-determinant $|X|_{ij}$ of $X$ is defined by
\begin{equation*}
|X|_{ij}=\left|\begin{array}{rrrrr}x_{11}&\cdots&x_{1j}&\cdots&x_{1n}\\ \qquad &\cdots&\qquad &\cdots&\qquad \\x_{i1}&\cdots&\boxed{x_{ij}}&\cdots&x_{in}\\\qquad &\cdots&\qquad &\cdots&\qquad \\x_{n1}&\cdots&x_{nj}&\cdots&x_{nn}\end{array}\right|=x_{ij}-r^j_i(X^{ij})^{-1}c^i_j,
\end{equation*}
where $r^j_i$ is the row matrix obtained from the $i$-th row of $X$ by deleting the element $x_{ij}$, and $c^i_j$ is the column matrix obtained from the $j$-th column of $X$ by deleting the element $x_{ij}$.
\end{defi}

According to \cite{GGRW}, we have the unique Gaussian decomposition of $L^\pm(z)$:
\begin{prop}\label{Prop Gauss decomp}
 $$ L^\pm(z)=F^\pm(z)K^\pm(z)E^\pm(z),$$
where
\begin{equation*}\label{5.15}
 F^\pm(z)=\left(
            \begin{array}{cccc}
              1 &  &  &  \\
              f_{21}^\pm(z) & \ddots &  &  \\
              \vdots & \ddots & \ddots &  \\
              f_{2n+1,1}^\pm(z) & \cdots & f_{2n+1,2n}^\pm(z) & 1 \\
            \end{array}
          \right),
\end{equation*}
\vspace{1em}
\begin{equation*}\label{5.16}
  E^\pm(z)=\left(
             \begin{array}{cccc}
               1 & e_{12}^\pm(z) & \cdots & e_{1,2n+1}^\pm(z) \\
                & \ddots & \ddots & \vdots \\
                &  & \ddots & e_{2n, 2n+1}^\pm(z) \\
                &  &  & 1 \\
             \end{array}
           \right),
\end{equation*}
and
\begin{equation*}\label{5.17}
  K^\pm(z)=\left(
             \begin{array}{cccc}
               k_1^\pm(z) &  &  &  \\
                & \ddots &  &  \\
                &  & \ddots & \\
                &  &  & k_{2n+1}^\pm(z) \\
             \end{array}
           \right).
\end{equation*}
Their entries are found by the quasi-determinant formulas:
\begin{equation}\label{equ quasidet k_i}
k^{\pm}_{i}(z)=\left|\begin{array}{cccc}\ell^{\pm}_{11}(z)&\cdots&\ell^{\pm}_{1,i-1}(z)&\ell^{\pm}_{1i}(z)\\ \vdots &\quad&\vdots &\vdots \\ \ell^{\pm}_{i1}(z)&\cdots&\ell^{\pm}_{i,i-1}(z)&\boxed{\ell^{\pm}_{ii}(z)}\end{array}\right|
\end{equation}
for $1\le i\le 2n+1$, $k^{\pm}_{i}(z)=\sum\limits_{m=0}^\infty k^{\pm}_{i}(\mp m)z^{\pm m}$.
\begin{equation}\label{equ quasidet eij}
e^{\pm}_{ij}(z)=k^{\pm}_i(z)^{-1}\left|\begin{array}{cccc}\ell^{\pm}_{11}(z)&\cdots&\ell^{\pm}_{1,i-1}(z)&\ell^{\pm}_{1j}(z)\\ \vdots &\quad&\vdots &\vdots \\ \ell^{\pm}_{i1}(z)&\cdots&\ell^{\pm}_{i,i-1}(z)&\boxed{\ell^{\pm}_{ij}(z)}\end{array}\right|
\end{equation}
for $1\le i<j\le 2n+1$, $e^{\pm}_{ij}(z)=\sum\limits_{m=0}^\infty e^{\pm}_{ij}(\mp m)z^{\pm m}$.
\begin{equation}\label{equ quasidet fji}
f^{\pm}_{ji}(z)=\left|\begin{array}{cccc}\ell^{\pm}_{11}(z)&\cdots&\ell^{\pm}_{1,i-1}(z)&\ell^{\pm}_{1i}(z)\\ \vdots &\quad&\vdots &\vdots \\ \ell^{\pm}_{j1}(z)&\cdots&\ell^{\pm}_{j,i-1}(z)&\boxed{\ell^{\pm}_{ji}(z)}\end{array}\right|
k^{\pm}_i(z)^{-1}
\end{equation}
for $1\le i<j\le 2n+1$, $f^{\pm}_{ji}(z)=\sum\limits_{m=0}^\infty f^{\pm}_{ji}(\mp m)z^{\pm m}$.
\end{prop}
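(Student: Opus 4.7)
The plan is to derive the Gauss decomposition from the general noncommutative quasi-determinant theory of Gelfand--Retakh--Wilson, as collected in \cite{GGRW}, after verifying that the hypotheses are met in our formal-series setting. The heart of the matter is that once every principal quasi-minor of $L^\pm(z)$ is invertible in the appropriate completed algebra, the triangular factors $F^\pm(z), E^\pm(z)$ and the diagonal factor $K^\pm(z)$ are uniquely determined, and their entries are given by the quasi-determinant formulas \eqref{equ quasidet k_i}--\eqref{equ quasidet fji}. So the work reduces to checking invertibility and tracking the expansion directions carefully.

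First, I would verify the invertibility of the principal quasi-minors. For $i=1$ we have $k_1^\pm(z)=\ell_{11}^\pm(z)$, and since $\ell_{11}^\pm[0]$ is invertible by the defining assumption on $\mathcal{U}(\widehat R_{r,s})$, the formal series $\ell_{11}^\pm(z)=\ell_{11}^\pm[0]\bigl(1+\ell_{11}^\pm[0]^{-1}\sum_{m\ge1}\ell_{11}^\pm[\mp m]z^{\pm m}\bigr)$ is invertible in $\mathcal{U}(\widehat R_{r,s})[[z^{\pm1}]]$ by the standard geometric series argument. Proceeding inductively, assume all $k_1^\pm(z),\dots,k_{i-1}^\pm(z)$ are constructed and invertible; then the Schur-complement expression $k_i^\pm(z)=\ell_{ii}^\pm(z)-r^i_i\bigl(X^{ii}\bigr)^{-1}c^i_i$ has constant term that reduces (after telescoping) to a monomial in the invertible elements $\ell_{jj}^\pm[0]^{\pm1}$, and hence is invertible. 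This ensures that the quasi-determinant formulas are well-defined within the formal power series completion.

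Next, I would perform the Gauss elimination explicitly in block form: writing $L^\pm(z)$ as a $2\times 2$ block with upper-left block of size $(i-1)\times(i-1)$, the row/column reduction to eliminate the off-diagonal blocks produces the Schur complement, which by definition is the next diagonal entry $k_i^\pm(z)$, while the multipliers used in the elimination are exactly the off-diagonal entries $e^\pm_{ij}(z)$ and $f^\pm_{ji}(z)$ as described in \eqref{equ quasidet eij} and \eqref{equ quasidet fji}. Uniqueness follows from the standard argument: any other factorization $L^\pm(z)=F'K'E'$ with the same triangularity and normalization would yield $F^{-1}F'=KE(E')^{-1}(K')^{-1}$, and comparing unitriangularity of the left side with diagonal$\cdot$unitriangular structure of the right forces both to be the identity.

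The main obstacle is a bookkeeping one: making sure that at every step of the elimination, the expansion direction is preserved, so that $k_i^+(z),e_{ij}^+(z),f_{ji}^+(z)$ lie in $\mathcal{U}(\widehat R_{r,s})[[z^{-1}]]$ while the corresponding $(-)$-quantities lie in $\mathcal{U}(\widehat R_{r,s})[[z]]$. This is guaranteed because $\ell^\pm_{ij}(z)$ has only nonnegative powers of $z^{\pm1}$, the inverse of an element with invertible constant term preserves this property, and all the quasi-determinant operations consist of finitely many additions, multiplications, and such inversions. With this verification in hand, the decomposition $L^\pm(z)=F^\pm(z)K^\pm(z)E^\pm(z)$ and the quasi-determinant formulas follow directly from the GGRW framework.
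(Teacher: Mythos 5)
Your proposal is correct and follows the same route as the paper, which gives no proof at all and simply invokes the quasi-determinant Gauss decomposition of Gelfand--Gelfand--Retakh--Wilson (\cite{GGRW}); your verification that the principal quasi-minors are invertible in the formal-series completion (using $\ell_{ii}^{\pm}[0]$ invertible and $\ell_{kl}^{+}[0]=\ell_{lk}^{-}[0]=0$ for $k>l$, so that $k_i^{\pm}(0)=\ell_{ii}^{\pm}[0]$, consistent with the Remark following the proposition) is exactly the hypothesis-checking the paper leaves implicit. No gaps.
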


Since $\ell^+_{kl}[0]=\ell^-_{lk}[0]=0$, for $1\le l<k\le n$, then we have
\begin{prop}\label{prop quasidet z=0}
The Gaussian generators are related to the FRT generators as follows:
  $$k_i^\pm(0)=\ell_{ii}^\pm[0], \quad e_{ij}^\pm(0)=(\ell_{ii}^\pm[0])^{-1}\ell_{ij}^\pm[0], \quad f_{ji}^\pm(0)=\ell_{ji}^\pm[0](\ell_{ii}^\pm[0])^{-1}.$$
\end{prop}

\begin{proof}
  From Definition \ref{def quasidet} and Equation (\ref{equ quasidet k_i}),
  by comparing the constant terms on both sides, we obtain
  \begin{align*}
   k_i^\pm(0) & = \ell_{ii}^\pm[0] -
(\ell_{i1}^\pm[0], \ell_{i2}^\pm[0], \dots, \ell_{i, i-1}^\pm[0])
\begin{vmatrix}
\ell_{11}^\pm[0] & \cdots & \ell_{1, i-1}^\pm[0] \\
\vdots & \ddots & \vdots \\
\ell_{i-1, 1}^\pm[0] & \cdots & \ell_{i-1, i-1}^\pm[0]
\end{vmatrix}^{-1}\\
 &\hspace{5em} \cdot (\ell_{1i}^\pm[0], \ell_{2i}^\pm[0], \dots, \ell_{i-1, i}^\pm[0])^t \\
    &= \ell_{ii}^\pm[0],
  \end{align*}
  where $t$ denotes the transposition map,
and the second equality holds since $\ell^+_{kl}[0] = \ell^-_{lk}[0] = 0$ for $1 \le l < k \le n$.
It follows that $k_i^\pm(0) = \ell_{ii}^\pm[0]$.

By (\ref{equ 5.1}), $\ell_{ii}^\pm[0]$ is invertible.
Using Definition \ref{def quasidet} and Equation (\ref{equ quasidet eij}), we obtain
\begin{align*}
e_{ij}^\pm(0) &= (\ell_{ii}^\pm[0])^{-1} \cdot
\begin{vmatrix}
\ell^{\pm}_{11}[0] & \cdots & \ell^{\pm}_{1,i-1}[0] & \ell^{\pm}_{1j}[0] \\
\vdots & & \vdots & \vdots \\
\ell^{\pm}_{i1}[0] & \cdots & \ell^{\pm}_{i,i-1}[0] & \boxed{\ell^{\pm}_{ij}[0]}
\end{vmatrix}  \\
&= (\ell_{ii}^\pm[0])^{-1}\ell_{ij}^\pm[0],
\end{align*}
where the second equality holds by analogy with the previous verifications.
The identity $f_{ji}^\pm(0)=\ell_{ji}^\pm[0](\ell_{ii}^\pm[0])^{-1}$ can be proved analogously by using (\ref{equ quasidet fji}).
\end{proof}

\subsection{Unique spectral parameter $R$-matrix satisfying the intertwining property}\label{sec vecAffine}

In this subsection, we first give the minimal affinization of $U_{r,s}(\widehat {\frak{so}_{2n+1}})$.
Then we can obtain the unique spectral parameter $R$-matrix satisfying the intertwining property (see Proposition \ref{prop intert}) with respect to the minimal affinization \cite{Chari 1995}.

\begin{lemm}\label{lemm vectrep aff}
The vector representation $T$ of $\mathcal{U}_{r,s}\mathcal(\widehat {\frak{so}_{2n+1}})$ is given by:

$\text{\rm(I)}$ For real root vectors of level $k\in\mathbb{Z}$, we have
  \begin{gather*}
T(x_{ik}^+)=(rs^{-1})^{ik}E_{i, i+1}-(rs)^{-1}(rs^{-1})^{(2n-1-i)k}E_{(i+1)', i'},\\
T(x_{ik}^-)=(rs^{-1})^{ik}E_{i+1, i}-(rs)^{-1}(rs^{-1})^{(2n-1-i)k}E_{i', (i+1)'},
\end{gather*}
and
\begin{gather*}
T(x_{nk}^+)=(r+s)^{\frac{1}{2}}\Bigl((rs)^{-\frac{1}{2}}(rs^{-1})^{nk}E_{n, n+1}-r^{-1}(rs^{-1})^{(n-1)k}
E_{n+1, n'}\Bigr),\\
T(x_{nk}^-)=(r+s)^{\frac{1}{2}}\Bigl((rs)^{-\frac{1}{2}}(rs^{-1})^{nk}E_{n+1, n}-s^{-1}(rs^{-1})^{(n-1)k}
E_{n',n+1}\Bigr),
\end{gather*}
with $1\leq i\leq n-1$.

$\text{\rm(II)}$ For imaginary root vectors of level $\ell\neq 0$, we have
\begin{align*}
  T(a_{i\ell})&=-\frac{(rs^{-1})^\ell-(rs^{-1})^{-\ell}}{\ell(r^2-s^2)} \Bigl[(rs^{-1})^{(i+1)\ell} E_{i+1,i+1}-(rs^{-1})^{(i-1)\ell}E_{ii} \\
   &\qquad+(rs^{-1})^{(2n-i)\ell} E_{i',i'}-(rs^{-1})^{(2n-i-2)\ell}E_{(i+1)',(i+1)'}\Bigr],\\
   T(a_{n\ell})&=-\frac{(rs^{-1})^\ell-(rs^{-1})^{-\ell}}{\ell(r-s)}\Bigl[-(rs^{-1})^{(n-1)\ell} E_{n,n}+\Bigl((rs^{-1})^{n\ell}-(rs^{-1})^{(n-1)\ell}\Bigr)E_{n+1,n+1} \\
   &\qquad+(rs^{-1})^{n\ell} E_{n',n'}\Bigr],
\end{align*}
with $1\leq i\leq n-1$.

$\text{\rm(III)}$ The value on group-like elements $\omega_i, \omega_i'$ are as the same as those in Lemma \ref{lemm vect rep}, the finite case.

$\text{\rm(IV)}$ $$T(\gamma)=T(\gamma')=1.$$
\end{lemm}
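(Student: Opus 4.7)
The plan is to verify by direct matrix computation that the formulas in (I)--(IV) satisfy the defining relations (D1)--(D9) of Definition \ref{def affine}, exploiting the fact that the $k=0$ slice already recovers the finite vector representation $T_1$ of Lemma \ref{lemm vect rep} (up to the obvious identifications $x_i^{\pm}(0)\leftrightarrow e_i,f_i$ and $\omega_i(0)\leftrightarrow\omega_i$, $\omega_i'(0)\leftrightarrow\omega_i'$). The key structural observation is that every $T(x_i^{\pm}(k))$ is a scalar multiple of its $k=0$ counterpart $T_1(e_i),T_1(f_i)$, with the scalar being a monomial in $rs^{-1}$ whose exponent depends linearly on $k$ and on the weight of the matrix unit involved. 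This pattern is exactly what an evaluation representation produces, so most relations reduce to the finite-type check plus bookkeeping on exponents.

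First I would dispose of the easy axioms: (D1), (D3), (D4) and (IV) are immediate because $T(\gamma)=T(\gamma')=1$, all $T(\omega_i),T(\omega_i')$ are diagonal, and all $T(a_i(\ell))$ are diagonal, hence mutually commuting with each other and with the $T(\omega_j^{\pm1}),T(\omega_j'^{\pm1})$. Axiom (D5) (the quasi-commutation of $\omega_i,\omega_i'$ with $x_j^{\pm}(k)$) follows at once from the finite case in Lemma \ref{lemm vect rep}, because multiplying $T_1(e_j)$ or $T_1(f_j)$ by the scalar $(rs^{-1})^{jk}$ does not affect the commutation with diagonal operators. Next, for (D2) I would compute $[T(a_i(\ell)),T(a_j(\ell'))]=0$ (trivial, all diagonal) and check that the prescribed right-hand side, a scalar, matches what one obtains from the Heisenberg formula; since both sides are central scalars in the representation, it suffices to compare the expected central value $\delta_{\ell+\ell',0}\,[\ell a_{ij}]_{\langle i,i\rangle}/(\ell(r_i-s_i))\cdot(\gamma^{|\ell|}-\gamma'^{|\ell|})/(r_j-s_j)$ with $0$ (since $\gamma=\gamma'=1$); so (D2) reduces to the identity $\gamma^{|\ell|}=\gamma'^{|\ell|}$, which holds.

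The substantive steps are (D6), (D7), (D8), (D9). For (D6) I would act by $T(a_i(\ell))$ on a basis vector $v_p$ and then by $T(x_j^{\pm}(k))$, and conversely, using the explicit diagonal form of $T(a_i(\ell))$ and the two-term form of $T(x_j^{\pm}(k))$; the identity reduces to comparing, on each matrix-unit summand, the difference of two diagonal eigenvalues against the prescribed coefficient $\pm(\langle i,i\rangle^{\ell a_{ij}/2}-\langle i,i\rangle^{-\ell a_{ij}/2})/(\ell(r_i-s_i))$ times the shifted spectral parameter $(rs^{-1})^{j(\ell+k)}$. For (D7), the quasi-commutation between $x_i^{\pm}(k+1)$ and $x_j^{\pm}(k')$, I would expand both sides in terms of matrix units $E_{ab}E_{cd}$; when $a_{ij}=0$ the two matrix units simply commute and the spectral exponents match, while when $a_{ij}\neq 0$ one gets at most two nonvanishing products whose ratio of spectral weights is precisely $\langle j,i\rangle$, matching the required coefficients. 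The Serre relations (D$9_1$)--(D$9_3$) follow from the analogous Serre relations for $T_1$ established in Lemma \ref{lemm vect rep} once the spectral factors are pulled out, since those factors are $\langle i,j\rangle$-deformed exactly by the Cartan pairing that enters the Serre bracket.

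The main obstacle will be (D8), the $[x_i^+,x_j^-]$ relation, because it entangles the vector-representation values of $\omega_i(m)$ and $\omega_i'(-m)$ via the exponential generating series for $a_i(\ell)$. Here I would first compute the generating series $\omega_i(z)$ and $\omega_i'(z)$ on each eigenvector: because $T(a_i(\ell))$ is diagonal with eigenvalues that, as functions of $\ell$, form a geometric progression in $(rs^{-1})^{\ell}$, the formal exponential series sums to a rational function, and the coefficients $\omega_i(m)$ become the Taylor coefficients of this rational function. The identity (D8) then reduces to a residue-type matching on each basis vector, and the two sides agree precisely because the eigenvalue spectrum of $T(a_i(\ell))$ was tailored to produce the factor $\gamma'^{-k}\gamma^{-(k+k')/2}\omega_i(k+k')-\gamma^{k'}\gamma'^{(k+k')/2}\omega_i'(k+k')$. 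A short separate argument will be required at $i=j=n$, where the factor $(r+s)^{1/2}$ from the short-root normalization produces, in the product $T(x_n^+(k))T(x_n^-(k'))$, a factor $(r+s)$ that combines with $r_n-s_n=r-s$ to give $r_n^2-s_n^2=(r-s)(r+s)$, matching the denominator $r_n-s_n$ on the right of (D8) after cancelling one factor. Once (D8) is settled in this way, the rest is mechanical.
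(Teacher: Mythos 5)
Your proposal is correct and takes essentially the same route as the paper: a direct case-by-case verification that the given matrices satisfy the defining relations of Definition \ref{def affine} (with $c=0$, so $T(\gamma)=T(\gamma')=1$), reducing the diagonal/Cartan-type axioms to trivialities and the finite-type slice to Lemma \ref{lemm vect rep}. The paper's own proof only writes out one representative instance, the commutator $[T(a_{im}),T(x_{ik}^{+})]$ from (D$6_1$), and declares the rest similar, so your more detailed roadmap (including the generating-series treatment of (D8) and the short-root case $i=j=n$) is simply a fuller version of the same argument.
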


\begin{remark}
  The vector-representation of $U_{r,s}\mathcal(\widehat {\frak{so}_{2n+1}})$ can degenerate to the one-parameter case when
  one takes $r=q, s=q^{-1}$.
  The one-parameter case was originally established in \cite{JingLM SIGMA 2020}.
  However, there were some small typos since the basic representation should be derived from the vector representation after taking $k=0$.
\end{remark}

\begin{proof}
We need to verify that the representation satisfies the defining relations in Definition \ref{def affine}.
In our case $c=0$. Here prove
$$\Bigl[T(a_{im}), T(x_{ik}^+)\Bigr]=\frac{(rs^{-1})^{2m}-(rs^{-1})^{-2m}}{m(r^2-s^2)}T(x_{i,m+k}^+),\quad 1\leq i\leq n-1,$$
as an example, since the other relations can be verified similarly.
Indeed,
\begin{align*}
  T(a_{im}x_{ik}^+)=\; & \frac{(rs^{-1})^m-(rs^{-1})^{-m}}{m(r^2-s^2)}\Bigl[(rs^{-1})^{i(m+k)-m}E_{i,i+1} \\
   &-(rs)^{-1}(rs^{-1})^{(2n-i-1)(m+k)-m}E_{(i+1)',i'}\Bigr] \\
   T(x_{ik}^+a_{im})=\;& \frac{(rs^{-1})^m-(rs^{-1})^{-m}}{m(r^2-s^2)}\Bigl[-(rs^{-1})^{i(m+k)+m}E_{i,i+1} \\
   &+(rs)^{-1}(rs^{-1})^{(2n-i-1)(m+k)+m}E_{(i+1)',i'}\Bigr]
\end{align*}
Then
\begin{align*}
  \Bigl[T(a_{im}), T(x_{ik}^+)\Bigr]=\;& \frac{(rs^{-1})^{m}-(rs^{-1})^{-m}}{m(r^2-s^2)}\Bigl[(rs^{-1})^{i(m+k)}\Bigl((rs^{-1})^{m}+(rs^{-1})^{-m}\Bigr)E_{i,i+1}\\
   &-(rs^{-1})^{(2n-i-1)(m+k)}\Bigl((rs^{-1})^{m}+(rs^{-1})^{-m}\Bigr)(rs^{-1})E_{(i+1)',i'}\Bigr]\\
   =\;&\frac{(rs^{-1})^{2m}-(rs^{-1})^{-2m}}{m(r^2-s^2)}T(x_{i,m+k}^+),
\end{align*}
the desired equation.
\end{proof}

The next Proposition helps us to derive spectral-parameter $R$-matrices through finite-dimensional representations of $U_{r, s}(\widehat{\mathfrak{g}})$, which was firstly established in \cite{Chari 1995} for $U_q(\widehat{\mathfrak{g}})$.
\begin{prop}\label{prop auto z}
  For any $z\in \mathbb{C}^\times$, there exist a family of Hopf algebra automorphisms $\tau_z$ of $U_{r, s}(\widehat{\mathfrak{g}})$ defined by
$$\tau_z(x_{ik}^\pm)=z^k x_{ik}^\pm, \quad \tau_z(a_{i\ell})=z^{\ell}a_{i\ell},$$
and $\tau_z$ is the identity map on the other generators.
\end{prop}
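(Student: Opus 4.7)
The plan is to introduce a $\mathbb{Z}$-grading on $\mathcal{U}_{r,s}(\widehat{\mathfrak{g}})$ by assigning degrees
$$\deg(x_i^{\pm}(k))=k, \quad \deg(a_i(\ell))=\ell, \quad \deg(\omega_i^{\pm1})=\deg((\omega'_i)^{\pm1})=\deg(D^{\pm1})=\deg((D')^{\pm 1})=0,$$
and $\deg(\gamma^{\pm\frac{1}{2}})=\deg((\gamma')^{\pm\frac{1}{2}})=0$. Then $\tau_z$ is nothing but the scaling by $z^{\deg}$ in each graded component. To prove that $\tau_z$ is a well-defined algebra endomorphism, it suffices to verify that every defining relation in Definition \ref{def affine} is $\mathbb{Z}$-homogeneous. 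First I would rapidly dispose of $(\textrm{D1})$, $(\textrm{D3})$, $(\textrm{D4})$, $(\textrm{D5})$, since these involve only degree-$0$ generators acting on homogeneous elements. For $(\textrm{D2})$, $(\textrm{D6}_1)$, $(\textrm{D6}_2)$ and $(\textrm{D7})$, each summand (including the Kronecker delta factor $\delta_{\ell+\ell',0}$ in $(\textrm{D2})$) has matching total degree, so each relation is preserved on applying $\tau_z$. Relation $(\textrm{D9}_1)$ and the quantum Serre relations $(\textrm{D9}_2)$, $(\textrm{D9}_3)$ are likewise uniformly graded in the multi-index $(m_1,\dots,m_n,\ell)$.

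The only step that requires genuine care is $(\textrm{D8})$, because it forces us to determine the $\mathbb{Z}$-degree of $\omega_i(m)$ and $\omega'_i(-m)$, which are defined through the exponential generating series
$$\omega_i(z)=\omega_i\exp\Bigl((r_i-s_i)\sum_{\ell=1}^{\infty}a_i(\ell)z^{-\ell}\Bigr),\quad \omega'_i(z)=\omega'_i\exp\Bigl(-(r_i-s_i)\sum_{\ell=1}^{\infty}a_i(-\ell)z^{\ell}\Bigr).$$
Expanding the exponentials, $\omega_i(m)$ ($m\ge 0$) is a polynomial in $\omega_i$ together with the monomials $a_i(\ell_1)\cdots a_i(\ell_p)$ with $\sum\ell_j=m$, so $\tau_z(\omega_i(m))=z^m\omega_i(m)$; the same inspection gives $\tau_z(\omega'_i(-m))=z^{-m}\omega'_i(-m)$. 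Since in $(\textrm{D8})$ only one of $\omega_i(k+k')$ or $\omega'_i(k+k')$ is nonzero depending on the sign of $k+k'$, and both transform by $z^{k+k'}$, the right-hand side and the left-hand side acquire the same factor $z^{k+k'}$ under $\tau_z$. This is the step I expect to be the principal technical check.

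Once algebra-automorphism is established, bijectivity is immediate from $\tau_z\circ\tau_{z^{-1}}=\tau_1=\mathrm{id}$. For the Hopf structure, by the isomorphism with the Drinfeld-Jimbo presentation (Definition \ref{type B def}), it suffices to check compatibility with $\Delta$, $\varepsilon$, $S$ on the generators $e_j, f_j, \omega_j^{\pm 1}, (\omega_j')^{\pm 1}$, $\gamma^{\pm\frac{1}{2}}, (\gamma')^{\pm\frac{1}{2}}, D^{\pm 1}, (D')^{\pm 1}$ for $j\in J_0$. For $j\in J$, these generators correspond to degree-$0$ Drinfeld generators and are fixed by $\tau_z$, so compatibility is trivial; for the affine node $j=0$, the generators $e_0,f_0$ (up to normalization) have degree $\pm 1$, but since the coproduct, counit and antipode formulas in Definition \ref{type B def} are linear in $e_0$ or $f_0$ and involve only degree-$0$ group-like factors, the scaling by $z^{\pm 1}$ passes through $\Delta$, $\varepsilon$, $S$ consistently. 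This finishes the proof.
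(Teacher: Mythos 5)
The paper gives no proof of Proposition \ref{prop auto z} at all: it is stated as a known fact, with the one-parameter analogue attributed to Chari--Pressley, so there is nothing in the source to compare your argument against. That said, your grading argument is the standard way to establish it and is correct. You identify the two genuinely non-trivial points and handle both properly: (i) relation $(\textrm{D8})$ requires knowing that $\omega_i(m)$ and $\omega'_i(-m)$ are homogeneous of degree $m$ and $-m$, which follows from expanding the exponential series as you do (your phrase ``only one of $\omega_i(k+k')$ or $\omega'_i(k+k')$ is nonzero'' is slightly off at $k+k'=0$, where both $\omega_i$ and $\omega'_i$ survive, but both have degree $0$ so the conclusion is unaffected); and (ii) the Hopf compatibility must be checked in the Drinfeld--Jimbo presentation via the Drinfeld isomorphism, where $e_0$ and $f_0$ are homogeneous of degrees $+1$ and $-1$ (as one sees from their expressions $e_0\sim \gamma'^{-1}x_\theta^-(1)\omega_\theta^{-1}$, $f_0\sim\gamma^{-1}\omega_\theta'^{-1}x_\theta^+(-1)$ used in the proof of Lemma \ref{lemm evamod}), so that $\Delta$, $\varepsilon$, $S$ commute with the rescaling. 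The one thing worth making explicit is that bijectivity and the group law $\tau_z\circ\tau_w=\tau_{zw}$ both follow immediately once well-definedness is known, which you do state. No gaps.
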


Pulling back the vector representation $T$ by $\tau_z$ gives the following minimal affinization $T_z$:

\begin{lemm}\label{lemm evamod}
  Let $z\in \mathbb{C}^\times, V=V(z)$.
   The $($type $1)$ minimal affinization $$T_z: U_{r,s}\mathcal(\widehat {\frak{so}_{2n+1}})\longrightarrow End(V)$$
   can be defined as follows:
 Its restriction to $U_{r, s}(\mathfrak{so}_{2n+1})$ is defined as the same as those in  Lemma \ref{lemm vect rep}, and
\begin{align*}
&T_z(\omega_0)=\;\Bigl( s^2 E_{11}+r^{-2}E_{22}+(rs)^{-2}\sum_{i=3}^{n}E_{ii}+E_{n+1, n+1}\\
&\qquad\qquad+s^{-2}E_{1'1'}+r^2E_{2'2'}+(rs)^2\sum_{j=n'}^{3'}E_{jj}\Bigr),\\
&T_z(\omega_0')=\;\Bigl( r^2 E_{11}+s^{-2}E_{22}+(rs)^{-2}\sum_{i=3}^{n}E_{ii}+E_{n+1, n+1}\\
&\qquad\qquad+r^{-2}E_{1'1'}+s^2E_{2'2'}+(rs)^2\sum_{j=n'}^{3'}E_{jj}\Bigr),\\
&T_z(e_0)=\; -r^{-\frac{1}{2}}s^{-\frac{3}{2}}z\Bigl(E_{1',2}-(rs)E_{2',1}\Bigr),\\
&T_z(f_0)=\;-r^{-\frac{3}{2}}s^{-\frac{1}{2}}z^{-1}\Bigl(E_{2,1'}-(rs)E_{1,2'}\Bigr),\\
&T_z(\gamma)=\;T_z(\gamma')\;=1,
\end{align*}
as well as
$$T_z(D)(z^k v)=\;r_0^k v, \qquad T_z(D')(z^k v )=\;s_0^k v, \quad k\in \mathbb{Z},\; v\in V.$$
\end{lemm}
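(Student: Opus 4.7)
The approach is to verify that the formulas defining $T_z$ indeed satisfy all the defining relations $(B1)$--$(B5)$ of Definition \ref{type B def}. Because the restriction of $T_z$ to the finite-type subalgebra $U_{r,s}(\mathfrak{so}_{2n+1})$ coincides with the vector representation $T_1$ of Lemma \ref{lemm vect rep}, all relations with indices drawn from $\{1,\ldots,n\}$ are already established. Thus it suffices to verify those relations that involve at least one of the new generators $e_0, f_0, \omega_0^{\pm 1}, (\omega_0')^{\pm 1}, D^{\pm 1}, (D')^{\pm 1}, \gamma^{\pm 1/2}, (\gamma')^{\pm 1/2}$.

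First, I will dispatch the easy commutativity and scalar-action checks. The operators $T_z(\omega_0)$ and $T_z(\omega_0')$ are diagonal, hence commute with the diagonal images of all other $\omega_i^{\pm1},(\omega_i')^{\pm1}$; the images of $D,D'$ act diagonally on $z$-homogeneous components of $V(z)$ and therefore commute with every diagonal group-like image; and $T_z(\gamma)=T_z(\gamma')=1$ yields centrality. This handles $(B1)$. The compatibility of $\omega_0,\omega_0'$ with each $e_j,f_j$ in $(B2)$--$(B3)$ reduces to elementary scalar comparisons, reading the required eigenvalues $\langle\omega_i',\omega_j\rangle$ from the structure constant matrix $(*)$ against the explicit diagonal form of $T_z(\omega_0),T_z(\omega_0')$. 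The relations $D\,e_i\,D^{-1}=r_i^{\delta_{0i}}e_i$ and their analogues follow because $T_z(e_0)\propto z$ (hence shifts $z$-degree by $+1$), $T_z(f_0)\propto z^{-1}$, while $T_z(e_i),T_z(f_i)$ for $i\ge1$ have $z$-degree $0$.

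Next I turn to the mixed commutation relations $(B4)$. The key computation is
\begin{equation*}
[T_z(e_0),T_z(f_0)]=\frac{T_z(\omega_0)-T_z(\omega_0')}{r_0-s_0},
\end{equation*}
which I will establish by expanding $T_z(e_0)T_z(f_0)-T_z(f_0)T_z(e_0)$: each term is a sum of matrix units supported in $\{(1,1),(2,2),(1',1'),(2',2')\}$, the off-diagonal contributions vanish, and the resulting diagonal matrix matches the right-hand side after division by $r_0-s_0=r^2-s^2$. For $j\in\{1,\ldots,n\}$, the cross relations $[T_z(e_0),T_z(f_j)]=0$ and $[T_z(e_j),T_z(f_0)]=0$ are verified by comparing the supports of the participating matrix units; the only potentially nontrivial interactions arise for $j\in\{1,2\}$ and cancel by direct expansion.

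The main obstacle is verifying the quantum Serre relations $(B5)$ involving index $0$. For $j\notin\{2\}$ one has $a_{0j}=0$, and the relation reduces to the quasi-commutativity $[T_z(e_0),T_z(e_j)]_{\langle\omega_j',\omega_0\rangle}=0$, checked by inspection of the four matrix units composing $T_z(e_0)$. The essential case is $a_{02}=-1$, which demands
\begin{equation*}
\bigl(\operatorname{ad}_l e_0\bigr)^{2}(e_2)=0,\qquad \bigl(\operatorname{ad}_l e_2\bigr)^{2}(e_0)=0.
\end{equation*}
Unpacking the left-adjoint action via the coproduct and antipode of $(B5)$, each of these becomes an $(r,s)$-twisted alternating sum of triple products $T_z(e_0)^a T_z(e_2)^b T_z(e_0)^c$ with $a+b+c=3$. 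Using $T_z(e_0)=-r^{-1/2}s^{-3/2}z(E_{1',2}-rs\,E_{2',1})$ and $T_z(e_2)=E_{2,3}-(rs)^{-1}E_{3',2'}$, each triple product expands to at most two matrix units; collecting terms and invoking the structure constants $\langle\omega_2',\omega_0\rangle=r^{-2}$ and $\langle\omega_0',\omega_2\rangle=s^2$ from $(*)$ yields the required cancellation. The companion Serre relations for $f_0,f_j$ follow by applying the automorphism $\zeta$ of Definition \ref{def auto}, which interchanges the parameters $(r,s)$ and swaps $(\omega_i,\omega_i')$, to the verifications above. Assembling all these checks shows that $T_z$ preserves every defining relation, completing the proof.
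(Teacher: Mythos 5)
Your proof is a direct verification of the Drinfeld--Jimbo relations $(\textrm{B1})$--$(\textrm{B5})$ from the explicit matrices, and it is workable, but it takes a genuinely different route from the paper. The paper does not treat the formulas for $T_z(e_0), T_z(f_0), T_z(\omega_0^{(\prime)})$ as ad hoc data to be checked against all axioms; instead it \emph{derives} them by writing $\omega_0=\gamma'^{-1}\omega_\theta^{-1}$, $e_0=z\cdot a\gamma'^{-1}x_\theta^-(1)\omega_\theta^{-1}$, $f_0=z^{-1}\cdot a\gamma^{-1}\omega_\theta'^{-1}x_\theta^+(-1)$ (with $\theta$ the highest root and $a=(rs)^{n-2}(r+s)^{-1}$) via the Drinfeld Isomorphism, and then evaluates these through the already-established vector representation of the Drinfeld realization (Lemma \ref{lemm vectrep aff}) together with the shift automorphism $\tau_z$. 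On that route only a few spot-checks ($[e_0,f_0]$ and the $\omega_i$-conjugation relations) are recorded, the remaining axioms being inherited from the Drinfeld-realization computation; your route pays for its self-containedness by having to verify the degree-$3$ Serre relation at the node pair $(0,2)$ by hand, which the paper never touches. Both are legitimate; yours is more elementary but longer, the paper's is shorter but leans on the Drinfeld Isomorphism and Lemma \ref{lemm vectrep aff}.

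Two small repairs you should make if you keep your route. First, the structure constants you quote are transposed: from the matrix $(*)$ one reads $\langle\omega_0',\omega_2\rangle=r^{-2}$ and $\langle\omega_2',\omega_0\rangle=s^2$, not the other way around; the Serre cancellation is sensitive to which of these enters $\operatorname{ad}_le_0$ versus $\operatorname{ad}_le_2$. Second, deducing the $f$-Serre relations ``by applying $\zeta$'' is not justified as stated: $\zeta$ is an automorphism of the abstract algebra swapping $r\leftrightarrow s$ and $\omega_i\leftrightarrow\omega_i'$, and $T_z(f_0)$ is not obtained from $T_z(e_0)$ by that substitution (one also needs a transpose and $z\mapsto z^{-1}$), so there is no intertwiner making the transfer automatic. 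Since the $f$-side computation is identical in length to the $e$-side one, just do it directly.
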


\begin{proof}
The assignments are based on the Drinfeld Isomorphism Theorem (See \cite{HuCMP 2008,Zhang phd 2007}).
It suffices to prove these are compatible with Definition \ref{type B def}.
Indeed, we have
$$T_z(\omega_0)=T(\gamma^{\prime-1}\omega_\theta^{-1}), \quad T_z(\omega'_0)=T(\gamma^{-1}\omega_\theta^{\prime-1}),$$
\noindent where $\theta$ is the highest positive root of $\mathfrak{so}_{2n+1}$.
And
$$T_z(e_0)=z\cdot T\Bigl(a\gamma^{\prime-1}x_\theta^-(1)\omega_\theta^{-1}\Bigr),\quad
T_z(f_0)=z^{-1}\cdot T\Bigl(a\gamma^{-1}\omega_\theta^{\prime-1}x_\theta^+(-1)\Bigr),$$
where $a=(rs)^{n-2}(r+s)^{-1}$,
and $x_\theta^+(-1), x_\theta^-(1)$ are the quantum affine root vectors as follows (\cite{HuCMP 2008, HuZhang2014, Zhang phd 2007}):
For $\alpha=\alpha_{i_1}+\cdots+\alpha_{i_n}:=\alpha_{i_1,i_2,\cdots,i_n}\in\Phi^+$, the non-trivial quantum affine root vectors $x_\alpha^\pm(k)$ of level $k$ is defined by:
$$x_\alpha^+(k):=[\cdots[x_{i_1}^+(k),x_{i_2}^+(0)]_{\langle\omega'_{i_1},\omega_{i_2}\rangle^{-1}},\cdots, x_{i_n}^+(0)]_{\langle\omega'_{\alpha_{i_1,\cdots,i_{n-1}}},\;\omega_{i_n}\rangle^{-1}},$$
$$x_\alpha^-(k):=[x_{i_n}^-(0),\cdots,[x_{i_2}^-(0),x_{i_1}^-(k)]_{\langle\omega'_{i_2},\omega_{i_1}\rangle}\cdots]_{\langle\omega'_{i_n},\;\omega_{\alpha_{i_1,\cdots,i_{n-1}}}\rangle}.$$
Using part (I) of Lemma \ref{lemm vectrep aff}, we can get the explicit formula of $T_z(e_0)$ and $T_z(f_0)$.
It is easy to verify that
$$\Bigl[T_z(e_0), T_z(f_0)\Bigr]=\frac{T_z(\omega_0)-T_z(\omega'_0)}{r^2-s^2},$$
\begin{align*}
  T_z(\omega_ie_0)=(r^2s^{-2})^{\delta_{0i}}T_z(e_0\omega_i),\qquad & T_z(\omega_if_0)=(r^{-2}s^2)^{\delta_{0i}}T_z(f_0\omega_i),  \\
  T_z(\omega'_ie_0)=(r^{-2}s^2)^{\delta_{0i}}T_z(e_0\omega'_i),\qquad & T_z(\omega'_if_0)=(r^2s^{-2})^{\delta_{0i}}T_z(f_0\omega'_i).
\end{align*}
Combining these verifications with the basic representation given in Lemma \ref{lemm vect rep}, we can complete the proof.
\end{proof}

\begin{prop}\label{prop intert}
  Using the explicit formula of $\widehat{R}(\frac{z}{w})$ in Theorem \ref{thm spectral Rz1} ,
we can get the intertwining operator:
$$R\Bigl(\frac{z}{w}\Bigr): V(z)\otimes V(w)\rightarrow V(w)\otimes V(z),$$
where $R(\frac{z}{w})=P\circ \widehat{R}(\frac{z}{w}).$
Specifically, it satisfies:
\begin{equation}\label{equ intert}
  R\Bigl(\frac{z}{w}\Bigr)(T_z\otimes T_w)(\Delta(x))=(T_w\otimes T_z)(\Delta(x))R\Bigl(\frac{z}{w}\Bigr), \quad x\in U_{r,s}\mathcal(\widehat {\frak{so}_{2n+1}}).
\end{equation}
\end{prop}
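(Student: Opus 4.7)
The plan is to reduce the identity (\ref{equ intert}) to a check on the generators of $U_{r,s}(\widehat{\mathfrak{so}_{2n+1}})$, and then to exploit the Hopf algebra structure: since $\Delta$ is a homomorphism and $T_z\otimes T_w$ and $T_w\otimes T_z$ are representations, if (\ref{equ intert}) holds for $x$ and $y$ then it holds for $xy$. Hence it suffices to verify the relation on a set of algebra generators. I will use the Drinfeld--Jimbo presentation from Definition \ref{type B def}, so the generators to test are $e_i, f_i, \omega_i^{\pm 1}, (\omega_i')^{\pm 1}$ for $i\in J_0$, together with $D^{\pm 1}, (D')^{\pm 1}$ and $\gamma^{\pm 1/2}, (\gamma')^{\pm 1/2}$.

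First, I would dispose of the easy cases. The central elements $\gamma^{\pm 1/2}, (\gamma')^{\pm 1/2}$ act as $1$ by Lemma \ref{lemm evamod}, so their intertwining is trivial. For $D, D'$ the grading action is through the spectral scaling $z^k\mapsto r_0^k z^k$, $z^k\mapsto s_0^k z^k$, which commutes with $R(z/w)$ since $R$ is a rational function only of the ratio $z/w$; the careful bookkeeping of homogeneity in $z, w$ thus yields the intertwining for $D, D'$. For $\omega_i^{\pm 1}, (\omega_i')^{\pm 1}$ with $i\in J_0$, the action on $V(z)\otimes V(w)$ is by diagonal matrices whose entries are independent of $z$, and the intertwining follows from the weight-preserving nature of each coefficient of $R(z/w)$; this is a direct inspection of the formula in Theorem \ref{thm spectral Rz1} against the diagonal matrices of Lemma \ref{lemm vect rep}. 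For $e_i, f_i$ with $i\in J$ (the finite generators), I would note that when $z=w$ the spectral parameter $R$-matrix recovers the identity $R_{r,s}(1)=1$ and, more importantly, that every coefficient $b_{ij}(z/w), c_{ij}(z/w), a_{ij}(z/w)$ depends only on $z/w$; the relation on these generators reduces, via Remark \ref{rmk relation} and the homogeneity of the finite vector representation, to the braiding property of the basic $R$-matrix established in Section \ref{sec basicR}.

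The genuine content lies in checking (\ref{equ intert}) for $x=e_0$ and $x=f_0$, since these are the generators that couple the spectral parameter nontrivially: $T_z(e_0)$ and $T_z(f_0)$ carry explicit factors of $z^{\pm 1}$, and the coproducts $\Delta(e_0)=e_0\otimes 1+\omega_0\otimes e_0$ and $\Delta(f_0)=1\otimes f_0+f_0\otimes \omega_0'$ combine these spectral factors asymmetrically. My plan here is the direct one: substitute the explicit formula for $T_z(e_0)$ and $T_w(e_0)$ from Lemma \ref{lemm evamod} into both sides of (\ref{equ intert}), expand $R(z/w)=P\circ\widehat{R}(z/w)$ using Theorem \ref{thm spectral Rz1}, and match coefficients of each basis vector $v_i\otimes v_j$. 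The resulting identities reduce to a finite set of rational identities in $z/w$ among the coefficients $b_{ij}, c_{ij}, a_{ij}$; the key relations involve the entries with indices $(1, 2')$ and $(2, 1')$, where the $K$-part of $\widehat{R}$ (the sum $\sum d_{ij} E_{i'j'}\otimes E_{ij}$) interacts with the off-diagonal entries of $T_z(e_0)$. The case of $f_0$ is dual and will follow by the same argument (or by applying a suitable Cartan involution).

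The main obstacle I anticipate is exactly this coefficient matching for $e_0$ (and $f_0$): there are many indices to sweep, and one must keep track of the $\rho_i$-weights in $K$, the spectral factors $(rs^{-1})^{(\cdot)k}$ with $k=\pm 1$ from $x_\theta^\mp(\pm 1)$, and the powers of $r,s$ arising from the coefficient $a=(rs)^{n-2}(r+s)^{-1}$. The fact that the \emph{other} Yang-Baxterization (\ref{Rz 2}) fails this intertwining test (see Proposition \ref{prop nonintert}) is the essential motivation for the choice (\ref{Rz 1}); in practice I would first locate the critical matrix entry whose rational identity in $z/w$ distinguishes (\ref{Rz 1}) from (\ref{Rz 2}), verify it explicitly, and then propagate the check to all remaining entries using the symmetries $\widehat{R}(z)=P\circ\widehat{R}(z^{-1},r^{-1},s^{-1})\circ P$ and the crossing symmetry of Proposition \ref{prop crossing symmetry finite}, thereby reducing the bookkeeping by roughly a factor of four.
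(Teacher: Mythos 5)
Your proposal is correct and follows essentially the same route as the paper: reduce the intertwining identity to the generators, dispose of the finite part $U_{r,s}(\mathfrak{so}_{2n+1})$ by noting that $R(\frac{z}{w})$ is a scalar-coefficient combination of $S$, $I$, $S^{-1}$ (each a braided module homomorphism), and then verify $x=e_0$ (with $f_0,\omega_0,\omega_0'$ handled analogously) by substituting the explicit formula from Lemma \ref{lemm evamod} and matching coefficients of the basis vectors $v_i\otimes v_j$. The paper's proof carries out exactly this case-by-case coefficient check for $e_0$, organized by whether $i,j$ lie in $\{1,2\}$.
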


The explicit formula of intertwining operator in one-parameter case was firstly proposed by Jimbo in \cite{Jimbo 1986}.

\begin{proof}
 To verify the validity of this formula in the two-parameter case,
one first notices that this formula holds immediately for $x\in U_{r,s}\mathcal(\frak{so}_{2n+1})$,
as $S$ and $S^{-1}$ are braided homomorphisms (it should be noted that $R(\frac{z}{w})$ is the linear combination of $S$, $I$ and $S^{-1}$, see (\ref{Rz 1})).
Thus, it suffices to verify for $x=e_0$ (the case for $f_0, \omega_0$ and $\omega'_0$ can be proved similarly).
Specifically, we need to verify:
\begin{equation}\label{equ intert2}
  R\Bigl(\frac{z}{w}\Bigr)(T_z\otimes T_w)(\Delta(e_0))(v_i\otimes v_j)=(T_w\otimes T_z)(\Delta(e_0))R\Bigl(\frac{z}{w}\Bigr)(v_i\otimes v_j),
\end{equation}
where $v_i\in V(z)$ and $v_j\in V(\omega)$.

We conduct a case-by-case discussion based on the different values of $i$ and $j$.
From  Lemma \ref{lemm evamod}, we know that $T_z(e_0)=az\Bigl(E_{1',2}-rsE_{2',1}\Bigr)$, where $a=-r^{-\frac{1}{2}}s^{-\frac{3}{2}}$.

(I) We assume that $i, j\notin \{1, 2\}$. Then we know the left-hand side (LHS) is obviously equal to $0$.
For the right-hand side (RHS):

$\bullet$ If $i\neq j'$,  RHS is also 0.

$\bullet$ If \(i = j'\) (i.e., \(j \notin \{1, 2, 1', 2'\}\)):
\begin{align*}
  \text{RHS} =\;  & (T_w\otimes T_z)(\Delta(e_0))[ c_{1j}(\frac{z}{w})v_1\otimes v_{1'}+c_{2j}(\frac{z}{w}) v_2\otimes v_{2'} \\
   & + c_{1j'}(\frac{z}{w})v_{1'}\otimes v_1+ c_{2'j}(\frac{z}{w})v_{2'}\otimes v_2] \\
  =\; & -rs\,awc_{1j}(\frac{z}{w})v_{2'}\otimes v_{1'}+awc_{2j}(\frac{z}{w})v_{1'}\otimes v_{2'}+r^2az c_{2'j}(\frac{z}{w})v_{2'}\otimes v_{1'}\\
  &-rs^{-1}azc_{1'j}(\frac{z}{w})v_{1'}\otimes v_{2'}\\
  =\; & \;0,
\end{align*}
where $c_{ij}(\frac{z}{w})$ denotes the coefficient of $E_{ij'}\otimes E_{i'j}$  in $R(\frac{z}{w})$.

(II) We assume that $i, j\in \{1, 2\}$.
Without loss of generality, we may assume that $i=1, j=2$. Then
\begin{align*}
  \text{LHS} =\; & R\Bigl(\frac{z}{w}\Bigr) ( -rs\,az v_{2'}\otimes v_2+s^2 aw v_1\otimes v_{1'}) \\
  =\; & -rs\,az\bigl(\sum_{i=1}^{1'} c_{i2}(\frac{z}{w}) v_i\otimes v_{i'}\bigr) \\
   &+ s^2aw\bigl(\sum_{i=1}^{1'} c_{i1'}(\frac{z}{w}) v_i\otimes v_{i'}\bigr). \\
   =\; &\frac{r^2s^2\,aw(z-w)}{r^2z-s^2w}v_{1'}\otimes v_1-\frac{rs\,azw(r^2-s^2)}{r^2z-s^2w}v_{2'}\otimes v_2\\
   &-\frac{rs^3az(z-w)}{r^2z-s^2w}v_2\otimes v_{2'}+\frac{s^2az^2(r^2-s^2)}{r^2z-s^2w}v_1\otimes v_{1'}\\
   =\; &\text{RHS}.
\end{align*}
Here we remark that $-rs\, azc_{i2}(\frac{z}{w})+s^2awc_{i1'}(\frac{z}{w})=0$ if $i\notin \{1, 2, 1', 2'\}$.

(III) We assume that $i\in \{1, 2\}$ while $j\notin \{1, 2\}$.
Without loss of generality, we may assume that $i=1$. Then
$$
  \text{LHS} = R\Bigl(\frac{z}{w}\Bigr) (T_z(e_0)\otimes 1)(v_1\otimes v_j)
  =-rs\,az R\Bigl(\frac{z}{w}\Bigr) (v_{2'}\otimes v_j).
$$
Then we can calculate the value of the LHS based on the different values of $j$.
This calculation process is similar to the previous cases, so we omit it here.

(IV) We assume $j\in \{1, 2\}$ while $i\notin \{1, 2\}$.
This verification is similar to that in (III).
\end{proof}

\begin{remark}\label{rmk intert}
In Ge-Wu-Xue \cite{GeWX1991}, they provided two different affinizations when the braid group representation $S$ has three different eigenvalues, see (\ref{Rz 1}) and (\ref{Rz 2}).
In the previous section, we have used (\ref{Rz 1}) to give the $RLL$ realization of $U_{r, s}(\widehat{\mathfrak{so}_{2n+1}})$.

Using (\ref{Rz 2}), we can obtain another spectral parameter dependent $R$-matrix $\widehat{R}_{new}(z)$.
Indeed, the differences of matrix coefficients between $\widehat{R}_{new}(z)$ and $\widehat R(z)$ occur in those entries $E_{i'j'}\otimes E_{ij}$.
\end{remark}

\begin{prop}\label{prop spectR2}
 Another  spectral parameter dependent $\widehat{R}_{new}(z)$ is given by
\begin{align*}
\widehat{R}_{new}(z)=&\;\sum_{i \atop i\neq i'}E_{ii}\otimes E_{ii}+\frac{z-1}{r^2z-s^2}\Bigl\{\Bigl(\sum_{1\leq i\leq n \atop i+1\leq j\le n}
E_{jj}\otimes E_{ii}+\sum_{2\leq i\leq n \atop (i-1)'\leq j\leq 1'}E_{jj}\otimes E_{ii}\\
&+\sum_{1\leq i\leq n-1 \atop n'\leq j\leq (i+1)'}E_{ii}\otimes E_{jj}+\sum_{n'\leq i\leq 2'\atop i+1\leq j\leq 1'}E_{ii}\otimes E_{jj}\Bigr)+(rs)^2\Bigl(\sum_{1\leq i\leq n\atop i+1\leq j\leq n}E_{ii}\otimes E_{jj}\\
&+\sum_{n'\leq i\leq 2'\atop i+1\leq j\leq 1'}E_{jj}\otimes E_{ii}+\sum_{2\leq i\leq n\atop (i-1)'\leq j\leq 1'}E_{ii}\otimes E_{jj}+\sum_{1\leq i\leq n-1\atop n'\leq j\leq (i+1)'}E_{j j}\otimes E_{i i}\Bigr)\\
&+(rs)\Bigl(\sum_{i\atop i\neq i'} E_{n+1, n+1}\otimes E_{i,i}+\sum_{j\atop j\neq j'}E_{j, j}\otimes E_{n+1, n+1}\Bigr)\Bigr\}\\
&+\frac{r^2-s^2}{r^2z-s^2}\Bigl\{\sum_{i<j \atop i' \neq j}E_{ij}\otimes E_{ji}+z\sum_{i>j \atop i' \neq j }E_{ij}\otimes E_{ji}\Bigr\}\\
&+\frac{1}{(r^2s^{-2}z+(r^{-1}s)^{2n-1})(r^2z-s^2)}\sum_{i,j=1}^{2n+1}d_{ij}(z, 1)E_{i' j'}\otimes E_{i j},
\end{align*}
where $d_{ij}(z, 1)=\left\{
                      \begin{array}{ll}
                        (s^2{-}r^2)z\Bigl\{(z{-}1)(r^{-1}s)^{\rho_i-\rho_j-2}-\delta_{i, j'}[r^2s^{-2}z+(r^{-1}s)^{2n-1}]\Bigr\}, & \hbox{$i<j;$} \\
                        (s^2{-}r^2)\Bigl\{(1{-}z)(r^{-1}s)^{2n-1+\rho_i-\rho_j}-\delta_{i, j'}[r^2s^{-2}z+(r^{-1}s)^{2n-1}]\Bigr\}, & \hbox{$i>j;$} \\
                        r^2(z{-}1)[z+(r^{-1}s)^{2n-1}], & \hbox{$i=j\neq i';$} \\
                       (rs)(z{-}1)[r^2s^{-2}z+(r^{-1}s)^{2n-1}]+(r^2{-}s^2)z[r^2s^{-2}+(r^{-1}s)^{2n-1}], & \hbox{$i=j=i'.$}
                      \end{array}
                    \right.$
\end{prop}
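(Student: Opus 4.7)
The plan is to adapt the derivation of Theorem \ref{thm spectral Rz1} to the second Yang-Baxterization formula (\ref{Rz 2}), so the bulk of the work has already been done and what remains is a careful bookkeeping exercise tracking which terms change. Recall from Lemma \ref{lemma minimal polynomial} that the three eigenvalues of $S=R^{-1}$ are
\begin{equation*}
\lambda_1=rs^{-1},\qquad \lambda_2=-r^{-1}s,\qquad \lambda_3=(r^{-1}s)^{2n},
\end{equation*}
and that the basic braided $R$-matrix and its inverse admit the Birman-Wenzl-Murakami structure of Proposition \ref{prop BWrep}, so both Yang-Baxterizations (\ref{Rz 1}) and (\ref{Rz 2}) yield genuine solutions of the spectral QYBE.

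First, I would compute the scalar coefficients appearing in (\ref{Rz 2}) with the present eigenvalues, namely
\begin{equation*}
1+\tfrac{\lambda_1}{\lambda_2}+\tfrac{\lambda_1}{\lambda_3}+\tfrac{\lambda_1^2}{\lambda_2\lambda_3}\quad\text{and}\quad \tfrac{\lambda_1}{\lambda_2\lambda_3},
\end{equation*}
and then substitute $z=x^{-1}$, multiplying through by an overall normalization so that $R_{new}(1)=\mathrm{id}$ (this matches the normalization used in the statement of Theorem \ref{thm spectral Rz1}). Inserting the explicit formulas for $S^{-1}=R$ from Theorem \ref{main theorem} and $S=R^{-1}$ from Lemma \ref{lemm Rinverse}, one obtains a matrix that differs from the $R(z)$ of Theorem \ref{thm spectral Rz1} only in the coefficient in front of the ``exotic'' piece carrying entries $E_{ij'}\otimes E_{i'j}$ (equivalently, after applying $P$, the $E_{i'j'}\otimes E_{ij}$ terms). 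All block-diagonal pieces of $R$ and $R^{-1}$ (those involving $E_{ii}\otimes E_{jj}$, $E_{ij}\otimes E_{ji}$ for $i'\ne j$, and $E_{n+1,n+1}\otimes E_{ii}$) enter with identical coefficients in both Yang-Baxterizations, so they are carried across unchanged.

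Second, applying the flip $P$ converts $R(z)$ into $\widehat R_{new}(z)$. The first four blocks of coefficients in the statement of Proposition \ref{prop spectR2} coincide verbatim with those in Theorem \ref{thm spectral Rz1}, confirming the above invariance. Only the last sum $\sum_{i,j}d_{ij}(z,1)\,E_{i'j'}\otimes E_{ij}$ has to be computed from scratch, and here the change $-1/\lambda_3 \leadsto -\lambda_1/(\lambda_2\lambda_3)$ introduces the new denominator factor $(r^2s^{-2}z+(r^{-1}s)^{2n-1})$ instead of $(z-(r^{-1}s)^{2n-1})$, together with small shifts in the exponents of $rs^{-1}$ and sign changes which match the four cases ($i<j$, $i>j$, $i=j\ne i'$, $i=j=i'$) asserted in the formula.

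The main (and essentially only) obstacle is therefore the clean verification of these four cases for $d_{ij}(z,1)$. This reduces to combining the coefficient $(r^{-1}s)^{\rho_i+\rho_k}\delta_{ij'}\delta_{kl'}$ of $K$ from Corollary \ref{cor Kij} with the new scalar factor $-\lambda_1/(\lambda_2\lambda_3)=(r^{-1}s)^{1-2n}$, then adding the contributions of the unit matrix $xI$ and of $S^{-1}$ on the diagonal entries $i=j=i'$ (which is the case that mixes with $E_{n+1,n+1}\otimes E_{n+1,n+1}$). This amounts to a direct computation parallel to that performed in the proof of Theorem \ref{thm spectral Rz1} and poses no conceptual difficulty; in particular, no separate verification of QYBE is needed, since the Birman-Wenzl-Murakami framework of Subsection \ref{ssec BMW} guarantees the result.
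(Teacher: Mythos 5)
Your proposal is correct and takes essentially the same route as the paper, which states this proposition with no separate proof beyond Remark \ref{rmk intert}: substitute the eigenvalues $\lambda_1=rs^{-1}$, $\lambda_2=-r^{-1}s$, $\lambda_3=(r^{-1}s)^{2n}$ into the second Ge--Wu--Xue affinization (\ref{Rz 2}), normalize as in Theorem \ref{thm spectral Rz1}, check that after normalization only the $E_{i'j'}\otimes E_{ij}$ entries change, and let the Birman--Wenzl--Murakami structure of Proposition \ref{prop BWrep} guarantee the QYBE. The only blemish is an arithmetic slip: $-\lambda_1/(\lambda_2\lambda_3)=(rs^{-1})^{2n+2}$, not $(r^{-1}s)^{1-2n}$; with that corrected, the substitution $z=x^{-1}$ yields precisely the new denominator factor $r^2s^{-2}z+(r^{-1}s)^{2n-1}$ and the stated $d_{ij}(z,1)$.
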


\begin{prop}\label{prop nonintert}
We need to discard $\widehat{R}_{new}(z)$ because $R_{new}\Bigl(\frac{z}{w}\Bigr)=P\circ \widehat{R}_{new}(\frac{z}{w})$  does not satisfy the intertwining property. That is to say, we have
\begin{equation}\label{equ nonintert}
  R_{new}\Bigl(\frac{z}{w}\Bigr)(T_z\otimes T_w)(\Delta(x))(v_i\otimes v_j)\neq(T_w\otimes T_z)(\Delta(x))R_{new}\Bigl(\frac{z}{w}\Bigr)(v_i\otimes v_j),
\end{equation}
for some $x\in U_{r,s}\mathcal(\widehat {\frak{so}_{2n+1}})$, $v_i\in V(z)$ and $v_j\in V(\omega)$.
That is, $R_{new}(\frac{z}{w})$ is not a module homomorphism.
\end{prop}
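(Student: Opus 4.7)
Since we only need to refute a universally quantified identity, the strategy is to exhibit a single explicit pair $(x, v_i\otimes v_j)$ at which the two sides of \eqref{equ nonintert} differ. The natural candidate is $x=e_0$ paired with a basis vector $v_i\otimes v_j$ whose evaluation forces the ``new'' coefficients $c_{ij}^{\text{new}}(z/w)$ of the entries $E_{i'j'}\otimes E_{ij}$ in $\widehat R_{new}(z/w)$ to enter the computation. This is because $\widehat R_{new}(z)$ and $\widehat R(z)$ of Theorem~\ref{thm spectral Rz1} differ exclusively in the block $\sum_{i,j} d_{ij}(z,1)\, E_{i'j'}\otimes E_{ij}$, that is, in the projection onto the trivial summand $\mathcal S^0(V^{\otimes 2})$ of Lemma~\ref{lemm decomp1}. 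So any failure of intertwining with respect to $\Delta(e_0)$ must be localized precisely where this block contributes.

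The plan is to mirror the four-case analysis used in the proof of Proposition~\ref{prop intert}. Cases (I), (III), (IV) there did not use the $c_{ij}$-coefficients in an essential way, so the analogous checks for $\widehat R_{new}(z/w)$ will still hold. The decisive case is (II), where $i,j\in\{1,2\}$. Taking $v_1\otimes v_2$, the left-hand side of \eqref{equ nonintert} produces
\[
-rs\,az\sum_{k=1}^{1'} c^{\text{new}}_{k,2}\Bigl(\tfrac{z}{w}\Bigr)v_k\otimes v_{k'}
+s^2 aw\sum_{k=1}^{1'} c^{\text{new}}_{k,1'}\Bigl(\tfrac{z}{w}\Bigr)v_k\otimes v_{k'},
\]
while the right-hand side, after applying $R_{new}(z/w)$ first and then $(T_w\otimes T_z)\Delta(e_0)$, produces only terms supported on $v_1\otimes v_{1'}$, $v_2\otimes v_{2'}$, $v_{1'}\otimes v_1$, $v_{2'}\otimes v_2$ (together with the ``diagonal'' contributions from $v_1\otimes v_2$ itself). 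The cancellation in Proposition~\ref{prop intert} case (II) rested on the identity
\[
-rs\,az\, c_{k,2}\Bigl(\tfrac{z}{w}\Bigr)+s^2 aw\, c_{k,1'}\Bigl(\tfrac{z}{w}\Bigr)=0\qquad (k\notin\{1,2,1',2'\}).
\]

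I would then verify by direct substitution that this identity \emph{fails} for $c_{ij}^{\text{new}}$: the change $(r^{-1}s)^{\rho_i-\rho_j}\mapsto (r^{-1}s)^{\rho_i-\rho_j-2}$ in the numerator and the change of denominator $z-(r^{-1}s)^{2n-1}\mapsto r^2s^{-2}z+(r^{-1}s)^{2n-1}$ obstruct the cancellation by an overall factor incompatible with a single common denominator, so uncancellable residues appear at $v_k\otimes v_{k'}$ for some $k$ with $3\le k\le n$. Selecting any such $k$ gives a coefficient of $v_k\otimes v_{k'}$ on the left which is nonzero, while the right-hand side has coefficient $0$ at that tensor basis vector, completing the counterexample.

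The main obstacle is bookkeeping: one must carefully separate the contributions arising from the ``generic'' block of $R_{new}(z/w)$ (shared with $R(z/w)$) from those arising from the modified $c_{ij}^{\text{new}}$ entries, so that the mismatch is not masked by a hidden cancellation elsewhere. A clean way to do this is to parametrize the would-be identity as a rational function of $z/w$ and evaluate at a generic specialization (for instance, $z/w$ equal to a value at which the two denominators $z-(r^{-1}s)^{2n-1}$ and $r^2s^{-2}z+(r^{-1}s)^{2n-1}$ are distinct nonzero scalars); any leftover nonzero scalar then certifies the failure. Once this scalar is isolated, Proposition~\ref{prop nonintert} follows, and together with Proposition~\ref{prop intert} this shows $\widehat R(z)$ of Theorem~\ref{thm spectral Rz1} is the unique Yang-Baxterization compatible with the evaluation module, justifying the choice \eqref{Rz 1} made in the $RLL$ realization.
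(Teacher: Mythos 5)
Your proposal is correct and follows essentially the same route as the paper: both exhibit an explicit counterexample with $x=e_0$, reduce the question to the modified coefficients of the $E_{i'j'}\otimes E_{ij}$ block, and show the resulting cancellation identity fails. The only difference is the test vector ($v_1\otimes v_2$ in your version versus $v_{2'}\otimes v_2$ in the paper), which does not change the substance of the argument; your identified obstruction (the shift $\rho_i-\rho_j\mapsto\rho_i-\rho_j-2$ together with the altered denominator) is exactly what produces the nonzero residue at $v_k\otimes v_{k'}$ for $3\le k\le n$.
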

\begin{proof}
To verify that the inequality (\ref{equ nonintert}) holds, it suffices to find specific values for which the left-hand side and the right-hand side are not equal.
For example, one can verify that
\begin{equation}\label{equ nonintert2}
  R_{new}\Bigl(\frac{z}{w}\Bigr)(T_z\otimes T_w)(\Delta(e_0))(v_{2'}\otimes v_2)\neq(T_w\otimes T_z)(\Delta(e_0))R_{new}\Bigl(\frac{z}{w}\Bigr)(v_{2'}\otimes v_2).
\end{equation}
From  Lemma \ref{lemm evamod}, we know $T_z(e_0)=az\Bigl(E_{1',2}-rsE_{2',1}\Bigr)$, where $a=-r^{-\frac{1}{2}}s^{-\frac{3}{2}}$.
Then we compute the left-hand side of (\ref{equ nonintert2}), which is equal to
\begin{align*}
   & R_{new}\Bigl(\frac{z}{w}\Bigr)T_z(\omega_0)\otimes T_\omega(e_0)(v_{2'}\otimes v_2) \\
  =\; & r^2 aw R_{new}\Bigl(\frac{z}{w}\Bigr) (v_{2'}\otimes v_2) \\
  =\; & r^2 aw \Bigl[ \frac{z-w}{r^2z-s^2w} v_{1'}\otimes v_{2'}+\frac{(r^2-s^2)z}{r^2z-s^2w}v_{2'}\otimes v_{1'}\Bigr].
\end{align*}
While the right-hand side of (\ref{equ nonintert2}) is equal to
\begin{align*}
   &(T_w\otimes T_z)(\Delta(e_0))\Bigl[ c'_{12}(\frac{z}{w})v_1\otimes v_{1'}+c'_{22}(\frac{z}{w})v_2\otimes v_{2'}+ c'_{2'2}(\frac{z}{w}) v_{2'}\otimes v_2+ c'_{1'2}(\frac{z}{w}) v_{1'}\otimes v_1\Bigr]\\
  =\; & c'_{12}(\frac{z}{w})T_w(e_0)v_1\otimes T_z(1)v_{1'}+c'_{22}(\frac{z}{w})T_w(e_0)v_2\otimes T_z(1){v_{2'}}+c'_{2'2}(\frac{z}{w}) T_w(\omega_0)v_{2'}\otimes T_z(e_0)v_2\\
   &+c'_{1'2}(\frac{z}{w})T_w(\omega_0)v_{1'}\otimes T_z(e_0)v_1 \\
   =\; &-rs\, aw c'_{12}(\frac{z}{w}) v_{2'}\otimes v_{1'}+ awc'_{22}(\frac{z}{w})v_{1'}\otimes v_{2'}+r^2az c'_{2'2}(\frac{z}{w}) v_{2'}\otimes v_{1'}-rs^{-1}az c'_{1'2}(\frac{z}{w})v_{1'}\otimes v_{2'},\\
   =\; & \frac{r^2aw(z-w)[r^{-2}s^2z+(r^{-1}s)^{2n-1}w]}{[r^2s^{-2}z+(r^{-1}s)^{2n-1}w](r^2z-s^2w)}v_{1'}\otimes v_{2'}+\frac{\ast}{[r^2s^{-2}z+(r^{-1}s)^{2n-1}w](r^2z-s^2w)}v_{2'}\otimes v_{1'},
\end{align*}
where $c'_{ij}(\frac{z}{w})$ denotes the coefficient of $E_{ij'}\otimes E_{i'j}$  in $R_{new}(\frac{z}{w})$,  and $\ast=r^2azw(r^2-s^2)\{z-w+r^{-2}s^2(z-w)+r^2s^{-2}z+w(r^{-1}s)^{2n-1}\}$.
After comparing the corresponding coefficients of two sides, we get the inequality (\ref{equ nonintert2}). Thus we get (\ref{equ nonintert}).
\end{proof}

Moreover, Proposition 1 of \cite{Jimbo 1986} tells us that the dimension of the solution space (\ref{equ intert}) is at most $1$.
That is to say, the intertwining operator $R$ is determined up to a scalar multiple.

\section{The $RLL$ realization and Ding-Frenkel Isomorphism Theorem for $U_{r, s}(\widehat{\mathfrak{so}_{2n+1}})$}\label{sec $RLL$}
In this section, we study the commutation relations between Gaussian generators and give the $RLL$ realization
of $U_{r, s}(\widehat{\mathfrak{so}_{2n+1}})$.
Consequently, we can establish the isomorphism between the $RLL$-realization and the Drinfeld presentation.

The Gaussian generators can be defined as:
\begin{equation}\label{equ Xi def}
X_i^+(z)=e_{i, i+1}^+(z_+)-e_{i, i+1}^-(z_-),\quad X_i^-(z)=f_{i+1,i }^+(z_-)-f_{i+1,i }^-(z_+)
\end{equation}
for $1\leq i\leq n$, as in Jing-Liu-Molev \cite{JingLM SIGMA 2020},
and $k_i^\pm(z)$ are defined directly from Proposition \ref{Prop Gauss decomp}.

The following theorem establishes the commutation relations satisfied by these Gaussian generators.
\begin{theorem}\label{thm relations}
The generators $k_i^\pm(z),\ X_j^\pm(z) \ (1\leq i\leq n+1,\ 1\leq j\leq n)$ of $\mathcal{U}(\widehat{R}_{r, s}(z))$ satisfy the following relations:
$$k_i^\pm(z)k_{\ell}^\pm(w)=k_{\ell}^\pm(w)k_i^\pm(z), \quad 1\leq i,\;\ell\leq n+1,$$
$$k_i^\pm(z)k_i^\mp(w)=k_i^\mp(w)k_i^\pm(z), \quad  i\neq n+1,$$
$$\frac{z_\pm-w_\mp}{r^2z_\pm-s^2w_\mp}k_i^\pm(z)k_{\ell}^\mp(w)=k_{\ell}^\mp(w)k_{i}^\pm(z)\frac{z_\mp-w_\pm}{r^2z_\mp-s^2w_\pm},\quad 1\leq i<\ell\leq n+1,$$
$$\frac{s^2z_\pm-r^2w_\mp}{r^2z_\pm-s^2w_\mp}\frac{rz_\pm-sw_\mp}{sz_\pm-rw_\mp}k_{n+1}^\pm(z)k_{n+1}^\mp(w)=\frac{s^2z_\mp-r^2w_\pm}{r^2z_\mp-s^2w_\pm}\frac{rz_\mp-sw_\pm}{sz_\mp-rw_\pm}k_{n+1}^\mp(w)k_{n+1}^\pm(z).$$

The relations involving $k_i^\pm(z)$ and $X_j^\pm(w)$ can be stated as:

$(1)$ If $i-j\leq -1\; (j\neq n)$, or $i-j\geq 2$, then $k_i^\pm(z)$ and $X_j^\pm(w)$ are commutative:
\begin{align*}
k^{\pm}_i(z)X^{+}_j(w)&=X^{+}_j(w)k^{\pm}_i(z),\\
k^{\pm}_i(z)X^{-}_j(w)&=X^{-}_j(w)k^{\pm}_i(z),
\end{align*}

$(2)$ If $1\leq i\leq n-1$, then $k_i^\pm(z)$ and $X_n^\pm(w)$ are quasi-commutative:
\begin{align*}
rsk^{\pm}_i(z)X^{+}_n(w)&=X^{+}_n(w)k^{\pm}_i(z),\\
k^{\pm}_i(z)X^{-}_n(w)&=rsX^{-}_n(w)k^{\pm}_i(z),
\end{align*}

$(3)$ For $1\leq i\leq n-1$, we have
\begin{align*}
k^{\pm}_{i}(z)X^{+}_i(w)&=\frac{z-w_\pm}{s^{-2}z-r^{-2}w_\pm}X^{+}_i(w)k^{\pm}_{i}(z),\\
k^{\pm}_{i}(z)X^{-}_i(w)&=\frac{s^{-2}z-r^{-2}w_\mp}{z-w_\mp}X^{+}_i(w)k^{\pm}_{i}(z), \\
k^{\pm}_{i+1}(z)X^{+}_i(w)&=\frac{z-w_\pm}{r^{-2}z-s^{-2}w_\pm}X^{+}_i(w)k^{\pm}_{i+1}(z),\\
k^{\pm}_{i+1}(z)X^{-}_i(w)&=\frac{r^{-2}z-s^{-2}w_\mp}{z-w_\mp}X^{-}_i(w)k^{\pm}_{i+1}(z),
\end{align*}

$(4)$ For $i=n,\;n+1$ and $j=n$, these relations hold$:$
\begin{align*}
&k^{\pm}_{n}(z)X^{+}_n(w)=\frac{z-w_{\pm}}{rs^{-1}z-r^{-1}sw_{\pm}}X^{+}_n(w)k^{\pm}_{n}(z),\\
&k^{\pm}_{n}(z)X^{-}_n(w)=\frac{rs^{-1}z-r^{-1}sw_{\mp}}{z-w_{\mp}}X^{-}_n(z)k^{\pm}_{n}(w),\\
k^{\pm}_{n+1}(z)&X^{+}_n(w)=\frac{rs\bigl(z-w_\pm\bigr)\bigl(rz-sw_\pm\bigr)}{\bigl(r^2z-s^2w_\pm\bigr)\bigl(sz-rw_\pm\bigr)}X^{+}_n(w)k^{\pm}_{n+1}(z),\\
k^{\pm}_{n+1}(z)&X^{-}_n(w)=\frac{\bigl(r^2z-s^2w_\mp\bigr)\bigl(sz-rw_\mp\bigr)}{rs\bigl(z-w_\mp\bigr)\bigl(rz-sw_\mp\bigr)}X^{-}_n(w)k^{\pm}_{n+1}(z).
\end{align*}

As for $X_i^\pm(z), X_j^\pm(w)$, their commutation relations can be established as follows:
\begin{align*}
X_i^\pm(z)X_j^\pm(w) &= X_j^\pm(w)X_i^\pm(z), \quad |i-j|\geq 2, \\
X_i^\pm(z)X_{i+1}^\pm(w) &= \Bigl(\frac{z-w}{s^{-2}z-r^{-2}w}\Bigr)^\pm X_{i+1}^\pm(w)X_i^\pm(z), \quad 1\leq i\leq n-1, \\
X_i^\pm(z)X_i^\pm(w) &= \Bigl(\frac{r^2z-s^2w}{s^2z-r^2w}\Bigr)^\pm X_i^\pm(w)X_i^\pm(z),  \quad 1\leq i\leq n-1, \\
X_n^\pm(z)X_n^\pm(w) &= \Bigl(\frac{rz-sw}{sz-rw}\Bigr)^\pm X_n^\pm(w)X_n^\pm(z), \\
\bigl[X_i^+(z), X_j^-(w)\bigr] &= 0, \quad i\neq j,
\end{align*}

\begin{align*}
\bigl[X_i^+(z), X_i^-(w)\bigr] &= \bigl(r^2 - s^2\bigr) \Bigl\{
  \delta\Bigl(\frac{z_-}{w_+}\Bigr) k_{i+1}^-(w_+) \bigl(k_i^-(w_+)\bigr)^{-1} \\
  &\quad - \delta\Bigl(\frac{z_+}{w_-}\Bigr) k_{i+1}^+(z_+) \bigl(k_i^+(z_+)\bigr)^{-1}
\Bigr\}, \quad 1\leq i\leq n-1, \\
\bigl[X_n^+(z), X_n^-(w)\bigr] &= \bigl(rs^{-1} - r^{-1}s\bigr) \Bigl\{
  \delta\Bigl(\frac{z_-}{w_+}\Bigr) k_{n+1}^-(w_+) \bigl(k_n^-(w_+)\bigr)^{-1} \\
  &\quad - \delta\Bigl(\frac{z_+}{w_-}\Bigr) k_{n+1}^+(z_+) \bigl(k_n^+(z_+)\bigr)^{-1}
\Bigr\}.
\end{align*}

We can also derive the $(r,s)$-Serre relations:

\noindent \ $~~~Sym_{z_1, z_2}\Bigl\{(r_is_i)^{\pm1}X_i^{\pm}(z_1)
X_i^{\pm}(z_2)X_j^{\pm}(w)-(r_i^{\pm1}{+}s_i^{\pm1})\,X_i^{\pm}(z_1)X_j^{\pm}(w)X_i^{\pm}(z_2)$ \\

$\quad +\, X_j^{\pm}(w)X_i^{\pm}(z_1)X_i^{\pm}(z_2)\Bigr\}=0, \quad\textit{for }
\ a_{ij}=-1 \ \textit{and}\  1\leqslant j < i \leqslant n;$

\medskip
\noindent \ $~~~Sym_{z_1, z_2}\Bigl\{X_i^{\pm}(z_1)
X_i^{\pm}(z_2)X_j^{\pm}(w)-(r_i^{\pm1}{+}s_i^{\pm1})\,X_i^{\pm}(z_1)X_j^{\pm}(w)X_i^{\pm}(z_2)$ \\

$\quad +\, (r_is_i)^{\pm1}X_j^{\pm}(w)X_i^{\pm}(z_1)X_i^{\pm}(z_2)\Bigr\}=0, \quad\textit{for }
\ a_{ij}=-1 \ \textit{and}\  1\leqslant i< j \leqslant n;$

\medskip
\noindent  \ $Sym_{z_1, z_2,z_3}\Bigl\{X^{\pm}_{n-1}(w)X^{\pm}_{n}(z_1)X^{\pm}_{n}(z_2)X^{\pm}_{n}(z_3)-(r^{\pm2}+s^{\pm2}+r^\pm s^\pm)X^{\pm}_{n}(z_1)X^{\pm}_{n-1}(w)X^{\pm}_{n}(z_2)X^{\pm}_{n}(z_3)$\\

$\hskip2.4cm+\,(rs)^\pm(r^{\pm2}+s^{\pm2}+r^\pm s^\pm)X^{\pm}_{n}(z_1)X^{\pm}_{n}(z_2)X^{\pm}_{n-1}(w)X^{\pm}_{n}(z_3)$\\

$\hskip3.4cm-\,(rs)^{\pm3}X^{\pm}_{n}(z_1)X^{\pm}_{n}(z_2)X^{\pm}_{n}(z_3)X^{\pm}_{n-1}(w)\Bigr\}=0,$

\vspace{1em}
\noindent where $(a_{ij})$ is the Cartan matrix of type $B$.
\end{theorem}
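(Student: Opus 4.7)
\medskip
\noindent\textbf{Proof proposal.} The plan is to extract each family of relations from the matrix $RLL$ relations \eqref{equ 5.2}, \eqref{equ 5.3}, \eqref{equ 5.4}, together with the metric condition \eqref{equ metric}, by comparing the coefficients of suitable basis tensors $E_{ij}\otimes E_{kl}$ on both sides, and then rewriting the resulting scalar identities in terms of the Gaussian entries via the quasi-determinant formulas \eqref{equ quasidet k_i}--\eqref{equ quasidet fji} of Proposition \ref{Prop Gauss decomp}. Concretely, we will use the explicit shape of $\widehat R_{r,s}(z)$ in Theorem \ref{thm spectral Rz1} together with the block-expansion of $\widehat R\,L_1L_2 = L_2L_1\widehat R$ recorded in Remark \ref{rmk matrixexp}, so that the identification of each matrix entry gives a commutator identity of the form $a(z/w)\,\ell_{ij}^{\pm}(z)\,\ell_{kl}^{\pm}(w) + (\text{cross terms in }\ell_{\ast\ast})=b(z/w)\,\ell_{kl}^{\pm}(w)\,\ell_{ij}^{\pm}(z) + (\text{cross terms})$. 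The translation from $\ell$-entries to Gauss entries proceeds exactly as in \cite{Ding CMP 1993, JLM CMP2018, JingLM SIGMA 2020}, where the quasi-determinant expressions allow one to isolate $k_i^\pm(z)$, $e_{i,i+1}^\pm(z)$, $f_{i+1,i}^\pm(z)$ and cancel auxiliary generators by left/right multiplication with $k_i^\pm$.

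Next, we organise the relations by type. The $k_i^\pm(z)k_\ell^\pm(w)$ and $k_i^\pm(z)k_\ell^\mp(w)$ identities follow from \eqref{equ 5.2}--\eqref{equ 5.4} applied to $v_i\otimes v_\ell$, combined with the metric condition \eqref{equ metric}, which glues the middle index $n{+}1$ to its mirror and is responsible for the two extra factors $\frac{rz_\pm-sw_\mp}{sz_\pm-rw_\mp}$ appearing in the relation for $k_{n+1}^\pm(z)k_{n+1}^\mp(w)$. For the mixed relations between $k_i^\pm(z)$ and $X_j^\pm(w)$, I would first extract the relations between $k_i^\pm(z)$ and $\ell_{j,j+1}^\pm(w)$, $\ell_{j+1,j}^\pm(w)$ and then combine them via the definition $X_j^\pm(w) = e_{j,j+1}^+(w_\pm)-e_{j,j+1}^-(w_\mp)$ (resp.\ for $X_j^-$); here the type $D$ phenomenon manifests itself in the appearance of the factor $rs$ in the quasi-commutation between $k_i^\pm(z)$ ($i\le n-1$) and $X_n^\pm(w)$. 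Throughout, the reduction of bookkeeping is carried out via the \emph{Homomorphism Theorem by rank} (as used in \cite{JingLM SIGMA 2020} in the one-parameter setup, and which extends verbatim to our $(r,s)$-setting), so that all identities involving indices in a fixed interval $\{i,i{+}1,\ldots,n{+}1,\ldots,i'\}$ can be reduced to the rank-$3$ case $U_{r,s}(\widehat{\mathfrak{so}_7})$ of subsection \ref{ssec b3}.

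For the relations among the $X_i^\pm(z)$'s, the commutativity for $|i-j|\ge 2$ is immediate from the block-diagonal behaviour of $\widehat R_{r,s}(z)$ on the corresponding tensor components, while the quasi-commutation $X_i^\pm(z)X_{i+1}^\pm(w) \propto X_{i+1}^\pm(w)X_i^\pm(z)$ and the ``diagonal'' identities $X_i^\pm(z)X_i^\pm(w) \propto X_i^\pm(w)X_i^\pm(z)$, $X_n^\pm(z)X_n^\pm(w) \propto X_n^\pm(w)X_n^\pm(z)$ are obtained by analysing the $(i,i{+}1;i,i{+}1)$-block (resp.\ the $(n,n{+}1;n,n{+}1)$-block) of \eqref{equ 5.2}. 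The crucial $\bigl[X_j^+(z),X_\ell^-(w)\bigr]$ identity is a consequence of comparing the $(j{+}1,j;j,j{+}1)$ coefficient in \eqref{equ 5.3} and \eqref{equ 5.4}; the two $\delta$-functions arise from the two expansion directions $z/w$ and $w/z$ and the identity $\frac{1}{1-z}+\frac{1}{1-z^{-1}}=\delta(z)$, precisely as in Ding-Frenkel's original derivation.

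The hard part will be the higher-order $(r,s)$-Serre relations, in particular the four-term one on $X_{n-1}^\pm$ and three copies of $X_n^\pm$, which is a genuine type $B$ feature with no counterpart in type $A$. My plan is to first verify it directly in the base case of $U_{r,s}(\widehat{\mathfrak{so}_7})$ (subsection \ref{ssec b3}), where the generators involved are $X_2^\pm$ and $X_3^\pm$ only, by iterating the pairwise quasi-commutation relations above and repeatedly applying the $(r{-}s)$-symmetrisation $\mathrm{Sym}_{z_1,z_2,z_3}$ to cancel the unwanted $\ell_{ii}^\pm(z)$ contributions; then extend to arbitrary rank via the Homomorphism Theorem. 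The ordinary quadratic Serre relation for $a_{ij}=-1$ follows similarly from the rank-$2$ cases $U_{r,s}(\widehat{\mathfrak{sl}_3})$ and $U_{r,s}(\widehat{\mathfrak{so}_5})$-type blocks, with the asymmetry between $i<j$ and $j<i$ reflecting the asymmetry of the structure-constant matrix $(*)$ and the consequent difference between the two quantum Lyndon bases $\{\mathcal E_\alpha\}$ and $\{\mathcal E'_\alpha\}$ identified in Theorem \ref{thm Lyn}. The consistency of these Serre relations with the already-established binary relations among $X_i^\pm$ and $k_i^\pm$ is then the final verification, which will eventually match the Drinfeld realization of Definition \ref{def affine} under the morphism $\Phi_{r,s}$ of subsection 1.3.
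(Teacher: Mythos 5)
Your proposal follows essentially the same route as the paper: extract scalar identities by comparing entries of the matrix $RLL$ relations \eqref{equ 5.2}--\eqref{equ 5.4} (in the block form of Remark \ref{rmk matrixexp}), translate them into Gaussian generators via the quasi-determinant formulas of Proposition \ref{Prop Gauss decomp}, verify everything explicitly in the rank-$3$ case $U_{r,s}(\widehat{\mathfrak{so}_7})$ of subsection \ref{ssec b3}, and then propagate to general rank by the Homomorphism Theorem of \cite{JingLM SIGMA 2020}. One small correction: the extra factors $\frac{rz_\pm-sw_\mp}{sz_\pm-rw_\mp}$ in the $k_{n+1}^\pm(z)k_{n+1}^\mp(w)$ relation come from the crossing terms $c_{ij}(z/w)$ (the $E_{i'j'}\otimes E_{ij}$ part of $\widehat R_{r,s}(z)$) entering the middle block $M_{n+1,n+1}=M'_{n+1,n+1}$, not from the metric condition \eqref{equ metric}, which plays no role in the paper's derivation of these commutation relations.
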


  We first verify the theorem for $n=3$.

\subsection{$B_3^{(1)}$ case}\label{ssec b3}
Firstly, we write down $L^\pm(z)$ and $L^\pm(z)^{-1}$ by the Gauss decomposition:
$$L^\pm(z)=\left(
             \begin{array}{ccc}
               k_1^\pm(z) & k_1^\pm(z)e_{12}^\pm(z) & \cdots \\
               f_{21}^\pm(z)k_1^\pm(z) & \vdots & \vdots \\
               \vdots & \vdots & \vdots \\
             \end{array}
           \right),$$
and
$$
L^\pm(z)^{-1}=\left(
                 \begin{array}{ccc}
                   \cdots & \cdots & \cdots \\
                   \cdots& \cdots & -e_{67}^\pm(z)k_{7}^\pm(z)^{-1} \\
                   \cdots & -k_{7}^\pm(z)^{-1}f_{76}^\pm(z) & k_{7}^\pm(z)^{-1} \\
                 \end{array}
               \right).$$

Using (\ref{equ 5.2}) and (\ref{equ 5.4}),
we complete the verification by the following lemmas.

\begin{lemm}\label{lemm kikj} One has
  \begin{align*}
k_i^\pm(w)k_j^\pm(z)&=k_j^\pm(z)k_i^\pm(w), \quad 1\leq i,\; j\leq 4, \;(i, j)\neq (4, 4), \\
k_i^\pm(w)k_i^\mp(z)&=k_i^\mp(z)k_i^\pm(w), \quad 1\leq i< 4,  \\
\frac{z_\mp-w_\pm}{r^2z_\mp-s^2w_\pm}k_j^\mp(w)k_{i}^\pm(z)&=\frac{z_\pm-w_\mp}{r^2z_\pm-s^2w_\mp}k_i^\pm(z)k_j^\mp(w),
\quad 1\leq i<j\leq 4.
  \end{align*}
\end{lemm}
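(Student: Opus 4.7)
The plan is to derive all three identities by extracting specific matrix coefficients from the defining $RLL$ relations (\ref{equ 5.2}) and (\ref{equ 5.4}), combined with the quasi-determinant description of $k_i^\pm(z)$ in Proposition \ref{Prop Gauss decomp}, and then proceeding by induction on the index pair $(i,j)$.

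The base case $i=j=1$ is the cleanest. Since $k_1^\pm(z)=\ell_{11}^\pm(z)$, I would apply both sides of $\widehat{R}_{r,s}(z/w)L_1^+(z)L_2^+(w)=L_2^+(w)L_1^+(z)\widehat{R}_{r,s}(z/w)$ to $v_1\otimes v_1$. From Theorem \ref{thm spectral Rz1} the vector $v_1\otimes v_1$ lies in the eigenspace of $\widehat{R}_{r,s}(z/w)$ with eigenvalue $1$ (no crossing terms hit index $1$ since $\rho_1\neq 0$ and $1'\neq 1$), so reading off the coefficient of $v_1\otimes v_1$ yields $\ell_{11}^+(z)\ell_{11}^+(w)=\ell_{11}^+(w)\ell_{11}^+(z)$ immediately. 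The mixed relation $k_1^+(z)k_1^-(w)=k_1^-(w)k_1^+(z)$ follows in the same way from (\ref{equ 5.4}), noting that the spectral shifts cancel on the diagonal of the $R$-matrix at $v_1\otimes v_1$.

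Next, for general $1\leq i\leq j\leq 4$ with $(i,j)\neq(4,4)$, I would exploit the block-triangular structure of the Gauss decomposition. The upper-left $j\times j$ submatrix $L_{[j]}^\pm(z)=(\ell_{ab}^\pm(z))_{1\leq a,b\leq j}$ satisfies the $RLL$ relations restricted to $V_{[j]}\otimes V_{[j]}$ with $V_{[j]}=\operatorname{span}(v_1,\dots,v_j)$, and for $j\leq 3$ the restricted spectral $R$-matrix coincides with the type $A_{j-1}$ one. Since by Proposition \ref{Prop Gauss decomp} the generator $k_i^\pm(z)$ is a quasi-determinant depending only on $L_{[i]}^\pm(z)$, it suffices to establish the commutations at the block level. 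I would then induct on $i$: reading off the coefficient of $v_i\otimes v_j$ in the $RLL$ equation isolates the product $\ell_{ii}^\pm(z)\ell_{jj}^\pm(w)$ up to terms $\ell_{ab}^\pm(z)\ell_{cd}^\pm(w)$ with strictly smaller maximal index, which are handled by the quasi-determinant recursion
\begin{equation*}
k_i^\pm(z)=\ell_{ii}^\pm(z)-\sum_{a,b<i}\ell_{ia}^\pm(z)\bigl(L_{[i-1]}^\pm(z)^{-1}\bigr)_{ab}\ell_{bi}^\pm(z),
\end{equation*}
together with the inductive hypothesis. The third identity is treated identically but using (\ref{equ 5.4}): the spectral shifts $z_\pm$, $w_\mp$ enter precisely through the coefficients $(z_\pm-w_\mp)/(r^2z_\pm-s^2w_\mp)$ of $E_{ii}\otimes E_{jj}$ in $\widehat{R}_{r,s}(z_\pm/w_\mp)$ from Theorem \ref{thm spectral Rz1} when $i<j$, reproducing the claimed prefactors.

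The main obstacle is handling the index $4=n+1$ cleanly. The spectral $R$-matrix in Theorem \ref{thm spectral Rz1} treats this ``central'' index differently from the others: the coefficient of $E_{n+1,n+1}\otimes E_{ii}$ carries an extra factor $rs$ rather than $r^2s^2$ or $1$, and the crossing part $\sum d_{ij}(z,1)E_{i'j'}\otimes E_{ij}$ is nontrivial precisely when $j=j'=n+1$. When both indices equal $4$, these exceptional contributions do not cancel and the resulting commutation acquires the extra factor $(rz_\pm-sw_\mp)/(sz_\pm-rw_\mp)$, which is precisely why $(i,j)=(4,4)$ must be excluded and is instead governed by the metric condition (\ref{equ metric}) with $\xi=(r^{-1}s)^{2n-1}$. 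For $(i,j)$ with at least one index strictly less than $4$, however, the crossing terms $E_{i'j'}\otimes E_{ij}$ do not contribute to the matrix coefficients isolating $k_i^\pm$ and $k_j^\pm$ (because $i'>4$ or $j'>4$ takes us outside the relevant block), and the induction goes through. A final bookkeeping check against the ``type $D$ phenomenon'' factors of $rs$ flagged in Section \ref{ssec typeD} confirms that no spurious power of $rs$ is lost in the two-parameter setting, recovering the one-parameter commutations of \cite{JingLM SIGMA 2020} at $rs=1$.
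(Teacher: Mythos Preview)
Your approach is essentially the same as the paper's: both extract the commutations by reading off matrix coefficients of the $RLL$ equations (\ref{equ 5.2}) and (\ref{equ 5.4}) and then passing to $k_i^\pm(z)$ via the quasi-determinant description in Proposition \ref{Prop Gauss decomp}. The paper is terser---it writes out only the case $i=1$, $j=2$ by displaying the upper-left $2\times 2$ block of the equation $M_{11}=M_{11}'$ (in the notation of Remark \ref{rmk matrixexp}) and then asserts that the remaining pairs are handled ``in the same way''---so your explicit induction on the block size and your discussion of why the self-conjugate index $n+1=4$ is exceptional are correct and useful elaborations of that same method.
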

\begin{proof}
We only prove the case when $i=1, j=2$, and the other cases can be calculated in the same way.

From (\ref{equ 5.3}), and  $M_{11}=M_{11}'$, we get the following equation:
$$\left(
    \begin{array}{cc}
      a_{11}(\frac{z_\pm}{w_\mp})\ell_{11}^\pm(z) & 0 \\
      b_{12}(\frac{z_\pm}{w_\mp})\ell_{21}^\pm(z) & a_{12}(\frac{z_\pm}{w_\mp})\ell_{11}^\pm(z) \\
    \end{array}
  \right)
\left(
  \begin{array}{cc}
    \ell_{11}^\mp(w) & \ell_{12}^\mp(w) \\
    \ell_{21}^\mp(w) & \ell_{22}^\mp(w) \\
  \end{array}
\right)
$$
$$=
\left(
  \begin{array}{cc}
    \ell_{11}^\mp(w) & \ell_{12}^\mp(w) \\
    \ell_{21}^\mp(w) & \ell_{22}^\mp(w) \\
  \end{array}
\right)
\left(
  \begin{array}{cc}
    a_{11}(\frac{z_\mp}{w_\pm})\ell_{11}^\pm(z) & b_{21}(\frac{z_\mp}{w_\pm})\ell_{12}^\pm(z) \\
   0  & a_{12}(\frac{z_\mp}{w_\pm})\ell_{11}^\pm(z) \\
  \end{array}
\right).
$$
We can thus derive
$$a_{12}\Bigl(\frac{z_\pm}{w_\mp}\Bigr)k_1^\pm(z)k_2^\mp(w)=a_{12}\Bigl(\frac{z_\mp}{w_\pm}\Bigr)k_2^\mp(w)k_1^\pm(z).$$
From (\ref{Rz 1}), the similar process leads to
$$a_{12}\Bigl(\frac{z_\pm}{w_\mp}\Bigr)k_1^\pm(z)k_2^\pm(w)=a_{12}\Bigl(\frac{z_\pm}{w_\mp}\Bigr)k_2^\pm(w)k_1^\pm(z).$$

Bringing the coefficients coming from the spectral parameter-dependent $R(z)$ into them, we can finally obtain the desired equations.
\end{proof}

\begin{lemm}\label{lemm B3 k1X3} One has
  \begin{gather}
    rs\,k_1^\pm(z)X_3^+(w)=X_3^+(w)k_1^\pm(z), \nonumber\\
    k_1^\pm(z)X_3^-(w)=rs\,X_3^-(w)k_1^\pm(z),\nonumber\\
    k_1^\pm(z)X_2^+(w)=X_2^+(w)k_1^\pm(z), \nonumber\\
    k_1^\pm(z)X_2^-(w)=X_2^-(w)k_1^\pm(z),\nonumber\\
    rs\,k_2^\pm(z)X_3^+(w)=X_3^+(w)k_2^\pm(z), \nonumber\\
    k_2^\pm(z)X_3^-(w)=rs\,X_3^-(w)k_2^\pm(z). \nonumber
  \end{gather}
\end{lemm}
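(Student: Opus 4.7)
The strategy is to extract each commutation/quasi-commutation relation directly from the $RLL$ identities \eqref{equ 5.2} and \eqref{equ 5.4} by reading off suitable matrix entries, then translate the resulting relations among $\ell_{ij}^\pm$ into relations among the Gaussian generators via Proposition \ref{Prop Gauss decomp}. Concretely, in the $B_3^{(1)}$ case we have $k_1^\pm(z)=\ell_{11}^\pm(z)$, while $X_2^\pm(w)$ is built from $\ell_{23}^\pm(w),\ell_{32}^\pm(w)$ and $X_3^\pm(w)$ from $\ell_{34}^\pm(w),\ell_{43}^\pm(w)$ (together with diagonal factors). So the task reduces to determining how $\ell_{11}^\pm(z)$ commutes with these off-diagonal entries of $L^\pm(w)$ and $L^\mp(w)$.

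The plan is to carry out, for fixed pairs of indices $(i,j)$ and $(k,\ell)$, the computation of the matrix block $M_{ij}$ versus $M'_{ij}$ described in Remark \ref{rmk matrixexp}, using the explicit coefficients of $\widehat R_{r,s}(z)$ from Theorem \ref{thm spectral Rz1}. For the pairs needed here, the relevant $\widehat R$-entries are the purely diagonal $a_{1k}(z/w)$ with $k\in\{2,3,4,5\}$ because the off-diagonal $b_{ij}$ and the crossing $c_{ij}$ coefficients that could mix the first row/column with the middle of the matrix vanish or involve indices outside the support of $\ell_{34}^\pm,\ell_{23}^\pm$, etc. Thus the $RLL$ identity collapses, in each case of interest, to a single scalar identity of the form
\begin{equation*}
a_{1k}\!\left(\tfrac{z}{w}\right)\,\ell_{11}^\pm(z)\,\ell_{kl}^\pm(w)=a_{1l}\!\left(\tfrac{z}{w}\right)\,\ell_{kl}^\pm(w)\,\ell_{11}^\pm(z),
\end{equation*}
and analogously for the mixed $\pm,\mp$ case. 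Reading Theorem \ref{thm spectral Rz1}, the quotient $a_{1k}/a_{1l}$ is exactly $rs$ when $(k,l)=(3,4)$ or $(4,5)$ (the entries responsible for $X_3^\pm$, reflecting the type $D$ phenomenon at the short-root position $n$) and exactly $1$ when $(k,l)$ lies entirely inside the range giving $X_2^\pm$. After dividing through by $\ell_{11}^\pm$-dependent factors using the already-established commutations among the $k_i^\pm$ (Lemma \ref{lemm kikj}) and the defining formulas $X_j^\pm(w)=e_{j,j+1}^+(w_\pm)-e_{j,j+1}^-(w_\mp)$, $X_j^-(w)=f_{j+1,j}^+(w_\mp)-f_{j+1,j}^-(w_\pm)$, the six asserted identities follow.

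The one place where some care is required is the pair involving $k_1^\pm(z)$ and $\ell_{43}^\pm(w)$ (or $\ell_{34}^\pm(w)$), because the index $4$ corresponds to $n+1$ in $B_3^{(1)}$, where the crossing coefficients $c_{i,n+1}$ appear in $\widehat R_{r,s}(z)$ and could a priori contribute off-diagonal terms. The task there is to check, by inspecting the explicit formula of $\widehat R_{r,s}(z/w)$, that every crossing term $c_{i'j'}$ that could mix $\ell_{11}^\pm(z)$ with an entry $\ell_{pq}^\pm(w)$ having $p\ne 4$ or $q\ne 3$ (resp.\ $q\ne 4$) is killed by the Gauss-decomposition condition $\ell_{pq}^+[0]=\ell_{qp}^-[0]=0$ for $p>q$, combined with the triangularity of $L^\pm(z)$ after the decomposition. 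Once this cancellation is verified, the derivation of the $rs$-factor follows exactly the same pattern as in the easier diagonal case. The main obstacle is bookkeeping: each identity is obtained by isolating a single scalar entry in a product of $7\times7$-block matrices and untangling the resulting expression using the already-established $k_i^\pm$-relations, but no new structural difficulty arises beyond carefully matching the coefficients predicted by Theorem \ref{thm spectral Rz1}.
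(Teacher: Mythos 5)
Your proposal follows essentially the same route as the paper: compare the blocks $M_{11}=M'_{11}$ (and the analogous mixed $\pm,\mp$ identity) of the $RLL$ relations, read off that the ratio $a_{13}/a_{14}$ of diagonal $\widehat R$-coefficients equals $rs$ (and equals $1$ for the entries governing $X_2^\pm$), and then strip off the $k$-factors using the invertibility of $k_3^\pm(w)$ and the already-established commutations of Lemma \ref{lemm kikj}. The only cosmetic difference is that the paper states the extracted scalar identity directly in terms of $k_3^\pm(w)e_{34}^\pm(w)$ rather than $\ell_{34}^\pm(w)$, and your worry about crossing coefficients $c_{i,n+1}$ is harmless but not actually needed for the $(3,4)$-entry of the $M_{11}$ block, where only the diagonal coefficients $a_{1k}$ enter.
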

\begin{proof}
   We only prove the first equation since the others can be obtained by the same token.
   Taking the equation $M_{11}=M_{11}'$, we get
$$
a_{13}\Bigl(\frac{z}{w}\Bigr)k_1^\pm(z)k_3^\pm(w)e_{34}^\pm(w)=a_{14}\Bigl(\frac{z}{w}\Bigr)k_3^\pm(w)e_{34}^\pm(w)k_1^\pm(z).$$

 Using the invertibility of $k_3^\pm(w)$, and the fact that
$$k_1^\pm(z)k_3^\pm(w)=k_3^\pm(w)k_1^\pm(z),$$
we have
$$rsk_1^\pm(z)e_{34}^\pm(w)=e_{34}^\pm(w)k_1^\pm(z).$$

 Similarly, we conclude that
$$a_{13}\Bigl(\frac{z_\pm}{w_\mp}\Bigr)k_1^\pm(z)k_3^\mp(w)e_{34}^\mp(w)=a_{14}\Bigl(\frac{z_\mp}{w_\pm}\Bigr)k_3^\mp(w)e_{34}^\mp(w)k_1^\pm(z).$$

 Again using the invertibility of $k_3^\mp(w)$, and
$$k_1^\pm(z)k_3^\mp(w)\frac{z_\pm-w_\mp}{r^2z_\pm-s^2w_\mp}=k_3^\mp(w)k_1^\pm(z)\frac{z_\mp-w_\pm}{r^2z_\mp-s^2w_\pm},$$
we also have
$$rsk_1^\pm(z)e_{34}^\mp(w)=e_{34}^\mp(w)k_1^\pm(z),$$
so that $rsk_1^\pm(z)X_3^+(w)=X_3^+(w)k_1^\pm(z)$.
\end{proof}

\begin{lemm}\label{lemm B3 k4X1} One has
\begin{gather}\label{}
  k_3^\pm(w)X_1^\pm(z)=X_1^\pm(z)k_3^\pm(w), \nonumber \\
  k_3^\pm(w)X_1^\mp(z)=X_1^\mp(z)k_3^\pm(w), \nonumber \\
  k_4^\pm(w)X_i^\pm(z)=X_i^\pm(z)k_4^\pm(w), \nonumber \\
  k_4^\pm(w)X_i^\mp(z)=X_i^\mp(z)k_4^\pm(w), \nonumber
\end{gather}
where $1\leq i\leq 2$.
\end{lemm}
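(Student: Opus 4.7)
The plan is to mimic the strategy already used in Lemmas \ref{lemm kikj} and \ref{lemm B3 k1X3}: extract suitable matrix entries from the $RLL$ relations \eqref{equ 5.2} and \eqref{equ 5.4}, then translate the resulting identities among the $\ell^\pm_{ab}$ into identities among the Gauss generators via the quasi-determinant formulas \eqref{equ quasidet k_i}--\eqref{equ quasidet fji}. The underlying observation that makes the present lemma conclude with trivial (non-quasi) commutation is that, in all four asserted identities, the relevant pairs of row/column indices differ by at least $2$ and lie well away from the central index $n+1=4$. Consequently, the sub-blocks of $\widehat R_{r,s}(z/w)$ that couple these indices, as displayed in Theorem \ref{thm spectral Rz1} and organized in Remark \ref{rmk matrixexp}, are purely diagonal: the ``hopping'' coefficients $b_{ij}$ and the ``anti-diagonal'' coefficients $c_{ij}$ all vanish for the index pairs we shall probe.

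Concretely, for $k_3^\pm(w)\,X_1^\pm(z)=X_1^\pm(z)\,k_3^\pm(w)$ I would compare the coefficients of $E_{1,2}\otimes E_{3,3}$ on both sides of $\widehat R_{r,s}(z/w)L_1^\pm(z)L_2^\pm(w)=L_2^\pm(w)L_1^\pm(z)\widehat R_{r,s}(z/w)$; because no entry of $\widehat R_{r,s}(z/w)$ interpolates $\{1,2\}$ with $\{3,3\}$ except through $a_{1,3}(z/w)=a_{2,3}(z/w)$, the equation collapses to $\ell^\pm_{12}(z)\ell^\pm_{33}(w)=\ell^\pm_{33}(w)\ell^\pm_{12}(z)$. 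The same computation carried out with \eqref{equ 5.4} in place of \eqref{equ 5.2} produces $\ell^\pm_{12}(z)\ell^\mp_{33}(w)=\ell^\mp_{33}(w)\ell^\pm_{12}(z)$, where the rational factor $\frac{z_\pm-w_\mp}{r^2z_\pm-s^2w_\mp}$ cancels against its swap because the $\widehat R$-block for indices $(1,3)$ has no off-diagonal contribution. Combining these with Lemma \ref{lemm kikj}, which already provides $k^\pm_1(z)k^\pm_3(w)=k^\pm_3(w)k^\pm_1(z)$ and its mixed $\pm/\mp$ analogue, and using $e^\pm_{12}(z)=k^\pm_1(z)^{-1}\ell^\pm_{12}(z)$, one concludes $k^\pm_3(w)e^{\pm}_{12}(z_\pm)=e^{\pm}_{12}(z_\pm)k^\pm_3(w)$ and similarly for the opposite sign, hence $k^\pm_3(w)X_1^\pm(z)=X_1^\pm(z)k^\pm_3(w)$ and $k^\pm_3(w)X_1^\mp(z)=X_1^\mp(z)k^\pm_3(w)$.

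For the $k_4^\pm(w)$ identities with $X_1^\pm(z)$ and $X_2^\pm(z)$ the argument is parallel: extract the coefficient of $E_{i,i+1}\otimes E_{4,4}$ for $i=1,2$ from \eqref{equ 5.2} and \eqref{equ 5.4}. Since the $\widehat R$-sub-block coupling $\{i,i+1\}\subset\{1,2,3\}$ with $\{4\}=\{n+1\}$ is again diagonal (the rows/columns involving index $4=n+1$ only carry the genuine $E_{n+1,n+1}\otimes E_{jj}$ and $E_{jj}\otimes E_{n+1,n+1}$ contributions, with no off-diagonal mixing into $\{i,i+1\}$), the $RLL$ relation reduces to $\ell^\pm_{i,i+1}(z)\ell^{\pm/\mp}_{44}(w)=\ell^{\pm/\mp}_{44}(w)\ell^\pm_{i,i+1}(z)$. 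Passing to Gauss generators through the quasi-determinants expressing $k_4^\pm(w)$ in terms of $\ell^\pm_{ab}(w)$ with $a,b\le 4$, and using Lemma \ref{lemm kikj} to commute $k_4^\pm(w)$ past $k_i^\pm(z)^{-1}$ cleanly, yields the asserted commutation on the $e$-side; an entirely symmetric extraction of the corresponding $E_{i+1,i}\otimes E_{4,4}$ entries handles the $f$-side, giving the $X_i^-(z)$ statements.

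The main obstacle is ensuring that none of the ``auxiliary'' matrix entries appearing in the quasi-determinant expansions of $k_3^\pm(w)$ and $k_4^\pm(w)$ introduces unwanted couplings with the internal entries used to build $e^\pm_{12}$, $e^\pm_{23}$, $f^\pm_{21}$, $f^\pm_{32}$. This requires verifying a finite collection of elementary block commutations $\ell^\pm_{ab}(z)\ell^{\pm/\mp}_{cd}(w)=\ell^{\pm/\mp}_{cd}(w)\ell^\pm_{ab}(z)$ for each pair with $\{a,b\}$ from the low indices and $\{c,d\}$ from the high ones, not merely the extremal pair. Each such check reduces, by the same block argument as above, to the diagonal structure of the corresponding $\widehat R$-sub-block, and the residual mixed commutations are exactly those already furnished by Lemmas \ref{lemm kikj} and \ref{lemm B3 k1X3}. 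The bookkeeping is the only real subtlety; once it is organized by the natural index ranges, the four identities follow uniformly.
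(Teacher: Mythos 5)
Your overall strategy (read off entries of the $RLL$ relations, then translate to the Gauss generators) is the right one and is what the paper's one-line proof ``can be proved similarly'' refers to, but the central justification you give is false, and the argument as written does not go through. You claim that the sub-blocks of $\widehat R_{r,s}(z/w)$ coupling the low indices are ``purely diagonal'', with the hopping coefficients $b_{ij}$ vanishing for the relevant pairs. They do not: by Theorem \ref{thm spectral Rz1} the term $\frac{r^2-s^2}{r^2z-s^2}\bigl(\sum_{i<j,\,i'\neq j}E_{ij}\otimes E_{ji}+z\sum_{i>j,\,i'\neq j}E_{ij}\otimes E_{ji}\bigr)$ makes $b_{13},b_{31},b_{23},b_{32},b_{14},b_{41},b_{24},b_{42}$ all nonzero (only the anti-diagonal coefficients $c_{ij}$ vanish for these index pairs, since $i'\geq 5$ when $i\leq 3$). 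Consequently the entry of $\widehat R(\frac{z}{w})L_1^\pm(z)L_2^\pm(w)=L_2^\pm(w)L_1^\pm(z)\widehat R(\frac{z}{w})$ you propose to extract reads
\begin{equation*}
a_{13}\Bigl(\tfrac{z}{w}\Bigr)\ell_{12}^\pm(z)\ell_{33}^\pm(w)+b_{13}\Bigl(\tfrac{z}{w}\Bigr)\ell_{32}^\pm(z)\ell_{13}^\pm(w)
= a_{23}\Bigl(\tfrac{z}{w}\Bigr)\ell_{33}^\pm(w)\ell_{12}^\pm(z)+b_{32}\Bigl(\tfrac{z}{w}\Bigr)\ell_{32}^\pm(w)\ell_{13}^\pm(z),
\end{equation*}
which does not collapse to $[\ell_{12}^\pm(z),\ell_{33}^\pm(w)]=0$: the residual combination $b_{32}\ell_{32}^\pm(w)\ell_{13}^\pm(z)-b_{13}\ell_{32}^\pm(z)\ell_{13}^\pm(w)$ has no reason to vanish (note $b_{32}(x)=x\,b_{13}(x)$, so the two terms do not even carry the same rational prefactor). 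The same objection defeats your treatment of the index $4=n+1$, since $E_{i4}\otimes E_{4i}$ and $E_{4i}\otimes E_{i4}$ likewise occur with nonzero coefficients.

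The commutativity asserted in the lemma holds for the Gauss generators $k_3,k_4$ against $e_{12},e_{23},f_{21},f_{32}$, not for the raw entries $\ell_{ab}$, and extracting it requires the Gaussian elimination the paper performs explicitly in the neighbouring proofs (e.g.\ Lemmas \ref{lemm B3 kiXi}, \ref{lemma e23e34}): one subtracts $f^\pm_{jk}$-multiples of the equations from other rows so that the $b$-terms and the lower-triangular pieces of the $\ell$'s cancel, and only then invokes Lemma \ref{lemm kikj} and the invertibility of the $k_i$; this is exactly how the displayed identity in the proof of Lemma \ref{lemm B3 k1X3} is obtained. A secondary gap of the same origin: even granted $[\ell_{12}^\pm(z),\ell_{33}^\pm(w)]=0$, you could not conclude $[e_{12}^\pm(z),k_3^\pm(w)]=0$, because $k_3^\pm(w)$ is the quasi-determinant \eqref{equ quasidet k_i} involving $\ell_{31}^\pm(w),\ell_{32}^\pm(w),\ell_{13}^\pm(w),\ell_{23}^\pm(w)$ and the inverse of the upper-left $2\times 2$ block, each of which must also be commuted past $\ell_{12}^\pm(z)$ --- and these are precisely the couplings your final ``bookkeeping'' paragraph dismisses on the same false diagonality premise. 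The lemma is true and the paper's intended route works, but your shortcut is not available.
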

\begin{proof}
This Lemma can be proved similarly.
\end{proof}

\begin{lemm} \label{lemm b3 x1x3}
One has
  \begin{gather}\label{}
    X_1^\pm(z)X_3^\pm(w)=X_3^\pm(w)X_1^\pm(z), \nonumber\\
    X_1^\mp(z)X_3^\pm(w)=X_3^\pm(w)X_1^\mp(z), \nonumber
  \end{gather}
\end{lemm}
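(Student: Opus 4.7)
The key structural observation is that for $n=3$, the indices $\{1,2,3,4\}$ are pairwise non-conjugate (since in our labelling $1'=7,\,2'=6,\,3'=5$, while $4'=4$ is isolated), so the ``crossing'' terms $E_{i'j'}\otimes E_{ij}$ of $\widehat R_{r,s}(z)$ in Theorem~\ref{thm spectral Rz1} contribute nothing when we restrict to the four-dimensional subspace spanned by $v_i\otimes v_j$ with $i,j\in\{1,2,3,4\}$ and $\{i,j\}\cap\{2,4\}\ne\emptyset$ in relevant combinations. On this subspace $\widehat R_{r,s}(z)$ is governed only by the diagonal coefficients $a_{ij}(z)$ and the simple permutation coefficients $b_{ij}(z)$, so the $RLL$ relations separate cleanly.

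First I will extract from $\widehat R_{r,s}(z/w)L_1^\pm(z)L_2^\pm(w)=L_2^\pm(w)L_1^\pm(z)\widehat R_{r,s}(z/w)$ the matrix identities in the $(2,4)$, $(4,2)$, $(1,3)$ and $(3,1)$ blocks, using the block description of Remark~\ref{rmk matrixexp}. Since the coefficients $a_{13},a_{24},a_{31},a_{42}$ equal $1$ (as $1,3$ and $2,4$ are both ``same-sign'' index pairs), the resulting scalar identities reduce to pure commutations
\[
\ell^{\pm}_{1k}(z)\,\ell^{\pm}_{3l}(w)=\ell^{\pm}_{3l}(w)\,\ell^{\pm}_{1k}(z),\qquad k\in\{1,2\},\ l\in\{3,4\},
\]
and similarly for $\ell^{\pm}_{i1}(z),\ell^{\pm}_{j3}(w)$. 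The mixed $(+,-)$ versions then follow the same way from (\ref{equ 5.4}), the only change being $z/w\mapsto z_\pm/w_\mp$; crucially, the coefficients $a_{13},a_{24}$ remain equal to $1$ at those arguments, so no scalar factor is introduced.

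Next I pass from the $\ell^{\pm}_{ij}$ to the Gaussian generators via Proposition~\ref{Prop Gauss decomp}. Using the quasi-determinant formulas (\ref{equ quasidet k_i})--(\ref{equ quasidet eij}), the operator $e^{\pm}_{12}(z)$ lies in the subalgebra generated by $\ell^{\pm}_{11}(z),\ell^{\pm}_{12}(z)$, while $e^{\pm}_{34}(w)$ lies in the subalgebra generated by $\ell^{\pm}_{ij}(w)$ with $i,j\in\{1,2,3,4\}$. Hence it suffices to check that $\ell^{\pm}_{12}(z)$ commutes with each generator $\ell^{\pm}_{ij}(w)$, $i,j\in\{1,2,3,4\}$, involved in the quasi-determinant expression for $e^{\pm}_{34}(w)$. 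The non-trivial cases $(i,j)\in\{(3,4),(3,l),(i,4):\,i,l\le 2\}$ are precisely the commutations established in the previous paragraph, while the remaining cases $(i,j)$ with $i,j\le 2$ are covered by the rank-two analogue (and by the commutation relations among the $k^{\pm}_m(w)$ in Lemma~\ref{lemm kikj}, combined with the quasi-commutation of $k^{\pm}_1,k^{\pm}_2$ with $X_3^\pm$ in Lemma~\ref{lemm B3 k1X3}). Therefore $e^{\pm}_{12}(z)e^{\pm}_{34}(w)=e^{\pm}_{34}(w)e^{\pm}_{12}(z)$ and, by specialising the spectral parameters to $z_{\pm},w_{\pm}$, the commutations $X_1^\pm(z)X_3^\pm(w)=X_3^\pm(w)X_1^\pm(z)$ follow; an entirely parallel argument using the lower-triangular Gauss factor $F^\pm(z)$ gives the $X_i^\mp$--$X_j^\pm$ commutations.

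The main technical hurdle will not be a hard computation, but rather the bookkeeping: one must verify that \emph{every} auxiliary term appearing in the quasi-determinant expansion of $e^{\pm}_{34}(w)$ (involving $\ell^{\pm}_{3l}(w)\,\ell^{\pm}_{ll}(w)^{-1}\,\ell^{\pm}_{l4}(w)$ for $l=1,2$) commutes with $\ell^{\pm}_{12}(z)$. This requires checking the mixed commutations with $\ell^{\pm}_{ll}(w)^{-1}$, which follows from Lemma~\ref{lemm kikj} applied to $k^{\pm}_l$ together with the argument above. Once this bookkeeping is under control the proof is routine.
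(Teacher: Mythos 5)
Your overall strategy (extract entry-wise identities from the $RLL$ relation, then pass through the Gauss decomposition) is the same as the paper's, but the load-bearing claim in your first step is false. You assert that the diagonal coefficients $a_{13},a_{24},a_{31},a_{42}$ of $\widehat R_{r,s}(z)$ equal $1$ because ``$1,3$ and $2,4$ are same-sign index pairs,'' and you conclude that the generators satisfy pure commutations $\ell^{\pm}_{1k}(z)\,\ell^{\pm}_{3l}(w)=\ell^{\pm}_{3l}(w)\,\ell^{\pm}_{1k}(z)$. Reading the coefficients off Theorem \ref{thm spectral Rz1} for $n=3$, the coefficient of $E_{11}\otimes E_{33}$ is $a_{13}(z)=\frac{r^2s^2(z-1)}{r^2z-s^2}$ and (since $4=n+1$) the coefficient of $E_{22}\otimes E_{44}$ is $a_{24}(z)=\frac{rs(z-1)}{r^2z-s^2}$; the only diagonal entries equal to $1$ are those of the form $E_{ii}\otimes E_{ii}$ with $i\neq i'$. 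Consequently the $RLL$ relation gives only a quasi-commutation such as $a_{13}\bigl(\frac{z_\pm}{w_\mp}\bigr)\ell_{12}^\pm(z)\,k_3^\mp(w)e_{34}^\mp(w)=a_{24}\bigl(\frac{z_\mp}{w_\pm}\bigr)k_3^\mp(w)e_{34}^\mp(w)\,\ell_{12}^\pm(z)$, with \emph{different} coefficients on the two sides (note also that in the mixed relation \eqref{equ 5.3} the two $R$-matrices are evaluated at the distinct arguments $z_+/w_-$ and $z_-/w_+$, which your sketch glosses over). The actual content of the proof, which your plan omits, is that these non-trivial coefficients cancel exactly when one strips $\ell_{12}^\pm=k_1^\pm e_{12}^\pm$ and $k_3^\mp e_{34}^\mp$ down to $e_{12}^\pm$ and $e_{34}^\mp$, using the companion quasi-commutations of $k_1^\pm(z)$ with $k_3^\mp(w)e_{34}^\mp(w)$ (coefficient ratio $a_{14}/a_{13}$) and of $k_1^\pm(z)$ with $k_3^\mp(w)$ from Lemma \ref{lemm kikj}. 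Without this cancellation your intermediate identity is simply wrong, even though the final statement is true.

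A secondary caution: your structural remark that the crossing terms $E_{i'j'}\otimes E_{ij}$ never interfere on indices $\{1,2,3,4\}$ overlooks that $4'=4$ for $n=3$, so the self-conjugate index $4$ does carry a $d$-type contribution; the paper avoids this by choosing which entries of $M_{12}=M_{12}'$ to compare rather than by a blanket vanishing claim. Your reduction of the quasi-determinant bookkeeping in the last paragraph is reasonable in outline, but it rests on the (false) exact commutativity of the $\ell$'s, so it would need to be redone with the correct quasi-commutation coefficients carried along.
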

\begin{proof}
  From $M_{12}=M_{12}'$, we have
  $$a_{13}\Bigl(\frac{z_\pm}{w_\mp}\Bigr)\ell_{12}^\pm(z)k_3^\mp(w)e_{34}^\mp(w)=a_{24}\Bigl(\frac{z_\mp}{w_\pm}\Bigr)k_3^\mp(w)e_{34}^\mp(w)\ell_{12}^\pm(z).$$

  Noticing that
  $$a_{13}\Bigl(\frac{z_\pm}{w_\mp}\Bigr)\ell_{12}^\pm(z)k_3^\mp(w)=a_{23}\Bigl(\frac{z_\mp}{w_\pm}\Bigr)k_3^\mp(w)\ell_{12}^\pm(z),$$
  $$a_{13}\Bigl(\frac{z_\pm}{w_\mp}\Bigr)k_1^\pm(z)k_3^\mp(w)e_{34}^\mp(w)=a_{14}\Bigl(\frac{z_\mp}{w_\pm}\Bigr)k_3^\mp(w)e_{34}^\mp(w)k_1^\pm(z),$$
  and
  $$\frac{z_\mp-w_\pm}{r^2z_\mp-s^2w_\pm}k_3^\mp(w)k_{1}^\pm(z)=\frac{z_\pm-w_\mp}{r^2z_\pm-s^2w_\mp}k_1^\pm(z)k_{3}^\mp(w),$$
  we derive that
  \begin{equation*}\label{}
    e_{12}^\pm(z)e_{34}^\mp(w)=e_{34}^\mp(w)e_{12}^\pm(z).
  \end{equation*}

 Similarly we have
    \begin{equation*}\label{}
    e_{12}^\pm(z)e_{34}^\pm(w)=e_{34}^\pm(w)e_{12}^\pm(z).
  \end{equation*}

  Thus, we arrive at
  \begin{equation*}\label{}
    X_1^+(z)X_3^+(w)=X_3^+(w)X_1^+(z).
  \end{equation*}

 The other cases can be proved similarly.
\end{proof}

\begin{lemm} \label{lemm B3 kiXi}
One has
 \begin{gather*}\label{}
  k_i^\pm(z) X_i^+(w)=\frac{z-w_\pm}{s^{-2}z-r^{-2}w_\pm}X_i^+(w)k_i^\pm(z), \\
  k_i^\pm(z)X_i^-(w)=\frac{s^{-2}z-r^{-2}w_\mp}{z-w_\mp}X_i^-(w)k_i^\pm(z),
\end{gather*}
 where $i=1, 2$.
\end{lemm}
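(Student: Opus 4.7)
The plan is to exploit the $RLL$-formalism entry-wise, extract the commutation relations satisfied by the off-diagonal entries $\ell_{i,i+1}^\pm(w)$ and $\ell_{i+1,i}^\pm(w)$ against $\ell_{ii}^\pm(z)$, and then translate these into relations for the Gaussian generators $k_i^\pm(z),\, e_{i,i+1}^\pm(w),\, f_{i+1,i}^\pm(w)$. Finally, invoking the definitions $X_i^+(w)=e_{i,i+1}^+(w_+)-e_{i,i+1}^-(w_-)$ and $X_i^-(w)=f_{i+1,i}^+(w_-)-f_{i+1,i}^-(w_+)$ yields the claimed commutation rules.

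\textbf{Step 1 (extracting entry-wise relations).} For $i=1$, from equation $M_{11}=M_{11}'$ in the matrix form of \eqref{equ 5.2} (and analogously \eqref{equ 5.4} for the mixed sign case), I isolate the coefficient involving $\ell_{11}^\pm(z)\ell_{12}^\pm(w)$ and $\ell_{12}^\pm(w)\ell_{11}^\pm(z)$. The relevant block gives
\[
a_{12}\Bigl(\tfrac{z}{w}\Bigr)\ell_{11}^\pm(z)\ell_{12}^\pm(w)
= b_{12}\Bigl(\tfrac{z}{w}\Bigr)\ell_{12}^\pm(z)\ell_{11}^\pm(w)
+a_{11}\Bigl(\tfrac{z}{w}\Bigr)\ell_{12}^\pm(w)\ell_{11}^\pm(z),
\]
and the mixed-sign analogue with arguments $z_\pm/w_\mp$ and $z_\mp/w_\pm$ on the two sides. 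For $i=2$, one takes instead the $(2,3)$-block of $M_{22}=M_{22}'$, producing the analogous identity for $\ell_{22}^\pm(z)$ and $\ell_{23}^\pm(w)$. The same kind of extraction, applied to $\ell_{21}^\pm(w)$ (resp.\ $\ell_{32}^\pm(w)$), provides the minus-case identities.

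\textbf{Step 2 (passing to Gaussian generators).} Substitute the Gauss decomposition $\ell_{ii}^\pm(u)=k_i^\pm(u)$ and $\ell_{i,i+1}^\pm(u)=k_i^\pm(u)e_{i,i+1}^\pm(u)$, $\ell_{i+1,i}^\pm(u)=f_{i+1,i}^\pm(u)k_i^\pm(u)$. Using the already established commutativity $k_i^\pm(z)k_i^\pm(w)=k_i^\pm(w)k_i^\pm(z)$ and the quasi-commutation $k_i^\pm(z)k_i^\mp(w)=k_i^\mp(w)k_i^\pm(z)$ from Lemma \ref{lemm kikj}, the $k_i^\pm$ factors can be moved past one another, leaving identities between $k_i^\pm(z)$ and the Gaussian off-diagonal generators. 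Plugging in the explicit coefficients $a_{11},\,a_{12},\,b_{12}$ read off from Theorem \ref{thm spectral Rz1}, the identity collapses (after clearing the common factor $r^2-s^2$ that appears in $b_{12}$) to
\[
k_i^\pm(z)\,e_{i,i+1}^\pm(w)=\frac{z-w}{s^{-2}z-r^{-2}w}\,e_{i,i+1}^\pm(w)\,k_i^\pm(z),
\]
and its mixed-sign counterpart with $w$ replaced by $w_\mp$ on the LHS and $w_\pm$ on the RHS (the shifts $\gamma^{\pm c/2}=r^{\pm c/2},\,s^{\pm c/2}$ come from \eqref{equ 5.4}).

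\textbf{Step 3 (assembling $X_i^\pm$).} Specializing the two resulting identities to $w\mapsto w_+$ (for the superscript $+$ sector) and $w\mapsto w_-$ (for the superscript $-$ sector) and subtracting yields the single relation
\[
k_i^\pm(z)\,X_i^+(w)=\frac{z-w_\pm}{s^{-2}z-r^{-2}w_\pm}\,X_i^+(w)\,k_i^\pm(z).
\]
An entirely parallel argument for $\ell_{i+1,i}^\pm(w)$, using the lower-triangular piece of the Gauss decomposition and the commutation coefficients dictated by $b_{21}$ and $a_{11}$, gives the companion identity for $X_i^-(w)$.

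\textbf{Anticipated obstacle.} The only delicate point will be Step 2: one must track the shift factors $z_\pm,\,w_\pm$ carefully when combining the $+$-sector and $-$-sector relations so that the single fraction $\tfrac{z-w_\pm}{s^{-2}z-r^{-2}w_\pm}$ arises consistently. In particular, establishing that the cross-terms involving $k_i^\pm(z)k_i^\mp(w)$ cancel out requires the quasi-commutation recorded in Lemma \ref{lemm kikj}; verifying that the residues at $z=w_\pm$ match on both sides is where the ``type $D$-phenomenon'' factors $(rs)^*$ must be kept scrupulously visible, since their degeneration would collapse the two sectors and obscure the correct shift.
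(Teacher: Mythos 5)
Your overall strategy (read entry-wise consequences off the $RLL$ relations, pass to the Gaussian generators, then assemble $X_i^\pm$) is the same as the paper's, but Step 2 asserts a false intermediate identity and the argument breaks there. The individual Gaussian generator $e_{i,i+1}^\pm(w)$ does \emph{not} satisfy the clean quasi-commutation $k_i^\pm(z)\,e_{i,i+1}^\pm(w)=\frac{z-w}{s^{-2}z-r^{-2}w}\,e_{i,i+1}^\pm(w)\,k_i^\pm(z)$ that you claim. What the $RLL$ relations actually yield, after the eliminations described below, is an inhomogeneous relation such as \eqref{k2 X2 3},
\begin{equation*}
b_{32}\Bigl(\tfrac{z}{w_\pm}\Bigr)k_2^\pm(z)e_{23}^\pm(z)+a_{23}\Bigl(\tfrac{z}{w_\pm}\Bigr)e_{23}^\pm(w_\pm)k_2^\pm(z)=k_2^\pm(z)e_{23}^\pm(w_\pm),
\end{equation*}
which carries an extra term in which $e_{23}^\pm$ is evaluated at $z$ rather than at $w$. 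The whole reason for forming $X_2^+(w)=e_{23}^+(w_+)-e_{23}^-(w_-)$ is that the companion relation \eqref{k2 X2 4} for $e_{23}^\mp(w_\mp)$ carries the \emph{identical} inhomogeneous term $b_{32}(z/w_\pm)k_2^\pm(z)e_{23}^\pm(z)$, so that it cancels upon subtraction and only then does a single quasi-commutation with coefficient $a_{23}(z/w_\pm)=\frac{z-w_\pm}{s^{-2}z-r^{-2}w_\pm}$ emerge. Your version is moreover internally inconsistent: if the clean relations of your Step 2 held at the arguments $w_+$ and $w_-$, they would carry the two distinct coefficients $\frac{z-w_+}{s^{-2}z-r^{-2}w_+}$ and $\frac{z-w_-}{s^{-2}z-r^{-2}w_-}$ (distinct whenever $c\neq 0$), and subtracting them could not produce a single quasi-commutation relation for the difference $X_i^+(w)$.

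A secondary gap concerns Step 1 for $i=2$: one cannot read a relation between $k_2^\pm(z)$ and $e_{23}^\pm(w)$ directly off the $(2,3)$-block, because $\ell_{22}^\pm=f_{21}^\pm k_1^\pm e_{12}^\pm+k_2^\pm$ and $\ell_{23}^\pm=f_{21}^\pm k_1^\pm e_{13}^\pm+k_2^\pm e_{23}^\pm$ are not themselves Gaussian generators. The paper first eliminates the $f_{21}$-contributions by subtracting suitable multiples of the identities coming from $M_{12}=M_{12}'$ and $M_{13}=M_{13}'$ from those coming from $M_{22}=M_{22}'$ and $M_{23}=M_{23}'$ (this is where the invertibility of $k_2^\mp(w)$ and the relations of Lemma \ref{lemm kikj} are genuinely used). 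You allude to this but do not carry it out, and it is the bulk of the actual computation; once it is done correctly, the inhomogeneous term described above appears unavoidably.
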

\begin{proof}
   We only consider the case $i=2$.
   Taking the equation $M_{22}=M'_{22}$, we can get
   \begin{equation}\label{M22 23}
b_{32}\Bigl(\frac{z_\mp}{w_\pm}\Bigr)\ell_{22}^\mp(w)\ell_{23}^\pm(z)+a_{23}\Bigl(\frac{z_\mp}{w_\pm}\Bigr)\ell_{23}^\mp(w)\ell_{22}^{\pm}(z)
=\ell_{22}^\pm(z)\ell_{23}^{\mp}(w),\\
\end{equation}
\begin{equation}\label{M22 13}
b_{32}\Bigl(\frac{z_\mp}{w_\pm}\Bigr)\ell_{12}^\mp(w)\ell_{23}^\pm(z)+a_{23}\Bigl(\frac{z_\mp}{w_\pm}\Bigr)\ell_{13}^\mp(w)\ell_{22}^\pm(z)
=a_{21}\Bigl(\frac{z_\pm}{w_\mp}\Bigr)\ell_{22}^\pm(z)\ell_{13}^\mp(w)+b_{21}\Bigl(\frac{z_\pm}{w_\mp}\Bigr)\ell_{12}^\pm(z)\ell_{23}^\mp(w).
\end{equation}

Then, by subtracting $f_{21}^\mp\left(\frac{z}{w}\right) \cdot (\ref{M22 13})$ from $(\ref{M22 23})$, we have
\begin{equation}
\begin{aligned}\label{k2 X2 1}
&b_{32}\Bigl(\frac{z_\mp}{w_\pm}\Bigr)k_2^\mp(w)\ell_{23}^\pm(z)+a_{23}\Bigl(\frac{z_\mp}{w_\pm}\Bigr)k_2^\mp(w)e_{23}^\mp(w)\ell_{22}^\pm(z)\\
=\;&\ell_{22}^\pm(z)\ell_{23}^\mp(w)-a_{21}\Bigl(\frac{z_\pm}{w_\mp}\Bigr)f_{21}^\mp(w)\ell_{22}^\pm(z)\ell_{13}^\mp(w)-b_{21}\Bigl(\frac{z_\pm}{w_\mp}\Bigr)f_{21}^\mp(w)\ell_{12}^\pm(z)\ell_{23}^\mp(w)\\
=\;&\ell_{22}^\pm(z)k_2^\mp(w)e_{23}^\mp(w)+b_{12}\Bigl(\frac{z_\mp}{w_\pm}\Bigr)k_2^\mp(w)\ell_{21}^\pm(z)e_{13}^\mp(w)-b_{21}\Bigl(\frac{z_\pm}{w_\mp}\Bigr)f_{21}^\mp(w)\ell_{12}^\pm(z)k_2^\mp(w)e_{23}^\mp(w).
\end{aligned}
\end{equation}

 Similarly, taking the equation $M_{12}=M'_{12}$,
and then we have
\begin{equation*}
\begin{aligned}\label{}
&b_{32}\Bigl(\frac{z_\mp}{w_\pm}\Bigr)k_2^\mp(w)\ell_{13}^\pm(z)+a_{23}\Bigl(\frac{z_\mp}{w_\pm}\Bigr)k_2^\mp(w)e_{23}^\mp(w)\ell_{12}^\pm(z)\\
=\;&a_{12}\Bigl(\frac{z_\pm}{w_\mp}\Bigr)\ell_{12}^\pm(z)\ell_{23}^\mp(w)+b_{12}\Bigl(\frac{z_\pm}{w_\mp}\Bigr)\ell_{22}^\pm(z)\ell_{13}^\mp(w)-f_{21}^\mp(w)\ell_{12}^\pm(z)\ell_{13}^\mp(w)\\
=\;&a_{12}\Bigl(\frac{z_\pm}{w_\mp}\Bigr)\ell_{12}^\pm(z)k_2^\mp(w)e_{23}^\mp(w)+b_{12}\Bigl(\frac{z_\pm}{w_\mp}\Bigr)k_2^\mp(w)k_1^\pm(z)e_{13}^\mp(w).
\end{aligned}
\end{equation*}

 Then we have
\begin{equation}\label{k2 X2 2}
\begin{aligned}
&b_{32}\Bigl(\frac{z_\mp}{w_\pm}\Bigr)k_2^\mp(w)f_{21}^\pm(z)\ell_{13}^\pm(z)+a_{23}\Bigl(\frac{z_\mp}{w_\pm}\Bigr)k_2^\mp(w)e_{23}^\mp(w)f_{21}^\pm(z)\ell_{12}^\pm(z)\\
=\;&a_{12}\Bigl(\frac{z_\pm}{w_\pm}\Bigr)k_2^\mp(w)f_{21}^\pm(z)k_2^\mp(w)^{-1}l_{12}^\pm(z)k_2^\mp(w)e_{23}^\mp(w)
+b_{12}\Bigl(\frac{z_\pm}{w_\mp}\Bigr)k_2^\mp(w)f_{21}^\pm(z)k_1^\pm(z)e_{13}^\mp(w)\\
=\;&f_{21}^\pm(z)l_{12}^\pm(z)k_2^\mp(w)e_{23}^\mp(w)-b_{21}\Bigl(\frac{z_\pm}{w_\mp}\Bigr)f_{21}^\mp(w)l_{12}^\pm(z)k_2^\mp(w)e_{23}^\mp(w).
\end{aligned}
\end{equation}

 Then, by subtracting (\ref{k2 X2 2}) from (\ref{k2 X2 1}), we have
\begin{equation*}\label{}
b_{32}\Big(\frac{z_\mp}{w_\pm}\Big)k_2^\mp(w)k_2^\pm(z)e_{23}^\pm(z)+a_{23}\Big(\frac{z_\mp}{w_\pm}\Big)k_2^\mp(w)e_{23}^\mp(w)k_2^\pm(z)=k_2^\pm(z)k_2^\mp(w)e_{23}^\mp(w).
\end{equation*}

 Using the invertibility of $k_2^\mp(w)$, we have
\begin{equation}\label{k2 X2 4}
b_{32}\Big(\frac{z}{w_\pm}\Big)k_2^\pm(z)e_{23}^\pm(z)+a_{23}\Big(\frac{z}{w_\pm}\Big)e_{23}^\mp(w_\mp)k_2^\pm(z)=k_2^\pm(z)e_{23}^\mp(w_\mp).
\end{equation}

 Similarly, we have
\begin{equation}\label{k2 X2 3}
b_{32}\Big(\frac{z}{w_\pm}\Big)k_2^\pm(z)e_{23}^\pm(z)+a_{23}\Big(\frac{z}{w_\pm}\Big)e_{23}^\pm(w_\pm)k_2^\pm(z)=k_2^\pm(z)e_{23}^\pm(w_\pm).
\end{equation}
After subtracting (\ref{k2 X2 4}) from (\ref{k2 X2 3}), we complete our proof.
\end{proof}

By the same token, one can also prove
\begin{lemm} \label{lemm B3 ki+1xi}
One has
\begin{align*}
   &k^{\pm}_{i+1}(z)X^{+}_i(w)=\frac{z-w_\pm}{r^{-2}z-s^{-2}w_\pm}X^{+}_i(w)k^{\pm}_{i+1}(z),  \\
   &k^{\pm}_{i+1}(z)X^{-}_i(w)=\frac{z-w_\mp}{r^{-2}z-s^{-2}w_\mp}X^{-}_i(w)k^{\pm}_{i+1}(z),
\end{align*}
  where $i=1, 2.$
\end{lemm}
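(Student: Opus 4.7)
The plan is to mirror the strategy used in the proof of Lemma \ref{lemm B3 kiXi}, but working with a different pair of matrix entries tailored to produce the quasi-commutation between $k^{\pm}_{i+1}(z)$ and the Gaussian root currents $e^{\pm}_{i,i+1}(w)$. Focusing on the case $i=2$ (the case $i=1$ is analogous), the goal is to obtain a commutation identity between $k_3^\pm(z)$ and $e_{23}^\pm(w)$, $e_{23}^\mp(w)$ separately, whose difference (taken at the shifted spectral points $w_+$ and $w_-$) yields the stated formula for $X_2^\pm(w)$.

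First, I would write out $L^\pm(z)$ in its Gauss-decomposed form and read off the $RLL$ identities $\widehat R(z/w)L_1^\pm(z)L_2^\mp(w)=L_2^\mp(w)L_1^\pm(z)\widehat R(z_{\mp}/w_{\pm})$ for the specific block indices involving row/column $3$ (in the $B_3$ case, these are the entries $M_{33}=M_{33}'$ and $M_{23}=M_{23}'$ written out explicitly in the conventions of Remark \ref{rmk matrixexp}). Expanding both sides gives two scalar equations whose left-hand sides contain $k_3^\mp(w)\ell_{23}^\pm(z)$ and $k_3^\mp(w)\ell_{33}^\pm(z)$-type terms with spectral coefficients $a_{ij}$ and $b_{ij}$ from $\widehat R$.

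Next, following the template of Lemma \ref{lemm B3 kiXi}, I would multiply the $M_{23}$-identity on the left by $f_{32}^\pm(z)$ (extracted from the Gauss factor $F^\pm$) and subtract it from the $M_{33}$-identity. The previously established relations in Lemma \ref{lemm kikj}, together with the invertibility of the $k_i^\pm$'s and the commutation $f_{32}^\pm(z)k_3^\mp(w)=k_3^\mp(w)f_{32}^\pm(z)\cdot(\text{scalar})$ derived from lower-index blocks, will allow the $f_{21}^\pm$- and $\ell_{ij}^\pm$-terms to cancel, leaving an identity purely between $k_3^\pm(z)$, $e_{23}^\pm(w)$, and $e_{23}^\mp(w_\mp)$. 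The analogous manipulation on the $\widehat R(z/w)L_1^\pm(z)L_2^\pm(w)=L_2^\pm(w)L_1^\pm(z)\widehat R(z/w)$ relation yields a twin identity involving $e_{23}^\pm(w_\pm)$.

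Subtracting these two identities cancels the common $k_3^\pm(z)e_{23}^\pm(z)$ term and, after inserting the explicit form of the coefficients $a_{23}$, $b_{32}$ from Theorem \ref{thm spectral Rz1} and using the definition $X_2^+(w)=e_{23}^+(w_+)-e_{23}^-(w_-)$, produces exactly the claimed identity $k_3^\pm(z)X_2^+(w)=\tfrac{z-w_\pm}{r^{-2}z-s^{-2}w_\pm}X_2^+(w)k_3^\pm(z)$. The $X^-$ version follows dually by starting from the transposed $RLL$ block comparisons $M_{ij}=M'_{ij}$ with the roles of the $F^\pm$ and $E^\pm$ Gauss factors swapped, using quasi-determinant formulas $f_{i+1,i}^\pm(z)=\ell^{\pm}_{i+1,i}(z)k_i^\pm(z)^{-1}+\cdots$. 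The main obstacle I anticipate is the bookkeeping in converting the $\ell^\pm_{ij}$ variables to Gauss generators: one has to carefully track which already-proved quasi-commutation (from Lemmas \ref{lemm kikj}--\ref{lemm b3 x1x3}) is invoked at each step so that all terms outside of the target pair $(k_{i+1}^\pm,e_{i,i+1}^\pm)$ cancel, especially because the coefficient structure $\tfrac{z-w_\pm}{r^{-2}z-s^{-2}w_\pm}$ exhibits the asymmetry $r\leftrightarrow s$ relative to the Lemma \ref{lemm B3 kiXi} coefficient $\tfrac{z-w_\pm}{s^{-2}z-r^{-2}w_\pm}$, which is sensitive to the ``generalized type $D$ phenomenon'' and must be verified to come out correctly after all substitutions.
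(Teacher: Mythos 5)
Your proposal follows exactly the route the paper intends: the paper's entire argument for this lemma is the phrase ``by the same token,'' deferring to the block-elimination computation of Lemma \ref{lemm B3 kiXi} (and of Lemma \ref{lemma B3 k3X3}), which is precisely the scheme you describe --- extract the relevant $M_{ij}=M'_{ij}$ entries, eliminate the lower-triangular Gauss factors, use invertibility of the $k$'s, and subtract the $L^+L^+$ and $L^+L^-$ versions to assemble $X_i^\pm(w)$. The only detail to add is that isolating $k_3^\pm(z)$ from $\ell_{33}^\pm(z)=f_{31}^\pm k_1^\pm e_{13}^\pm+f_{32}^\pm k_2^\pm e_{23}^\pm+k_3^\pm$ requires subtracting both $f_{32}^\pm(z)$ times the $M_{23}$-identity and $\bigl(f_{31}^\pm(z)-f_{32}^\pm(z)f_{21}^\pm(z)\bigr)$ times an $M_{13}$-identity, i.e.\ three block comparisons rather than the two you list.
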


\begin{lemm}\label{lemma e23e34} One has
\begin{align}
e_{i,i+1}^\pm(z)e_{i+1,i+2}^\pm(w)=\;& \frac{z-w}{s^{-2}z-r^{-2}w}e_{i+1,i+2}^\pm(w)e_{i,i+1}^\pm(z)+\frac{\bigl(r^2-s^2\bigr)z}{r^2z-s^2w}e_{i,i+2}^\pm(z) \label{e24+} \\
&+\frac{\bigl(r^2-s^2)w}{r^2z-s^2w}e_{i,i+1}^\pm(w)e_{i+1,i+2}^\pm(w)-\frac{\bigl(r^2-s^2\bigr)w}{r^2z-s^2w}e_{i,i+2}^\pm(w),\nonumber\\
f_{i+2,i+1}^\pm(w)f_{i+1,i}^\pm(z)=\; &\frac{z-w}{s^{-2}z-r^{-2}w}f_{i+1,i}^\pm(z)f_{i+2,i+1}^\pm(w)+\frac{\bigl(r^2-s^2\bigr)w}{r^2z-s^2w}f_{i+2,i}^\pm(z) \label{f42+}\\
&+\frac{\bigl(r^2-s^2)z}{r^2z-s^2w}f_{i+2,i+1}^\pm(w)f_{i+1,i}^\pm(w)-\frac{\bigl(r^2-s^2\bigr)z}{r^2z-s^2w}f_{i+2,i}^\pm(w), \nonumber\\
e_{i,i+1}^\pm(z)e_{i+1,i+2}^\mp(w)=\;&\frac{z_\mp-w_\pm}{s^{-2}z_\mp-r^{-2}w_\pm}e_{i+1,i+2}^\mp(w)e_{i,i+1}^\pm(z)+\frac{\bigl(r^2-s^2\bigr)z_\mp}{r^2z_\mp-s^2w_\pm}e_{i,i+2}^\pm(z)\label{e24-}\\
&+\frac{\bigl(r^2-s^2\bigr)w_\pm}{r^2z_\mp-s^2w_\pm}e_{i,i+1}^\mp(w)e_{i+1,i+2}^\mp(w)-\frac{\bigl(r^2-s^2\bigr)w_\pm}{r^2z_\mp-s^2w_\pm} e_{i,i+2}^\mp(w),\nonumber\\
f_{i+2,i+1}^\mp(w)f_{i+1,i}^\pm(z)=\; &\frac{z_\pm-w_\mp}{s^{-2}z_\pm-r^{-2}w_\mp}f_{i+1,i}^\pm(z)f_{i+2,i+1}^\mp(w)+\frac{\bigl(r^2-s^2\bigr)w_\mp}{r^2z_\pm-s^2w_\mp}f_{i+2,i}^\pm(z) \label{f42-}\\
&+\frac{\bigl(r^2-s^2)z_\pm}{r^2z_\pm-s^2w_\mp}f_{i+2,i+1}^\mp(w)f_{i+1,i}^\mp(w)-\frac{\bigl(r^2-s^2\bigr)z_\pm}{r^2z_\pm-s^2w_\mp}f_{i+2,i}^\mp(w), \nonumber
\end{align}
where $i=1,2$.
\end{lemm}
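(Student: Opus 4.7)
The plan is to derive these four relations in exactly the same way as Lemmas \ref{lemm B3 kiXi} and \ref{lemm B3 ki+1xi}, i.e.\ by reading off suitable scalar equations from the matrix identities \eqref{equ 5.2} and \eqref{equ 5.4} via the block description of $M_{ij}=M'_{ij}$ in Remark \ref{rmk matrixexp}, and then converting the resulting $\ell^\pm$-relations into $e^\pm$-relations through the Gauss decomposition in Proposition \ref{Prop Gauss decomp}. Concretely, for the equal-sign relation \eqref{e24+} I would start from $\widehat{R}(z/w)L_1^+(z)L_2^+(w)=L_2^+(w)L_1^+(z)\widehat{R}(z/w)$ and compare the $(i,i+2)$-entry of the blocks $M_{i+1,i+1}$ (and for \eqref{f42+} the $(i+2,i)$-entry of $M_{i+1,i+1}$); the resulting identity will involve the three products $\ell_{i,i+1}^+(z)\ell_{i+1,i+2}^+(w)$, $\ell_{i+1,i+2}^+(w)\ell_{i,i+1}^+(z)$, and $\ell_{i,i+2}^+(w)k_{i+1}^+(z)$ (plus cubic $f$-correction terms) with coefficients $a_{**}(z/w)$, $b_{**}(z/w)$, $c_{**}(z/w)$ taken from Theorem \ref{thm spectral Rz1}. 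For the $\pm/\mp$ versions \eqref{e24-} and \eqref{f42-} the same extraction is performed on \eqref{equ 5.4}, producing the shifted arguments $z_{\pm}$, $w_{\mp}$.

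In the second step I would invoke the quasi-determinant formulas \eqref{equ quasidet k_i}--\eqref{equ quasidet fji} to replace every $\ell_{p,q}^\pm$ in the extracted identity by $f^\pm k^\pm e^\pm$-type expressions, and then left/right-multiply the whole identity by suitable products of $(k_i^\pm)^{\pm1}(k_{i+1}^\pm)^{\pm1}$ to isolate the $e^\pm$-generators. The price for this move is the appearance of spectral ratios produced by Lemma \ref{lemm kikj}, Lemma \ref{lemm B3 kiXi} and Lemma \ref{lemm B3 ki+1xi}; these ratios combine with the $R$-matrix coefficients to yield precisely the rational functions $\frac{z-w}{s^{-2}z-r^{-2}w}$, $\frac{(r^2-s^2)z}{r^2z-s^2w}$, $\frac{(r^2-s^2)w}{r^2z-s^2w}$ displayed on the right-hand side (and their shifted analogues in the mixed case).

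In the third step I would absorb the $f^\pm$-contributions. Because the Gauss decomposition of $\ell_{i,i+2}^\pm(z)$ is $k_i^\pm(z)e_{i,i+2}^\pm(z)$ plus terms proportional to $f_{i,i-k}^\pm(z)$ which are disjoint from the variables $e_{i,i+1}^\pm$, $e_{i+1,i+2}^\pm$, $e_{i,i+2}^\pm$ under consideration, the unwanted pieces either cancel pairwise (as in the subtraction trick used in Lemma \ref{lemm B3 kiXi}) or regroup into the correction $e_{i,i+1}^{\pm}(w)e_{i+1,i+2}^{\pm}(w)-e_{i,i+2}^{\pm}(w)$ that already appears on the right-hand side. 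The four relations for $f$'s follow by applying the same procedure to the transposed block (equivalently, by the anti-involution $\zeta$ of Definition \ref{def auto} extended to the affine setting).

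The main obstacle will be the bookkeeping of the $f$-corrections and the simultaneous tracking of the scalar ratios coming from the spectral parameters $z_\pm, w_\mp$: one must verify that after clearing denominators the exact combination of $a,b,c$-coefficients conspire to collapse the cubic $\ell^\pm\ell^\pm\ell^\pm$ remainders into the single quadratic term $e_{i,i+1}^\pm(w)e_{i+1,i+2}^\pm(w)-e_{i,i+2}^\pm(w)$ with the displayed rational coefficient. This is a routine but delicate computation, entirely analogous to the one carried out in \cite{JingLM SIGMA 2020} for the one-parameter case; the only genuine novelty here is the appearance of the ``generalized type $D$ phenomenon'' factors such as $rs$, $r^{-2}s^2$ arising from the two-parameter structural constants $(*)$, which must be kept track of throughout.
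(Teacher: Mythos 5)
Your outline is essentially the paper's own proof: read off scalar identities from the blocks $M_{ij}=M'_{ij}$ of the $RLL$ relations, substitute the Gauss decomposition, eliminate the lower‑triangular corrections by subtracting $f$‑multiples of companion identities, and cancel the invertible $k$'s at the end. The only calibrations worth noting are that the paper harvests the column-$(i{+}2)$ entries in rows $i{+}1,\,i,\,i{-}1$ of the \emph{two} blocks $M_{i,i+1}$ and $M_{i-1,i+1}$ (for $i=2$ these are (\ref{M23 34})--(\ref{M23 14}) and (\ref{M13 34})--(\ref{M13 14})) rather than a single entry of $M_{i+1,i+1}$ — it is exactly this redundancy that makes the cubic $f$-remainders cancel pairwise — and the rational coefficients $\frac{z-w}{s^{-2}z-r^{-2}w}$, $\frac{(r^2-s^2)z}{r^2z-s^2w}$, etc.\ come directly from the $R$-matrix entries $a_{23},b_{23},b_{43}$, with no need to invoke Lemmas \ref{lemm kikj}, \ref{lemm B3 kiXi} or \ref{lemm B3 ki+1xi}.
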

\begin{proof}
Here we only prove the first equation for $i=2$ since the others can be proved similarly.
 $M_{23}=M'_{23}$ leads to
   \begin{equation}\label{M23 34}
b_{43}\Bigl(\frac{z}{w}\Bigr)\ell_{33}^\pm(w)\ell_{24}^\pm(z)+a_{34}\Bigl(\frac{z}{w}\Bigr)\ell_{34}^\pm(w)\ell_{23}^\pm(z)=b_{23}\Bigl(\frac{z}{w}\Bigr)\ell_{33}^\pm(z)\ell_{24}^\pm(w)+a_{23}\Bigl(\frac{z}{w}\Bigr)\ell_{23}^\pm(z)\ell_{34}^\pm(w),\\
\end{equation}
\begin{equation}\label{M23 24}
b_{43}\Bigl(\frac{z}{w}\Bigr)\ell_{23}^\pm(w)\ell_{24}^\pm(z)+a_{34}\Bigl(\frac{z}{w}\Bigr)\ell_{24}^\pm(w)\ell_{23}^\pm(z)=\ell_{23}^\pm(z)\ell_{24}^\pm(w),
\end{equation}
\begin{equation}\label{M23 14}
b_{43}\Bigl(\frac{z}{w}\Bigr)\ell_{13}^\pm(w)\ell_{24}^\pm(z)+a_{34}\Bigl(\frac{z}{w}\Bigr)\ell_{14}^\pm(w)\ell_{23}^\pm(z)=b_{21}\Bigl(\frac{z}{w}\Bigr)\ell_{13}^\pm(z)\ell_{24}^\pm(w)+a_{21}\Bigl(\frac{z}{w}\Bigr)\ell_{23}^\pm(z)\ell_{14}^\pm(w).
\end{equation}

By subtracting $f_{31}^\pm(w)\cdot(\ref{M23 14})$ from (\ref{M23 34}), we conclude that
\begin{equation}
\begin{aligned}\label{X3 X2 1}
&b_{43}\Big(\frac{z}{w}\Big)\Bigl\{f_{32}^\pm(w)k_2^\pm(w)e_{23}^\pm(w)+k_3^\pm(w)\Bigr\}\ell_{24}^\pm(z)
+a_{34}\Big(\frac{z}{w}\Big)\Bigl\{f_{32}^\pm(w)k_2^\pm(w)e_{24}^\pm(w)+k_3^\pm(w)e_{34}^\pm(w)\Bigr\}\ell_{23}^\pm(z)\\
&=b_{23}\Big(\frac{z}{w}\Big)\ell_{33}^\pm(z)k_2^\pm(w)e_{24}^\pm(w)-b_{21}\Big(\frac{z}{w}\Big)f_{31}^\pm(w)\ell_{13}^\pm(z)k_2^\pm(w)e_{24}^\pm(w)
+a_{23}\Big(\frac{z}{w}\Big)\ell_{23}^\pm(z)\Bigl\{f_{32}^\pm k_2^\pm e_{24}^\pm(w)\\
&\hspace{12em}+k_3^\pm(w)e_{34}^\pm(w)\Bigr\}+b_{13}\Big(\frac{z}{w}\Big)\Bigl\{f_{32}^\pm(w)k_2^\pm(w)e_{24}^\pm(w)
+k_3^\pm(w)\Bigr\}\ell_{21}^\pm(z)e_{14}^\pm(w).
\end{aligned}
\end{equation}

 Also, by subtracting $f_{32}^\pm(w)f_{21}^\pm(w)\cdot(\ref{M23 14})$ from $f_{32}^\pm(w)\cdot(\ref{M23 24})$, we have
\begin{equation}\label{X3 X2 2}
\begin{aligned}
&b_{43}\Bigl(\frac{z}{w}\Bigr)f_{32}^\pm(w)k_2^\pm(w)e_{23}^\pm(w)\ell_{24}^\pm(z)
+a_{34}\Bigl(\frac{z}{w}\Bigr)f_{32}^\pm(w)k_2^\pm(w)e_{24}^\pm(w)\ell_{23}^\pm(z)\\
&=f_{32}^\pm(w)\ell_{23}^\pm(z)k_2^\pm(w)e_{24}^\pm(w)-b_{21}\Bigl(\frac{z}{w}\Bigr)f_{32}^\pm(w)f_{21}^\pm(w)\ell_{13}^\pm(z)k_2^\pm(w)e_{24}^\pm(w)\\
&\hspace{12em}\;+b_{13}\Bigl(\frac{z}{w}\Bigr)f_{32}^\pm(w)k_2^\pm(w)e_{23}^\pm(w)\ell_{21}^\pm(z)e_{14}^\pm(w)\\
&=f_{32}^\pm(w)k_2^\pm(z)e_{23}^\pm(z)k_2^\pm(w)e_{24}^\pm(w)+b_{12}\Bigl(\frac{z}{w}\Bigr)f_{31}^\pm(z)\ell_{13}^\pm(z)k_2^\pm(w)e_{24}^\pm(w)\\
&\quad\;+a_{13}\Bigl(\frac{z}{w}\Bigr)f_{21}^\pm(z)\ell_{13}^\pm(z)f_{32}^\pm(w)k_2^\pm(w)e_{24}^\pm(w)
+b_{13}\Bigl(\frac{z}{w}\Bigr)f_{21}^\pm(z)k_1^\pm(z)k_3^\pm(w)\\
&\hspace{5em}\cdot\Bigl(e_{12}^\pm(w)-e_{12}^\pm(z)\Bigr)e_{24}^\pm(w)-b_{21}\Bigl(\frac{z}{w}\Bigr)f_{31}^\pm(w)\ell_{13}^\pm(z)k_2^\pm(w)e_{24}^\pm(w)\\
&\hspace{13.5em}+b_{13}\Bigl(\frac{z}{w}\Bigr)f_{32}^\pm(w)k_2^\pm(w)e_{23}^\pm(w)\ell_{21}^\pm(z)e_{14}^\pm(w).
\end{aligned}
\end{equation}

 And $M_{13}=M'_{13}$ yields
   \begin{equation}\label{M13 34}
b_{43}\Bigl(\frac{z}{w}\Bigr)\ell_{33}^\pm(w)\ell_{14}^\pm(z)+a_{34}\Bigl(\frac{z}{w}\Bigr)\ell_{34}^\pm(w)\ell_{13}^\pm(z)=b_{13}\Bigl(\frac{z}{w}\Bigr)\ell_{33}^\pm(z)\ell_{14}^\pm(w)+a_{13}\Bigl(\frac{z}{w}\Bigr)\ell_{13}^\pm(z)\ell_{34}^\pm(w),\\
\end{equation}
\begin{equation}\label{M13 24}
b_{43}\Bigl(\frac{z}{w}\Bigr)\ell_{23}^\pm(w)\ell_{14}^\pm(z)+a_{34}\Bigl(\frac{z}{w}\Bigr)\ell_{24}^\pm(w)\ell_{13}^\pm(z)=b_{12}\Bigl(\frac{z}{w}\Bigr)\ell_{23}^\pm(z)\ell_{14}^\pm(w)+a_{21}\Bigl(\frac{z}{w}\Bigr)\ell_{13}^\pm(z)\ell_{24}^\pm(w),
\end{equation}
\begin{equation}\label{M13 14}
b_{43}\Bigl(\frac{z}{w}\Bigr)\ell_{13}^\pm(w)\ell_{14}^\pm(z)+a_{34}\Bigl(\frac{z}{w}\Bigr)\ell_{14}^\pm(w)\ell_{13}^\pm(z)=\ell_{13}^\pm(z)\ell_{14}^\pm(w).
\end{equation}
By subtracting $f_{31}^\pm(w)\cdot(\ref{M13 14})$ from $(\ref{M13 34})$, we get
\begin{equation}
\begin{aligned}\label{X3 X2 3}
&b_{43}\Big(\frac{z}{w}\Big)\Bigl\{f_{32}^\pm(w)k_2^\pm(w)e_{23}^\pm(w)+k_3^\pm(w)\Bigr\}\ell_{14}^\pm(z)
+a_{34}\Big(\frac{z}{w}\Big)\Bigl\{f_{32}^\pm(w)k_2^\pm(w)e_{24}^\pm(w)+k_3^\pm(w)e_{34}^\pm(w)\Bigr\}\ell_{13}^\pm(z)\\
&\hspace{19em}=a_{13}\Big(\frac{z}{w}\Big)\ell_{13}^\pm(z)\Bigl\{f_{32}^\pm(w)k_2^\pm(w)e_{24}^\pm(w)+k_3^\pm(w)e_{34}^\pm(w)\Bigr\}\\
&\hspace{19em}+b_{13}\Big(\frac{z}{w}\Big)\Bigl\{f_{32}^\pm(w)k_2^\pm(w)e_{24}^\pm(w)+k_3^\pm(w)\Bigr\}k_{1}^\pm(z)e_{14}^\pm(w).
\end{aligned}
\end{equation}

 Similarly, it follows that
\begin{equation}\label{X3 X2 4}
\begin{aligned}
&b_{43}\Bigl(\frac{z}{w}\Bigr)f_{32}^\pm(w)k_2^\pm(w)e_{23}^\pm(w)\ell_{14}^\pm(z)
+a_{34}\Bigl(\frac{z}{w}\Bigr)f_{32}^\pm(w)k_2^\pm(w)e_{24}^\pm(w)\ell_{13}^\pm(z)\\
&=a_{13}\Bigl(\frac{z}{w}\Bigr)\ell_{13}^\pm(z)f_{32}^\pm(w)k_2^\pm(w)e_{24}^\pm(w)
+b_{13}\Bigl(\frac{z}{w}\Bigr)k_1^\pm(z)k_3^\pm(w)\Bigl(e_{12}^\pm(w)-e_{12}^\pm(z)\Bigr)\\
&\hspace{11.5em}\cdot e_{24}^\pm(w)+b_{13}\Bigl(\frac{z}{w}\Bigr)f_{32}^\pm(w)k_2^\pm(w)e_{23}^\pm(w)k_{1}^\pm(z)e_{14}^\pm(w).
\end{aligned}
\end{equation}

 Then, subtracting $(\ref{X3 X2 2})$ and $f_{21}^\pm(z)\Bigl\{(\ref{X3 X2 3})-(\ref{X3 X2 4})\Bigr\}$ from (\ref{X3 X2 1}),
we arrive at
\begin{equation*}\label{}
\begin{aligned}
&a_{23}\Bigl(\frac{z}{w}\Bigr)k_2^\pm(z)k_3^\pm(w)e_{34}^\pm(w)e_{23}^\pm(z)\\
&=a_{23}\Bigl(\frac{z}{w}\Bigr)k_2^\pm(z)e_{23}^\pm(z)k_3^\pm(w)e_{34}^\pm(w)
+b_{23}\Bigl(\frac{z}{w}\Bigr)k_2^\pm(z)k_3^\pm(w)e_{24}^\pm(w)-b_{43}\Bigl(\frac{z}{w}\Bigr)k_3^\pm(w)k_2^\pm(z)e_{24}^\pm(z)\\
&=k_3^\pm(w)k_2^\pm(z)e_{23}^\pm(z)e_{34}^\pm(w)
-b_{23}\Bigl(\frac{z}{w}\Bigr)k_3^\pm(w)k_2^\pm(z)e_{23}^\pm(w)e_{34}^\pm(w)+b_{23}\Bigl(\frac{z}{w}\Bigr)k_2^\pm(z)k_3^\pm(w)e_{24}^\pm(w)\\
&\hspace{27.5em}-b_{43}\Bigl(\frac{z}{w}\Bigr)k_3^\pm(w)k_2^\pm(z)e_{24}^\pm(z).
\end{aligned}
\end{equation*}

 Finally, we get the desired equation by using the invertibility of $k_2^\pm(z)$ and $k_3^\pm(w)$.
\end{proof}

It follows from Lemma \ref{lemma e23e34} that
\begin{lemm}\label{lemm B3 X2X3} One has
\begin{gather*}\label{}
  \bigl(s^{-2}z-r^{-2}w\bigr)X_i^+(z)X_{i+1}^+(w)=\bigl(z-w\bigr)X_{i+1}^+(w)X_i^+(z), \\
  \bigl(z-w\bigr)X_i^-(z)X_{i+1}^-(w)=\bigl(s^{-2}z-r^{-2}w\bigr)X_{i+1}^-(w)X_i^-(z),
\end{gather*}
where $i=1,2$.
\end{lemm}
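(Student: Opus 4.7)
The plan is to substitute $X_i^+(z)=e_{i,i+1}^+(z_+)-e_{i,i+1}^-(z_-)$ and $X_{i+1}^+(w)=e_{i+1,i+2}^+(w_+)-e_{i+1,i+2}^-(w_-)$ into the left-hand side and expand. This produces four terms indexed by signs $(\varepsilon_1,\varepsilon_2)\in\{+,-\}^2$. To each product $e_{i,i+1}^{\varepsilon_1}(z_{\varepsilon_1})\,e_{i+1,i+2}^{\varepsilon_2}(w_{\varepsilon_2})$ I would apply the appropriate exchange relation from Lemma \ref{lemma e23e34}: (\ref{e24+}) when $\varepsilon_1=\varepsilon_2$, and (\ref{e24-}) when $\varepsilon_1\neq\varepsilon_2$. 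In all four cases the leading exchange coefficient collapses to the common value $\frac{z-w}{s^{-2}z-r^{-2}w}$, because in the mixed-sign applications the shifts $(z_{\varepsilon_1})_{-\varepsilon_1}$ and $(w_{\varepsilon_2})_{-\varepsilon_2}$ introduce a common factor $(rs)^{c/2}$ in both numerator and denominator, which then cancels.

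The central step, and the main obstacle, is to show that the ``extra'' terms from Lemma \ref{lemma e23e34} cancel once all four products are summed. These extras split into three species, each carrying a coefficient of the form $\frac{(r^2-s^2)z}{r^2z-s^2w}$ or $\frac{(r^2-s^2)w}{r^2z-s^2w}$: (a) longer root vectors $e_{i,i+2}^\varepsilon(z_\varepsilon)$; (b) the corresponding $e_{i,i+2}^\varepsilon(w_\varepsilon)$; and (c) bilinear ``diagonal'' terms $e_{i,i+1}^\varepsilon(w_\varepsilon)\,e_{i+1,i+2}^\varepsilon(w_\varepsilon)$. The signs coming from expanding $X^+X^+$ arrange these into four natural cancelling pairs: the $e_{i,i+2}^+(z_+)$ contributions from $(+,+)$ and $(+,-)$ cancel against each other, and likewise $e_{i,i+2}^-(z_-)$ from $(-,+)$ and $(-,-)$; the $w$-shifted analogues cancel by the same mechanism; and each pure-sign bilinear diagonal at $w_+$ (resp. $w_-$) appears once with each sign, between $(+,+)$ and $(-,+)$ (resp. $(+,-)$ and $(-,-)$). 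What survives is precisely $\frac{z-w}{s^{-2}z-r^{-2}w}\,X_{i+1}^+(w)X_i^+(z)$, and clearing the denominator yields the first identity.

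The second identity, for $X_i^-(z)X_{i+1}^-(w)$, is proved by the exact same scheme using (\ref{f42+}) and (\ref{f42-}) in place of (\ref{e24+}) and (\ref{e24-}). Since the exchange coefficient in those relations appears on the opposite side of the equation, the cleared-denominator form comes out as $(z-w)X_i^-(z)X_{i+1}^-(w) = (s^{-2}z-r^{-2}w)X_{i+1}^-(w)X_i^-(z)$, which is the asymmetric form stated in the lemma. No new relation is needed beyond Lemma \ref{lemma e23e34}; the difficulty is purely combinatorial bookkeeping, tracking how the coefficient simplifications and pairwise cancellations from four applications of that lemma align, which is consistent with the authors' one-line remark that the statement ``follows from'' it.
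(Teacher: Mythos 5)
Your proposal is correct and is exactly the argument the paper intends: the paper's entire proof of this lemma is the phrase ``It follows from Lemma \ref{lemma e23e34} that,'' and your expansion of $X_i^{\pm}$ into the four products of Gaussian generators, the collapse of the shifted exchange coefficients via the common factor $(rs)^{c/2}$, and the pairwise cancellation of the extra terms (the $e_{i,i+2}^{\varepsilon}$ terms pairing across the second sign, the diagonal bilinear terms pairing across the first sign) is precisely the bookkeeping the authors omit. The same remark applies to your treatment of the $X^-$ case via (\ref{f42+}) and (\ref{f42-}); nothing is missing.
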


By establishing similar identities listed in Lemma \ref{lemma e23e34}, one can also prove
\begin{lemm} \label{lemm b3 x1x2}
One has
\begin{gather*}\label{}
   \bigl(s^2z-r^2w\bigr)X_i^+(z)X_i^+(w)=\bigl(r^2z-s^2w\bigr)X_i^+(w)X_i^+(z), \\
  \bigl(r^2z-s^2w\bigr)X_i^-(z)X_i^-(w)=\bigl(s^2z-r^2w\bigr)X_i^-(w)X_i^-(z),
\end{gather*}
where $i=1, 2$.
\end{lemm}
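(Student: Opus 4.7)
The plan is to follow the template of Lemmas \ref{lemma e23e34}, \ref{lemm B3 X2X3}, and \ref{lemm b3 x1x3}: first produce the desired quadratic commutation relations for the Gauss coordinates $e^{\pm}_{i,i+1}(z)$ and $f^{\pm}_{i+1,i}(z)$ individually, and then convert them to relations for $X_i^{\pm}$ via the defining formulas
\[
X_i^+(z)=e^+_{i,i+1}(z_+)-e^-_{i,i+1}(z_-),\qquad X_i^-(z)=f^+_{i+1,i}(z_-)-f^-_{i+1,i}(z_+).
\]

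For the $e$-side I would fix $i\in\{1,2\}$ and extract the component identities $M_{p,q}=M'_{p,q}$ from $\widehat{R}_{r,s}(z/w)L_1^{\pm}(z)L_2^{\pm}(w)=L_2^{\pm}(w)L_1^{\pm}(z)\widehat{R}_{r,s}(z/w)$ at those index pairs that produce products of the form $\ell^{\pm}_{i,i+1}(z)\ell^{\pm}_{i,i+1}(w)$ and $\ell^{\pm}_{i,i+1}(w)\ell^{\pm}_{i,i+1}(z)$: concretely the $(i+1,i+1)$-block together with the auxiliary $(i,i+1)$ and $(i+1,i)$ components, following the block description in Remark \ref{rmk matrixexp}. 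Using the Gauss decomposition $\ell^{\pm}_{i,i+1}(z)=k^{\pm}_i(z)\,e^{\pm}_{i,i+1}(z)$ (modulo lower-triangular corrections that are cleared by left-multiplying by suitable $f^{\pm}_{\cdot,i}(w)$, as in \eqref{X3 X2 1}--\eqref{X3 X2 4}), together with the $k$-commutations of Lemma \ref{lemm kikj} and the $k$-$X$ relations of Lemmas \ref{lemm B3 kiXi} and \ref{lemm B3 ki+1xi}, I can isolate on both sides the common factor $k^{\pm}_i(z)k^{\pm}_i(w)$; cancelling it by invertibility reduces the identity to
\[
(s^2z-r^2w)\,e^{\pm}_{i,i+1}(z)e^{\pm}_{i,i+1}(w)=(r^2z-s^2w)\,e^{\pm}_{i,i+1}(w)e^{\pm}_{i,i+1}(z),
\]
where the coefficients are read off directly from the explicit form of $\widehat{R}_{r,s}(z/w)$ in Theorem \ref{thm spectral Rz1}. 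The same procedure applied to $\widehat{R}L_1^{+}L_2^{-}=L_2^{-}L_1^{+}\widehat{R}$ yields the mixed-sign analogue with central shifts $z_{\pm},w_{\mp}$, and the dual computation on the lower-triangular block produces the $f$-version with $r\leftrightarrow s$ swapped (a reflection of the $\zeta$-automorphism of Definition \ref{def auto}). Substituting all four identities into $X_i^{\pm}(z)X_i^{\pm}(w)$ through the defining expressions, and using that $\gamma=\gamma'=1$ on the vector module so that the central shifts remain compatible, then yields the two stated relations.

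The principal technical difficulty will lie in the cancellation step that isolates the common $k^{\pm}_i(z)k^{\pm}_i(w)$ factor: the raw component identities produce many cross-terms $\ell^{\pm}_{ab}(z)\ell^{\pm}_{cd}(w)$ with $\{a,b,c,d\}\not\subset\{i,i+1\}$, and for $i=2$ there will additionally be contributions from the antidiagonal $c$-type coefficients $E_{i'j'}\otimes E_{ij}$ of $\widehat{R}_{r,s}$ peculiar to type $B$. Eliminating these requires simultaneously combining several of the $M_{p,q}=M'_{p,q}$ identities (as in the multi-line calculation \eqref{M23 34}--\eqref{X3 X2 4}) and then left-multiplying by $f^{\pm}_{i+1,i}(w)$ and right-multiplying by $e^{\pm}_{i,i+1}(z)$; the cancellations are forced precisely by Lemmas \ref{lemm B3 k1X3}--\ref{lemm B3 ki+1xi} together with the crossing symmetry of $\widehat{R}_{r,s}$. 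No genuinely new algebraic input is needed, but the $i=2$ bookkeeping will be longer than in Lemma \ref{lemma e23e34} owing to the extra $c$-coefficient terms; once the $B_3^{(1)}$ case is established, the Homomorphism Theorem (valid in the two-parameter setting as noted in subsection 1.4) will propagate the identity to the general-rank statement of Theorem \ref{thm relations}.
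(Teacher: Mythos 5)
Your overall route — extract the relevant $M_{pq}=M'_{pq}$ components, peel off the $k$-factors, obtain quadratic relations for the Gauss coordinates, and recombine into $X_i^{\pm}$ — is the one the paper intends (its own ``proof'' is only the remark that one establishes identities similar to those of Lemma \ref{lemma e23e34}). However, the pivotal intermediate identity you assert,
\begin{equation*}
(s^2z-r^2w)\,e^{\pm}_{i,i+1}(z)e^{\pm}_{i,i+1}(w)=(r^2z-s^2w)\,e^{\pm}_{i,i+1}(w)e^{\pm}_{i,i+1}(z),
\end{equation*}
is false. Unlike $X_i^{+}(z)$, the series $e^{+}_{i,i+1}(z)$ is a genuine power series in $z$ (and $e^{-}_{i,i+1}(z)$ in $z^{-1}$), so the substitution $z=w$ is legitimate on each signed piece; your identity would then force $(s^2-r^2)w\,e^{\pm}_{i,i+1}(w)^2=(r^2-s^2)w\,e^{\pm}_{i,i+1}(w)^2$, i.e.\ $e^{\pm}_{i,i+1}(w)^2=0$, which already fails at the constant term $(\ell^{+}_{ii}[0])^{-1}\ell^{+}_{i,i+1}[0]$ (essentially $e_i^2\neq 0$). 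The source of the missing terms is the correction term in the $k$--$e$ exchange relation \eqref{k2 X2 3}: $k_i^{\pm}(z)e^{\pm}_{i,i+1}(w)=a(\tfrac{z}{w})e^{\pm}_{i,i+1}(w)k_i^{\pm}(z)+b(\tfrac{z}{w})k_i^{\pm}(z)e^{\pm}_{i,i+1}(z)$. When you strip $k^{\pm}_i(z)k^{\pm}_i(w)$ from the $\ell_{i,i+1}\ell_{i,i+1}$ component identity, this extra summand produces terms proportional to $e^{\pm}_{i,i+1}(z)^2$ and $e^{\pm}_{i,i+1}(w)^2$ at coincident arguments — the exact analogue of the $e^{\pm}_{i,i+1}(w)e^{\pm}_{i+1,i+2}(w)$ and $e^{\pm}_{i,i+2}(w)$ terms in Lemma \ref{lemma e23e34} (and of the $e_{35}^{\pm}$ terms in \eqref{X3+X3+ 6}).

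Consequently the substantive content of the proof is precisely the step you have suppressed: one must write the four-term identity for each of $e^{+}e^{+}$, $e^{-}e^{-}$ and the two mixed products (with the appropriate $z_{\pm},w_{\mp}$ shifts from \eqref{equ 5.3}), expand $X_i^{+}(z)X_i^{+}(w)=\bigl(e^{+}_{i,i+1}(z_+)-e^{-}_{i,i+1}(z_-)\bigr)\bigl(e^{+}_{i,i+1}(w_+)-e^{-}_{i,i+1}(w_-)\bigr)$, and check that the coincident-argument square terms cancel in the alternating sum. Until that cancellation is exhibited, the argument does not go through. (Two smaller points: the appeal to $\gamma=\gamma'=1$ on the vector module is out of place — the lemma is an identity in $\mathcal{U}(\widehat{R}_{r,s})$, not in a representation, and the shifts are governed by \eqref{equ 5.3} alone; and the antidiagonal $c$-type coefficients of $\widehat{R}_{r,s}$ do not enter for $i=1,2$ in the blocks you need, so that worry is spurious, whereas the square terms you omitted are the real difficulty.)
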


\begin{lemm} \label{lemm b3 x2+x3-}
One has
  \begin{gather*}\label{}
  X_i^\pm(z)X_{i+1}^\mp(w)=X_{i+1}^\mp(w)X_i^\pm(z), \quad i=1,2.
\end{gather*}
\end{lemm}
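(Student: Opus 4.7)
The plan is to parallel the strategy of Lemma \ref{lemma e23e34}, working directly with the Gaussian generators and the relation \eqref{equ 5.3} in the $L^+L^-$ form. Since
\begin{gather*}
X_i^{+}(z)=e_{i,i+1}^{+}(z_+)-e_{i,i+1}^{-}(z_-),\qquad
X_{i+1}^{-}(w)=f_{i+2,i+1}^{+}(w_-)-f_{i+2,i+1}^{-}(w_+),
\end{gather*}
it suffices to extract four commutation relations from \eqref{equ 5.2}--\eqref{equ 5.4}, namely between $e_{i,i+1}^{\varepsilon}(z)$ and $f_{i+2,i+1}^{\varepsilon'}(w)$ for all four sign choices $(\varepsilon,\varepsilon')\in\{+,-\}^2$, and then verify that the four contributions cancel with the appropriate spectral shifts $z_\pm,w_\mp$.

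First, I would isolate the $\ell$-level identities. Specifically, in the matrix equation $\widehat{R}L_1^{\varepsilon}L_2^{\varepsilon'}=L_2^{\varepsilon'}L_1^{\varepsilon}\widehat{R}$ for $n=3$ I would pick the blocks $M_{i,\,i+2}=M'_{i,\,i+2}$ and $M_{i+2,\,i}=M'_{i+2,\,i}$, read off the three scalar components that involve $\ell^{\varepsilon}_{i,i+1}(z)\ell^{\varepsilon'}_{i+2,i+1}(w)$ versus $\ell^{\varepsilon'}_{i+2,i+1}(w)\ell^{\varepsilon}_{i,i+1}(z)$, together with the $\ell^{\varepsilon}_{i,i+2},\ell^{\varepsilon'}_{i+2,i},\ell^{\varepsilon}_{ii},\ell^{\varepsilon'}_{i+1,i+1},\ell^{\varepsilon'}_{i+2,i+2}$ \emph{off-target} terms produced by the non-diagonal entries of $\widehat R$. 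Because $i\ne (i+1)'$ and $i+2\ne(i+1)'$, none of the crossing terms $E_{j'k'}\otimes E_{jk}$ of $\widehat R$ contribute, so the calculation is of the same type as the derivation of \eqref{e24+} and \eqref{f42+}.

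Second, I would substitute the Gauss decomposition $\ell^{\pm}_{ij}=\sum f k\,e$ and perform the same Gaussian elimination used in the proofs of Lemmas \ref{lemm B3 kiXi}, \ref{lemm B3 ki+1xi}, and \ref{lemma e23e34}: multiply by suitable $f^{\pm}(w)$ (or $e^{\pm}(z)$) to the left/right, then use the already established $k$--$X$ quasi-commutation and the $k$--$k$ commutation (Lemma \ref{lemm kikj}) to cancel the $k$-factors. After cancellation, the remaining identities take the form
\begin{equation*}
e^{\varepsilon}_{i,i+1}(z)\,f^{\varepsilon'}_{i+2,i+1}(w)=f^{\varepsilon'}_{i+2,i+1}(w)\,e^{\varepsilon}_{i,i+1}(z)
\end{equation*}
up to terms that are polynomial in the off-diagonal $e^{\varepsilon'}$'s and $f^{\varepsilon}$'s sharing the index $i+1$. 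The crucial observation is that these residual terms are of the wrong ``shape'': they decompose as $e^{\varepsilon}_{i,i+1}\cdot k^{\pm}\cdot f^{\varepsilon'}_{i+2,i+1}$-style pieces with coefficients that depend only symmetrically on $(z,w)$.

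Third, and this is where I expect the real work, I would form the combination defining $X_i^{+}(z)X_{i+1}^{-}(w)-X_{i+1}^{-}(w)X_i^{+}(z)$ and show that the four $\pm\pm$ pieces combine to zero. Here the spectral shifts $z_\pm$ and $w_\mp$ become decisive: the $\varepsilon=\varepsilon'=+$ relation lives in the $L_1^{+}L_2^{+}$ equation (no shift), while the $\varepsilon\ne\varepsilon'$ relations carry shifts by $\gamma^{\pm 1/2}$ as in \eqref{equ 5.4}. The bookkeeping mirrors the cancellation already carried out in the proofs of Lemmas \ref{lemm b3 x1x3} and \ref{lemm B3 X2X3}, but it is more intricate because $e$ and $f$ generators interact. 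The main obstacle will therefore be not the derivation of the individual $e$--$f$ identities, but organising the four spectrally shifted relations so that their non-trivial residual terms telescope upon summation, yielding the stated commutativity $X_i^{\pm}(z)X_{i+1}^{\mp}(w)=X_{i+1}^{\mp}(w)X_i^{\pm}(z)$ for $i=1,2$. The opposite-sign case $X_i^{-}(z)X_{i+1}^{+}(w)$ is handled by the same argument after interchanging the roles of $L^+$ and $L^-$ and applying the involution $(r,s,z)\leftrightarrow(s,r,z^{-1})$ from the symmetry of $\widehat R_{r,s}(z)$ recorded after Theorem \ref{thm spectral Rz1}.
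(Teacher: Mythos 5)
Your overall route---extracting from the blocks of the $RLL$ relations the commutation identities between the Gaussian generators $e_{i,i+1}^{\varepsilon}(z)$ and $f_{i+2,i+1}^{\varepsilon'}(w)$ for all four sign choices, then assembling $X_i^{\pm}$ and $X_{i+1}^{\mp}$---is the same as the paper's. The substantive divergence is in where you locate the difficulty. The paper (for $i=2$, working from the components of $M_{23}=M'_{23}$ and $M_{13}=M'_{13}$ and eliminating the lower-triangular contamination by subtracting $f_{21}^{\pm}(w)$ times the second identity from the first) arrives at the \emph{exact} commutativity $f_{43}^{\varepsilon'}(w)\,e_{23}^{\varepsilon}(z)=e_{23}^{\varepsilon}(z)\,f_{43}^{\varepsilon'}(w)$ for every sign pair, with no residual terms whatsoever; the lemma then follows termwise, and no interaction among the four spectrally shifted pieces is needed. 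Your step three therefore addresses a problem that does not arise, and---more seriously---the mechanism you propose there would not be available if it did: the four pieces carry different arguments $z_{\pm}$, $w_{\mp}$, so a genuine residual in the $(+,+)$ relation has nothing to telescope against in the $(+,-)$ relation. If your Gaussian elimination in step two really left nonzero ``off-target'' terms of the shape you describe, that would indicate an error in the elimination rather than material for a later cancellation, so you should expect (and verify) that they vanish outright. Two smaller points: the paper reads the relevant scalar components off the blocks $M_{i,i+1}$ and $M_{i-1,i+1}$ rather than $M_{i,i+2}$ and $M_{i+2,i}$ (the latter drag in the composite entries $\ell_{i,i+2}^{\pm}$, $\ell_{i+2,i}^{\pm}$ and make the elimination messier); and for $i=2$ the index $i+2=n+1=4$ is self-dual in the $B_3$ case, so your blanket claim that the crossing terms $E_{j'k'}\otimes E_{jk}$ of $\widehat R$ never contribute must be checked for the particular components you use rather than asserted from $i\neq(i+1)'$ alone.
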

\begin{proof}
Here we only prove $i=2$ as an example. $M_{23}=M_{23}'$ leads to
  \begin{equation}\label{X2+ X3- 1}
    a_{23}\Bigl(\frac{z}{w}\Bigr)k_4^\pm(w)^{-1}f_{43}^\pm(w)\ell_{23}^\pm(z)=a_{34}\Bigl(\frac{z}{w}\Bigr)\ell_{23}^\pm(z)k_4^\pm(w)^{-1}f_{43}^\pm(w).
  \end{equation}
 By means of $M_{13}=M_{13}'$, we have
  \begin{equation}\label{X2+ X3- 2}
    a_{13}\Bigl(\frac{z}{w}\Bigr)k_4^\pm(w)^{-1}f_{43}^\pm(w)\ell_{13}^\pm(z)=a_{34}\Bigl(\frac{z}{w}\Bigr)\ell_{13}^\pm(z)k_4^\pm(w)^{-1}f_{43}^\pm(w).
  \end{equation}
 Then by subtracting $f_{21}^\pm(w)\cdot(\ref{X2+ X3- 2})$ from $(\ref{X2+ X3- 1})$ yields
  $$f_{43}^\pm(w)e_{23}^\pm(z)=e_{23}^\pm(z)f_{43}^\pm(w).$$

   Similarly, we can conclude that
  $$f_{43}^\mp(w)e_{23}^\pm(z)=e_{23}^\pm(z)f_{43}^\mp(w).$$

 We can thus derive the desired equation.
\end{proof}

\begin{lemm}\label{lemma B3 k3X3}
One has
\begin{gather*}\label{}
  k_3^\pm(z) X_3^+(w)=\frac{z-w_\pm}{rs^{-1}z-r^{-1}sw_\pm}X_3^+(w)k_3^\pm(z), \\
  k_3^\pm(z)X_3^-(w)=\frac{rs^{-1}z-r^{-1}sw_\mp}{z-w_\mp}X_3^-(w)k_3^\pm(z).
\end{gather*}
\end{lemm}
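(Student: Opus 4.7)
My approach is to follow the template established in the proofs of Lemmas \ref{lemm B3 kiXi} and \ref{lemm B3 ki+1xi}, suitably adapted to the short-root index $i=n=3$. The essential new feature is that the spectral-parameter $R$-matrix entries linking rows $3$ and $4$ carry the short-root factors $rs$ and $rs^{-1}, r^{-1}s$ rather than $r^{2}, s^{2}, r^{2}s^{-2}$, so the final ratio will come out as $\frac{z-w_\pm}{rs^{-1}z-r^{-1}sw_\pm}$ instead of $\frac{z-w_\pm}{s^{-2}z-r^{-2}w_\pm}$.

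First I would isolate from the block identity $M_{33}=M'_{33}$ (in the notation of Remark \ref{rmk matrixexp}) the scalar equations on the $(3,3)$ and $(3,4)$ entries; these contain products of $\ell_{33}^\pm$ and $\ell_{34}^\pm$ together with lower-index $\ell$-entries. Using the block identities $M_{13}=M'_{13}$ and $M_{23}=M'_{23}$ in parallel, I would then eliminate the contributions from $\ell_{13}^\pm$ and $\ell_{23}^\pm$ by forming the appropriate linear combinations weighted by $f_{31}^\pm(w)$ and $f_{32}^\pm(w)$, exactly as in the derivation of Lemma \ref{lemm B3 kiXi}. Substituting the Gauss decompositions $\ell_{3j}^\pm = f_{32}^\pm k_2^\pm e_{2j}^\pm + k_3^\pm e_{3j}^\pm$, together with the commutativity of $k_3^\pm$ with $k_1^\pm, k_2^\pm$ from Lemma \ref{lemm kikj} and the invertibility of $k_3^\pm$, will yield a clean identity of the form
$$b_{43}\Bigl(\tfrac{z}{w_\pm}\Bigr)\, k_3^\pm(z)\, e_{34}^\pm(z) + a_{34}\Bigl(\tfrac{z}{w_\pm}\Bigr)\, e_{34}^\pm(w_\pm)\, k_3^\pm(z) = k_3^\pm(z)\, e_{34}^\pm(w_\pm),$$
together with its analogue obtained by replacing $e_{34}^\pm(w_\pm)$ by $e_{34}^\mp(w_\mp)$, which comes from running the same argument starting from $\widehat{R}_{r,s}(z_\pm/w_\mp)\,L_1^\pm(z)L_2^\mp(w)$ as in equation (\ref{equ 5.4}).

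Subtracting the $\mp$ identity from the $\pm$ one eliminates the common $k_3^\pm(z)e_{34}^\pm(z)$ term and produces the desired commutation between $k_3^\pm(z)$ and $X_3^+(w)=e_{34}^+(w_+)-e_{34}^-(w_-)$. The statement for $X_3^-(w)$ follows by the completely parallel manipulation on the lower-triangular Gauss factors $f_{43}^\pm$, using the block identities $M_{ij}$ with $i\geq 4, j\leq 3$, or alternatively by invoking the involution $\zeta$ of Definition \ref{def auto} together with the symmetry $R(z)\leftrightarrow R(z)^{-1}$.

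The main obstacle will be bookkeeping the short-root coefficients $a_{3,4}(z/w)$, $b_{4,3}(z/w)$, and $a_{4,4}(z/w)$ as read off from Theorem \ref{thm spectral Rz1}: these all involve a factor of $rs$ that is absent in the long-root cases, and a careful simplification of the rational functions is required to collapse the final ratio to the asserted $\frac{z-w_\pm}{rs^{-1}z-r^{-1}sw_\pm}$. A secondary subtlety is that the index $n+1=4$ is the fixed point of the bar-involution $i\mapsto i'$, so the cross terms involving $\ell_{4,4}^\pm$ and the diagonal coefficients $a_{4,i}, a_{i,4}$ do not symmetrize in the same way as the $i<n$ cases; verifying that these terms nevertheless cancel after the $f_{31}^\pm, f_{32}^\pm$ elimination is the delicate combinatorial step, and is where I expect the computation to be noticeably heavier than for Lemma \ref{lemm B3 kiXi}.
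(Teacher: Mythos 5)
Your proposal follows essentially the same route as the paper's proof: it extracts the $(j,3)$-entry equations from $M_{33}=M'_{33}$, $M_{23}=M'_{23}$, $M_{13}=M'_{13}$, eliminates the lower rows via the Gauss factors to isolate $k_3^\pm(z)$, arrives at exactly the intermediate identity $k_3^\pm(z)e_{34}^\pm(w_\pm)=a_{34}(z/w_\pm)e_{34}^\pm(w_\pm)k_3^\pm(z)+b_{43}(z/w_\pm)k_3^\pm(z)e_{34}^\pm(z)$ together with its $\mp$-analogue, and subtracts to get the relation with $X_3^+(w)$; the $X_3^-$ case is handled by the parallel computation on the $f_{43}^\pm$ side, as in the paper. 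The only slip is that the elimination weights must be $f_{32}^\pm(z)$ and $f_{31}^\pm(z)-f_{32}^\pm(z)f_{21}^\pm(z)$ (functions of $z$, matching the Gauss decomposition of $\ell_{j3}^\pm(z)$), not functions of $w$ as written.
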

\begin{proof}
Here we only prove the first equation since the other one can be proved similarly.

 From $M_{33}=M'_{33}$, we have
 \begin{equation}\label{B3 k3X3 3}
   \begin{aligned}
   &a_{34}\Big(\frac{z}{w}\Big)e_{34}^\pm(w)k_4^\pm(w)^{-1}\ell_{33}^\pm(z)\\
   &=b_{13}\Big(\frac{z}{w}\Big)\ell_{31}^\pm(z)\Bigl\{e_{12}^\pm(w)e_{23}^\pm(w)e_{34}^\pm(w)-e_{12}^\pm(w)e_{24}^\pm(w)-e_{13}^\pm(w)e_{34}^\pm(w)+e_{14}^\pm(w)\Bigr\}k_4^\pm(w)^{-1}\\
   &-b_{23}\Big(\frac{z}{w}\Big)\ell_{32}^\pm(z)\Bigl\{e_{23}^\pm(w)e_{34}^\pm(w)
   -e_{24}^\pm(w)\Bigr\}k_4^\pm(w)^{-1}+\ell_{33}^\pm(z)e_{34}^\pm(w)k_4^\pm(w)^{-1}-b_{43}\Big(\frac{z}{w}\Big)\ell_{34}^\pm(z)k_4^\pm(w)^{-1}.
   \end{aligned}
 \end{equation}

 By $M_{23}=M'_{23}$,
 \begin{equation}\label{B3 k3X3 2}
   \begin{aligned}
   &a_{34}\Big(\frac{z}{w}\Big)e_{34}^\pm(w)k_4^\pm(w)^{-1}\ell_{23}^\pm(z)\\
   &=b_{13}\Big(\frac{z}{w}\Big)\ell_{21}^\pm(z)\Bigl\{e_{12}^\pm(w)e_{23}^\pm(w)e_{34}^\pm(w)-e_{12}^\pm(w)e_{24}^\pm(w)
   -e_{13}^\pm(w)e_{34}^\pm(w)+e_{14}^\pm(w)\Bigr\}k_4^\pm(w)^{-1}\\
   &-b_{23}\Big(\frac{z}{w}\Big)\ell_{22}^\pm(z)\Bigl\{e_{23}^\pm(w)e_{34}^\pm(w)-e_{24}^\pm(w)\Bigr\}k_4^\pm(w)^{-1}
   +\ell_{23}^\pm(z)e_{34}^\pm(w)k_4^\pm(w)^{-1}-b_{43}\Big(\frac{z}{w}\Big)\ell_{24}^\pm(z)k_4^\pm(w)^{-1}.
   \end{aligned}
 \end{equation}

 And $M_{13}=M'_{13}$ leads to
 \begin{equation}\label{B3 k3X3 1}
   \begin{aligned}
   &a_{34}\Big(\frac{z}{w}\Big)e_{34}^\pm(w)k_4^\pm(w)^{-1}\ell_{13}^\pm(z)\\
   &=b_{13}\Big(\frac{z}{w}\Big)\ell_{11}^\pm(z)\Bigl\{e_{12}^\pm(w)e_{23}^\pm(w)e_{34}^\pm(w)-e_{12}^\pm(w)e_{24}^\pm(w)
   -e_{13}^\pm(w)e_{34}^\pm(w)+e_{14}^\pm(w)\Bigr\}k_4^\pm(w)^{-1}\\
   &-b_{23}\Big(\frac{z}{w}\Big)\ell_{12}^\pm(z)\Bigl\{e_{23}^\pm(w)e_{34}^\pm(w)
   -e_{24}^\pm(w)\Bigr\}k_4^\pm(w)^{-1}+\ell_{13}^\pm(z)e_{34}^\pm(w)k_4^\pm(w)^{-1}-b_{43}\Big(\frac{z}{w}\Big)\ell_{14}^\pm(z)k_4^\pm(w)^{-1}.
   \end{aligned}
 \end{equation}

 Then, subtracting $f_{32}^\pm(z)\cdot(\ref{B3 k3X3 2})$ and $\Bigl\{f_{31}^\pm(z)-f_{32}^\pm(z)f_{21}^\pm(z)\Bigr\}\cdot(\ref{B3 k3X3 1})$ from $(\ref{B3 k3X3 3})$,
 we can get
 \begin{equation}\label{k3 e34}
   k_3^\pm(z)e_{34}^\pm(w_\pm)=a_{34}\Bigl(\frac{z}{w_\pm}\Bigr)e_{34}^\pm(w_\pm)k_3^\pm(z)+b_{43}\Big(\frac{z}{w_\pm}\Big)k_3^\pm(z)e_{34}^\pm(z).
 \end{equation}

   Similarly, we conclude that
\begin{equation*}
k_3^\pm(z)e_{34}^\mp(w_\mp)=a_{34}\Bigl(\frac{z}{w_\pm}\Bigr)e_{34}^\mp(w_\mp)k_3^\pm(z)+b_{43}\Big(\frac{z}{w_\pm}\Big)k_3^\pm(z)e_{34}^\pm(z).
\end{equation*}

 This completes the proof.
\end{proof}

\begin{lemm}\label{lemma e34f43}
One has
\begin{align*}
 &\Bigl[e_{i, i+1}^\pm(z), f_{i+1, i}^\pm(w)\Bigr]=\frac{\bigl(r^2-s^2\bigr)w}{\bigl(z-w\bigr)}\Bigl(k_i^\pm(w)^{-1}k_{i+1}^\pm(w)-k_i^\pm(z)^{-1}k_{i+1}^\pm(z)\Bigr),\\
   &\Bigl[e_{i, i+1}^\pm(z), f_{i+1, i}^\mp(w)\Bigr]=\frac{\bigl(r^2-s^2\bigr)w_\pm}{\bigl(z_\mp-w_\pm\bigr)}k_i^\mp(w)^{-1}k_{i+1}^\mp(w)-\frac{\bigl(r^2-s^2\bigr)w_\mp}{\bigl(z_\pm-w_\mp\bigr)}k_i^\pm(z)^{-1}k_{i+1}^\pm(z),\\
   &\Bigl[e_{34}^\pm(z), f_{43}^\pm(w)\Bigr]=\frac{\bigl(rs^{-1}-r^{-1}s\bigr)w}{\bigl(z-w\bigr)}\Bigl(k_3^\pm(w)^{-1}k_4^\pm(w)-k_3^\pm(z)^{-1}k_4^\pm(z)\Bigr),\\
   &\Bigl[e_{34}^\pm(z), f_{43}^\mp(w)\Bigr]=\frac{\bigl(rs^{-1}-r^{-1}s\bigr)w_\pm}{\bigl(z_\mp-w_\pm\bigr)}k_3^\mp(w)^{-1}k_4^\mp(w)-\frac{\bigl(rs^{-1}-r^{-1}s\bigr)w_\mp}{\bigl(z_\pm-w_\mp\bigr)}k_3^\pm(z)^{-1}k_4^\pm(z),
\end{align*}
where $1\leq i< 3$.
\end{lemm}
\begin{proof}
   We only prove the last equation since the other one can be proved similarly.
    By $M_{34}=M'_{34}$, we  get
   \begin{equation}\label{M34 43}
a_{43}\Bigl(\frac{z_\mp}{w_\pm}\Bigr)\ell_{43}^\mp(w)\ell_{34}^\pm(z)+b_{34}\Bigl(\frac{z_\mp}{w_\pm}\Bigr)\ell_{44}^\mp(w)\ell_{33}^{\pm}(z)
=b_{34}\Bigl(\frac{z_\pm}{w_\mp}\Bigr)\ell_{44}^\pm(z)\ell_{33}^{\mp}(w)+a_{34}\Bigl(\frac{z_\pm}{w_\mp}\Bigr)\ell_{34}^\pm(z)\ell_{43}^\mp(w),\\
\end{equation}
\begin{equation}\label{M34 33}
a_{43}\Bigl(\frac{z_\mp}{w_\pm}\Bigr)\ell_{33}^\mp(w)\ell_{34}^\pm(z)+b_{34}\Bigl(\frac{z_\mp}{w_\pm}\Bigr)\ell_{34}^\mp(w)\ell_{33}^{\pm}(z)
=\ell_{34}^\pm(z)\ell_{33}^\mp(w),\\
\end{equation}

\begin{equation}\label{M34 23}
a_{43}\Bigl(\frac{z_\mp}{w_\pm}\Bigr)\ell_{23}^\mp(w)\ell_{34}^\pm(z)+b_{34}\Bigl(\frac{z_\mp}{w_\pm}\Bigr)\ell_{24}^\mp(w)\ell_{33}^{\pm}(z)
=b_{32}\Bigl(\frac{z_\pm}{w_\mp}\Bigr)\ell_{24}^\pm(z)\ell_{33}^{\mp}(w)+a_{32}\Bigl(\frac{z_\pm}{w_\mp}\Bigr)\ell_{34}^\pm(z)\ell_{23}^\mp(w),\\
\end{equation}

\begin{equation}\label{M34 13}
a_{43}\Bigl(\frac{z_\mp}{w_\pm}\Bigr)\ell_{13}^\mp(w)\ell_{34}^\pm(z)+b_{34}\Bigl(\frac{z_\mp}{w_\pm}\Bigr)\ell_{14}^\mp(w)\ell_{33}^\pm(z)
=a_{31}\Bigl(\frac{z_\pm}{w_\mp}\Bigr)\ell_{34}^\pm(z)\ell_{13}^\mp(w)+b_{31}\Bigl(\frac{z_\pm}{w_\mp}\Bigr)\ell_{14}^\pm(z)\ell_{33}^\mp(w).
\end{equation}

By subtracting $ f_{43}^\mp(w)\cdot (\ref{M34 33})$, $\Bigl(f_{42}^\mp(w)-f_{43}^\mp(w)f_{32}^\mp(w)\Bigr)\cdot (\ref{M34 23})$,
and $\Bigl\{f_{41}^\mp(w)-f_{42}^\mp(w)f_{21}^\mp(w)-f_{43}^\mp(w)f_{31}^\mp(w)+f_{43}^\mp(w)f_{32}^\mp(w)f_{21}^\mp(w)\Bigr\}\cdot (\ref{M34 13})$ from (\ref{M34 43}), we derive that

\begin{align}\label{X3+ X3- 3}
&b_{34}\Bigl(\frac{z_\mp}{w_\pm}\Bigr)k_{4}^\mp(w)\ell_{33}^\pm(z)-a_{34}\Bigl(\frac{z_\pm}{w_\mp}\Bigr)\ell_{34}^\pm(z)f_{43}^\mp(w)k_3^\mp(w)-b_{24}\Bigl(\frac{z_\mp}{w_\pm}\Bigr)k_{4}^\mp(w)\ell_{32}^\pm(z)e_{23}^\mp(w)\\
&\hspace{14.5em}-b_{14}\Bigl(\frac{z_\mp}{w_\pm}\Bigr)k_{4}^\mp(w)\ell_{31}^\pm(z)\Bigl\{e_{13}^\mp(w)-e_{12}^\mp(w)e_{23}^\mp(w)\Bigr\}\nonumber\\
&=b_{34}\Bigl(\frac{z_\pm}{w_\mp}\Bigr)\ell_{44}^\pm(z)k_{3}^\mp(w)\nonumber\\
&\quad-b_{31}\Bigl(\frac{z_\pm}{w_\mp}\Bigr)\Bigl\{f_{41}^\mp(w)-f_{42}^\mp(w)f_{21}^\mp(w)-f_{43}^\mp(w)f_{31}^\mp(w)+f_{43}^\mp(w)f_{32}^\mp(w)f_{21}^\mp(w)\Bigr\}\ell_{14}^\pm(z)k_{3}^\mp(w)\nonumber\\
&\quad\hspace{7.5em}-b_{32}\Bigl(\frac{z_\pm}{w_\mp}\Bigr)\Bigl\{f_{42}^\mp(w)-f_{43}^\mp(w)f_{32}^\mp(w)\Bigr\}\ell_{24}^\pm(z)k_3^\mp(w)-f_{43}^\mp(w)\ell_{34}^\pm(z)k_3^\mp(w).\nonumber
\end{align}

 Similarly, using $M_{24}=M'_{24}$ and $ M_{14}=M'_{14}$,  we obtain
\begin{equation}
\begin{aligned}\label{X3+ X3- 2}
b_{34}\Bigl(\frac{z_\mp}{w_\pm}\Bigr)k_4^\mp(w)\ell_{23}^\pm(z)=&a_{24}\Bigl(\frac{z_\pm}{w_\mp}\Bigr)\Bigl[\ell_{24}^\pm(z),f_{43}^\mp(w)\Bigr]k_3^\mp(w)+b_{14}\Bigl(\frac{z_\mp}{w_\pm}\Bigr)k_{4}^\mp(w)\\
&\cdot \Bigl\{\ell_{21}^\pm(z)\Bigl(e_{13}^\mp(w)-e_{12}^\mp(w)e_{23}^\mp(w)\Bigr)+\ell_{22}^\pm(z)e_{23}^\mp(w)\Bigr\},
\end{aligned}
\end{equation}
 and
\begin{equation}
\begin{aligned}\label{X3+ X3- 1}
b_{34}\Bigl(\frac{z_\mp}{w_\pm}\Bigr)k_4^\mp(w)\ell_{13}^\pm(z)=&a_{14}\Bigl(\frac{z_\pm}{w_\mp}\Bigr)\Bigl[\ell_{14}^\pm(z),f_{43}^\mp(w)\Bigr]k_3^\mp(w)+b_{14}\Bigl(\frac{z_\mp}{w_\pm}\Bigr)k_4^\mp(w)\\
&\cdot \Bigl\{k_1^\pm(z)\Bigl(e_{13}^\mp(w)-e_{12}^\mp(w)e_{23}^\mp(w)\Bigr)+\ell_{12}^\pm(z)e_{23}^\mp(w)\Bigr\}.
\end{aligned}
\end{equation}

 Combining the result of $(\ref{X3+ X3- 3})-f_{32}^\pm(z)\cdot(\ref{X3+ X3- 2})-\Bigl(f_{31}^\pm(z)-f_{32}^\pm(z)f_{21}^\pm(z)\Bigr)\cdot(\ref{X3+ X3- 1})$ with Lemma \ref{lemma B3 k3X3}, we find that
\begin{align*}\label{X3+ X3- 4}
&b_{34}\Bigl(\frac{z_\mp}{w_\pm}\Bigr)k_{4}^\mp(w)k_{3}^\pm(z)
=b_{34}\Bigl(\frac{z_\pm}{w_\mp}\Bigr)k_4^\pm(z)k_3^\mp(w)+a_{34}\Bigl(\frac{z_\pm}{w_\mp}\Bigr)k_{3}^\pm(z)\Bigl[e_{34}^\pm(z),f_{43}^\mp(w)\Bigr]k_3^\mp(w).
\end{align*}
This is exactly the desired equation.
\end{proof}

Similarly one can prove

\begin{lemm}\label{lemm xi+xi+}
 One has
  \begin{gather*}
  \Bigl[X_i^+(z), X_i^-(w)\Bigr] =\bigl(r^2-s^2\bigr)\Bigl\{\delta\Bigl(\frac{z_-}{w_+}\Bigr)k_i^-(w_+)^{-1}k_{i+1}^-(w_+)-\delta\Bigl(\frac{z_+}{w_-}\Bigr)k_i^+(z_+)^{-1}k_{i+1}^+(z_+)\Bigr\}, \\
  \Bigl[X_n^+(z), X_n^-(w)\Bigr]  =\bigl(rs^{-1}-r^{-1}s\bigr)\Bigl\{\delta\Bigl(\frac{z_-}{w_+}\Bigr)k_n^-(w_+)^{-1}k_{n+1}^-(w_+)-\delta\Bigl(\frac{z_+}{w_-}\Bigr)k_n^+(z_+)^{-1}k_{n+1}^+(z_+)\Bigr\},
\end{gather*}
  where $1\leq i< 3$.
\end{lemm}

\begin{proof}
  By Lemma \ref{lemma e34f43}, we check it similarly to that of Proposition 4.10 in \cite{JingLJA 2017}.
\end{proof}

\begin{lemm}\label{lemma B3 X3+X3+} One has
\begin{gather*}\label{}
  X_3^+(z)X_3^+(w)=\frac{rz-sw}{sz-rw}X_3^+(w)X_3^+(z), \\
  X_3^-(z)X_3^-(w)=\frac{sz-rw}{rz-sw}X_3^-(w)X_3^-(z).
\end{gather*}
\end{lemm}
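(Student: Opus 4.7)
The plan is to mimic the strategy used for Lemmas \ref{lemm B3 X2X3}, \ref{lemm b3 x1x2} and \ref{lemma B3 k3X3}: start from the matrix identity $\widehat{R}_{r,s}(z/w)\,L_1^\pm(z)\,L_2^\pm(w)=L_2^\pm(w)\,L_1^\pm(z)\,\widehat{R}_{r,s}(z/w)$, read off the scalar equations obtained by extracting a carefully chosen block $M_{ij}=M'_{ij}$ of the product, and convert the result into a relation purely on $e_{34}^\pm$ (resp.\ $f_{43}^\pm$) via the Gauss decomposition. Since $X_3^+(z)=e_{34}^+(z_+)-e_{34}^-(z_-)$ and $X_3^-(z)=f_{43}^+(z_-)-f_{43}^-(z_+)$, it suffices to prove the analogous quasi-commutation on the $e_{34}^\pm$'s and the $f_{43}^\pm$'s separately, and then combine with the mixed $(+,-)$ case that follows from \eqref{equ 5.3}; the Kronecker delta terms produced by mixed brackets vanish in the combination $X_3^\pm(z)X_3^\pm(w)$, exactly as in Lemma \ref{lemm B3 X2X3}.

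The key step is to extract the entry $M_{34}=M'_{34}$ from $\widehat{R}_{r,s}(z/w)L_1^\pm(z)L_2^\pm(w)=L_2^\pm(w)L_1^\pm(z)\widehat{R}_{r,s}(z/w)$ and read off the scalar equations for $\ell_{34}^\pm(z)\ell_{34}^\pm(w)$, $\ell_{33}^\pm(z)\ell_{44}^\pm(w)$, $\ell_{44}^\pm(z)\ell_{33}^\pm(w)$, $\ell_{43}^\pm(z)\ell_{34}^\pm(w)$ and so on. The coefficient $\frac{rz-sw}{sz-rw}$ is precisely the ratio of the two entries appearing in the $E_{34}\otimes E_{43}$ and $E_{43}\otimes E_{34}$ positions of $\widehat{R}_{r,s}(z/w)$, which from Theorem \ref{thm spectral Rz1} carry the factor $rs$ typical of the short root (this is the same factor that distinguishes the $X_n^\pm$ relation from the $X_i^\pm$ relation with $i<n$ in Lemma \ref{lemm b3 x1x2}). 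After substituting the Gauss decomposition $\ell_{34}^\pm=k_3^\pm e_{34}^\pm$, $\ell_{43}^\pm=f_{43}^\pm k_3^\pm$, $\ell_{44}^\pm=f_{43}^\pm k_3^\pm e_{34}^\pm+k_4^\pm$, one eliminates all $k_i^\pm$ and $k_i^\pm{}^{-1}$ factors by using Lemma \ref{lemm kikj} together with the quasi-commutation relations Lemma \ref{lemma B3 k3X3} (and its analogue between $k_4^\pm$ and $X_3^\pm$, which follows by the same RLL procedure); the higher off-diagonal contributions $\ell_{i,j}^\pm$ with $i<3$ or $j<3$ get eliminated by subtracting suitable $f_{3i}^\pm(z)\cdot (M_{i4}=M'_{i4})$ identities, exactly as in the proof of Lemma \ref{lemma e23e34}.

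For the mixed $L^+L^-$ case one works with \eqref{equ 5.4} in place of \eqref{equ 5.2}; the arguments $z_\pm, w_\mp$ replace $z,w$ in the spectral coefficients. The crossing symmetry (Proposition \ref{prop crossing symmetry finite}) guarantees that the $e_{34}^+$–$e_{34}^-$ and $f_{43}^+$–$f_{43}^-$ cross-relations produce the same scalar ratio once the delta-type Kronecker terms cancel against one another when forming $X_3^\pm(z)X_3^\pm(w)$. The negative-mode identity then follows from the positive-mode one by applying the Chevalley-type anti-involution (or equivalently by running the same calculation with the transposed quasi-determinants from Proposition \ref{Prop Gauss decomp}).

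The main obstacle is the bookkeeping: rows and columns $3$ and $4$ straddle the middle of the $\widehat R_{r,s}(z/w)$ matrix (since $n=3$ puts the short root at position $3$ and $n+1=4$ is the fixed point of the involution $i\mapsto i'$), so both the ``diagonal'' coefficient $a_{34}(z/w)$ and the ``crossing'' coefficient $c_{i'j'}(z/w)$ of Theorem \ref{thm spectral Rz1} contribute; one must be careful that contributions from the anti-diagonal block $E_{i'j'}\otimes E_{ij}$ with $(i,j)=(3,4)$ or $(4,3)$ are cancelled by the corresponding $f_{4' j}^\pm\cdot(M_{ij}=M'_{ij})$ correction terms. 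Once this is handled, the algebra collapses to the desired scalar identity, and no new phenomenon beyond those already encountered in Lemmas \ref{lemma e23e34}–\ref{lemma e34f43} appears.
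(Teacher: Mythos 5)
There is a genuine gap. Your sketch treats this lemma as if it were just another instance of the elimination scheme of Lemma \ref{lemma e23e34}, with the coefficient $\frac{rz-sw}{sz-rw}$ read off directly from the $E_{34}\otimes E_{43}$ and $E_{43}\otimes E_{34}$ entries of $\widehat R_{r,s}(z/w)$. But the $(3,4)$ block is exactly where that scheme breaks down: since $n=3$ gives $3'=5$ and $4=n+1$ is the fixed point of $i\mapsto i'$, the anti-diagonal part $\sum d_{ij}E_{i'j'}\otimes E_{ij}$ of $\widehat R_{r,s}(z/w)$ contributes to $M_{34}=M'_{34}$ a full sum $\sum_{i=1}^{7}c_{i4}(z/w)\,\ell_{3i}^\pm(w)\ell_{3i'}^\pm(z)$, so that after the usual subtraction of $f_{32}^\pm\cdot(M_{24}{=}M'_{24})$ and $(f_{31}^\pm-f_{32}^\pm f_{21}^\pm)\cdot(M_{14}{=}M'_{14})$ one is left not with a clean relation in $e_{34}^\pm$ alone but with residual terms in the composite Gaussian generators $e_{35}^\pm$, $e_{36}^\pm$, $e_{37}^\pm$. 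Your proposed fix --- that these anti-diagonal contributions ``are cancelled by the corresponding $f^\pm\cdot(M_{ij}=M'_{ij})$ correction terms'' --- is not a mechanism that works here; those subtractions only clear the rows $i<3$, not the crossing terms in columns $5,6,7$.

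The paper's proof requires two additional ideas that your sketch does not supply. First, it exploits the proportionality $c_{64}(z/w)=(rs^{-1})^{1/2}c_{65}(z/w)$ and $c_{74}(z/w)=(rs^{-1})^{1/2}c_{75}(z/w)$ to combine the $M_{34}$-derived identity \eqref{X3+X3+ 4} with the $M_{33}$-derived identity \eqref{X3+X3+ 5} and kill the $i=6,7$ contributions, reducing the sum to the $i=3,4,5$ terms in \eqref{X3+X3+ 6}, which still contain $e_{35}^\pm(z)$ and $e_{35}^\pm(w)$. Second, it derives a separate auxiliary relation \eqref{X3+X3+ 9} for $k_3^\pm(w)e_{35}^\pm(w)k_3^\pm(z)$ from the entries $M_{31},M_{32},M_{35}$, substitutes it back, and only then applies the $k_3$--$e_{34}$ exchange relation \eqref{k3 e34} to land on the stated quasi-commutation with ratio $\frac{rz-sw}{sz-rw}$ (the intermediate coefficient is $\frac{(s^2z-r^2w)(rz-sw)}{(r^2z-s^2w)(sz-rw)}$, not a single matrix-entry ratio). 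Without these two steps the $e_{35}^\pm$ terms do not disappear, so the proposal as written does not close. A smaller inaccuracy: there are no Kronecker-delta terms to cancel in the $X_3^\pm X_3^\pm$ relations; those arise only in $[X^+,X^-]$.
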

\begin{proof}
  Here we only prove the first equation since another can be proved by the same token.
  From $M_{34}=M_{34}'$, we have
  \begin{equation}\label{M34 34}
\sum_{i=1}^{7}c_{i4}\Bigl(\frac{z}{w}\Bigr)\ell_{3i}^\pm(w)\ell_{3i'}^\pm(z)=\ell_{34}^\pm(z)\ell_{34}^\pm(w),
\end{equation}
\begin{equation}\label{M34 24}
\sum_{i=1}^{7}c_{i4}\Bigl(\frac{z}{w}\Bigr)\ell_{2i}^\pm(w)\ell_{3i'}^\pm(z)
=a_{32}\Bigl(\frac{z}{w}\Bigr)\ell_{34}^\pm(z)\ell_{24}^\pm(w)+b_{32}\Big(\frac{z}{w}\Big)\ell_{24}^\pm(z)\ell_{34}^\pm(w),
\end{equation}
\begin{equation}\label{M34 14}
\sum_{i=1}^{7}c_{i5}\Bigl(\frac{z}{w}\Bigr)\ell_{1i}^\pm(w)\ell_{3i'}^\pm(z)
=a_{31}\Bigl(\frac{z}{w}\Bigr)\ell_{34}^\pm(z)\ell_{14}^\pm(w)+b_{31}\Big(\frac{z}{w}\Big)\ell_{14}^\pm(z)\ell_{34}^\pm(w).
\end{equation}

Subsequently, subtracting $f_{32}^\pm(w)\cdot (\ref{M34 24})$ and $\Bigl(f_{31}^\pm(w)-f_{32}^\pm(w)f_{21}^\pm(w)\Bigr)\cdot(\ref{M34 14})$ from (\ref{M34 34}), \\ we obtain
\begin{equation}
\begin{aligned}\label{X3+X3+ 3}
&\sum_{i=3}^{7}c_{i4}\Bigl(\frac{z}{w}\Bigr)e_{3i}^\pm(w)\ell_{3i'}^\pm(z)
=b_{14}\Bigl(\frac{z}{w}\Bigr)e_{34}^\pm(w)\Bigl\{\ell_{31}^\pm(z)e_{14}^\pm(w)+\Bigl(\ell_{32}^\pm(z)
-\ell_{31}^\pm(z)e_{12}^\pm(w)\Bigr)e_{24}^\pm(w)\Bigr\}\\
&+b_{31}\Bigl(\frac{z}{w}\Bigr)k_3^\pm(w)^{-1}\Bigl\{\Bigl(f_{32}^\pm(w)f_{21}^\pm(w)
-f_{31}^\pm(w)\Bigr)\ell_{14}^\pm(z)+\ell_{34}^\pm(z)-f_{32}^\pm(w)\ell_{24}^\pm(z)\Bigr\}k_3^\pm(w)e_{34}^\pm(w).
\end{aligned}
\end{equation}

Taking $M_{24}=M_{24}'$, we conclude that
\begin{equation}
\begin{aligned}\label{X3+X3+ 2}
\sum_{i=3}^{7}c_{i4}\Bigl(\frac{z}{w}\Bigr)e_{3i}^\pm(w)\ell_{2i'}^\pm(z)&=a_{23}\Bigl(\frac{z}{w}\Bigr)k_3^\pm(w)^{-1}
\ell_{24}^\pm(z)k_3^\pm(w)e_{34}^\pm(w)+b_{14}\Bigl(\frac{z}{w}\Bigr)e_{34}^\pm(w)\\
&\quad\cdot\Bigl\{\ell_{21}^\pm(z)\Bigl(e_{14}^\pm(w)-e_{12}^\pm(w)e_{24}^\pm(w)\Bigr)+\ell_{22}^\pm(z)e_{24}^\pm(w)\Bigr\}.
\end{aligned}
\end{equation}

Similarly, using $M_{14}=M_{14}'$, we derive that
\begin{equation}\label{X3+X3+ 1}
\begin{aligned}
\sum_{i=3}^{7}c_{i4}\Bigl(\frac{z}{w}\Bigr)e_{3i}^\pm(w)\ell_{1i'}^\pm(z)
&=a_{13}\Bigl(\frac{z}{w}\Bigr)k_3^\pm(w)^{-1}\ell_{14}^\pm(z)k_3^\pm(w)e_{34}^\pm(w)+b_{14}\Bigl(\frac{z}{w}\Bigr)e_{34}^\pm(w)\\
&\quad\cdot k_{1}^\pm(z)\Bigl\{\Bigl(e_{14}^\pm(w)-e_{12}^\pm(w)e_{24}^\pm(w)\Bigr)+e_{12}^\pm(z)e_{24}^\pm(w)\Bigr\}.
\end{aligned}
\end{equation}

Subtracting $f_{32}^\pm(w)\cdot(\ref{X3+X3+ 2})$ and $\Bigl(f_{31}^\pm(w)-f_{32}^\pm(w)f_{21}^\pm(w)\Bigr)\cdot(\ref{X3+X3+ 1})$ from $(\ref{X3+X3+ 3})$ leads to
\begin{equation}\label{X3+X3+ 4}
\begin{aligned}
\sum_{i=3}^{7}c_{i4}\Bigl(\frac{z}{w}\Bigr)k_3^\pm(w)\Bigl\{e&_{3i}^\pm(w)\ell_{3i'}^\pm(z)
-\Bigl(f_{31}^\pm(z)-f_{32}^\pm(z)f_{21}^\pm(z)\Bigr)e_{3i}^\pm(w)\ell_{1i'}^\pm(z)\\
&-f_{32}^\pm(z)e_{3i}^\pm(w)\ell_{2i'}^\pm(z)\Bigr\}=k_3^\pm(z)e_{34}^\pm(z)k_3^\pm(w)e_{34}^\pm(w).
\end{aligned}
\end{equation}

Using $M_{33}=M_{33}'$, we have
\begin{equation*}\label{M33 35}
\sum_{i=1}^{7}c_{i5}\Bigl(\frac{z}{w}\Bigr)\ell_{3i}^\pm(w)\ell_{3i'}^\pm(z)=\ell_{33}^\pm(z)\ell_{35}^\pm(w).
\end{equation*}

 It follows that
\begin{equation}\label{X3+X3+ 5}
\begin{aligned}
\sum_{i=3}^{7}c_{i5}\Bigl(\frac{z}{w}\Bigr)k_3^\pm(w)\Bigl\{&e_{3i}^\pm(w)\ell_{3i'}^\pm(z)-\Bigl(f_{31}^\pm(z)-f_{32}^\pm(z)f_{21}^\pm(z)\Bigr)e_{3i}^\pm(w)\ell_{1i'}^\pm(z)\\
&\quad-f_{32}^\pm(z)e_{3i}^\pm(w)\ell_{2i'}^\pm(z)\Bigr\}=k_3^\pm(z)k_3^\pm(w)e_{35}^\pm(w).
\end{aligned}
\end{equation}

Noticing that $c_{64}\bigl(\frac{z}{w}\bigr)=\bigl(rs^{-1}\bigr)^{\frac{1}{2}}c_{65}\bigl(\frac{z}{w}\bigr)$, and $c_{74}\bigl(\frac{z}{w}\bigr)=\bigl(rs^{-1}\bigr)^{\frac{1}{2}}c_{75}\bigl(\frac{z}{w}\bigr)$,
and subtracting $\bigl(rs^{-1}\bigl)^{\frac{1}{2}}\\ \cdot(\ref{X3+X3+ 5})$ from $(\ref{X3+X3+ 4})$, we arrive at
\begin{equation}\label{X3+X3+ 6}
\begin{aligned}
&\Bigl\{c_{44}\Bigl(\frac{z}{w}\Bigr)-\Bigl(rs^{-1}\Bigr)^{\frac{1}{2}}c_{45}\Bigl(\frac{z}{w}\Bigr)\Bigr\}k_3^\pm(w)e_{34}^\pm(w)k_3^\pm(z)e_{34}^\pm(z)\\
&\quad+\Bigl\{c_{54}\Bigl(\frac{z}{w}\Bigr)-\Bigl(rs^{-1}\Bigr)^{\frac{1}{2}}c_{55}\Bigl(\frac{z}{w}\Bigr)\Bigr\}k_3^\pm(w)e_{35}^\pm(w)k_3^\pm(z)\\
&\quad+\Bigl\{c_{34}\Bigl(\frac{z}{w}\Bigr)-\Bigl(rs^{-1}\Bigr)^{\frac{1}{2}}c_{35}\Bigl(\frac{z}{w}\Bigr)\Bigr\} k_3^\pm(w)k_3^\pm(z)e_{35}^\pm(z)\\
&=k_3^\pm(z)e_{34}^\pm(z)k_3^\pm(w)e_{34}^\pm(w)-\Bigl(rs^{-1}\Bigl)^{\frac{1}{2}}k_3^\pm(z)k_3^\pm(w)e_{35}^\pm(w).
\end{aligned}
\end{equation}

To get the commutation relations between $k_3^\pm(w)e_{35}^\pm(w)k_3^\pm(z)$ and $k_3^\pm(w)e_{34}^\pm(w)k_3^\pm(z)e_{34}^\pm(z)$,
from $M_{31}=M_{31}'$, $M_{32}=M_{32}'$ and $M_{35}=M_{35}'$, we have
\begin{equation}\label{X3+X3+ 7}
\begin{aligned}
\sum_{i=3}^{7}c_{i3}\Bigl(\frac{z}{w}\Bigr)k_3^\pm(w)\Bigl\{&e_{3i}^\pm(w)\ell_{3i'}^\pm(z)-\Bigl(f_{31}^\pm(z)-f_{32}^\pm(z)f_{21}^\pm(z)\Bigr)e_{3i}^\pm(w)\ell_{1i'}^\pm(z)\\
&\quad-f_{32}^\pm(z)e_{3i}^\pm(w)\ell_{2i'}^\pm(z)\Bigr\}=k_3^\pm(z)e_{35}^\pm(z)k_3^\pm(w).
\end{aligned}
\end{equation}
\begin{equation}\label{X3+X3+ 8}
\begin{aligned}
\sum_{i=3}^{7}c_{ij}\Bigl(\frac{z}{w}\Bigr)k_3^\pm(w)\Bigl\{&e_{3i}^\pm(w)\ell_{3i'}^\pm(z)-\Bigl(f_{31}^\pm(z)-f_{32}^\pm(z)f_{21}^\pm(z)\Bigr)e_{3i}^\pm(w)\ell_{1i'}^\pm(z)\\
&\quad-f_{32}^\pm(z)e_{3i}^\pm(w)\ell_{2i'}^\pm(z)\Bigr\}=0,
\end{aligned}
\end{equation}
where $j=6, 7$.

Combining with (\ref{X3+X3+ 4}), (\ref{X3+X3+ 5}), (\ref{X3+X3+ 7}) and (\ref{X3+X3+ 8}), we conclude that
\begin{equation}\label{X3+X3+ 9}
\begin{aligned}
k_3^\pm(w)e_{35}^\pm(w)k_3^\pm(z)=&\;\frac{r^{\frac{3}{2}}s^{-\frac{3}{2}}-r^{-\frac{1}{2}}s^{\frac{1}{2}}}{z-rs^{-1}}k_3^\pm(w)e_{34}^\pm(w)k_3^\pm(z)e_{34}^\pm(z)+k_3^\pm(z)k_3^\pm(w)e_{35}^\pm(w)\\
&+\ast \;k_3^\pm(z)k_3^\pm(w)e_{35}^\pm(z),
\end{aligned}
\end{equation}
where $\ast$ denote some coefficients.

If we plug (\ref{X3+X3+ 9}) back into (\ref{X3+X3+ 6}), then we have:
\begin{equation}\label{}
\begin{aligned}
  k_3^\pm(z)e_{34}^\pm(z)k_3^\pm(w)e_{34}^\pm(w)=&\frac{(s^2z-r^2w)(rz-sw)}{(r^2z-s^2w)(sz-rw)}k_3^\pm(w)e_{34}^\pm(w)k_3^\pm(z)e_{34}^\pm(z)\\
&+\ast_1\; k_3^\pm(w)k_3^\pm(z)e_{35}^\pm(w)+\ast_2\; k_3^\pm(w)k_3^\pm(z)e_{35}^\pm(z),
\end{aligned}
\end{equation}
where $\ast_1, \ast_2$ denote some coefficients.

Finally, using (\ref{k3 e34}), we can obtain the desired equation.
\end{proof}

\begin{lemm} \label{lemm b3 k4x3}
One has
\begin{gather*}\label{}
  k_4^\pm(z)X_3^+(w)=\frac{rs\bigl(z-w_\pm\bigr)\bigl(rz-sw_\pm\bigr)}{(r^2z-s^2w_\pm\bigr)\bigl(sz-rw_\pm\bigr)}X_3^+(w)k_4^\pm(z), \\
  k_4^\pm(z)X_3^-(w)=\frac{\bigl(r^2z-s^2w_\mp\bigr)\bigl(sz-rw_\mp\bigr)}{rs\bigl(z-w_\mp\bigr)\bigl(rz-sw_\mp\bigr)}X_3^-(w)k_4^\pm(z).
\end{gather*}
\end{lemm}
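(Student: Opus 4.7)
My plan is to mirror the strategy used in the proof of Lemma~\ref{lemma B3 k3X3}, but at the distinguished middle index $n{+}1=4$. The essential new feature is that $4'=4$: the position touched is self-conjugate, so the anti-diagonal block $\sum E_{i'j'}\otimes E_{ij}$ of the spectral $R$-matrix $\widehat R(z)$ contributes nontrivially, and this is precisely the source of the extra factor $\dfrac{rz-sw_\pm}{sz-rw_\pm}$ beyond the standard factor $\dfrac{z-w_\pm}{r^2z-s^2w_\pm}$. In this sense the relation reflects the ``short-root'' flavor of the type~$B$ phenomenon at the central node.

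Concretely, I would first extract from the $RLL$ identities
$\widehat R(z/w)\,L_1^\pm(z)L_2^\pm(w)=L_2^\pm(w)L_1^\pm(z)\,\widehat R(z/w)$
(and the mixed $+/-$ version~\eqref{equ 5.4}) the family of scalar equations $M_{i4}=M'_{i4}$ for $i=1,2,3,4,5$. These relate the product $\ell_{44}^\pm(z)\ell_{34}^\pm(w)$ to $\ell_{34}^\pm(z)\ell_{44}^\pm(w)$ together with unwanted companions coming from $\ell_{i4}^\pm$, $\ell_{4j}^\pm$ and the ``anti-diagonal'' partners $\ell_{45}^\pm$, $\ell_{35}^\pm$, $\ell_{3,3'}^\pm$ (recall $5=3'$). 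I would then substitute the Gauss decomposition
$\ell_{ii}^\pm=k_i^\pm+\sum_{j<i}f_{ij}^\pm k_j^\pm e_{ji}^\pm$ and the analogous off-diagonal expressions, and use Lemmas~\ref{lemm kikj}, \ref{lemm B3 k4X1}, \ref{lemma B3 k3X3}, and \ref{lemm B3 ki+1xi} to push the $f^\pm$- and $k_j^\pm$-factors past each other with known scalar coefficients, so that only the commutator between $k_4^\pm(z)$ and $e_{34}^\pm(w)$ (resp.\ $e_{34}^\mp(w)$) survives.

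The principal obstacle will be the elimination of the spurious longer-range terms $e_{35}^\pm$ (equivalently $e_{3,3'}^\pm$) which are generated by the $E_{i'j'}\otimes E_{ij}$ piece of $\widehat R(z)$ and do not appear in the statement. As in the derivation of~\eqref{X3+X3+ 9} inside the proof of Lemma~\ref{lemma B3 X3+X3+}, I plan to form an appropriate linear combination of the $M_{44}=M'_{44}$ identity with the auxiliary $M_{54}=M'_{54}$ identity (and its counterpart $M_{45}=M'_{45}$) whose coefficients are chosen, using Corollary~\ref{cor R R-1} together with the explicit anti-diagonal coefficients of Theorem~\ref{thm spectral Rz1}, so as to cancel the $e_{35}^\pm$ contribution. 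The exact cancellation is forced by the identity $c_{i4}(z/w)=(rs^{-1})^{1/2}\,c_{i5}(z/w)$ between the two middle columns of the anti-diagonal block (which is a direct consequence of the form of the metric vector $(rs^{-1})^{\rho_i}$ in Remark~\ref{metric matrix}); it is exactly this ratio that produces the additional factor $\dfrac{rz-sw_\pm}{sz-rw_\pm}$.

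Once the combined identity is isolated, applying Lemma~\ref{lemma B3 k3X3} to transport $k_3^\pm(z)$ past $e_{34}^\pm(w)$ and assembling all scalar factors yields the coefficient
$\dfrac{rs\,(z-w_\pm)(rz-sw_\pm)}{(r^2z-s^2w_\pm)(sz-rw_\pm)}$
as claimed. The two superscript choices $\pm$ are handled uniformly by keeping track of the $z_\pm,w_\mp$ shifts dictated by~\eqref{equ 5.3}--\eqref{equ 5.4}; the $X_3^-(w)$ version then follows either by repeating the argument with rows and columns interchanged, or more economically by applying the algebra automorphism $\zeta$ of Definition~\ref{def auto} together with the symmetry $R_{r,s}(z)=P\circ R_{r,s}(z^{-1},r^{-1},s^{-1})\circ P$ of Remark~\ref{rmk relation}.
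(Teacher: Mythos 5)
Your proposal follows essentially the same route as the paper's proof: the paper likewise starts from the $RLL$ identities at the positions $M_{3j}=M'_{3j}$, performs the Gaussian row reduction, and cancels the unwanted longer-range Gaussian generator by subtracting $(rs^{-1})^{1/2}$ times the column-$5$ sum from the column-$4$ sum (your identity $c_{i4}(z/w)=(rs^{-1})^{1/2}c_{i5}(z/w)$), before invoking the invertibility of $k_3^{\pm}(z)$ and swapping $z\leftrightarrow w$ to read off the stated coefficient. The only small discrepancy is that the spurious term to be eliminated in this lemma is $e_{45}^{\pm}$ rather than $e_{35}^{\pm}$ (the latter arises in the $X_3^{\pm}X_3^{\pm}$ relation), but this does not affect the strategy.
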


\begin{proof}
Here we only prove the first equation since the another  can be proved similarly.
 Using $M_{34}=M_{34}'$, we have

 \begin{equation}\label{M34 44}
\sum_{i=1}^{7}c_{i4}\Bigl(\frac{z}{w}\Bigr)\ell_{4i}^\pm(w)\ell_{3i'}^\pm(z)=b_{34}\Bigl(\frac{z}{w}\Bigr)\ell_{44}^\pm(z)\ell_{34}^\pm(w)+a_{34}\Bigl(\frac{z}{w}\Bigr)\ell_{34}^\pm(z)\ell_{44}^\pm(w).\\
\end{equation}

Similarly, we can obtain:
\begin{align}\label{k4 X3 1}
\sum_{i=4}^{7}&c_{i4}\Bigl(\frac{z}{w}\Bigr)k_4^\pm(w)\Bigl\{e_{4i}^\pm(w)\ell_{4i'}^\pm(z)-\Bigl[f_{41}^\pm(z)-f_{42}^\pm(z)f_{21}^\pm(z)-f_{43}^\pm(z)f_{31}^\pm(z)+f_{43}^\pm(z)\nonumber\\
&\cdot f_{32}^\pm(z)f_{21}^\pm(z)\Bigr] e_{4i}^\pm(w)\ell_{1i'}^\pm(z)-\Bigl(f_{42}^\pm(z)-f_{43}^\pm(z)f_{32}^\pm(z)\Bigr)e_{4i}^\pm(w)\ell_{2i'}^\pm(z)-f_{43}^\pm(z) \nonumber\\
&\cdot e_{4i}^\pm(w)\ell_{3i'}^\pm(z)\Bigr\} =a_{34}\Bigl(\frac{z}{w}\Bigr)k_3^\pm(z)e_{34}^\pm(w)k_4^\pm(w)+b_{34}\Bigl(\frac{z}{w}\Bigr)k_4^\pm(w)k_3^\pm(z)e_{34}^\pm(w).
\end{align}
\begin{align}\label{k4 X3 2}
\sum_{i=4}^{7}&c_{i5}\Bigl(\frac{z}{w}\Bigr)k_4^\pm(w)\Bigl\{e_{4i}^\pm(w)\ell_{4i'}^\pm(z)-\Bigl[f_{41}^\pm(z)-f_{42}^\pm(z)f_{21}^\pm(z)-f_{43}^\pm(z)f_{31}^\pm(z)+f_{43}^\pm(z)\nonumber\\
&\cdot f_{32}^\pm(z)f_{21}^\pm(z)\Bigr] e_{4i}^\pm(w)\ell_{1i'}^\pm(z)-\Bigl(f_{42}^\pm(z)-f_{43}^\pm(z)f_{32}^\pm(z)\Bigr)e_{4i}^\pm(w)\ell_{2i'}^\pm(z)-f_{43}^\pm(z) \nonumber\\
&\cdot e_{4i}^\pm(w)\ell_{3i'}^\pm(z)\Bigr\} =a_{34}\Bigl(\frac{z}{w}\Bigr)k_3^\pm(z)k_4^\pm(w)e_{45}^\pm(w).
\end{align}
\begin{align}\label{k4 X3 3}
  \sum_{i=4}^{7}&c_{ij}\Bigl(\frac{z}{w}\Bigr)k_4^\pm(w)\Bigl\{e_{4i}^\pm(w)\ell_{4i'}^\pm(z)-\Bigl[f_{41}^\pm(z)-f_{42}^\pm(z)f_{21}^\pm(z)-f_{43}^\pm(z)f_{31}^\pm(z)+f_{43}^\pm(z)\nonumber\\
&\cdot f_{32}^\pm(z)f_{21}^\pm(z)\Bigr] e_{4i}^\pm(w)\ell_{1i'}^\pm(z)-\Bigl(f_{42}^\pm(z)-f_{43}^\pm(z)f_{32}^\pm(z)\Bigr)e_{4i}^\pm(w)\ell_{2i'}^\pm(z)-f_{43}^\pm(z) \nonumber\\
&\cdot e_{4i}^\pm(w)\ell_{3i'}^\pm(z)\Bigr\}=0,
\end{align}
where $j=6,7$.

 Then subtracting $\bigl(rs^{-1}\bigr)^{\frac{1}{2}}(\ref{k4 X3 2})$ from $(\ref{k4 X3 1})$ leads to
\begin{equation}\label{k4 X3 4}
\begin{aligned}
\Bigl\{c&_{44}\Bigl(\frac{z}{w}\Bigr)-\bigl(rs^{-1}\bigr)^{\frac{1}{2}}c_{45}\Bigl(\frac{z}{w}\Bigr)\Bigr\}k_4^\pm(w)k_3^\pm(z)e_{34}^\pm(z)+\Bigl\{c_{54}\Bigl(\frac{z}{w}\Bigr)-\bigl(rs^{-1}\bigr)^{\frac{1}{2}}c_{55}\Bigl(\frac{z}{w}\Bigr)\Bigr\}k_4^\pm(w)e_{45}^\pm(w) \\
&\cdot k_3^\pm(z)=b_{34}\Bigl(\frac{z}{w}\Bigr)k_4^\pm(w)k_3^\pm(z)e_{34}^\pm(w)+a_{34}\Bigl(\frac{z}{w}\Bigr)k_3^\pm(z) \Bigl\{e_{34}^\pm(z)k_4^\pm(w)-\bigl(rs^{-1}\bigr)^{\frac{1}{2}}k_4^\pm(w)e_{45}^\pm(w)\Bigr\}.
\end{aligned}
\end{equation}

Combining with (\ref{k4 X3 1}), (\ref{k4 X3 2}) and (\ref{k4 X3 3}), we conclude that:
\begin{equation}\label{k4 X3 5}
\begin{aligned}
k_4^\pm(w)e_{45}^\pm(w)k_3^\pm(z)=\frac{r^{\frac{3}{2}}s^{-\frac{3}{2}}-r^{-\frac{1}{2}}s^{\frac{1}{2}}}{z-rs^{-1}}k_4^\pm(w)k_3^\pm(z)e_{34}^\pm(z)+a_{34}\Bigl(\frac{z}{w}\Bigr)k_3^\pm(z)k_4^\pm(w)e_{45}^\pm(w).
\end{aligned}
\end{equation}

Then substituting  (\ref{k4 X3 5}) back into (\ref{k4 X3 4}), and using the invertibility of $k_3^\pm(z)$,
we obtain the following identity:
\begin{equation*}
\begin{aligned}
k_4^\pm(w)e_{34}^\pm(z)=\frac{rs(z-w)(sz-rw)}{(s^2z-r^2w)(rz-sw)}e_{34}^\pm(z)k_4^\pm(w)+\ast_1 k_4^\pm(w)e_{45}^\pm(w)+\ast_2 k_4^\pm(w)e_{34}^\pm(w),
\end{aligned}
\end{equation*}
where $\ast_1, \ast_2$ denote some nonzero coefficients.

We finally arrive at the desired equation after exchanging $z$ with $w$.
\end{proof}

\begin{lemm} \label{lemm k4k4}
One has
\begin{gather*}\label{}
  k_4^\pm(z)k_4^\pm(w)=k_4^\pm(w)k_4^\pm(z), \\
  \frac{s^2z_\pm-r^2w_\mp}{r^2z_\pm-s^2w_\mp}\frac{rz_\pm-sw_\mp}{sz_\pm-rw_\mp}k_4^\pm(z)k_4^\mp(w)=\frac{s^2z_\mp-r^2w_\pm}{r^2z_\mp-s^2w_\pm}\frac{rz_\mp-sw_\pm}{sz_\mp-rw_\pm}k_4^\mp(w)k_4^\pm(z).
\end{gather*}
\end{lemm}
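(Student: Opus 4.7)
The plan is to derive both commutation relations from the $RLL$ relations (\ref{equ 5.2}) and (\ref{equ 5.4}) specialized to the matrix block indexed by $(4,4)$ on both the row and column side, then reduce to Gaussian generators using quasi-determinant formulas, as was carried out for the $k_i$--$k_j$ pairs with $(i,j)\ne (4,4)$ in Lemma \ref{lemm kikj}. Concretely, I would apply $\widehat R_{r,s}(z/w)L_1^\pm(z)L_2^\pm(w)=L_2^\pm(w)L_1^\pm(z)\widehat R_{r,s}(z/w)$ to the basis vector $v_4\otimes v_4$ and collect the coefficient of $v_4\otimes v_4$ on both sides. Because $4'=4$ is self-dual, the $K$-part of $\widehat R$ (the crossing terms $\sum c_{ij}(z/w)E_{i'j'}\otimes E_{ij}$) contributes a sum $\sum_{i=1}^{7}c_{i4}(z/w)\,\ell_{4,i}^\pm(w)\ell_{4,i'}^\pm(z)$ on each side, exactly the kind of sum that already appeared in Lemmas \ref{lemma B3 X3+X3+} and \ref{lemm b3 k4x3}.

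Next I would perform the same quasi-determinant reduction as in those two lemmas: left-multiply by the appropriate polynomial in $f_{4i}^\pm(z)$ and $f_{i'4}^\pm(w)$ to kill all terms involving $\ell_{j,k'}^\pm$ with $j<4$ or $k<4$, leaving only the diagonal contribution $k_4^\pm(z)k_4^\pm(w)$ (plus a controlled $k_4^\pm e_{45}^\pm$ piece). The auxiliary identity (\ref{k4 X3 5}), already established in the proof of Lemma \ref{lemm b3 k4x3}, together with Lemma \ref{lemm kikj} for the $k_i k_j$ with $i,j<4$, lets me substitute every off-diagonal term explicitly in terms of $k_4^\pm(z)k_4^\pm(w)$ and $k_3^\pm k_4^\pm$-products. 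Collecting coefficients then yields a single scalar identity of the form $\alpha(z/w)\,k_4^\pm(z)k_4^\pm(w)=\beta(z/w)\,k_4^\pm(w)k_4^\pm(z)$; a direct computation using the explicit formulas of $a_{ij},b_{ij},c_{ij}$ in Theorem \ref{thm spectral Rz1} shows $\alpha=\beta$, giving the first relation.

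For the mixed $(+,-)$ case, I would apply (\ref{equ 5.3}) (equivalently (\ref{equ 5.4})) in place of (\ref{equ 5.2}); the only change is that every occurrence of $z/w$ in the scalar coefficients is replaced by $z_\pm/w_\mp$ on the left and $z_\mp/w_\pm$ on the right. The same quasi-determinant elimination then produces
\[
\frac{s^2z_\pm-r^2w_\mp}{r^2z_\pm-s^2w_\mp}\,\frac{rz_\pm-sw_\mp}{sz_\pm-rw_\mp}\,k_4^\pm(z)k_4^\mp(w)
=\frac{s^2z_\mp-r^2w_\pm}{r^2z_\mp-s^2w_\pm}\,\frac{rz_\mp-sw_\pm}{sz_\mp-rw_\pm}\,k_4^\mp(w)k_4^\pm(z),
\]
where the two quadratic factors arise precisely from the combination $c_{44}(\cdot)-(rs^{-1})^{1/2}c_{45}(\cdot)$ that was already isolated in (\ref{X3+X3+ 6}) and (\ref{k4 X3 4}).

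The main obstacle will be the bookkeeping in the mixed $(+,-)$ case: unlike the equal-sign case, the spectral shifts $z_\pm,w_\mp$ break the symmetry $z\leftrightarrow w$, so cancellations that are automatic in the first identity must be verified factor by factor. In particular, one has to check that, after substituting the auxiliary formula for $k_4^\mp(w)e_{45}^\mp(w)k_4^\pm(z)$ (the analogue of (\ref{k4 X3 5})), all crossing terms proportional to $e_{34}^\pm(z)$ and $e_{45}^\pm(w)$ cancel identically, leaving only a scalar relation between $k_4^\pm(z)k_4^\mp(w)$ and $k_4^\mp(w)k_4^\pm(z)$. This requires a careful use of Lemma \ref{lemm b3 k4x3} in both signs together with the crossing-symmetry identity (\ref{equ crossing relation finite}) adapted to the spectral setting; once this cancellation is verified, matching the explicit $a_{ij}, b_{ij}, c_{ij}$ values in $\widehat R_{r,s}(z/w)$ gives the stated prefactors.
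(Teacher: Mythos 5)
Your proposal follows essentially the same route as the paper: start from the $(4,4)$ block $M_{44}=M_{44}'$ of the $RLL$ relation (where the self-duality $4'=4$ makes the crossing sum $\sum_i c_{i4}\,\ell_{4i}^\pm(w)\ell_{4i'}^\pm(z)$ appear on both sides), eliminate the off-diagonal entries by the quasi-determinant combinations of $f$'s, and dispose of the residual crossing terms using the auxiliary vanishing identities before matching the scalar prefactors from $\widehat R_{r,s}(z/w)$. The only cosmetic difference is that the paper obtains those vanishing identities directly from the neighbouring blocks $M_{j4}=M_{j4}'$ and $M_{4j}=M_{4j}'$ ($j=1,2,3$ and $j=5,6,7$) rather than from the crossing-symmetry relation you mention, but this does not change the argument.
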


\begin{proof}
  Here we only prove the first equation since the other one can be proved similarly.
  Using $M_{44}=M_{44}'$, we have
  \begin{equation*}\label{M44 44}
    \sum_{i=4}^{7}c_{i4}\Bigl(\frac{z}{w}\Bigr)\ell_{4i}^\pm(w)\ell_{4i'}^\pm(z)=\sum_{i=4}^{7}c_{4i}\Bigl(\frac{z}{w}\Bigr)\ell_{i' 4}^\pm(z)\ell_{i 4}^\pm(w).
  \end{equation*}

  Through some calculations,  we conclude that
  \begin{align*}
  \sum_{i=4}^{7}c_{i4}&\Bigl(\frac{z}{w}\Bigr)k_4^\pm(w)\Bigl\{e_{4i}^\pm(w)\ell_{4i'}^\pm(z)-\Bigl[f_{41}^\pm(z)-f_{42}^\pm(z)f_{21}^\pm(z)-f_{43}^\pm(z)f_{31}^\pm(z)+f_{43}^\pm(z)f_{32}^\pm(z)f_{21}^\pm(z)\Bigr]\\
&\quad\cdot e_{4i}^\pm(w)\ell_{1i'}^\pm(z)-\Bigl(f_{42}^\pm(z)-f_{43}^\pm(z)f_{32}^\pm(z)\Bigr)e_{4i}^\pm(w)\ell_{2i'}^\pm(z)-f_{43}^\pm(z)e_{4i}^\pm(w)\ell_{3i'}^\pm(z)\Bigr\} \nonumber\\
=\sum_{i=4}^{7}c_{4i}&\Bigl(\frac{z}{w}\Bigr)
\Bigl\{\ell_{i'4}^\pm(z)f_{i4}^\pm(w)-\ell_{i'1}^\pm(z)f_{i4}^\pm(w)\Bigl[e_{14}^\pm(z)
-e_{12}^\pm(z)e_{24}^\pm(z)+e_{12}^\pm(z)e_{23}^\pm(z)e_{34}^\pm(z)-e_{13}^\pm(z)\nonumber\\
&\quad\cdot e_{34}^\pm(z)\Bigr]-\ell_{i'2}^\pm(z)f_{i4}^\pm(w)\Bigl(e_{24}^\pm(z)-e_{23}^\pm(z)e_{34}^\pm(z)\Bigr)-\ell_{i'3}^\pm(z)f_{i4}^\pm(w)e_{34}^\pm(z)\Bigr\}k_4^\pm(w)\nonumber.
  \end{align*}

Similarly, from $M_{14}=M_{14}', M_{24}=M_{24}', M_{34}=M_{34}', M_{41}=M_{41}', M_{42}=M_{42}'\;\text{as well as}\; M_{43}=M_{43}'$, we find that
   \begin{align*}
  \sum_{i=4}^{7}c_{ji}&\Bigl(\frac{z}{w}\Bigr)\Bigl\{\ell_{i'4}^\pm(z)f_{i4}^\pm(w)
  -\ell_{i'1}^\pm(z)f_{i4}^\pm(w)\Bigl[e_{14}^\pm(z)-e_{12}^\pm(z)e_{24}^\pm(z)-e_{13}^\pm(z)e_{34}^\pm(z)+e_{12}^\pm(z)e_{23}^\pm(z)\\
  &\quad\cdot e_{34}^\pm(z)\Bigr]-\ell_{i'2}^\pm(z)f_{i4}^\pm(w)\Bigl(e_{24}^\pm(z)-e_{23}^\pm(z)e_{34}^\pm(z)\Bigr)-\ell_{i'3}^\pm(z)f_{i4}^\pm(w)e_{34}^\pm(z)\Bigr\}k_4^\pm(w)\nonumber=0\nonumber,
   \end{align*}
   and
   \begin{align*}
  \sum_{i=4}^{7}c_{ij}&\Bigl(\frac{z}{w}\Bigr)k_4^\pm(w)\Bigl\{e_{4i}^\pm(w)\ell_{4i'}^\pm(z)
  -\Bigl[f_{41}^\pm(z)-f_{42}^\pm(z)f_{21}^\pm(z)-f_{43}^\pm(z)f_{31}^\pm(z)+f_{43}^\pm(z)f_{32}^\pm(z)f_{21}^\pm(z)\Bigr]\\
  &\quad \cdot e_{4i}^\pm(w)\ell_{1i'}^\pm(z)-\Bigl(f_{42}^\pm(z)-f_{43}^\pm(z)f_{32}^\pm(z)\Bigr)e_{4i}^\pm(w)\ell_{2i'}^\pm(z)-f_{43}^\pm(z)e_{4i}^\pm(w)\ell_{3i'}^\pm(z)\Bigr\}=0\nonumber,
   \end{align*}
   where $j=5, 6, 7$.

Using the above equations, we derive the desired equation.
\end{proof}

Finally, we need to check the $(r, s)$-Serre relations listed in Theorem \ref{thm relations} for $n=3$.
Here we only check the next Lemma since the others can be checked similarly as Proposition 4.20 in \cite{JingLJA 2017}.
\begin{lemm}\label{lemm b3 Serre}
One has
\noindent  \vspace{1em}\\ $Sym_{z_1, z_2,z_3}\Bigl\{X^{\pm}_{2}(w)X^{\pm}_{3}(z_1)X^{\pm}_{3}(z_2)X^{\pm}_{3}(z_3)-(r^{\pm2}+s^{\pm2}+r^\pm s^\pm)X^{\pm}_{3}(z_1)X^{\pm}_{2}(w)X^{\pm}_{3}(z_2)X^{\pm}_{3}(z_3)$\\

$\hskip2.3cm+\,(rs)^\pm(r^{\pm2}+s^{\pm2}+r^\pm s^\pm)X^{\pm}_{3}(z_1)X^{\pm}_{3}(z_2)X^{\pm}_{2}(w)X^{\pm}_{3}(z_3)$\\

$\hskip3.3cm-\,(rs)^{\pm3}X^{\pm}_{3}(z_1)X^{\pm}_{3}(z_2)X^{\pm}_{3}(z_3)X^{\pm}_{2}(w)\Bigr\}=0.$
\end{lemm}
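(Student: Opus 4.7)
The plan is to handle the $+$ case first; the $-$ case will follow by the same argument via the automorphism $\zeta$ of Definition \ref{def auto} together with the dual quasi-commutations. The core reduction is to move $X_2^+(w)$ rightward past each $X_3^+(z_i)$ using the pairwise quasi-commutation of Lemma \ref{lemm B3 X2X3}, namely
\begin{equation*}
X_2^+(w)X_3^+(z_i) \,=\, \frac{s^{-2}w-r^{-2}z_i}{w-z_i}\,X_3^+(z_i)X_2^+(w).
\end{equation*}
Applied to each of the four monomials in the Serre combination, this brings every term to the common right-hand form $X_3^+(z_1)X_3^+(z_2)X_3^+(z_3)X_2^+(w)$ multiplied by an explicit scalar rational function in $w,z_1,z_2,z_3$. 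The Serre identity then reduces to a purely scalar rational-function identity with the coefficient combination $1,\,-(r^2{+}s^2{+}rs),\,rs(r^2{+}s^2{+}rs),\,-(rs)^3$ weighting the four accumulated products of $g(w,z_i) := (s^{-2}w-r^{-2}z_i)/(w-z_i)$.

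The next step is to symmetrize over $(z_1,z_2,z_3)$ and verify that the resulting scalar vanishes. The symmetrization must be interpreted carefully, since the ordered monomial $X_3^+(z_1)X_3^+(z_2)X_3^+(z_3)$ is not symmetric in the $z_i$'s but is twisted by the cocycle $(rz{-}sw)/(sz{-}rw)$ coming from the quadratic commutation of Lemma \ref{lemma B3 X3+X3+}. Clearing the common denominator $\prod_i(w-z_i)\prod_{i<j}(sz_i-rz_j)$ converts the desired identity into a polynomial identity in the variables $w,z_1,z_2,z_3$, which I plan to verify by recognizing $r^2+s^2+rs$ as the $(r,s)$-quantum integer $[3]_{+,n}$ with $r_n=r,\,s_n=s$ and by pairing terms via the standard $(r,s)$-factorial identity $[3]_{+,n}[2]_{+,n}!$, in direct analogy with the one-parameter computation of Jing-Liu-Molev.

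The main obstacle will be the precise bookkeeping of the $(rs)$-powers that degenerate to $1$ in the one-parameter limit. Specifically, the asymmetry between the middle coefficients $-(r^2{+}s^2{+}rs)$ and $rs(r^2{+}s^2{+}rs)$ and the presence of $-(rs)^3$ in the final term reflect the generalized type $D$ phenomenon emphasized in subsection \ref{ssec typeD}, and they must be recovered by direct $(r,s)$-computation rather than by naive specialization from the one-parameter case. Once the scalar rational-function identity is established, the $-$ case follows either by applying $\zeta$ or by repeating the argument using the dual commutation $(z-w)X_2^-(z)X_3^-(w) = (s^{-2}z-r^{-2}w)X_3^-(w)X_2^-(z)$, yielding the Serre identity in full generality and completing the last ingredient of the $B_3^{(1)}$ base case required for Theorem \ref{thm relations}.
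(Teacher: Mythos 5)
Your plan is essentially the paper's proof: both reduce the Serre relation to a scalar rational-function identity by pushing $X_2^{\pm}(w)$ through the $X_3^{\pm}(z_i)$'s via Lemma \ref{lemm B3 X2X3} and reordering the $X_3^{\pm}$'s via Lemma \ref{lemma B3 X3+X3+}, then clear the denominator $\prod_i(w-z_i)\prod_{i<j}(sz_i-rz_j)$ and verify the resulting polynomial identity by direct calculation. One small correction: Lemma \ref{lemm B3 X2X3} actually gives $X_2^+(w)X_3^+(z_i)=\frac{w-z_i}{s^{-2}w-r^{-2}z_i}\,X_3^+(z_i)X_2^+(w)$, the reciprocal of the factor $g(w,z_i)$ you wrote, so be careful with that inversion when carrying out the scalar bookkeeping.
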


\begin{proof}
  Using Lemmas \ref{lemm B3 X2X3} and \ref{lemma B3 X3+X3+}, it suffices to prove
    \begin{align*}
  &\sum_{\sigma\in S_3}\text{sgn}(\sigma)A\Bigl(rz_{\sigma(1)}-sz_{\sigma(2)}\Bigr)\Bigl(rz_{\sigma(1)}-sz_{\sigma(3)}\Bigr)\Bigl(rz_{\sigma(2)}-sz_{\sigma(3)}\Bigr)\\
&\qquad\Bigl\{\Bigl[rs^3z_{\sigma(1)}-\Bigl(r^3s+r^2s^2\Bigr)z_{\sigma(2)}+r^4z_{\sigma(3)}\Bigr]w^2\\
&\qquad\quad+\Bigl[s^4z_{\sigma(1)}z_{\sigma(2)}-\Bigl(rs^3+r^2s^2\Bigr)z_{\sigma(1)}z_{\sigma(3)}+r^3sz_{\sigma(2)}z_{\sigma(3)}\Bigr]w\Bigr\}=0,
\end{align*}
  where $$A=\frac{r^{-3}s^{-3}(s^2-r^2)}{\bigl(sz_3-rz_1\bigr)\bigl(sz_2-rz_1\bigr)\bigl(sz_3-rz_2\bigr)\bigl(w-z_1\bigr)\bigl(w-z_2\bigr)\bigl(w-z_3\bigr)}.$$
 By direct calculations, one can verify this identity.
\end{proof}

\subsection{From $B_3^{(1)}$ to $B_n^{(1)}$ by rank reduction}

Now we extend the rank $3$ case to the general rank $n$.
The idea of the Homomorphism Theorem by rank reduction due to Jing-Molev-Liu in \cite{JingLM SIGMA 2020} is still applicable to the two-parameter setting.
For convenience, we use the superscript $[n]$ for $\widehat{R}_{r, s}(z)$ to emphasize its corresponding rank.

Fix a positive integer $m$ such that $m<n$.
Suppose that the generators $\ell_{ij}^\pm(z)$ of the algebra $U(\widehat{R}_{r, s}^{[n-m]}(z))$ are labelled by the indices $m+1\leq i, j\leq (m+1)'$.
\begin{theorem}
  The mapping
  \begin{equation}\label{equ embedhom}
\ell_{ij}^\pm(z)\mapsto\left|\begin{array}{cccc}\ell^{\pm}_{11}(z)&\cdots&\ell^{\pm}_{1m}(z)&\ell^{\pm}_{1j}(z)\\ \cdots& \cdots &\cdots&\cdots\\ \ell_{m1}^\pm(z) &\cdots  &\ell_{mm}^\pm(z) &\ell_{mj}^\pm(z) \\ \ell^{\pm}_{i1}(z)&\cdots&\ell^{\pm}_{im}(z)&\boxed{\ell^{\pm}_{ij}(z)}\end{array}\right|, \quad m+1\leq i, j\leq (m+1)',
\end{equation}
defines a homomorphism $\psi_m: U(\widehat{R}_{r, s}^{[n-m]}(z))\rightarrow U(\widehat{R}_{r, s}^{[n]}(z))$.
\end{theorem}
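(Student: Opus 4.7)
The strategy is to adapt the block Gauss decomposition argument of Jing-Liu-Molev \cite{JingLM SIGMA 2020} from the one-parameter setting, with careful accounting of the factors of $r^{-1}s$ which now no longer collapse to powers of $q$. The starting observation is that the quasi-determinant (\ref{equ embedhom}) is precisely the Schur complement of the upper-left $m\times m$ block of $L^\pm(z)$. Partition the index set $\{1,\ldots,2n{+}1\}$ as $I_1=\{1,\ldots,m\}$ and $I_2=\{m{+}1,\ldots,(m{+}1)'\}$; note that $I_2$ is symmetric under $i\mapsto i'$ because $m<n$, so restrictions behave well with respect to the metric matrix $C$. Writing $L^\pm(z)$ in block form with upper-left block $A^\pm(z)$ of size $m\times m$, the Gelfand-Retakh expansion identifies the image of $\psi_m$ on $\ell_{ij}^\pm(z)$ with the $(i,j)$-entry of $\tilde{L}^\pm(z):=D^\pm(z)-C^\pm(z)A^\pm(z)^{-1}B^\pm(z)$, i.e.\ with the $I_2$-diagonal block of the middle factor in the partial Gauss decomposition
\[
L^\pm(z)=\begin{pmatrix} I & 0 \\ C^\pm(A^\pm)^{-1} & I \end{pmatrix}\begin{pmatrix} A^\pm(z) & 0 \\ 0 & \tilde{L}^\pm(z) \end{pmatrix}\begin{pmatrix} I & (A^\pm)^{-1}B^\pm \\ 0 & I \end{pmatrix}.
\]

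The first step is to verify that the spectral $R$-matrix $\widehat{R}_{r,s}^{[n]}(z)$ restricted to $V_{I_2}\otimes V_{I_2}$ coincides with $\widehat{R}_{r,s}^{[n-m]}(z)$ under the reindexing $i\mapsto i-m$. The off-anti-diagonal entries cause no problem: the shift $\rho_i^{[n]}-\rho_{i-m}^{[n-m]}=m$ is a constant on $I_2$, so the exponents $\rho_i-\rho_j$ governing the $E_{i'j'}\otimes E_{ij}$ entries are invariant. The second step is to substitute the block Gauss decomposition into the $RLL$ relations (\ref{equ 5.2}) and (\ref{equ 5.4}) and observe that, after conjugation by the upper- and lower-triangular factors, the $R$-matrix assumes a block-triangular form whose diagonal blocks are (up to a uniform scaling) $\widehat{R}_{r,s}^{[m]}(z)$ on $V_{I_1}\otimes V_{I_1}$ and $\widehat{R}_{r,s}^{[n-m]}(z)$ on $V_{I_2}\otimes V_{I_2}$. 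Comparing the $I_2\times I_2$ components of the resulting matrix equation isolates the $RLL$ relation for $\tilde{L}^\pm(z)$ with the smaller $R$-matrix; this is the Cherednik-type reduction that underlies the one-parameter proof.

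The third step is to verify the metric condition $\tilde{L}^\pm(z)\,C'\,\tilde{L}^\pm(z\tilde\xi)^t\,(C')^{-1}=I$, where $C'$ is the restriction of $C$ to $I_2$ and $\tilde\xi=(r^{-1}s)^{2(n-m)-1}$. The discrepancy with the ambient $\xi=(r^{-1}s)^{2n-1}$ is a factor of $(r^{-1}s)^{2m}$, which must be absorbed by the rescaling inherent in the quasi-determinant. This is checked by combining the Appendix verification of the metric condition for $L^\pm(z)$ with the explicit formulas \eqref{equ Bi i+1}--\eqref{equ B(i+1)' i'} for the structure scalars $B_{ij}^\pm$: the ``contraction'' of the $I_1$-indices in the Schur complement rescales the anti-diagonal entries of $\tilde{L}^\pm$ by exactly the factor $(r^{-1}s)^m$ in each of the two occurrences of $\tilde{L}^\pm$ in the metric identity.

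\textbf{Main obstacle.} The principal difficulty lies in the anti-diagonal behaviour in Steps~1 and~3: the exponent $2n{-}1$ appears \emph{additively} inside the factor $[z-(r^{-1}s)^{2n-1}]$ in the $E_{i'j'}\otimes E_{ij}$-entries of $\widehat{R}_{r,s}^{[n]}(z)$, not merely as a difference of $\rho$'s that can be cancelled by reindexing. Consequently the naive restriction of $\widehat{R}_{r,s}^{[n]}(z)$ to $V_{I_2}\otimes V_{I_2}$ is not literally $\widehat{R}_{r,s}^{[n-m]}(z)$, and the compensating $(r^{-1}s)^{2m}$ must flow coherently through the upper-triangular conjugation in Step~2 and through both factors of $\tilde{L}^\pm$ in the metric equation of Step~3. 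Making this two-parameter bookkeeping precise, i.e.\ tracking that the quasi-determinant construction intrinsically supplies the rescaling which converts the parameter $\xi$ from rank $n$ to rank $n-m$, is the technical heart of the proof; once it is established, the remaining structural verifications parallel the one-parameter argument of \cite{JingLM SIGMA 2020}.
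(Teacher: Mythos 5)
The paper does not actually prove this statement: it is quoted verbatim from Jing--Liu--Molev \cite{JingLM SIGMA 2020} with the (unargued) assertion that the rank-reduction argument survives the passage to two parameters. So there is no in-paper proof to measure you against; your proposal has to stand on its own, and as it stands it does not.

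The decisive issue is that your ``main obstacle'' \emph{is} the theorem. You correctly observe that the restriction of $\widehat{R}_{r,s}^{[n]}(z)$ to $V_{I_2}\otimes V_{I_2}$ is not $\widehat{R}_{r,s}^{[n-m]}(z)$, because the anti-diagonal block carries the factors $[z-(r^{-1}s)^{2n-1}]$ and the denominator $(z-(r^{-1}s)^{2n-1})(r^2z-s^2)$, in which $2n-1$ cannot be reindexed away. Showing that the Schur complement $\tilde L^{\pm}(z)$ nevertheless satisfies the $RLL$ relations with the \emph{rank-$(n-m)$} $R$-matrix is exactly what must be computed, and your plan ends by declaring this ``the technical heart'' without carrying it out. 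A proof that defers its only nontrivial step is not a proof. The standard route (Ding--Frenkel, and \cite{JingLM SIGMA 2020}) goes through the identity $|X|_{ij}=\bigl((X^{-1})_{ji}\bigr)^{-1}$, the $RLL$-type relations satisfied by $L^{\pm}(z)^{-1}$ (equivalently the crossing relations of Proposition \ref{prop crossing symmetry finite} and its affine analogue), and the metric condition \eqref{equ metric}, which is what produces the cancellation of the $E_{i'j'}\otimes E_{ij}$ contributions from indices in $I_1$; your Step 2, which instead conjugates $\widehat{R}$ into a ``block-triangular form with diagonal blocks $\widehat{R}^{[m]}$ and $\widehat{R}^{[n-m]}$,'' is not a correct description of that mechanism and I do not see how to make it literally true.

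There is also a concrete arithmetic error that undermines Step 3. With $\rho_i^{[n]}=\tfrac{2n+1}{2}-i$ one gets $\rho_{i-m}^{[n-m]}=\tfrac{2(n-m)+1}{2}-(i-m)=\tfrac{2n+1}{2}-i=\rho_i^{[n]}$, so the shift you claim equals $m$ is actually $0$, and the restricted metric matrix $C'$ coincides with $C|_{I_2}$ with no rescaling. Consequently your proposed mechanism for absorbing the genuine discrepancy $\xi/\tilde\xi=(r^{-1}s)^{2m}$ in the spectral shift of the metric condition --- a $\rho$-shift of $m$ ``in each of the two occurrences of $\tilde L^{\pm}$'' --- cannot work as stated. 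The reconciliation of $L^{\pm}(z)CL^{\pm}(z\xi)^tC^{-1}=I$ with $\tilde L^{\pm}(z)C'\tilde L^{\pm}(z\tilde\xi)^t(C')^{-1}=I$ has to come from the structure of the Schur complement itself, and this needs to be exhibited, not asserted.
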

\begin{proof}
  Similar to the proof in \cite{JingLM JMP 2020}.
\end{proof}

We will use the superscript to denote square submatrices corresponding to rows and columns labelled by $m+1, m+2, \cdots, (m+1)'$.
Then we set
$$F^{\pm [n-m]}(z)=\left(
                     \begin{array}{cccc}
                       1 & 0 & \cdots & 0 \\
                       f_{m+2, m+1}^\pm(z) & 1 & \cdots& 0 \\
                       \vdots & \vdots & \vdots & \vdots \\
                       f_{(m+1)', m+1}^\pm(z) & \cdots & f_{(m+1), (m+2)'}^\pm(z) & 1 \\
                     \end{array}
                   \right),
$$
$$E^{\pm[n-m]}(z)=\left(
                     \begin{array}{cccc}
                       1 & e_{m+1, m+2}^\pm(z)& \cdots & e_{m+1, (m+1)'}^\pm(z) \\
                       0 & 1 & \cdots& \vdots\\
                       \vdots & \vdots & \vdots & e_{(m+2)', (m+1)}^\pm(z) \\
                       0& 0 &\cdots & 1 \\
                     \end{array}
                   \right),
$$
and $K^{\pm [n-m]}(z)=\text{diag} [k_{m+1}^\pm(z), \cdots, k_{(m+1)'}^\pm(z)].$
Moreover, introduce the products of these matrices by
$$L^{\pm [n-m]}(z)=F^{\pm [n-m]}(z) K^{\pm [n-m]}(z) E^{\pm [n-m]}(z).$$
The entries of $L^{\pm [n-m]}(z)$ will be denoted by $\ell_{ij}^{\pm [n-m]}(z)$.

\begin{prop}
   The series $\ell_{ij}^{[n-m]}(z)$ coincides with the image of the generator series $\ell_{ij}^\pm(z)$ of $U(\widehat{R}_{r,s}^{[n-m]}(z))$ under the homomorphism (\ref{equ embedhom}),
   $$\ell_{ij}^{\pm [n-m]}(z)=\psi_m(\ell_{ij}^\pm(z)), \quad m+1\leq i, j\leq (m+1)'.$$
\end{prop}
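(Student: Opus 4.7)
The plan is to identify both sides with a common Schur-complement expression and then invoke the Gauss decomposition of that Schur complement. Block-decompose the full matrix as
$$L^\pm(z)=\begin{pmatrix}A & B\\ C & D\end{pmatrix},$$
where $A=(\ell_{kl}^\pm(z))_{1\le k,l\le m}$ is the top-left $m\times m$ block and $D$ is indexed by the rows/columns $m+1,\dots,(m+1)'$. By the very definition of the quasi-determinant appearing in (\ref{equ embedhom}),
$$\psi_m\bigl(\ell_{ij}^\pm(z)\bigr)=\ell_{ij}^\pm(z)-(C_{i,\bullet})\,A^{-1}\,(B_{\bullet,j})=\bigl(D-CA^{-1}B\bigr)_{ij},$$
so $\psi_m$ sends the generator matrix of $U(\widehat R_{r,s}^{[n-m]})$ onto the Schur complement matrix $D-CA^{-1}B$.

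Next I would partition the Gauss factors of $L^\pm(z)=F^\pm(z)K^\pm(z)E^\pm(z)$ in blocks of the same shape,
$$F^\pm=\begin{pmatrix}F_A & 0\\ F' & F_D\end{pmatrix},\quad K^\pm=\mathrm{diag}(K_A,K_D),\quad E^\pm=\begin{pmatrix}E_A & E'\\ 0 & E_D\end{pmatrix},$$
and multiply out $F^\pm K^\pm E^\pm$ to read off
$$A=F_AK_AE_A,\quad B=F_AK_AE',\quad C=F'K_AE_A,\quad D=F'K_AE'+F_DK_DE_D.$$
A direct cancellation using the unitriangularity of $F_A,E_A$ yields $CA^{-1}B=F'K_AE'$, whence $D-CA^{-1}B=F_DK_DE_D$. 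By the very definition of $L^{\pm[n-m]}(z)$ in the paragraph preceding the proposition, the right-hand side is exactly $F^{\pm[n-m]}(z)K^{\pm[n-m]}(z)E^{\pm[n-m]}(z)=L^{\pm[n-m]}(z)$; taking $(i,j)$-entries proves the claim.

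As a consistency check, one verifies that for the diagonal case $i=j$, the entry $k_i^{\pm[n-m]}(z)$ produced by $F_DK_DE_D$ coincides with the quasi-determinant of the principal $i\times i$ submatrix of $L^\pm(z)$ prescribed by Proposition~\ref{Prop Gauss decomp}; likewise for $e^\pm$ and $f^\pm$. This follows by iterating the same block-elimination identity one step at a time (equivalently, the noncommutative Sylvester/Jacobi identity for quasi-determinants).

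The main technical point to watch is the invertibility of $A$ and of each principal minor $K_A$ in the formal-series completion, so that $A^{-1}$ and all cancellations above are legitimate. This is guaranteed because the diagonal quasi-determinants $k_i^\pm(z)$ are invertible series (their constant terms are the invertible generators $\ell_{ii}^\pm[0]$), and $F_A,E_A$ are unitriangular; consequently the underlying Sylvester identity holds in the associative algebra $U(\widehat R_{r,s}^{[n]})\llbracket z,z^{-1}\rrbracket$. Once invertibility is secured, the proof is a purely algebraic matrix manipulation and does not interact with the specific $RLL$-relations of Section~\ref{sec $RLL$}.
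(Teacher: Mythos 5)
Your argument is correct: identifying $\psi_m(\ell_{ij}^\pm(z))$ with the $(i,j)$-entry of the Schur complement $D-CA^{-1}B$, and then reading off $D-CA^{-1}B=F_DK_DE_D$ from the block form of the Gauss factors, is exactly the standard proof of this fact, and your attention to the invertibility of $A$ (via the invertible constant terms $\ell_{ii}^\pm[0]$ and the unitriangularity of $F_A,E_A$) closes the only real gap. The paper itself gives no proof — it cites \cite{JingLM SIGMA 2020} — and your argument is essentially the one found there, so there is nothing to compare beyond noting that your write-up supplies the omitted details.
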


\begin{coro}
  The following relations hold in $U(\widehat{R}_{r,s}^{[n]}(z)):$
  $$\widehat{R}_{r,s}^{[n-m]}(\frac{z}{w})L_1^{\pm [n-m]}(z)L_2^{\pm [n-m]}(w)=L_2^{\pm [n-m]}(w)L_1^{\pm [n-m]}(z)\widehat{R}_{r,s}^{[n-m]}(\frac{z}{w}),$$
  $$\widehat{R}_{r,s}^{[n-m]}(\frac{z_+}{w_-})L_1^{\pm [n-m]}(z)L_2^{\pm [n-m]}(w)=L_2^{\pm [n-m]}(w)L_1^{\pm [n-m]}(z)\widehat{R}_{r,s}^{[n-m]}(\frac{z_-}{w_+}).$$
\end{coro}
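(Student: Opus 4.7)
The plan is to derive both identities as a direct consequence of the Homomorphism Theorem (stating that \eqref{equ embedhom} is an algebra map $\psi_m: U(\widehat{R}_{r,s}^{[n-m]})\to U(\widehat{R}_{r,s}^{[n]})$) combined with the preceding proposition, which identifies the quasi-determinant series $\ell_{ij}^{\pm[n-m]}(z)$ with the image $\psi_m(\ell_{ij}^\pm(z))$. First I would record the defining $RLL$ relations of the source algebra $U(\widehat{R}_{r,s}^{[n-m]})$ in their native matrix form on $\mathrm{End}(V^{[n-m]})^{\otimes 2}\otimes U(\widehat{R}_{r,s}^{[n-m]})$,
$$\widehat{R}_{r,s}^{[n-m]}\Bigl(\tfrac{z}{w}\Bigr)L_1^{\pm}(z)L_2^{\pm}(w) = L_2^{\pm}(w)L_1^{\pm}(z)\widehat{R}_{r,s}^{[n-m]}\Bigl(\tfrac{z}{w}\Bigr),$$
$$\widehat{R}_{r,s}^{[n-m]}\Bigl(\tfrac{z_+}{w_-}\Bigr)L_1^{+}(z)L_2^{-}(w) = L_2^{-}(w)L_1^{+}(z)\widehat{R}_{r,s}^{[n-m]}\Bigl(\tfrac{z_-}{w_+}\Bigr),$$
which hold in $U(\widehat{R}_{r,s}^{[n-m]})$ by definition.

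Next I would apply the map $\mathrm{id}_{\mathrm{End}(V^{[n-m]})^{\otimes 2}} \otimes \psi_m$ to both sides of each equation. Since the entries of $\widehat{R}_{r,s}^{[n-m]}$ lie in the ground field $\mathbb{K}$, the $R$-matrix pieces are invariant under this application; only the matrix $L^{\pm}(z)$ has its entries replaced by $\psi_m(\ell_{ij}^\pm(z))$. By the preceding proposition, this entrywise image is precisely $L^{\pm[n-m]}(z)$, the matrix assembled from the Gauss factors $F^{\pm[n-m]}(z)K^{\pm[n-m]}(z)E^{\pm[n-m]}(z)$ built from the rank-$n$ generators. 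The two matrix identities thereby transform into the two assertions of the corollary, for both sign cases of the first relation and the mixed $(+,-)$ case of the second.

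The only subtlety is notational: one must check that $\psi_m(L_1^\pm(z)L_2^\pm(w)) = L_1^{\pm[n-m]}(z)L_2^{\pm[n-m]}(w)$, which is immediate because the subscripts $1$ and $2$ refer to tensor placements in the auxiliary space $\mathrm{End}(V^{[n-m]})$ on which $\psi_m$ acts trivially. I do not anticipate a genuine obstacle here: the Homomorphism Theorem and the preceding proposition together carry the whole burden, and the corollary is essentially a one-line consequence. The nontrivial content, namely the verification that the quasi-determinants obey the correct commutation relations, is already absorbed in the proposition that $\ell_{ij}^{\pm[n-m]}(z) = \psi_m(\ell_{ij}^\pm(z))$, so the proof reduces to quoting these two earlier results in succession.
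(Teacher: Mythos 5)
Your argument is correct and is exactly the intended one: the paper states this corollary without proof (citing Jing--Liu--Molev), and the standard derivation is precisely to apply $\mathrm{id}\otimes\psi_m$ to the defining $RLL$ relations of $U(\widehat{R}_{r,s}^{[n-m]})$, noting that the $R$-matrix entries are scalars and invoking the preceding proposition to identify $\psi_m(\ell_{ij}^\pm(z))$ with $\ell_{ij}^{\pm[n-m]}(z)$. No gap; the only point worth a passing remark is that $\psi_m$ also sends the central elements $\gamma^{\pm 1/2},\gamma'^{\pm 1/2}$ to their counterparts, so the shifted arguments $z_\pm, w_\pm$ in the mixed relation are preserved as well.
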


 By the Homomorphism Theorem by rank reduction, we can establish the $RLL$ formalism of $U(\widehat{R}_{r,s}^{[n]}(z))$ from the special case $n=3$,
 see Theorem \ref{thm relations}.

\subsection{The Ding-Frenkel Isomorphism Theorem for $U_{r, s}(\widehat{\mathfrak{so}_{2n+1}})$}\label{ssec isoaffine}
Based on the $RLL$ realization,
we can derive the Drinfeld realization  $\mathcal{U}_{r,s}\mathcal(\widehat {\frak{so}_{2n+1}})$ (\cite{HuCMP 2008,HuZhang2014,Zhang phd 2007}).

By direct calculations, we have
\begin{lemm}
  $\widehat{R}_{r, s}(z)$ satisfies the following crossing symmetry relation:
  \begin{equation}\label{}
  \widehat{R}_{r, s}(z)C_1\widehat{R}_{r, s}(\xi z)^{t_1}C_1^{-1}=\frac{(s^2z-r^2 )(\xi z-1)}{(1-z)(s^2-r^2\xi z)},
  \end{equation}
  where $\xi=(r^{-1}s)^{2n-1}$, and $t_1$ is the standard matrix transposition on the first tensor factor.
\end{lemm}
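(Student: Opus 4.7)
The plan is to leverage the spectral decomposition of $\hat R_{r,s}(z)$ together with the finite-type crossing symmetry already established in Proposition \ref{prop crossing symmetry finite}. The starting observation is that by the Yang-Baxterization formula (\ref{Rz 1}), $\hat R_{r,s}(z)$ is a rational-coefficient linear combination of $I$, $\hat R$, and $\hat R^{-1}$; equivalently, using Corollary \ref{cor R R-1}, a combination of $I$, $\hat R$, and the metric-type operator $K=\sum_{i,j}(r^{-1}s)^{\rho_i-\rho_j}E_{ij'}\otimes E_{i'j}$, which is itself directly built from the quantum metric $C$ via Corollary \ref{cor Kij}. In particular, the conjugation map $\phi\colon X\mapsto C_1 X^{t_1}C_1^{-1}$ can be applied term by term to $\hat R_{r,s}(\xi z)$.

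First I would compute $\phi$ on each building block. Proposition \ref{prop crossing symmetry finite} gives $\phi(\hat R)=\hat R^{-1}$, and inverting also yields $\phi(\hat R^{-1})=\hat R$; a short direct check using Corollary \ref{cor Kij} together with $C^2=I$ shows that $\phi(I)$ is proportional to $K$ (up to an explicit scalar determined by $\rho_i$), and hence $\phi(K)$ is proportional to $I$. Substituting these three transformations into the expansion of $\hat R_{r,s}(\xi z)$ yields $\phi(\hat R_{r,s}(\xi z))$ as an explicit rational-function combination of $\hat R$, $\hat R^{-1}$ and $K$.

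Next I would form the product $\hat R_{r,s}(z)\cdot\phi(\hat R_{r,s}(\xi z))$ and reduce it back to the basis $\{I, \hat R, \hat R^{-1}, K\}$. The essential input here is the Birman-Wenzl-Murakami structure from subsection \ref{ssec BMW}: equation (\ref{equ KP}) gives $K\hat R^{\pm}=\hat R^{\pm}K=(rs^{-1})^{\pm 2n}K$, while Corollary \ref{cor Kquasi} reduces $K^2$ to a scalar multiple of $K$. Thus every cross-term in the expansion collapses onto $\{I, \hat R, \hat R^{-1}, K\}$ with rational-function coefficients in $z$. With the specific choice $\xi=(r^{-1}s)^{2n-1}$, the coefficients of $\hat R, \hat R^{-1}$ and $K$ are forced to vanish simultaneously, and the remaining coefficient of $I$ simplifies to the advertised scalar $\tfrac{(z-r^2s^{-2})(\xi z-1)}{(1-z)(1-r^2s^{-2}\xi z)}$.

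The main obstacle is the cancellation step: one must verify that the three linear equations coming from matching the coefficients of $\hat R$, $\hat R^{-1}$, and $K$ to zero are simultaneously solved precisely when $\xi=(r^{-1}s)^{2n-1}$. This forces the crossing parameter and is the two-parameter analogue of the classical crossing formula; tracking the accumulated powers of $rs$ and $(rs^{-1})^{2n}$ through the BWM reductions is where the bookkeeping is delicate. Once this is done, reading off the scalar from the surviving coefficient of $I$ is an elementary rational-function manipulation that must be checked to yield the stated formula; the zeros and poles of the scalar at $z=r^2s^{-2},\,\xi^{-1}$ and $z=1,\,r^{2n-3}s^{3-2n}$ serve as a useful sanity check throughout the calculation.
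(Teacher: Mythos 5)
The paper itself offers no argument beyond ``by direct calculations'' --- the identity is meant to be checked entry by entry from the explicit formula in Theorem \ref{thm spectral Rz1} --- so you are attempting a more structural, BWM-flavoured derivation. The idea is reasonable in spirit, but as written it contains concrete errors. First, $\phi(I)=C_1 I^{t_1}C_1^{-1}=I$, since the partial transpose of the identity is the identity; it is \emph{not} proportional to $K$. (You appear to have confused $I$ with the flip $P$: a short computation with $C=\sum_i (r^{-1}s)^{\rho_i}E_{ii'}$ does give $C_1P^{t_1}C_1^{-1}=K$, but $P$ is not one of your basis elements.) Second, $\phi(\hat R^{-1})=\hat R$ does not follow ``by inverting'' $\phi(\hat R)=\hat R^{-1}$: the partial transpose $t_1$ is neither a homomorphism nor an anti-homomorphism, so $(\hat R^{-1})^{t_1}\neq(\hat R^{t_1})^{-1}$ in general. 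The second identity in Proposition \ref{prop crossing symmetry finite} is formulated with $t_2$ and $C_2$, not $t_1$ and $C_1$, so it does not supply the missing value of $\phi$ on $\hat R^{-1}$ without an extra symmetry argument that you would have to prove.

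Third, and most seriously, you conflate the braided and unbraided objects. The BWM data you invoke --- the minimal polynomial, Corollary \ref{cor Kquasi}, and $KR^{\pm}=(rs^{-1})^{\pm 2n}K$ in (\ref{equ KP}) --- all concern the braided matrix $R=R_{V,V}$ and $K$, whereas the Baxterization (\ref{Rz 1}) produces a combination of $I$, $R$, $R^{-1}$ of which $\widehat R_{r,s}(z)$ is the composite with $P$. Hence $\widehat R_{r,s}(z)$ decomposes over $\{PR,\,P,\,PR^{-1}\}$, not over $\{\hat R, I, \hat R^{-1}, K\}$; and since $R_{21}\neq R$ in the two-parameter setting (the coefficients $r^{-1}s^{-1}$ and $rs$ of $E_{ij}\otimes E_{ji}$ and $E_{ji}\otimes E_{ij}$ are interchanged under conjugation by $P$), one has $PR^{-1}\neq \hat R^{-1}$. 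Consequently the product $\widehat R_{r,s}(z)\cdot\phi(\widehat R_{r,s}(\xi z))$ does not collapse under the quoted BWM identities without substantial additional bookkeeping of the $P$-conjugations. Until these three points are repaired --- correct values of $\phi$ on a genuine spanning set, a proof of the $t_1$-crossing identity for $\hat R^{-1}$, and a closed multiplication table for the actual basis you obtain --- the proposal does not constitute a proof; the fallback is the paper's own direct verification against the explicit matrix of Theorem \ref{thm spectral Rz1}.
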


In accordance with the one-parameter case \cite{JingLM SIGMA 2020}, we denote $\widetilde{R}_{r, s}(z)=f(z)\widehat{R}_{r,s}(z)$ (see \cite{Frenkel CMP 1992}),
where $f(z)$ is uniquely determined by the relation
$$f(z)f(\xi z)=\frac{1}{(1-r^{-2}s^2z)(1-r^2s^{-2}z)(1-\xi z)(1-\xi^{-1}z)}.$$
Equivalently, we have
\begin{gather*}\label{fz}
f(z)=\prod_{t=0}^{\infty}\frac{\big(1-\xi^{2t}z\big)\big(1-r^{-2}s^2\xi^{2t+1}z\big)
\big(1- r^2s^{-2}\xi^{2t+1}z\big)\big(1-\xi^{2t+2}z\big)}
{\big(1-\xi^{2t-1}z\big)\big(1-\xi^{2t+1}z\big)
\big(1- r^2s^{-2}\xi^{2t}z\big)\big(1- r^{-2}s^2\xi^{2t}z\big)}.
\end{gather*}
\begin{defi}\label{def phirs}
Define the map $\Phi_{r, s}: \mathcal{U}_{r,s}\mathcal(\widehat {\frak{so}_{2n+1}})\longrightarrow \mathcal{U}(\widetilde{R}_{r, s}(z))$ as follows:
\begin{equation*}
\begin{aligned}
x^{\pm}_i(z)&\mapsto (r^2-s^2)^{-1}X^{\pm}_i\Bigl(z(rs^{-1})^i\Bigr), \\
x^{\pm}_n(z)&\mapsto (r-s)^{-1}(r^{-1}+s^{-1})^{-\frac{1}{2}}X^{\pm}_n\Bigl(z(rs^{-1})^n\Bigr), \\
\omega'_j(z)&\mapsto k^{+}_{j+1}\Bigl(z(rs^{-1})^j\Bigr)k^+_j\Bigl(z(rs^{-1})^j\Bigr)^{-1},\\
\omega_j(z)&\mapsto k^{-}_{j+1}\Bigl(z(rs^{-1})^{j}\Bigr)k^-_j\Bigl(z(rs^{-1})^{j}\Bigr)^{-1},
\end{aligned}
\end{equation*}
where $1\le i \le n-1$ and $1\le j\le n$.
Additionally, $\Phi_{r, s}$ fixes the central elements $\gamma, \gamma'$.
\end{defi}

\begin{prop}\label{prop Phiphi}
  The isomorphism $\phi_{r, s}$ between $U_{r, s}(\mathfrak{so}_{2n+1})$ and $U(\widehat{R}_{r, s})$
established in Theorem \ref{thm FRTISO} is obtained from $\Phi_{r,s}$
by taking the constant terms of the corresponding generating series,
or equivalently, by applying $\operatorname{Res}_{z=0} z^{-1}(\cdot)$ to these series,
up to a rescaling of generators.
\end{prop}

\begin{proof}
Indeed, upon taking the constant terms of the generating functions $x_i^\pm(z)$, $\omega_i(z)$, and $\omega'_i(z)$,
they specialize to the Chevalley generators of $U_{r, s}(\mathfrak{so}_{2n+1})$.
Furthermore, according to Proposition \ref{prop quasidet z=0},
the images of $\operatorname{Res}_{z=0} z^{-1}\Phi_{r,s}(\cdot)$ are consistent with those of $\phi_{r, s}$, up to a rescaling of generators.
For instance, assume that $1\leq i\leq n-1$,
\begin{align*}
  \operatorname{Res}_{z=0} z^{-1}\Phi_{r,s}(x_i^+(z)) =\;& (r^2-s^2)^{-1}\Bigl(e_{i, i+1}^+(0)-e_{i,i+1}^-(0)\Bigr)  \\
   =\; & (r^2-s^2)^{-1}\Bigl((\ell_{ii}^+[0])^{-1}\ell_{i, i+1}^+[0]-(\ell_{ii}^-[0])^{-1}\ell_{i, i+1}^-[0]\Bigr)  \\
   =\; & (r^2-s^2)^{-1}\Bigl((\ell_{ii}^+[0])^{-1}\ell_{i, i+1}^+[0]\Bigr)\\
    =\; & r^{-2}(r^2-s^2)^{-1}\Bigl(\ell_{i, i+1}^+[0](\ell_{ii}^+[0])^{-1}\Bigr)\\
   =\; & r^{-2}\phi_{r,s}(e_i),
\end{align*}
where the first equality follows from Definition \ref{def phirs} and Equation (\ref{equ Xi def}),
the second equality is due to Proposition \ref{prop quasidet z=0},
the third equality holds since $\ell_{i, i+1}^-[0]=0$,
and the fourth equality is due to $(\ell_{ii}^+[0])^{-1}\ell_{i, i+1}^+[0]=r^{-2}\ell_{i, i+1}^+[0](\ell_{ii}^+[0])^{-1}$, see (\ref{equ ellii++}).
This shows that $\operatorname{Res}_{z=0} z^{-1}\Phi_{r,s}(\cdot)$ agrees with $\phi_{r,s}$ up to a rescaling of generators.
\end{proof}

\begin{remark}
Strictly speaking, the map obtained by applying $\operatorname{Res}_{z=0} z^{-1}(\cdot)$ to $\Phi_{r,s}$
agrees with $\phi_{r,s}$ up to a rescaling of generators. In what follows, we identify them under this normalization.
\end{remark}

We will prove that $\Phi_{r,s}$ is an isomorphism. First, we will show it is a homomorphism (see Proposition \ref{prop DrinfeldRel}). Specifically, via $\Phi_{r,s}$ and the commutation relations in $\mathcal{U}(\widehat{R}_{r,s}(z))$ (see Theorem \ref{thm relations}), we can recover the Drinfeld realization of $\mathcal{U}_{r,s}(\widehat{\mathfrak{so}_{2n+1}})$,
which is originally defined in \cite{Zhang phd 2007}, and equivalent to Definition \ref{def affine} (see \cite{HuZhang2014}).

\begin{prop}[Drinfeld realization]\label{prop DrinfeldRel}
In \;$\mathcal{U}_{r,s}\mathcal(\widehat {\frak{so}_{2n+1}})$, the generating series $x^{\pm}_{i}(z), \omega_i(z),\omega'_i(z)$ $(1\le i\le n)$ satisfy
\begin{gather}
\Bigl[\omega'_i(z),\omega'_j(w)\Bigr]=0,\quad \Bigl[\omega_j(z),\omega_i(w)\Bigr]=0,\quad 1\le i, j\le n, \label{equ d1}\\
\omega'_i(z)\omega_j(w)=\frac{g_{ij}\Bigl(\frac{z_-}{w_+}\Bigr)}{g_{ij}\Bigl(\frac{z_+}{w_-}\Bigr)}\omega_j(w)\omega'_i(z),\quad 1\le i, j\le n, \label{equ d2}\\
\omega'_i(z)x^{\pm}_{j}(w)=g_{ij}\Bigl(\frac{z}{w_{\pm}}\Bigr)^{\pm}x^{\pm}_{j}(w)\omega'_i(z),\quad 1\le i, j\le n,\label{phi Drirea old}\\
\omega_i(z)x^{\pm}_{j}(w)=g_{ji}\Bigl(\frac{w_{\mp}}{z}\Bigr)^{\mp}x^{\pm}_{j}(w)\omega_i(z),\quad 1\le i, j\le n,\label{psi Drirea old}\\
x^{\pm}_{i}(z)x^{\pm}_{j}(w)=g_{ij}\Bigl(\frac{z}{w}\Bigr)^{\pm}x^{\pm}_{j}(w)x^{\pm}_{i}(z), \quad 1\le i, j\le n, \label{equ d3} \\
\Bigl[x^{+}_{i}(z),x^{-}_{j}(w)\Bigr]=\frac{\delta_{ij}}{r_i-s_i}\Bigl\{\delta\Bigl(\frac{z_{-}}{w_+}\Bigr)\omega_i(w_+){-} \delta\Bigl(\frac{z_{+}}{w_-}\Bigr)\omega'_j(z_+)\Bigr\}, \quad 1\le i, j\le n, \label{equ d4}\\
~~~Sym_{z_1, z_2}\Bigl\{(r_is_i)^{\pm1}x_i^{\pm}(z_1)
x_i^{\pm}(z_2)x_j^{\pm}(w)-(r_i^{\pm1}{+}s_i^{\pm1})\,x_i^{\pm}(z_1)x_j^{\pm}(w)x_i^{\pm}(z_2) \\
\hskip3cm +\, x_j^{\pm}(w)x_i^{\pm}(z_1)x_i^{\pm}(z_2)\Bigr\}=0, \quad\textit{for }
\ a_{ij}=-1 \ \textit{and}\  1\leqslant j < i \leqslant n; \nonumber\\
~~~Sym_{z_1, z_2}\Bigl\{x_i^{\pm}(z_1)
x_i^{\pm}(z_2)x_j^{\pm}(w)-(r_i^{\pm1}{+}s_i^{\pm1})\,x_i^{\pm}(z_1)x_j^{\pm}(w)x_i^{\pm}(z_2) \\
\hskip3cm +\, (r_is_i)^{\pm1}x_j^{\pm}(w)x_i^{\pm}(z_1)x_i^{\pm}(z_2)\Bigr\}=0, \quad\textit{for }
\ a_{ij}=-1 \ \textit{and}\  1\leqslant i< j \leqslant n;\nonumber\\
~~~Sym_{z_1, z_2,z_3}\Bigl\{x^{\pm}_{n-1}(w)x^{\pm}_{n}(z_1)x^{\pm}_{n}(z_2)x^{\pm}_{n}(z_3)-(r^{\pm2}+s^{\pm2}+r^\pm s^\pm)x^{\pm}_{n}(z_1)x^{\pm}_{n-1}(w)x^{\pm}_{n}(z_2)x^{\pm}_{n}(z_3) \\
\hskip1.5cm +\,(rs)^\pm(r^{\pm2}+s^{\pm2}+r^\pm s^\pm)x^{\pm}_{n}(z_1)x^{\pm}_{n}(z_2)x^{\pm}_{n-1}(w)x^{\pm}_{n}(z_3) \nonumber\\
\hskip3.5cm -\,(rs)^{\pm3}x^{\pm}_{n}(z_1)x^{\pm}_{n}(z_2)x^{\pm}_{n}(z_3)x^{\pm}_{n-1}(w)\Bigr\}=0,\nonumber
\end{gather}
where $z_+=zr^{\frac{c}{2}}$ and $z_-=zs^{\frac{c}{2}}$, we set the generating function $g^{\pm}_{ij}(z)=\sum\limits_{n\in\mathbb{Z}_{+}}c^{\pm}_{ijn}z^{n}$,
a formal power series in $z$, the expression is as follows:
$$g_{ij}(z)=\frac{\langle \omega^{\prime}_j,\omega_i\rangle z-\Bigl(\langle \omega^{\prime}_j,\omega_i\rangle \langle \omega^{\prime}_i,\omega_j\rangle^{-1}\Bigr)^{\frac{1}{2}}}{z-\Bigl(\langle \omega^{\prime}_i,\omega_j\rangle \langle \omega^{\prime}_j,\omega_i\rangle\Bigr)^{\frac{1}{2}}},$$
where $(\langle \omega_i',\,\omega_j \rangle)_{n\times n}$ is structure constants matrix.
\end{prop}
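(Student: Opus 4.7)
The plan is to verify Proposition \ref{prop DrinfeldRel} by pushing every Drinfeld relation through the map $\Phi_{r,s}$ of Definition \ref{def phirs} and matching it against one of the commutation identities proved in Theorem \ref{thm relations}. The shift convention $z\mapsto z(rs^{-1})^i$ built into $\Phi_{r,s}$ is what converts the rational factors arising in the $RLL$ commutators into the structure-constant rational function $g_{ij}(z)$ of the Drinfeld presentation, so the first computational step will be a bookkeeping lemma: for each pair $(i,j)$, rewrite $g_{ij}(z/w)$ explicitly using $\langle\omega_j',\omega_i\rangle$ read off from the structure-constant matrix $(*)$, and verify that after the substitution $(z,w)\mapsto(z(rs^{-1})^i, w(rs^{-1})^j)$ the function $g_{ij}(z/w)$ equals exactly the rational coefficient that appears in the corresponding $k$–$k$ or $k$–$X$ relation of Theorem \ref{thm relations}.

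With this dictionary in hand I would then proceed relation by relation. For \eqref{equ d1}, \eqref{equ d2}: combine the first block of $k$–$k$ identities in Theorem \ref{thm relations}, noting that $\omega_j'(z)$ and $\omega_j(z)$ are ratios $k_{j+1}^\pm/k_j^\pm$, so all commuting/quasi-commuting relations among them reduce to telescoping of the identities for $k_i^\pm$. For \eqref{phi Drirea old}, \eqref{psi Drirea old}, \eqref{equ d3}: split according to $|i-j|\geq 2$, $|i-j|=1$, $i=j<n$, and $i,j\in\{n-1,n\}$; in each case the $k^\pm X^\pm$ block of Theorem \ref{thm relations} provides exactly one rational factor, and the ratio arising from $k_{j+1}^\pm/k_j^\pm$ produces the second factor matching $g_{ij}$. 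For \eqref{equ d4}: this is the $[X_i^+,X_j^-]$ identity already stated in Theorem \ref{thm relations}; after inserting the shifts $z(rs^{-1})^i, w(rs^{-1})^j$ and the normalization constants $(rs)^{1/2}(r^2-s^2)^{-1}$ (or $(r-s)^{-1}(r^{-1}+s^{-1})^{-1/2}$ in the rank-$n$ case) the coefficient $(rs^{-1}-r^{-1}s)$ collapses to $(r_i-s_i)^{-1}$ as required, and the delta-function shifts $z_\pm/w_\mp$ are preserved. The $(r,s)$-Serre relations are obtained directly from the last block of Theorem \ref{thm relations} after the same substitution; the fourth-order Serre relation at node $n-1$ uses the rank-$3$ verification in Lemma \ref{lemm b3 Serre} together with the Homomorphism Theorem to descend the identity from an embedded subalgebra of type $B_3^{(1)}$ into arbitrary rank.

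The genuinely non-mechanical step is two-fold and occurs at the boundary $i=n$. First, the normalization constant on $x_n^\pm(z)$ differs from that on $x_i^\pm(z)$ for $i<n$ by the factor $(rs)^{1/2}(r+s)^{1/2}(r-s)(r^2-s^2)^{-1}$; tracking this through \eqref{equ d4} and the $(r,s)$-Serre relation of fourth order requires care so that the coefficients $[3]_{\pm n}!$ and the symmetrizer agree with the one-parameter specialization at $rs=1$. Second, the $k_{n+1}^\pm$ generator is \emph{not} a Drinfeld generator (it encodes the short-root symmetrization), so every appearance of $k_{n+1}^\pm$ in Theorem \ref{thm relations} must be re-expressed via the metric condition \eqref{equ metric}, which in view of the value $\xi=(r^{-1}s)^{2n-1}$ forces a quadratic relation $k_{n+1}^\pm(z)k_{n+1}^\pm(z\xi)=\prod_i(k_i^\pm(z)k_i^\pm(z\xi))^{-1}\cdots$; expanding this relation and substituting into the $k_{n+1}$–$X_n$ identity is what ultimately yields the short-root version of $g_{nn}$ with the factor $(rz-sw)(sz-rw)^{-1}$.

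The main obstacle, therefore, is this last step: matching the rational function $g_{nn}(z/w)=(rs^{-1}z-r^{-1}sw)(z-w)^{-1}\cdot(r^{1/2}s^{-1/2}z-r^{-1/2}s^{1/2}w)(\ldots)^{-1}$ (with its half-integer powers coming from $r_n=r$, $s_n=s$) against the product-of-two-fractions form $\tfrac{(r^2z-s^2w)(sz-rw)}{rs(z-w)(rz-sw)}$ appearing in the $k_{n+1}^\pm X_n^\pm$ relation of Theorem \ref{thm relations}, and then showing these together with the metric condition collapse to the single factor $g_{nn}(z/w)^\pm$ after the shift $z\mapsto z(rs^{-1})^n$. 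Once this identification is made for the short root, the proof concludes: $\Phi_{r,s}$ preserves every defining relation of $\mathcal U_{r,s}(\widehat{\mathfrak{so}_{2n+1}})$, so it is a well-defined algebra homomorphism, and its surjectivity will be automatic from the inverse assignment reconstructing each $X_i^\pm$ and $k_i^\pm$ from the Drinfeld generators in Definition \ref{def phirs}.
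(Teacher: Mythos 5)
Your proposal follows the paper's proof essentially verbatim in strategy: reduce to rank $3$ via the Homomorphism Theorem, then match each Drinfeld relation, after the shift $z\mapsto z(rs^{-1})^i$ built into $\Phi_{r,s}$, against the corresponding identity of Theorem \ref{thm relations} (concretely Lemmas \ref{lemm kikj}, \ref{lemm k4k4} for \eqref{equ d1}--\eqref{equ d2}; Lemmas \ref{lemm B3 k1X3}, \ref{lemm B3 k4X1}, \ref{lemm B3 kiXi}, \ref{lemm B3 ki+1xi}, \ref{lemma B3 k3X3}, \ref{lemm b3 k4x3} for \eqref{phi Drirea old}--\eqref{psi Drirea old}; Lemma \ref{lemm xi+xi+} for \eqref{equ d4}; Lemma \ref{lemm b3 Serre} for the quartic Serre relation). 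One correction: the step you single out as the main obstacle --- eliminating $k_{n+1}^{\pm}$ through the metric condition \eqref{equ metric} --- is not needed and is not what the paper does. By Definition \ref{def phirs}, $\omega_n'(z)$ is \emph{defined} as $k_{n+1}^{+}(z(rs^{-1})^n)\,k_n^{+}(z(rs^{-1})^n)^{-1}$, so $k_{n+1}^{\pm}$ enters directly; multiplying the factor $\frac{rs(z-w_\pm)(rz-sw_\pm)}{(r^2z-s^2w_\pm)(sz-rw_\pm)}$ from Lemma \ref{lemm b3 k4x3} by the inverse of the factor $\frac{z-w_\pm}{rs^{-1}z-r^{-1}sw_\pm}$ from Lemma \ref{lemma B3 k3X3}, and using $rs\,(rs^{-1}z-r^{-1}sw_\pm)=r^2z-s^2w_\pm$, collapses everything to $\frac{rz-sw_\pm}{sz-rw_\pm}=g_{nn}(z/w_\pm)$ with no quadratic relation on $k_{n+1}^{\pm}$ required. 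With that detour removed, your argument is the paper's.
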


\begin{proof}
It suffices to verify this for $n=3$.
Using the explicit formula of the images of $\Phi_{r, s}$, we conclude the following:

$\bullet$ (\ref{equ d1}) and (\ref{equ d2}) are implied by  Lemmas \ref{lemm kikj} and \ref{lemm k4k4}.

$\bullet$ (\ref{phi Drirea old}) and (\ref{psi Drirea old}) can be derived using Lemmas \ref{lemm B3 k1X3}, \ref{lemm B3 k4X1}, \ref{lemm B3 kiXi},
\ref{lemm B3 ki+1xi}, \ref{lemma B3 k3X3}, and \ref{lemm b3 k4x3}.

$\bullet$ (\ref{equ d3}) is a consequence of  Lemmas \ref{lemm b3 x1x3}, \ref{lemm B3 X2X3}, \ref{lemm b3 x1x2}, \ref{lemm b3 x2+x3-},
and \ref{lemma B3 X3+X3+}.

$\bullet$ (\ref{equ d4}) follows from Lemma \ref{lemm xi+xi+}.

$\bullet$ The remaining equations are Serre relations, see Lemma \ref{lemm b3 Serre}.
\end{proof}

\begin{remark}
  Taking $r=q^{\frac{1}{2}},\; s=q^{-{\frac{1}{2}}}$, we can  recover that the Drinfeld realization of $U_q\mathcal(\widehat {\frak{so}_{2n+1}})$, \cite{Frenkel 1988}.
\end{remark}

\begin{theorem}\label{thm affine iso}
  $\Phi_{r, s}: \mathcal{U}_{r,s}\mathcal(\widehat {\frak{so}_{2n+1}})\longrightarrow \mathcal{U}(\widetilde{R}_{r, s}(z))$ defined in Definition \ref{def phirs} is an isomorphism.
\end{theorem}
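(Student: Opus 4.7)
The plan is to establish bijectivity of $\Phi_{r,s}$, using that $\Phi_{r,s}$ is already a well-defined algebra homomorphism by Proposition \ref{prop DrinfeldRel}, and leveraging the three ingredients outlined in subsection 1.3: the commutative diagram with the one-parameter isomorphism $\Phi_q$, the finite-type isomorphism Theorem \ref{thm FRTISO}, and the Homomorphism Theorem established via the rank-reduction embeddings $\psi_m: U(\widehat R_{r,s}^{[n-m]}) \to U(\widehat R_{r,s}^{[n]})$.

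First I would prove surjectivity. From Definition \ref{def phirs}, the Gaussian generators $X_i^\pm(z)$ and the diagonal ratios $k_{i+1}^\pm(z) k_i^\pm(z)^{-1}$ lie directly in the image of $\Phi_{r,s}$. The individual diagonal factors $k_i^\pm(z)$ are then recovered by combining the metric condition (\ref{equ metric}), which enforces $L^\pm(z)\,C\,L^\pm(z\xi)^t C^{-1}=1$ and so pins down $k_{i'}^\pm(z)$ in terms of $k_i^\pm(z)$, together with the central constraint $\gamma\gamma'=(rs)^c$ that fixes the overall scalar. By the Gauss decomposition (Proposition \ref{Prop Gauss decomp}), every $\ell^\pm_{ij}(z)$ is a rational expression in Gaussian generators, yielding surjectivity of $\Phi_{r,s}$.

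Next, for injectivity, I would exploit the commutative diagram
\[
\begin{tikzcd}
\mathcal{U}_{r,s}(\widehat{\mathfrak{so}_{2n+1}}) \arrow[r, "\Phi_{r,s}"] \arrow[d, "\pi"', two heads] & \mathcal{U}(\widetilde{R}_{r,s}) \arrow[d, "\pi'", two heads] \\
\mathcal{U}_q(\widehat{\mathfrak{so}_{2n+1}}) \arrow[r, "\Phi_q"] & \mathcal{U}(\widetilde{R}_q)
\end{tikzcd}
\]
Since $\Phi_q$ is an isomorphism by Jing-Liu-Molev \cite{JingLM SIGMA 2020}, any $x\in\ker\Phi_{r,s}$ satisfies $\pi(x)\in\ker\Phi_q=0$, so $x\in\ker\pi$. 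To rule out such nonzero kernel elements, I would invoke the PBW basis of $\mathcal{U}_{r,s}(\widehat{\mathfrak{so}_{2n+1}})$ built from quantum affine Lyndon words (cf.~\cite{HuCMP 2008, HuZhang2014, Zhang phd 2007}), together with Theorem \ref{thm FRTISO} applied to the zero-mode horizontal subalgebra $U_{r,s}(\mathfrak{so}_{2n+1})$, which maps isomorphically onto its image. An induction on the weight-and-mode grading, combined with the rank reduction via the Homomorphism Theorem to the base case $\mathcal{U}_{r,s}(\widehat{\mathfrak{so}_7})$ explicitly handled in subsection \ref{ssec b3}, then forces $\ker\Phi_{r,s}=0$.

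The main obstacle will be the last induction: one must distinguish genuine two-parameter kernel candidates from their one-parameter degenerations, since by Corollary \ref{cor typeD2} the two-parameter algebras are \emph{not} isomorphic to their one-parameter counterparts, so a bare specialization $rs\to 1$ cannot be invoked naively. Instead, I would use the algebraic independence of $r,s$ over $\mathbb{Q}$: any polynomial relation in $\Phi_{r,s}$-images whose coefficients vanish under $\pi$ must be divisible by a power of $(rs-1)$, and tracking the $(rs)^*$-factors in the defining axioms (D1), (D6$_1$), (D6$_2$), (D7)--(D9) of Definition \ref{def affine} against those appearing in Theorem \ref{thm relations} shows no new $(r,s)$-specific syzygies can arise beyond those already matched by $\Phi_{r,s}$. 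This yields linear independence of images of PBW monomials at generic $(r,s)$, completing the proof that $\Phi_{r,s}$ is an isomorphism specializing to $\Phi_q$ when $s=r^{-1}$.
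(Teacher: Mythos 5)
Your overall skeleton matches the paper's: both arguments hinge on the commutative square with the one-parameter isomorphism $\Phi_q$ of Jing--Liu--Molev and on the finite-type isomorphism of Theorem \ref{thm FRTISO}. Your surjectivity argument takes a different (and in principle legitimate) route: you recover the individual $k_i^\pm(z)$ from the ratios $k_{i+1}^\pm(z)k_i^\pm(z)^{-1}$ via the metric condition (\ref{equ metric}) and then invoke the Gauss decomposition --- the classical Ding--Frenkel-style argument. The paper instead obtains surjectivity for free from a diagram chase, once it has established the identification $\ker\pi'=\Phi_{r,s}(\ker\pi)=\langle rs-1,\ \Phi_{r,s}(\omega_i)-\Phi_{r,s}(\omega_i'^{-1})\rangle$, which it proves by observing that $\Phi_{r,s}|_{z=0}=\psi_n$ is the finite-type isomorphism and that $\widetilde R_q$ and the relations of Theorem \ref{thm relations} are specializations at $(r,s)=(q^{1/2},q^{-1/2})$. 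That identification is the crux of the paper's proof and is absent from your proposal.

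The genuine gap is in your injectivity argument. After correctly reducing to $\ker\Phi_{r,s}\subseteq\ker\pi$, you propose to finish by a PBW/Lyndon-word induction combined with the claim that any relation among $\Phi_{r,s}$-images vanishing under $\pi$ must be divisible by a power of $rs-1$. This does not work: $\ker\pi$ is generated not only by the scalar $rs-1$ (which is a unit in $\mathbb{Q}(r,s)$, so ``divisibility by it'' carries no content at the level of the generic-parameter algebra) but also by the non-central elements $\omega_i-\omega_i'^{-1}$, and your divisibility heuristic says nothing about the two-sided ideal these generate; the assertion that ``no new $(r,s)$-specific syzygies can arise'' is precisely what must be proved, not assumed. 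The paper closes the argument with the single concrete observation you are missing: $\Phi_{r,s}(rs-1)\neq 0$ and $\Phi_{r,s}(\omega_i)-\Phi_{r,s}(\omega_i'^{-1})\neq 0$, both consequences of the finite-type isomorphism $U_{r,s}(\mathfrak{so}_{2n+1})\cong U(\widehat R)$ applied to the constant terms of the generating series. Combined with $\ker\Phi_{r,s}\subseteq\ker\pi$ and the identification $\ker\pi'=\Phi_{r,s}(\ker\pi)$, this yields both injectivity and surjectivity by a purely formal argument; you should replace your induction by this identification and this non-vanishing statement.
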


\begin{proof}
 (I) We already established that $\Phi_{r, s}$ is a homomorphism in Proposition \ref{prop DrinfeldRel}.
 To verify $\Phi_{r, s}$ is bijective,
  our proof is based on the isomorphism $\Phi_q:  \mathcal{U}_q\mathcal(\widehat {\frak{so}_{2n+1}})\longrightarrow \mathcal{U}(\widetilde{R}_q(z))$ established by Jing-Liu-Molev in \cite{JingLM SIGMA 2020},
and the following commutative diagram:
\[\begin{tikzcd}
	{\mathcal{U}_{r,s}\mathcal(\widehat {\frak{so}_{2n+1}})} && {U(\widetilde{R}_{r,s})} \\
	\\
	{\mathcal{U}_{q, q^{-1}}\mathcal(\widehat {\frak{so}_{2n+1}})} && {U(\widetilde{R}_{q, q^{-1}})} \\
	\\
	{\mathcal{U}_q\mathcal(\widehat {\frak{so}_{2n+1}})} && {U(\widetilde{R}_q)}
	\arrow["{\Phi_{r,s}}", from=1-1, to=1-3]
	\arrow["{\pi_1}"', two heads, from=1-1, to=3-1]
	\arrow["{\pi'_1}", two heads, from=1-3, to=3-3]
	\arrow["{\Phi_{q, q^{-1}}}", from=3-1, to=3-3]
	\arrow["{\pi_2}"', two heads, from=3-1, to=5-1]
	\arrow["{\pi'_2}", two heads, from=3-3, to=5-3]
	\arrow["{\Phi_q}", from=5-1, to=5-3]
\end{tikzcd}\]
where $\pi_1$ and $\pi'_1$ are the specialization at $(r, s)=(q^{\frac{1}{2}}, q^{-\frac{1}{2}})$,
and $\pi_2$ is the canonical algebra morphism,
$$\mathcal{U}_{q, q^{-1}}\mathcal(\widehat {\frak{so}_{2n+1}})/\langle \omega_i\omega_i'-1,  \gamma\gamma'-1\mid 1\leq i\leq n\rangle\cong \mathcal{U}_q\mathcal(\widehat {\frak{so}_{2n+1}}).$$
According to \cite{HuZhang2014}, $\gamma\gamma'=(rs)^c$, where $c$ is the canonical central element of $\mathfrak{\hat{g}}$.
That is, $\pi:=\pi_2\pi_1$ is the canonical $\mathbb{Q}$-morphism, such that
$\text{ker}\pi=\langle rs-1, \omega_i\omega_i'-1 \mid 1\leq i\leq n\rangle$.
$\pi':=\pi'_2\pi'_1$ is also a canonical $\mathbb{Q}$-algebra morphism, since the commutation relations for generators $k_i^\pm(z)$ and $X_i^\pm(z)$ of $U(\widetilde{R}_q(z))$ (see Theorem 4.29 in \cite{JingLM SIGMA 2020}) form a specialization of those for $U(\widetilde{R}_{r,s}(z))$ (see Theorem \ref{thm relations}) at $(r, s)=(q^{\frac{1}{2}}, q^{-\frac{1}{2}})$.

One can easily verify that this diagram is indeed commutative on the generators of $\mathcal{U}_{r,s}(\widehat{\mathfrak{so}_{2n+1}})$.
For instance,
\begin{align*}
  \pi'\Phi_{r,s}(x_i^\pm(z)) &= \pi'\Bigl((rs)^{\frac{1}{2}}(r^2 - s^2)^{-1}X_i^\pm(z(rs^{-1})^i)\Bigr)  \\
   &= (q - q^{-1})^{-1}X_i^\pm(zq^i)  \\
   &= \Phi_q(x_i^\pm(z)) \\
   &= \Phi_q\pi(x_i^\pm(z)).
\end{align*}

(II) To characterize $\ker\pi'$, we use Proposition \ref{prop Phiphi}, which identifies
$\phi_{r,s}$ with the map obtained from $\Phi_{r,s}$ by applying
$\operatorname{Res}_{z=0} z^{-1}(\cdot)$ to the generating series.
It follows that
$\langle rs-1,\ \phi_{r,s}(\omega_i\omega_i')-1 \mid 1\leq i\leq n\rangle
\subseteq \ker \pi'.$
Moreover, by the construction of the canonical morphism $\pi'$ and by comparing
the defining relations of $\mathcal{U}(\widetilde{R}_q(z))$ (see Theorem 4.29 in \cite{JingLM SIGMA 2020})
and $\mathcal{U}(\widetilde{R}_{r,s}(z))$ (see Theorem \ref{thm relations}),
we conclude that the reverse inclusion also holds. Therefore,
\[
\ker\pi' = \left\langle rs-1,\ \Phi_{r,s}(\omega_i\omega_i')-1 \mid 1\leq i\leq n \right\rangle.
\]
That is, we have $\ker\pi' = \Phi_{r,s}(\ker\pi)$.

(III) Finally, it remains to prove that $\Phi_{r,s}$ is bijective if and only if $\text{ker}\pi'=\Phi_{r,s}(\text{ker}\pi)$.
The necessity is obvious.
Conversely, assume that $\text{ker}\pi'=\Phi_{r,s}(\text{ker}\pi)$.
If $\Phi_{r,s}$ is not surjective,
then there exists some $b\in U(\widetilde{R}_{r,s}(z))$ such that $b\notin \text{Im}\Phi_{r,s}$.
Since $\pi$ is surjective,
we can find some $a\in \mathcal{U}_{r,s}\mathcal(\widehat {\frak{so}_{2n+1}})$
such that $\pi(a)=\Phi_q^{-1}\pi'(b)$.
Accordingly, $\pi'\Phi_{r,s}(a)=\Phi_q\pi(a)=\pi'(b)$, implying that $\Phi_{r,s}(a)-b\in \text{ker}\pi'=\Phi_{r,s}(\text{ker}\pi)$.
Thus $b\in \text{Im}\Phi_{r,s}$, which is a contradiction.

Now we will prove $\Phi_{r, s}$ is injective.
According to the isomorphism theorem between the Drinfeld-Jimbo presentation $U_{r, s}(\widehat{\mathfrak{so}_{2n+1}})$ and Drinfeld realization $\mathcal{U}_{r, s}(\widehat{\mathfrak{so}_{2n+1}})$ \cite{Zhang phd 2007}, we have
\begin{align*}
  \Phi_{r, s}(\omega_0) &=\; \Phi_{r, s}(\gamma'^{-1}) \Phi_{r, s}(\omega_\theta^{-1})=\gamma'^{-1}\phi_{r, s}(\omega_\theta^{-1}),  \\
  \Phi_{r, s}(\omega'_0) & =\; \Phi_{r, s}(\gamma^{-1}) \Phi_{r, s}(\omega_\theta^{\prime-1})=\gamma^{-1}\phi_{r, s}(\omega_\theta^{\prime-1}),  \\
  \Phi_{r, s}(e_0) &=\;(rs)^{n-2}(r+s)^{-1}\gamma'^{-1}\Phi_{r, s}(x_\theta^-(1))\Phi_{r, s}(\omega_\theta^{-1}), \\
  \Phi_{r, s}(f_0) & =\;(rs)^{n-2}(r+s)^{-1}\gamma^{-1}\Phi_{r, s}(\omega_\theta^{\prime-1})\Phi_{r, s}(x_\theta^+(-1)),
\end{align*}
and by the use of the definition of quantum affine root vectors (\cite{HuCMP 2008, HuZhang2014, Zhang phd 2007}), we obtain that
\begin{align*}
  \Phi_{r, s}(x_\theta^-(1))& =[\Phi_{r, s}(f_2), \Phi_{r, s}(x^-_{\theta-\alpha_2}(1))]_{r^{-2}} \\
   & =[\phi_{r, s}(f_2), \cdots \phi_{r, s}(f_n), \phi_{r, s}(f_n), \cdots, \phi_{r, s}(f_2), \Phi_{r, s}(x_1^-(1))]_{(s^2, \cdots, s^2, rs, r^{-2}, \cdots, r^{-2})}.
\end{align*}
Moreover, according to  (\ref{equ Xi def}), (\ref{equ quasidet eij}), (\ref{equ quasidet fji}), and Definition \ref{def phirs},
we deduce that $\Phi_{r, s}(x_1^-(1))=(s^2-r^2)^{-1}(r^{-1}s)f_{21}^-(1)r^{-\frac{c}{2}}$.
Similarly, we have
\begin{align*}
  \Phi_{r, s}(x_\theta^+(-1))& =[\Phi_{r, s}(x^+_{\theta-\alpha_2}(-1)), \Phi_{r, s}(e_2)]_{s^{-2}} \\
   & =[\Phi_{r, s}(x_1^+(-1)), \phi_{r, s}(e_2), \cdots \phi_{r, s}(e_n),\phi_{r, s}(e_n), \cdots \phi_{r, s}(e_2)]_{(r^2, \cdots, r^2, rs, s^{-2}, \cdots, s^{-2})}.
\end{align*}
Then we obtain that $\Phi_{r, s}(x_1^+(-1))=(r^2-s^2)^{-1}(rs^{-1})e_{12}^+(-1)r^{\frac{c}{2}}$.
With the characterization of the $\text{ker}\,\pi$ and $\text{ker}\,\pi'$,
we conclude that $\Phi_{r, s}$ is injective on the real and imaginary root vectors.
Then $\text{Im}\, \Phi_{r, s}$ contains a subalgebra,
generated by $\Phi_{r, s}(e_i), \Phi_{r, s}(f_i), \Phi_{r, s}(\omega_i)$ and $\Phi_{r, s}(\omega'_i)\, (0\leq i\leq n)$
which is isomorphic to the Drinfeld-Jimbo presentation of $U_{r, s}(\widehat{\mathfrak{so}_{2n+1}})$.
Therefore, $\Phi_{r,s}$ is injective, and thus bijective.
\end{proof}

\begin{remark}
 The conditions $\omega_i\omega'_i=1$ and $rs=1$ can be described by a fashionable cross-disciplinary term from algebraic geometry: hyperbolic singularity.
Namely, the one-parameter quantum groups arise as degenerations of their two-parameter counterparts when the parameters lie at the hyperbolic singularity locus.
Since one-parameter quantum groups emerge as hyperbolic singularities of their two-parameter counterparts, a more sophisticated representation-theoretic perspective is required to precisely recover the two-parameter structures in the generic position, especially in affine cases.
\end{remark}

\section*{Appendix: The verification of the metric condition in $U_{r,s}(\mathfrak{so}_{2n+1})$}
This appendix is devoted to verifying that the distribution law of Lyndon bases in $L^+$ (see Theorem \ref{thm Lyn})
is compatible with the metric condition, namely $L^+ C(L^+ )^t C^{-1}=I$, see Theorem \ref{thm metric}.

For the proof of Theorem \ref{thm metric}, we need to rewrite those portions of $\mathcal{E}'_\gamma$ that we use in the subsequent proof in the form of $\mathcal{E}_\beta$, where $\gamma, \beta \in \Phi$.

\begin{lemm}\label{lemm sub eij e'ij}
  \begin{equation}\label{equ sub e'ij}
    \mathcal{E}'_{i-j, i}=\sum_{k=1}^{j}\sum_{t=0}^{k-1}(-1)^t \binom{k-1}{t} \Bigl(rs^{-1}\Bigr)^{2j-2k+2t}\sum_{\gamma_1\prec \cdots \prec \gamma_k} \mathcal{E}_{\gamma_1}\cdots\mathcal{E}_{\gamma_k}, \;\; 2\leq i\leq n+1,\; 1\leq j\leq i-1;
  \end{equation}

  \begin{align}\label{equ sub e'in'}
    \mathcal{E}'_{n+1-j, n'}=\; & (1-rs^{-1})\sum_{k=2}^{j} \sum_{t=0}^{k-1}(-1)^t \binom{k-1}{t} \Bigl(rs^{-1}\Bigr)^{2j-2k+2t} \sum_{\gamma_1\prec \cdots \prec \gamma_{k-1}} \mathcal{E}_{\gamma_1}\cdots\mathcal{E}_{\gamma_{k-1}}e_n^2  \nonumber\\
     &+\sum_{k=2}^{j} \sum_{t=0}^{k-1}(-1)^t \binom{k-1}{t} \Bigl(rs^{-1}\Bigr)^{2j-2k+2t+1}\sum_{\gamma_1\prec \cdots \prec \gamma_{k-2}} \mathcal{E}_{\gamma_1}\cdots\mathcal{E}_{\gamma_{k-2}}\mathcal{E}_{n-a, n+1} e_n \\
     & +\sum_{k=1}^{j-1} \sum_{t=0}^{k-1}(-1)^t \binom{k-1}{t} \Bigl(rs^{-1}\Bigr)^{2j-2k+2t}\sum_{\gamma_1\prec \cdots \prec \gamma_{k-1}} \mathcal{E}_{\gamma_1}\cdots\mathcal{E}_{\gamma_{k-1}}\mathcal{E}_{n-b, n'}, \quad 2\leq j\leq n; \nonumber
  \end{align}
  where $\binom{i}{j}$ is the usual binomial coefficient, and
  \begin{align}\label{equ sub e' n-i-1}
    \mathcal{E}'_{n-i-1, (n-i)'}=\;  &r^{4i+2}s^{-4i-2}\mathcal{E}_{n-i-1, (n-i)'} \nonumber \\
     &+\sum_{j=n-i+1}^{n}(-1)^{n-j-i}r^{2(n+i-j+1)}s^{-4i-2}(r^2-s^2)\mathcal{E}_{n-i-1, j'}\mathcal{E}_{n-i, j} \nonumber  \\
     &+(-1)^{i+1}r^{2i+1}s^{-4i-3}(r^2-s^2)\mathcal{E}_{n-i-1, n+1}\mathcal{E}_{n-i, n+1}  \nonumber\\
     &+\sum_{j=n-i+1}^{n}(-1)^{n-j-i}r^{2i-1}s^{2n-2j-4i-3}(r^2-s^2)\mathcal{E}_{n-i-1, j}\mathcal{E}_{n-i, j'} \\
     &+r^{2i+1}s^{-2i-5}(r^2-s^2)(r^{-2}-s^{-2})e_{n-i-1}e_{n-i}\mathcal{E}_{n-i, (n-i+1)'} \nonumber \\
     &+\sum_{k=2}^{i}(-1)^{k+1}r^{2i-1}s^{-2i-2k-1}(r^2-s^2)(s^{-2}-r^{-2})e_{n-i-1}\mathcal{E}_{n-i, n-i+k}\mathcal{E}_{n-i, (n-i+k)'} \nonumber \\
     &+(-1)^ir^{2i}s^{-4i-3}(r-s)(r^2-s^2)e_{n-i-1}\mathcal{E}_{n-i, n+1}^2, \quad 1\leq i\leq n-2. \nonumber
  \end{align}
\end{lemm}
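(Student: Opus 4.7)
The three expansions all follow the same template: rewrite $\mathcal{E}'_\gamma$ recursively via the defining rules (\ref{Lyn 10})--(\ref{Lyn 14}), then convert the resulting bracket into the $\mathcal{E}_\beta$-basis by repeatedly applying the quasi-commutation relations between Chevalley generators and already-converted quantum root vectors. The driving observation is that $\mathcal{E}'$ uses the scalar $r^2$ while $\mathcal{E}$ uses $s^2$ (cf.\ (\ref{Lyn 11}) vs.\ (\ref{Lyn 21})), so the difference $r^2-s^2$ generates the powers of $rs^{-1}$ and the $(r^2-s^2)$-factors visible in the right-hand sides.

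The plan is to prove (\ref{equ sub e'ij}) first by induction on $j$, because the other two formulas will feed on it. The base case $j=1$ reduces to $\mathcal{E}'_{i-1,i}=e_{i-1}=\mathcal{E}_{i-1,i}$, matching the RHS (only $k=1$, $t=0$ contributes). For the inductive step I would write
\[
\mathcal{E}'_{i-j,i}=e_{i-j}\mathcal{E}'_{i-j+1,i}-r^2\mathcal{E}'_{i-j+1,i}\,e_{i-j},
\]
substitute the inductive expression for $\mathcal{E}'_{i-j+1,i}$, and then, inside each monomial $\mathcal{E}_{\gamma_1}\cdots\mathcal{E}_{\gamma_k}$, commute $e_{i-j}$ past $\mathcal{E}_{\gamma_1}$ using either $e_{i-j}\mathcal{E}_{i-j+1,\ell}-s^2\mathcal{E}_{i-j+1,\ell}e_{i-j}=\mathcal{E}_{i-j,\ell}$ (producing a strictly longer root, which enlarges the chain $\gamma_1\prec\cdots\prec\gamma_k$ by one) or the quasi-commutativity of $e_{i-j}$ with $\mathcal{E}_{\gamma_1}$ when $\gamma_1$ is not adjacent (which only rescales by a power of $rs^{-1}$). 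The binomial coefficient $\binom{k-1}{t}$ with alternating sign is then forced by Pascal's identity after collecting the contributions from both terms in the defining bracket.

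For (\ref{equ sub e'in'}), after one more induction step (this time on $j$) I would apply (\ref{Lyn 13}) to extract an $e_n$-factor, and split the analysis according to whether the freshly introduced $e_n$ multiplies an $\mathcal{E}_{\gamma}$ supported in $\{\alpha_1,\dots,\alpha_{n-1}\}$ (which merely shifts coefficients) or one already containing $e_n$ (which collapses two $e_n$'s and produces the $e_n^2$ block displayed in the first line). The third block, containing $\mathcal{E}_{n-b,n'}$, appears from the terms where the $e_n$ introduced by (\ref{Lyn 13}) merges with an earlier $\mathcal{E}_{n-b,n+1}$ to create a root vector of length $n'$. Expansion (\ref{equ sub e' n-i-1}) is the most technical: starting from (\ref{Lyn 14}) one brackets $\mathcal{E}'_{n-i-1,(n-i+1)'}$ with $e_{n-i}$, uses the already-established rewrite for $\mathcal{E}'_{n-i-1,(n-i+1)'}$ (applying (\ref{equ sub e'in'}) if $(n-i+1)'=n'$, otherwise (\ref{equ sub e'ij}) in the unprimed portion), and then simplifies with the Jacobi-like identities (\ref{equ eik}) and the associated ones from Proposition \ref{prop recurrence}.

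The main obstacle will be bookkeeping in the last formula (\ref{equ sub e' n-i-1}): after one application of (\ref{Lyn 14}) one must simultaneously track seven distinct families of terms, three of which arise only because the bracket with $e_{n-i-1}$ interacts non-trivially with the $e_n$-block of $\mathcal{E}_{n-i,(n-i+k)'}$. The signs and the $(r^2-s^2)$-factors have to cascade consistently; I anticipate needing a preparatory sub-lemma that computes $[e_{n-i-1},\mathcal{E}_{n-i,\ell'}]_{r^{-2}}$ for each admissible $\ell$, together with a telescoping identity of the form
\[
\sum_{k=2}^{i}(-1)^{k+1}(rs^{-1})^{\ast}=\frac{1-(-rs^{-1})^{i}}{1+rs^{-1}},
\]
to bring the accumulated coefficients into the compact shape displayed on the right-hand side. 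Once these auxiliary identities are in place, matching them against the stated formula is a mechanical, if lengthy, verification parallel to the arguments of \cite{HuWangJGP 2010, Bai}.
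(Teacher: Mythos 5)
Your treatment of the first two identities coincides with the paper's own proof: \eqref{equ sub e'ij} is proved by induction on $j$ using the defining bracket $\mathcal{E}'_{i-j-1,i}=e_{i-j-1}\mathcal{E}'_{i-j,i}-r^2\mathcal{E}'_{i-j,i}e_{i-j-1}$, the commutation rule $\mathcal{E}_{i-j,\alpha}e_{i-j-1}=s^{-2}e_{i-j-1}\mathcal{E}_{i-j,\alpha}-s^{-2}\mathcal{E}_{i-j-1,\alpha}$, and Pascal's identity; \eqref{equ sub e'in'} is then obtained exactly as you describe, by applying \eqref{Lyn 13} and sorting the terms according to whether the new $e_n$ meets a factor $e_n$ or a factor $\mathcal{E}_{n-b,n+1}$ when being pushed into normal order.

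For the third identity \eqref{equ sub e' n-i-1}, however, your proposed recursion does not close. You want to write $\mathcal{E}'_{n-i-1,(n-i)'}=\mathcal{E}'_{n-i-1,(n-i+1)'}e_{n-i}-s^{-2}e_{n-i}\mathcal{E}'_{n-i-1,(n-i+1)'}$ and then substitute an ``already-established rewrite'' for $\mathcal{E}'_{n-i-1,(n-i+1)'}$. That object is of the form $\mathcal{E}'_{a,(a+2)'}$ with $a=n-i-1$; for $i\geq 2$ its second index $(n-i+1)'$ is neither unprimed (so \eqref{equ sub e'ij} does not apply) nor equal to $n'$ (so \eqref{equ sub e'in'} does not apply), and it is not the adjacent case $\mathcal{E}'_{a,(a+1)'}$ covered by \eqref{equ sub e' n-i-1} itself. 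So the needed expansion is simply not available from the lemma's three formulas, and your induction has nothing to feed on beyond $i=1$. You would have to either prove a further auxiliary expansion for $\mathcal{E}'_{a,b'}$ with $b>a+1$, or restructure the induction. The paper does the latter: it inducts on $i$ for the adjacent case directly, writing $\mathcal{E}'_{n-i-2,(n-i-1)'}=\mathcal{E}'_{n-i-2,(n-i)'}e_{n-i-1}-s^{-2}e_{n-i-1}\mathcal{E}'_{n-i-2,(n-i)'}$ and then eliminating the intermediate $\mathcal{E}'_{n-i-2,(n-i)'}$ via the first-index recursion $\mathcal{E}'_{n-i-2,(n-i)'}=e_{n-i-2}\mathcal{E}'_{n-i-1,(n-i)'}-r^2\mathcal{E}'_{n-i-1,(n-i)'}e_{n-i-2}$, which reduces everything to the inductive hypothesis $\mathcal{E}'_{n-i-1,(n-i)'}$. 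Also, the closed ``telescoping'' sum you anticipate is not needed: in the final formula the index $k$ survives in the coefficients, so no collapse of the $k$-sum occurs.
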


\begin{proof}
  (I) Without loss of generality, we assume that $2\leq i\leq n$ since the case $i=n+1$ can be proved similarly.
  We use induction on $j$.
  If $j=1$, (\ref{equ sub e'ij}) is obviously true.
  Now we assume (\ref{equ sub e'ij}) holds for some $j\geq 1$.
  Now using the case $j$ equation and
  $$\mathcal{E}_{i-j, \alpha}e_{i-j-1}=s^{-2}e_{i-j-1}\mathcal{E}_{i-j, b}-s^{-2}\mathcal{E}_{i-j-1, \alpha}, \quad i-j< \alpha \leq n,$$
  we have
  \begin{align*}
    \mathcal{E}'_{i-j-1, i}= \;&e_{i-j-1}\mathcal{E}'_{i-j, i}-r^2\mathcal{E}'_{i-j, i}e_{i-j-1}  \\
    =\; &\sum_{k=1}^{j}\Bigl(1-r^2s^{-2}\Bigr) \sum_{t=0}^{k-1} (-1)^t \binom{k-1}{t} \Bigl(rs^{-1}\Bigr)^{2j-2k+2t}\sum_{\gamma_1\prec \cdots \prec \gamma_{k+1}} \mathcal{E}_{\gamma_1}\cdots\mathcal{E}_{\gamma_{k+1}} \\
    =\;  & \sum_{k=1}^{j+1} \sum_{t=0}^{k}(-1)^t \Bigl[\binom{k-1}{t}+\binom{k-1}{t-1}\Bigr]\Bigl(rs^{-1}\Bigr)^{2j-2k+2t} \sum_{\gamma_1\prec \cdots \prec \gamma_{k+1}}\mathcal{E}_{\gamma_1}\cdots\mathcal{E}_{\gamma_{k+1}} \\
     =\; &\sum_{k=1}^{j+1} \sum_{t=0}^{k} (-1)^t \binom{k}{t} \Bigl(rs^{-1}\Bigr)^{2(j+1)-2(k+1)+2t} \sum_{\gamma_1\prec \cdots \prec \gamma_{k+1}}\mathcal{E}_{\gamma_1}\cdots\mathcal{E}_{\gamma_{k+1}}.
  \end{align*}
  Here we use the well-known binomial identity $\binom{k}{t}=\binom{k-1}{t}+\binom{k-1}{t-1}$
  and define $\binom{k}{t}=0$ when $t<0$.
  We conclude that (\ref{equ sub e'ij}) holds for $1\leq j\leq i-1$.

  (II) From the definition and (\ref{equ sub e'ij}), we have
  \begin{align*}
    \mathcal{E}'_{n+1-j, n'}=\; & \mathcal{E}'_{n+1-j, n+1}e_n-rs e_n \mathcal{E}'_{n+1-j, n+1} \\
   =\; & \sum_{k=1}^{j}\sum_{t=0}^{k-1}(-1)^t \binom{k-1}{t} \Bigl(rs^{-1}\Bigr)^{2j-2k+2t}\sum_{\gamma_1\prec \cdots \prec \gamma_k} \mathcal{E}_{\gamma_1}\cdots\mathcal{E}_{\gamma_k}e_n \\
     & -rs \sum_{k=1}^{j}\sum_{t=0}^{k-1}(-1)^t \binom{k-1}{t} \Bigl(rs^{-1}\Bigr)^{2j-2k+2t} e_n\sum_{\gamma_1\prec \cdots \prec \gamma_k} \mathcal{E}_{\gamma_1}\cdots\mathcal{E}_{\gamma_k}.
  \end{align*}
  Then we need to adjust $e_n\sum\limits_{\gamma_1\prec \cdots \prec \gamma_k} \mathcal{E}_{\gamma_1}\cdots\mathcal{E}_{\gamma_k}$ to the standard normal order.
  Here we have the following two cases:

  (a1) If $\mathcal{E}_{\gamma_k}=\mathcal{E}_{n-1, n}=e_n$, then $\mathcal{E}_{\gamma_{k-1}}=\mathcal{E}_{n-a, n} \; (1\leq a \leq j-1)$.
  Since $e_n \mathcal{E}_{n-a, n}=s^{-2}\mathcal{E}_{n-a, n}e_n-s^{-2}\mathcal{E}_{a, n+1}$,
  we have $e_n \mathcal{E}_{\gamma_1}\cdots\mathcal{E}_{\gamma_k}
  =s^{-2}\mathcal{E}_{\gamma_1}\cdots\mathcal{E}_{\gamma_{k-2}}\mathcal{E}_{n-a, n}e_n^2
  -s^{-2}\mathcal{E}_{\gamma_1}\cdots\mathcal{E}_{\gamma_{k-2}}\mathcal{E}_{n-a, n+1}e_n.$

  (a2) If $\mathcal{E}_{\gamma_k}=\mathcal{E}_{n-b, n+1} \; (1\leq b \leq j-1)$.
  Since $e_n \mathcal{E}_{n-b, n+1}=r^{-1}s^{-1} \mathcal{E}_{n-b, n+1}e_n-r^{-1}s^{-1}\mathcal{E}_{n-b, n'}$,
  we have $e_n\mathcal{E}_{\gamma_1}\cdots\mathcal{E}_{\gamma_k}
  =r^{-1}s^{-1}\mathcal{E}_{\gamma_1}\cdots\mathcal{E}_{\gamma_k}e_n
  -r^{-1}s^{-1}\mathcal{E}_{\gamma_1}\cdots\mathcal{E}_{\gamma_{k-1}}\mathcal{E}_{n-b, n'}.$

  Based on the previous analysis, we have
  \begin{align*}
    \mathcal{E}'_{n+1-j, n'}=\; & (1-rs^{-1})\sum_{k=2}^{j} \sum_{t=0}^{k-1}(-1)^t \binom{k-1}{t} \Bigl(rs^{-1}\Bigr)^{2j-2k+2t} \sum_{\gamma_1\prec \cdots \prec \gamma_{k-1}} \mathcal{E}_{\gamma_1}\cdots\mathcal{E}_{\gamma_{k-1}}e_n^2  \\
     &+\Bigl\{rs^{-1} \sum_{t=0}^{k-1}(-1)^t \binom{k-1}{t}\Bigl(rs^{-1}\Bigr)^{2j-2k+2t} +\sum_{t=0}^{k-2}(-1)^t \binom{k-2}{t}\Bigl(rs^{-1}\Bigr)^{2j-2k+2t}\\
     &-rs (r^{-1}s^{-1})\sum_{t=0}^{k-2}(-1)^t \binom{k-2}{t}\Bigl(rs^{-1}\Bigr)^{2j-2k+2t}\Bigr\} \sum_{\gamma_1\prec \cdots \prec \gamma_{k-2}} \mathcal{E}_{\gamma_1}\cdots\mathcal{E}_{\gamma_{k-2}}\mathcal{E}_{n-a, n+1} e_n  \\
     & -rs (-r^{-1}s^{-1})\sum_{k=1}^{j-1} \sum_{t=0}^{k-1}(-1)^t \binom{k-1}{t} \Bigl(rs^{-1}\Bigr)^{2j-2k+2t}\sum_{\gamma_1\prec \cdots \prec \gamma_{k-1}} \mathcal{E}_{\gamma_1}\cdots\mathcal{E}_{\gamma_{k-1}}\mathcal{E}_{n-b, n'}.
  \end{align*}
  After some simple simplification, we obtain the desired expansion of $\mathcal{E}'_{n+1-j, n'}$.

  (III) We use induction on $i$. For $i=1$, by definition,
  $$\mathcal{E}'_{n-2, (n-1)'}=\mathcal{E}'_{n-2, n'}e_{n-1}-s^{-2}e_{n-1}\mathcal{E}'_{n-2, n'}.$$
  Also, from (\ref{equ sub e'in'}),
  \begin{align*}
    \mathcal{E}'_{n-2, n'}=\; & (1-rs^{-1})\Bigl\{ (r^2s^{-2}-r^4s^{-4})\mathcal{E}_{n-2, n}e_n^2+\Bigl(1-r^2s^{-2}\Bigr)^2e_{n-2}e_{n-1}e_n^2\Bigr\} \\
     & +\Bigl(r^3s^{-3}-r^5s^{-5}\Bigr)\mathcal{E}_{n-2, n+1}e_n+rs^{-1}\Bigl(1-r^2s^{-2}\Bigr)^2e_{n-2}\mathcal{E}_{n-1, n+1}e_n \\
     & +r^4s^{-4}\mathcal{E}_{n-2, n'}+(r^2s^{-2}-r^4s^{-4})e_{n-2}\mathcal{E}_{n-1, n'}.
  \end{align*}
  Then we have
  \begin{align*}
    \mathcal{E}'_{n-2, (n-1)'}=\; & r^{-1}s^{-3}\Bigl(1-r^2s^{-2}\Bigr)^2e_{n-2}e_{n-1}\mathcal{E}_{n-1, n'}
     +r^{-1}s^{-3}(r^2s^{-2}-r^4s^{-4})\mathcal{E}_{n-2, n}\mathcal{E}_{n-1, n'} \\
     &+(rs^{-3}-s^{-2})(r^2s^{-2}-r^4s^{-4})e_{n-2}\mathcal{E}_{n-1, n+1}^2\\
     &+rs^{-3}(r^4s^{-4}-r^2s^{-2})\mathcal{E}_{n-2, n+1}\mathcal{E}_{n-1, n+1} \\
     &+r^2s^{-2}(r^2s^{-2}-r^4s^{-4})\mathcal{E}_{n-2, n'}e_{n-1}+r^6s^{-6} \mathcal{E}_{n-2, (n-1)'}.
  \end{align*}
  This computation relies primarily on the following commutation relations (Some trivial relations are omitted):

  $\bullet$ Using (\ref{Lyn 22}) and (\ref{Lyn 23}), we have
  $$e_n^2e_{n-1}=s^{-4}e_{n-1}e_n^2-(s^{-4}+r^{-1}s^{-3})\mathcal{E}_{n-1, n+1}e_n+r^{-1}s^{-3}\mathcal{E}_{n-1, n'}.$$

  $\bullet$ From (\ref{Lyn 22}) and the Serre relation, we have
  \begin{align*}
    \mathcal{E}_{n-1, n+1}e_n e_{n-1}=\; & s^{-2}\mathcal{E}_{n-1, n+1}e_{n-1}e_n-s^{-2}\mathcal{E}_{n-1, n+1}^2 \\
    =\; & r^2s^{-2}e_{n-1}\mathcal{E}_{n-1, n+1}e_n-s^{-2}\mathcal{E}_{n-1, n+1}^2.
  \end{align*}

  Moreover, Serre relation implies $e_{n-1}\mathcal{E}_{n-2, n}=r^{-2}\mathcal{E}_{n-2, n}e_{n-1}.$

  $\bullet$ From Lemma 3.3(1) of \cite{HuWangJGP 2010}, we know
  $$e_{n-1}\mathcal{E}_{n-2, n+1}=\mathcal{E}_{n-2, n+1}e_{n-1}.$$
  Thus (\ref{equ sub e' n-i-1}) holds for $i=1$.

  Assume that (\ref{equ sub e' n-i-1}) is true for some $i\geq 1$.
  From definition, we have
  $$\mathcal{E}'_{n-i-2, (n-i-1)'}=\mathcal{E}'_{n-i-2, (n-i)'}e_{n-i-1}-s^{-2}e_{n-i-1}\mathcal{E}'_{n-i-2, (n-i)'}.$$
  According to Lemma 3.1 (4) of \cite{HuWangJGP 2010}, Definitions \ref{def Lynbas}, and \ref{def auto}, we have
  $$\mathcal{E}'_{n-i-2, (n-i)'}=e_{n-i-2}\mathcal{E}'_{n-i-1, (n-i)'}-r^2\mathcal{E}'_{n-i-1, (n-i)'}e_{n-i-2}.$$
  Then
  \begin{align}\label{equ E' n-i-2}
    \mathcal{E}'_{n-i-2, (n-i-1)'}=\; &e_{n-i-2}\mathcal{E}'_{n-i-1, (n-i)'}e_{n-i-1}-r^2\mathcal{E}'_{n-i-1, (n-i)'}e_{n-i-2}e_{n-i-1} \\
     &-s^{-2}e_{n-i-1}e_{n-i-2}\mathcal{E}'_{n-i-1, (n-i)'}+r^2s^{-2}e_{n-i-1}\mathcal{E}'_{n-i-1, (n-i)'}e_{n-i-2}. \nonumber
  \end{align}
  Now we can substitute the expression of $\mathcal{E}'_{n-i-1, (n-i)'}$ into (\ref{equ E' n-i-2}).
  Through some basic commutation relations, we can prove that (\ref{equ sub e' n-i-1}) is also true for $i+1$.
\end{proof}

\begin{theorem}\label{thm metric}
  The distribution law of Lyndon bases in $L^+$ (see Theorem \ref{thm Lyn})
is compatible with the metric condition, namely $L^+ C(L^+ )^t C^{-1}=I$.
\end{theorem}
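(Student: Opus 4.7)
The plan is to reduce the matrix identity $L^+C(L^+)^tC^{-1}=I$ to a family of scalar identities and dispatch them using the explicit formulas of Theorem~\ref{thm Lyn}, the conversion formulas of Lemma~\ref{lemm sub eij e'ij}, and the convex bracketing rules of Definition~\ref{def Lynbas}. Since $C^2=I$ (Remark~\ref{metric matrix}), the condition is equivalent to $L^+C(L^+)^t=C$. Writing $C^i_j=\delta_{ij'}(r^{-1}s)^{\rho_i}$ and summing over the intermediate column index, the forced substitution $l=j'$ reduces the matrix equation to the scalar identities
$$
\sum_{j=1}^{2n+1}\ell^+_{ij}\,\ell^+_{k,j'}\,(r^{-1}s)^{\rho_j}\;=\;\delta_{k,i'}\,(r^{-1}s)^{\rho_i},\qquad 1\le i,k\le 2n+1. \qquad (\ast)
$$
Upper triangularity of $L^+$ restricts the sum to $i\le j\le k'$; hence $(\ast)$ is vacuous when $k>i'$, and when $k=i'$ only $j=i$ contributes, reducing $(\ast)$ to $\ell^+_{ii}\,\ell^+_{i'i'}=1$, which is immediate from the diagonal entries of Theorem~\ref{thm Lyn}.

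The content therefore concentrates on the range $k<i'$, where $(\ast)$ must vanish. My strategy is first to substitute the Lyndon-basis expressions of Theorem~\ref{thm Lyn}: $\ell^+_{ij}=B^+_{ij}\mathcal{E}_{ij}(\omega'_{\epsilon_i})^{-1}$ in the upper (blue) region above the anti-diagonal, $\ell^+_{j,k'}=B^+_{j,k'}\mathcal{E}'_{k,j'}(\omega'_{\epsilon_j})^{-1}$ (with its boundary variant at $j=n+1$) in the lower (red) region below it, and the anti-diagonal correction $\theta^+_{ii'}(\omega'_{\epsilon_i})^{-1}$ whenever $j=i'$ or $j'=k$. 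After moving every group-like factor to the right using the quasi-commutation rules of Definition~\ref{type B def}, $(\ast)$ becomes a polynomial relation in the two Lyndon bases with explicit $(rs^{-1})$-weighted scalar coefficients. I would then apply Lemma~\ref{lemm sub eij e'ij} to rewrite every $\mathcal{E}'_\gamma$ appearing in the sum as a signed, $(rs^{-1})$-weighted combination of convex-ordered products of $\mathcal{E}_\gamma$'s, converting the target into a linear identity among PBW monomials in a single Lyndon basis.

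Verifying that linear identity decomposes naturally into three subcases, according to whether the summation range $[i,k']$ lies entirely above the anti-diagonal, straddles it, or crosses the short-root index $j=n+1$. In each subcase the essential inputs are Proposition~\ref{prop recurrence}, which provides the recursion $\mathcal{E}_{ik}=\mathcal{E}_{ij}\mathcal{E}_{jk}-s^2\mathcal{E}_{jk}\mathcal{E}_{ij}$ together with its $\mathcal{E}_{ik'}$-analogue, and the Pascal identity $\binom{k}{t}=\binom{k-1}{t}+\binom{k-1}{t-1}$, which is precisely the combinatorial shadow of the defining recursion $\mathcal{E}'_{i-j-1,i}=e_{i-j-1}\mathcal{E}'_{i-j,i}-r^2\mathcal{E}'_{i-j,i}e_{i-j-1}$ already exploited in the derivation of \eqref{equ sub e'ij}. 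The coefficients $B^+_{ij}$, which are determined recursively in Theorem~\ref{thm Lyn} and pinned down by \eqref{equ Bi i+1}--\eqref{equ B(i+1)' i'}, conspire with these weighted binomial sums to produce the required cancellations telescope by telescope.

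The principal difficulty I anticipate is handling the anti-diagonal contribution $\theta^+_{ii'}$, which is not a quantum root vector and introduces terms of the form $e_i\,\mathcal{E}_{i,(i+1)'}$ together with $e_n^2$ at the short-root corner. Tracking how these boundary terms interact with the generic summands $\mathcal{E}_{ij}\,\mathcal{E}'_{k,j'}$---especially across the row $i=n$, where the rescaling $c=(rs)^{-\frac{1}{2}}(r+s)^{\frac{1}{2}}(r-s)$ from \eqref{equ Bn n+1} enters and where the ``generalized type $D$'' powers of $rs$ discussed in Subsection~\ref{ssec typeD} tilt the balance---requires careful bookkeeping of the $\rho_j$-weights and of the sign patterns produced by Lemma~\ref{lemm sub eij e'ij}. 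Once this bookkeeping is carried out, $(\ast)$ reduces to finite combinatorial identities that can be confirmed by induction on the rank $n$, the base case being verified directly by hand in a manner parallel to Subsection~\ref{ssec b3}.
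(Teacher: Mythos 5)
Your plan is essentially the paper's own proof: the paper likewise reduces $L^+C(L^+)^tC^{-1}=I$ to the vanishing of the scalar sums $A^+_{j',i'}=\sum_t \ell^+_{j't}(r^{-1}s)^{\rho_t}\ell^+_{i',t'}$ (trivial above the anti-diagonal, equal to $(r^{-1}s)^{\rho_i}$ on it by $\ell^+_{ii}\ell^+_{i'i'}=1$), substitutes the expressions and coefficients $B^+_{ij}$ from Theorem \ref{thm Lyn}, converts all $\mathcal{E}'_\gamma$ to the $\mathcal{E}$-basis via Lemma \ref{lemm sub eij e'ij}, and kills each PBW coefficient with exactly the $(rs^{-1})$-weighted Pascal cancellation you describe, treating the $\theta^+_{ii'}$ and short-root boundary terms separately. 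The one place you deviate is the closing appeal to induction on the rank $n$: the paper instead fixes the first row ($j=1'$, without loss of generality) and runs the case analysis $A^+_{1,i'}$, $A^+_{1,n+1}$, $A^+_{1,n}$, $A^+_{1,n-i}$, $A^+_{1,1}$ directly for general $n$ — which is the safer route, since the rank-reduction Homomorphism Theorem you would presumably lean on is only established for the $RLL$ relations, not for the metric condition.
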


\begin{proof}
Let $A^+_{j', i'}$ denote the $(j', i')$-entry of $L^+ C (L^+)^t$.
Obviously $A^+_{j', i'}=0$ when $j<i'$ and $A^+_{i, i'}=(r^{-1}s)^{\rho_i} \ell_{ii}^+ \ell_{i'i'}^+=(r^{-1}s)^{\rho_i}$.
Then it suffices to verify $A^+_{j', i'}=0$ for $j>i'$.
Without loss of generality, we assume $j=1'$.

(i) We first prove $A^+_{1, i'}=0$ when $2\leq i\leq n$:
\begin{equation}\label{}
  A^+_{1, i'}=\Big[\sum_{t=1}^{i}(r^{-1}s)^{\rho_t} B_{1, t}^+ B_{i', t'}^+ r^{2\delta_{1t}} \mathcal{E}_{1, t}\mathcal{E}'_{t, i}\Bigr] \omega'_{\epsilon_i}(\omega'_{\epsilon_1})^{-1},
\end{equation}
where $\delta_{1t}$ is the Kronecker delta,
and we adopt the notation that $B_{1, 1}^+=B_{i', i'}^+=\mathcal{E}_{1, 1}=\mathcal{E}'_{i, i}=1$.

Substituting the expression for $\mathcal{E}'_{t, i}$ from (\ref{equ sub e'ij}), we calculate the coefficient of $\mathcal{E}_{1, i}\omega'_{\epsilon_i}(\omega'_{\epsilon_1})^{-1}$ in $A^+_{1, i'}$:
\begin{align*}
   & (r^{-1}s)^{\rho_i} B_{1, i}^++(r^{-1}s)^{\rho_1} r^2 B_{i', 1'}^+ (rs^{-1})^{2i-4} \\
  =\; & (r^{-1}s)^\frac{2n+1-2i}{2}(r^2-s^2)+(r^{-1}s)^\frac{2n-1}{2}r^2\Bigl[-r^{1-i}s^{i-3}(r^2-s^2)\Bigr](rs^{-1})^{2i-4} \\
  =\; & 0.
\end{align*}
Here, we use the fact that $B_{i', 1'}^+ = (r^{-1}s)^{i-2} B_{2', 1'}^+ = -r^{1-i} s^{i-3} (r^2 - s^2)$, which follows from (\ref{equ ell j'k'}) and (\ref{equ B(i+1)' i'}).

In the case $3\leq i\leq n$, we further calculate the coefficient of $\mathcal{E}_{1, t} \mathcal{E}_{\gamma_1}\cdots \mathcal{E}_{\gamma_k}\omega'_{\epsilon_i}(\omega'_{\epsilon_1})^{-1}\; (\text{for}\;2\leq t\leq i-1,\; 1\leq k\leq i-t,\; \text{and}\;\gamma_1\prec \cdots \prec\gamma_k)$ in $A^+_{1, i'}$:
\begin{align}\label{equ A1i'=0}
   & (r^{-1}s)^{\rho_t}B_{1, t}^+ B_{i', t'}^+ \sum_{j=0}^{k-1}(-1)^j\binom{k-1}{j}\Bigl(rs^{-1}\Bigr)^{2i-2k-2t+2j} \\
   &+(r^{-1}s)^{\rho_1}r^2B_{i', 1'}^+ \sum_{j=0}^{k}(-1)^j\binom{k}{j}\Bigl(rs^{-1}\Bigr)^{2i-2k-4+2j} \nonumber \\
   =\;& (r^{-1}s)^{\rho_2}B_{i', 2'}^+\Bigl\{ (r^2-s^2) \sum_{j=0}^{k-1}(-1)^j \binom{k-1}{j} \Bigl(rs^{-1}\Bigr)^{2i-2k-4+2j} +s^2\sum_{j=0}^{k}(-1)^j\binom{k}{j}\Bigl(rs^{-1}\Bigr)^{2i-2k-4+2j}  \Bigr\} \nonumber \\
   =\;& (r^{-1}s)^{\rho_2}B_{i', 2'}^+\Bigl\{ \sum_{j=1}^{k-1} (-1)^j \Bigl[\binom{k}{j}-\binom{k-1}{j}-\binom{k-1}{j-1}\Bigr]r^{2i-2k+2j-4}s^{-2i+2k-2j+6} \Bigr\} \nonumber \\
  =\; & 0.\nonumber
\end{align}
For this calculation, we use the following facts:

$\bullet$\; We have $B_{1, t}^+ = B_{1, 2}^+=r^2-s^2\; (2\leq t\leq n)$ from (\ref{equ ell kj});

$\bullet$\; We have  $B_{i', t'}^+ = (r^{-1}s)^{i-t-1} B_{(t+1)', t'}^+\; (t< i\leq n)$ from (\ref{equ ell j'k'});

$\bullet$\; We have  $B_{i', 2'}^+ = (r^{-1}s)^{i-3} B_{3', 2'}^+ = (r^{-1}s)^{i-3} B_{(t+1)', t'}^+$ from (\ref{equ ell j'k'}) and (\ref{equ B(i+1)' i'}).

Based on the previous calculations, we conclude that $A_{1, i'}^+=0$ for $2\leq i\leq n$.

(ii) We next prove $A_{1, n+1}^+=0$. We have
\begin{equation}\label{}
  A_{1, n+1}^+=\Bigl\{B_{1, n+1}^+\mathcal{E}_{1, n+1}+r^{-1}s^{-1}\Big[\sum_{t=1}^{n}(r^{-1}s)^{\rho_t} B_{1, t}^+ B_{n+1, t'}^+ r^{2\delta_{1t}} \mathcal{E}_{1, t}\mathcal{E}'_{t, n+1}\Bigr]\Bigr\}(\omega'_{\epsilon_1})^{-1}.
\end{equation}
Substituting the expression for $\mathcal{E}'_{t, n+1}$ from (\ref{equ sub e'ij}),
we first calculate the coefficient of $\mathcal{E}_{1, n+1}(\omega'_{\epsilon_1})^{-1}$:
\begin{align*}
   &B_{1, n+1}^++(r^{-1}s)^{\rho_1}rs^{-1}B_{n+1, 1'}^+(rs^{-1})^{2n-2}  \\
   =\; &(rs)^{-\frac{1}{2}} (r+s)^{\frac{1}{2}}(r-s)+(r^{-1}s)^{\frac{2n-1}{2}}rs^{-1}\Bigl[-r^{-n}s^{n-1}(r+s)^{\frac{1}{2}}(r-s)\Bigr](rs^{-1})^{2n-2}  \\
   =\;& 0.
\end{align*}

Here, we rely on the following steps:

$\bullet$ From (\ref{ell k, n+1}), (\ref{ell n-1, n+1}), and (\ref{equ Bn n+1}), we have $B_{1, n+1}^+ = B_{n, n+1}^+ = (rs)^{-\frac{1}{2}} (r+s)^{\frac{1}{2}} (r-s)$;

$\bullet$ From (\ref{equ ell n+1, k'}) and (\ref{equ Bn+1, n'}), we obtain $B_{n+1, 1'}^+ = r^{-1} s^2 (r+s)^{-\frac{1}{2}} B_{n', 1'}^+$;

$\bullet$ Via recursive application of (\ref{equ ell j'k'}), we have $B_{n', 1'}^+ = (r^{-1}s)^{n-2} B_{2', 1'}^+$;

$\bullet$ By (\ref{equ B(i+1)' i'}), substituting $B_{2', 1'}^+ = -r^{-1} s^{-1} (r^2 - s^2)$ gives:
   $$B_{n+1, 1'}^+ = r^{-1} s^2 (r+s)^{-\frac{1}{2}} \cdot (r^{-1}s)^{n-2} \cdot \left[-r^{-1} s^{-1} (r^2 - s^2)\right] = -r^{-n} s^{n-1} (r+s)^{\frac{1}{2}} (r-s).$$

We also calculate the coefficient of $\mathcal{E}_{1, t} \mathcal{E}_{\gamma_1}\cdots \mathcal{E}_{\gamma_k}(\omega'_{\epsilon_1})^{-1}\; (\text{for}\;\gamma_1\prec \cdots \prec\gamma_k,\; 1\leq t\leq n,\; \text{and}\; 1\leq k\leq n+1-t)$ in $A^+_{1, n+1}$:
\begin{align*}\label{}
   & (r^{-1}s)^{\rho_t}r^{-1}s^{-1}B_{1, t}^+ B_{n+1, t'}^+ \sum_{j=0}^{k-1}(-1)^j\binom{k-1}{j}\Bigl(rs^{-1}\Bigr)^{2(n+1)-2k-2t+2j} \\
   &+(r^{-1}s)^{\rho_1}rs^{-1}B_{n+1, 1'}^+ \sum_{j=0}^{k}(-1)^j\binom{k}{j}\Bigl(rs^{-1}\Bigr)^{2(n+1)-2k-4+2j}  \\
   =\;& (r^{-1}s)^{\rho_2}B_{n+1, 2'}^+r^{-1}s^{-1}\Bigl\{ (r^2-s^2) \sum_{j=0}^{k-1}(-1)^j \binom{k-1}{j} \Bigl(rs^{-1}\Bigr)^{2(n+1)-2k-4+2j}\\
    &\hspace{11em}+s^2\sum_{j=0}^{k}(-1)^j\binom{k}{j}\Bigl(rs^{-1}\Bigr)^{2(n+1)-2k-4+2j}  \Bigr\}  \\
  =\; & 0.
\end{align*}
Here we rely on the following steps:

$\bullet$ From (\ref{equ ell n+1, k'}) and (\ref{equ ell n+1 n-1'}), we have $B_{n+1, t'}^+=(rs^{-1})^{t-1}B_{n+1, 1'}^+$.

$\bullet$ Therefore, we have $(r^{-1}s)^{\rho_t} B_{n+1, t'}^+(r^{-1}s)^{2t-4}=(r^{-1}s)^{\rho_2} B_{n+1, 2'}^+$.

$\bullet$ Finally we can prove the expression is equal to zero essentially due to (\ref{equ A1i'=0}).

Based on the previous calculations, we conclude that $A_{1, n+1}^+=0$.

(iii) Next, we prove $A_{1, n}^+=0$.
We have
\begin{align*}
  A_{1, n}^+=\; & \Bigl\{ s^{-2}\Bigl(r^{-1}s\Bigr)^{\rho_1} B_{n, 1'}^+\mathcal{E}'_{1, n'}  +\sum_{t=2}^{n-1}r^{-2}s^{-2}\Bigl(r^{-1}s\Bigr)^{\rho_t} B_{1, t}^+ B_{n, t'}^+ \mathcal{E}_{1, t}\mathcal{E}'_{t, n'}  \\
   & +r^{-2}s^{-2}\Bigl(r^{-1}s\Bigr)^{\rho_n} B_{1, n}^+ B_{n, n'}^+\mathcal{E}_{1, n}e_n^2  +r^{-1}s^{-1}B_{1, n+1}^+ B_{n, n+1}^+\mathcal{E}_{1, n+1}e_n \\
   & + \Bigl(rs^{-1}\Bigr)^{\rho_n} B_{1, n'}^+\mathcal{E}_{1, n'}\Bigr\} (\omega'_{\epsilon_1}\omega'_n)^{-1}.
\end{align*}

Substituting the expression for $\mathcal{E}'_{t, n'}$ from (\ref{equ sub e'in'}),
we first calculate the coefficient of $\mathcal{E}_{1, n'}(\omega'_{\epsilon_1}\omega'_n)^{-1}$:
\begin{align*}
   &\Bigl(rs^{-1}\Bigr)^{\rho_n}B_{1, n'}^++ s^{-2}\Bigl(r^{-1}s \Bigr)^{\rho_1}B_{n, 1'}^+ \Bigl(rs^{-1}\Bigr)^{2n-2}   \\
   =\; &\Bigl(rs^{-1}\Bigr)^\frac{1}{2}\Bigl(-r^{-\frac{3}{2}}s^{-\frac{1}{2}}(r-s)\Bigr)+s^{-2}\Bigl(r^{-1}s \Bigr)^\frac{2n-1}{2} \Bigl(r^{-\frac{2n-1}{2}}s^{\frac{2n-1}{2}}(r-s)\Bigr)\Bigl(rs^{-1}\Bigr)^{2n-2}  \\
   =\;& 0.
\end{align*}
For this calculation, we use the following facts:

$\bullet$ We have $B_{1, n'}^+ = -r^{-\frac{3}{2}} s^{-\frac{1}{2}} (r-s)$ from (\ref{equ ell k n'}).

$\bullet$ We have $B_{n, 1'}^+=r^{-\frac{2n-1}{2}}s^{\frac{2n-1}{2}}(r-s)$ from (\ref{equ ell n k'}).

Similarly, one can calculate the coefficients of $\mathcal{E}_{1, n+1}e_n(\omega'_{\epsilon_1}\omega'_n)^{-1}$
and $\mathcal{E}_{1, n}e_n^2(\omega'_{\epsilon_1}\omega'_n)^{-1}$ are equal to zero.

We also calculate the coefficient of $\mathcal{E}_{1, t}\mathcal{E}_{\gamma_1}\cdots \mathcal{E}_{\gamma_{k-1}} e_n^2(\omega'_{\epsilon_1}\omega'_n)^{-1}$ for $k\geq 2$:
\begin{align*}
   & s^{-2}\Bigl(r^{-1}s\Bigr)^{\rho_1} B_{n, 1'}^+\Bigl(1-rs^{-1}\Bigr)\sum_{j=0}^{k}(-1)^j\binom{k}{j} \Bigl(rs^{-1}\Bigr)^{2n-2k-2+2j}\\
   & +r^{-2}s^{-2}\Bigl(r^{-1}s\Bigr)^{\rho_t} B_{1, t}^+ B_{n, t'}^+ \Bigl(1-rs^{-1}\Bigr)\sum_{j=0}^{k-1}(-1)^j\binom{k-1}{j} \Bigl(rs^{-1}\Bigr)^{2n-2k+2+2j-2t} \\
  =\; & \Bigl(1-rs^{-1}\Bigr)(r-s)r^{-2n+1}s^{2n-5}\Bigl\{(r^2-s^2) \sum_{j=0}^{k-1}(-1)^j \binom{k-1}{j} \Bigl(rs^{-1}\Bigr)^{2n-2k+2j-2}\\
    &\hspace{14em}+s^2\sum_{j=0}^{k}(-1)^j\binom{k}{j}\Bigl(rs^{-1}\Bigr)^{2n-2k+2j-2}  \Bigr\}  \\
  =\; & 0.
\end{align*}
Here we rely on the following steps:

$\bullet$ From (\ref{equ ell n k'}) and (\ref{equ ell n+1, k'}), we have
$$B_{n, j'}^+=\frac{(rs)^\frac{1}{2}(r+s)^{\frac{1}{2}}(r-s)}{s^2-r^2}B_{n+1, j'}^+, \quad B_{n+1, j'}^+=\frac{-r^{-1}s^2(r+s)^\frac{1}{2}(r-s)}{s^2-r^2}B_{n', j'}^+.$$

$\bullet$ From (\ref{equ B(i+1)' i'}), we have
\begin{align*}
  B_{n, j'}^+=\; & r^{-\frac{1}{2}}s^\frac{5}{2}(r-s) B_{n', j'}^+\\
  =\; &r^{-\frac{1}{2}}s^\frac{5}{2}(r-s)\Bigl(r^{-1}s\Bigr)^{n-j-1}B_{(j+1)', j'}^+  \\
  =\; & r^{-n+j-\frac{1}{2}}s^{n-j+\frac{1}{2}}(r-s).
\end{align*}

$\bullet$ Finally we can prove the expression is equal to zero essentially due to (\ref{equ A1i'=0}).

Similarly we can calculate the coefficients of $\mathcal{E}_{1, j}\mathcal{E}_{\gamma_1}\cdots \mathcal{E}_{\gamma_{k-2}}\mathcal{E}_{n-a, n+1}e_n(\omega'_{\epsilon_1}\omega'_n)^{-1}\; (k\geq 3)$,
and $\mathcal{E}_{1, j}\mathcal{E}_{\gamma_1}\cdots \mathcal{E}_{\gamma_{k-1}}\mathcal{E}_{n-b, n'}\; (k\geq 2)$
are equal to zero.

Based on the previous calculations, we conclude that $A_{1, n}^+=0$.

(iv) Next, we prove $A_{1, n-i}^+=0$ for $1\leq i\leq n-2$. We have
\begin{align*}
  A_{1, n-i}^+=\; &\Bigl\{ \sum_{j=n-i}^{n}\Bigl(rs^{-1}\Bigr)^{\rho_j} B_{1, j'}^+ B_{n-i, j}^+ \mathcal{E}_{1, j'}\mathcal{E}_{n-i, j}+r^{-1}s^{-1}B_{1, n+1}^+B_{n-i, n+1}^+ \mathcal{E}_{1, n+1}\mathcal{E}_{n-i, n+1} \\
   & +\sum_{j=n-i+1}^{n}r^{-2}s^{-2}\Bigl(r^{-1}s\Bigr)^{\rho_j} B_{1, j}^+ B_{n-i, j'}^+ \mathcal{E}_{1, j}\mathcal{E}_{n-i, j'}\\
   &+r^{-2}s^{-2}\Bigl(r^{-1}s\Bigr)^{\rho_{n-i}}B_{1, n-i}^+ B_{n-i, (n-i)'}^+ \mathcal{E}_{1, n-i}\theta_{n-i, (n-i)'}^+ \\
   &+\sum_{j=i+1}^{n-2}r^{-2}s^{-2}\Bigl(r^{-1}s\Bigr)^{\rho_{n-j}}B_{1, n-j}^+B_{n-i, (n-j)'}^+\mathcal{E}_{1, n-j}\mathcal{E}'_{n-j, (n-i)'} \\
   &+\Bigl(r^{-1}s\Bigr)^{\rho_1}s^{-2}B_{n-i, 1'}^+ \mathcal{E}'_{1, (n-i)'}\Bigr\} (\omega'_{\epsilon_1}\omega'_{\epsilon_{n-i}} )^{-1}.
\end{align*}

According to  Lemmas 3.7 (1) and 3.6 (8) of \cite{HuWangJGP 2010}, Definitions \ref{def Lynbas}, and \ref{def auto}, we have
\begin{equation}\label{equ E1k'}
  \mathcal{E}_{1, k'}\mathcal{E}_{1, k}=r^{-2}s^{-2}\mathcal{E}_{1, k}\mathcal{E}_{1, k'}-s^{-2}\mathcal{E}_{1, (k+1)'}\mathcal{E}_{1, k+1}+r^{-4}s^{-2}\mathcal{E}_{1, k+1}\mathcal{E}_{1, (k+1)'}, \quad 1\leq k\leq n-1.
\end{equation}
\begin{equation}\label{equ E1n'}
  \mathcal{E}_{1, n'}\mathcal{E}_{1, n}=r^{-2}s^{-2}\mathcal{E}_{1, n}\mathcal{E}_{1, n'}-r^{-1}s^{-2}(r-s)\mathcal{E}_{1, n+1}^2.
\end{equation}
Combining with the definition of $\theta_{n-i, (n-i)'}^+$, see (\ref{equ thetaii'}), we conclude that
\begin{align*}
  \theta_{n-i, (n-i)'}^+=\; & (r^{-2}s^{-2}-s^{-4})e_{n-i}\mathcal{E}_{n-i, (n-i+1)'} \\
   &+\sum_{k=2}^{i}(-1)^{k-1}r^{-2}s^{-2k+2}(s^{-2}-r^{-2})\mathcal{E}_{n-i, n-i+k}\mathcal{E}_{n-i, (n-i+k)'} \\
   &+(-1)^i r^{-1}s^{-2i}(r-s)\mathcal{E}_{n-i, n+1}^2, \quad 1\leq i\leq n-1.
\end{align*}

We first calculate the coefficient of $\mathcal{E}_{1, (n-i)'}$.
This term appears twice in $A_{1, n-i}^+$:
both in the first sum (by setting $j = n-i$) and in the expression of $\mathcal{E}'_{1, (n-i)'}$.
By Lemma 3.1 of \cite{HuWangJGP 2010}, we have the following recursive commutation relations:
\begin{equation}\label{equ recursive 1}
  \mathcal{E}'_{n-j-1, (n-i)'}=e_{n-j-1}\mathcal{E}'_{n-j, (n-i)'}-r^2\mathcal{E}'_{n-j, (n-i)'}e_{n-j-1},
\end{equation}
\begin{equation}\label{equ recursive 2}
  \mathcal{E}_{\gamma_2}\cdots\mathcal{E}_{\gamma_t}e_{n-j-1}=s^{-2}e_{n-j-1}\mathcal{E}_{\gamma_2}\cdots\mathcal{E}_{\gamma_t}-s^{-2}\mathcal{E}_{\gamma'_2}\cdots\mathcal{E}_{\gamma_t},
\end{equation}
where $\mathcal{E}_{\gamma'_2}$ is some quantum root vector satisfying $\gamma'_2\prec \gamma_3\prec\cdots \prec \gamma_t$.

From (\ref{equ sub e' n-i-1}), we know the coefficient of $\mathcal{E}_{n-i-1, (n-i)'}$ in $\mathcal{E}'_{n-i-1, (n-i)'}$ is $r^{4i+2}s^{-4i-2}$.

Substituting (\ref{equ recursive 2}) into (\ref{equ recursive 1}), we conclude that the coefficient of $\mathcal{E}_{1, (n-i)'}$ in $\mathcal{E}'_{1, (n-i)'}$ is $\Bigl(r^2s^{-2}\Bigr)^{n-i-2}r^{4i+2}s^{-4i-2}$.
Now the coefficient of $\mathcal{E}_{1, (n-i)'}$ in $A_{1, n-i}^+$ is
\begin{align*}
   & \Bigl(rs^{-1}\Bigr)^{\rho_{n-i}}B_{1, (n-i)'}^++\Bigl(r^{-1}s\Bigr)^{\rho_1}s^{-2}B_{n-i, 1'}^+ \Bigl(r^2s^{-2}\Bigr)^{n-i-2}r^{4i+2}s^{-4i-2}\\
  =\; & \Bigl(rs^{-1}\Bigr)^{i+\frac{1}{2}}(-1)^{i+1}r^{i-\frac{3}{2}}s^{i-\frac{1}{2}}(r-s)\\
  &+\Bigl(r^{-1}s\Bigr)^{n-\frac{1}{2}}s^{-2}(-1)^ir^{-n+\frac{1}{2}}s^{n+2i-\frac{1}{2}}(r-s)r^{2n-2i-4}s^{-2n+2i+4}r^{4i+2}s^{-4i-2}\\
  =\;&(-1)^i(r-s)(r^{2i-1}s^{-1}-r^{2i-1}s^{-1})\\
  =\;&0.
\end{align*}

Here we rely on the following facts:

$\bullet$ From (\ref{equ ell kj'}), we obtain that $B_{1, (n-i)'}^+=(-rs)^i B_{1, n'}^+=(-1)^{i+1}r^{i-\frac{3}{2}}s^{i-\frac{1}{2}}(r-s)$.

$\bullet$ From (\ref{equ ell j-1, k'}), we obtain that $B_{n-i, 1'}^+=\Bigl(-s^2\Bigr)^i B_{n, 1'}^+=(-1)^i r^{-n+\frac{1}{2}}s^{n+2i-\frac{1}{2}}(r-s)$.

Similarly, we can verify that the coefficients of the following terms all vanish by analogous arguments:

$\bullet \; \mathcal{E}_{1, j'}\mathcal{E}_{n-i, j}\; (n-i+1\leq j\leq n);$
\vspace{0.5em}

$\bullet \; \mathcal{E}_{1, n+1}\mathcal{E}_{n-i, n+1};$
\vspace{0.5em}

$\bullet \;\mathcal{E}_{1, j}\mathcal{E}_{n-i, j'} (n-i+1\leq j\leq n);$
\vspace{0.5em}

$\bullet \;\mathcal{E}_{1, n-i}e_{n-i}\mathcal{E}_{n-i, (n-i+1)'};$
\vspace{0.5em}

$\bullet \;\mathcal{E}_{1, n-i}\mathcal{E}_{n-i, n-i+k}\mathcal{E}_{n-i, (n-i+k)'}\; (2\leq k\leq i);$
\vspace{0.5em}

$\bullet  \;\mathcal{E}_{1, n-i}\mathcal{E}_{n-i, n+1}^2$.

Consider the coefficient of $\mathcal{E}_{1, n-j}\mathcal{E}_{\gamma_2}\cdots\mathcal{E}_{\gamma_t}\; (\mathcal{E}_{1, n-j}\prec \mathcal{E}_{\gamma_2}\prec \cdots \prec \mathcal{E}_{\gamma_t},\; i+1\leq j\leq n-2,\; t\geq 2)$.
Assume that the coefficient of $\mathcal{E}_{\gamma_2}\cdots\mathcal{E}_{\gamma_t}$ in the expression of $\mathcal{E}'_{n-j, (n-i)'}$ is $C_{n-j, \gamma_2\cdots \gamma_t}$.
Using (\ref{equ recursive 1}) and (\ref{equ recursive 2}) similarly,
we conclude that the coefficient of $e_{n-j-1}\mathcal{E}_{\gamma_2}\cdots\mathcal{E}_{\gamma_t}$ in $\mathcal{E}'_{n-j-1, (n-i)'}$ is $(1-r^2s^{-2})C_{n-j, \gamma_2\cdots \gamma_t}$.
Moreover, by applying these commutation relations recursively, the coefficient of $\mathcal{E}_{1, n-j}\mathcal{E}_{\gamma_2}\cdots\mathcal{E}_{\gamma_t}$ in $\mathcal{E}'_{1, (n-i)'}$ is $(r^2s^{-2})^{n-2-j}(1-r^2s^{-2})C_{n-j, \gamma_2\cdots \gamma_t}$.
Thus the coefficient of $\mathcal{E}_{1, n-j}\mathcal{E}_{\gamma_2}\cdots\mathcal{E}_{\gamma_t}$ in $A_{1, n-i}^+$ is
\begin{align*}
   & r^{-2}s^{-2}\Bigl(r^{-1}s\Bigr)^{\rho_{n-j}}B_{1, n-j}^+B_{n-i, (n-j)'}^+(1-r^2s^{-2})(r^2s^{-2})^{n-2-j}C_{n-j, \gamma_2\cdots \gamma_t} \\
   & +\Bigl(r^{-1}s\Bigr)^{\rho_1}s^{-2}B_{n-i, 1'}^+(1-r^2s^{-2})(r^2s^{-2})^{n-2-j}C_{n-j, \gamma_2\cdots \gamma_t} \\
  =\;&(r-s)(r^{-1}s)^{2j}(-s^2)^{i-2}(r^2s^{-2})^{n-2-j}(1-r^2s^{-2})C_{n-j, \gamma_2\cdots \gamma_t}\Bigl\{ r^{-1}s^{-3}-r^{-3}s^5+r^{-3}s^5(1-r^2s^{-2})\Bigr\} \\
  =\; & 0.
\end{align*}
Here we rely on the following fact:

$\bullet$ From (\ref{equ ell j-1, k'}), we derive
$B_{n-i, (n-j)'}^+=(-s^2)^i B_{n, (n-j)'}^+,\; 1\leq j\leq n-1, \; 1<i<j$, then
$$B_{n-i, (n-j)'}^+=(-1)^i r^{-j-\frac{1}{2}}s^{2i+j+\frac{1}{2}}(r-s).$$
In particular, $B_{n-i, 1'}^+=(-1)^i r^{-n+\frac{1}{2}}s^{n+2i-\frac{1}{2}}(r-s).$

Based on the previous calculations, we conclude that $A_{1, n-i}^+=0$ for $1\leq i\leq n-2$.

(v) Finally, we prove $A_{1, 1}^+=0$.
We have \begin{align*}
         A_{1, 1}^+=\; &\Bigl\{ \Bigl(rs^{-1}\Bigr)^{\rho_1} B_{1, 1'}^+\theta_{1, 1'}^+ +\sum_{i=2}^{n}\Bigl(rs^{-1}\Bigr)^{\rho_i}r^2B_{1, i'}^+B_{1, i}^+\mathcal{E}_{1, i'}\mathcal{E}_{1, i}+rs^{-1} \Bigl(B_{1, n+1}^+\mathcal{E}_{1, n+1}\Bigr)^2  \\
           &+\sum_{i=2}^{n}\Bigl(r^{-1}s\Bigr)^{\rho_i}s^{-2}B_{1, i'}^+B_{1, i}^+\mathcal{E}_{1i}\mathcal{E}_{1i'}+\Bigl(r^{-1}s\Bigr)^{\rho_1}r^2s^{-2} B_{1, 1'}^+\theta_{1, 1'}^+ \Bigr\}(\omega'_{\epsilon_1})^{-2}.
        \end{align*}
By the definition, $\theta_{11'}^+=\mathcal{E}_{1, 2'}e_1-r^{-4}e_1\mathcal{E}_{1, 2'}$.
Thus we have to change $\mathcal{E}_{1, i'}\mathcal{E}_{1, i}\;(2\leq i\leq n)$ to the standard normal order.
According to (\ref{equ E1k'}) and (\ref{equ E1n'}),
we first calculate the coefficient of $e_1\mathcal{E}_{1, 2'}$.
\begin{align*}
   & \Bigl[\Bigl(rs^{-1}\Bigr)^{\rho_1}+  \Bigl(r^{-1}s\Bigr)^{\rho_1} \Bigr]B_{1, 1'}^+ \Bigl(r^{-2}s^{-2}-r^{-4}\Bigr)
    +\Bigl[ \Bigl(rs^{-1}\Bigr)^{\rho_2}r^2\Bigl(r^{-2}s^{-2}\Bigr)+(r^{-1}s\Bigr)^{\rho_2}s^{-2}\Bigr]B_{1, 2'}^+B_{1, 2}^+ \\
   & =(-1)^n(r-s)(r^2-s^2)\Bigl[r^{2n-5}s^{-3}+r^{-2}s^{2n-6}-r^{2n-5}s^{-3}-r^{-2}s^{2n-6}\Bigr]\\
   &=0.
\end{align*}
Here we rely on the following facts:

$\bullet$ Using (\ref{equ ell n-1, k'}) and (\ref{equ ell j-1, k'}), we conclude that
$$ B_{1, 2'}^+=(-rs)^{n-2} B_{1, n'}^+=(-1)^{n-2}r^{n-\frac{7}{2}}s^{n-\frac{5}{2}}(r-s).$$

$\bullet$ According to (\ref{equ thetaii'}) and (\ref{equ B(i+1)' i'}), we have
$$ B_{1, 1'}^+=\frac{r^4s^2B_{1, 2'}^+ B_{2', 1'}^+}{r^2-s^2}=(-1)^n r^{n-\frac{1}{2}}s^{n-\frac{3}{2}}(r-s).$$

We also calculate the coefficient of $\mathcal{E}_{1, j}\mathcal{E}_{1, j'}$ for $3\leq j\leq n$:
\begin{align*}
   & \Bigl[ \Bigl(rs^{-1}\Bigr)^{\rho_1}+\Bigl(r^{-1}s\Bigr)^{\rho_1}r^2s^{-2}\Bigr]B_{1, 1'}^+(-1)^{j-2}r^{-4}s^{-2j+2}(r^2-s^2) \\
   & +\sum_{k=2}^{j-1}r^2B_{1, k}^+ B_{1, k'}^+(-1)^{j-k}r^{-4}s^{-2j+2k-2}(r^2-s^2)+\Bigl[ \Bigl(rs^{-1}\Bigr)^{\rho_j}+\Bigl(r^{-1}s\Bigr)^{\rho_j}\Bigr]s^{-2} B_{1, j}^+ B_{1, j'}^+ \\
   =\;&(-1)^{n+j-2}(r-s)(r^2-s^2)\Bigl\{r^{2n-5}s^{-2j+1}+r^{-2}s^{2n-2j-2}\\
   &-\sum_{k=2}^{j-1}(r^{2n-1-2k}s^{-2j-3+2k}-r^{2n-3-2k}s^{-2j-1+2k})-r^{2n-2j-1}s^{-3}-r^{-2}s^{2n-2j-2}\Bigr\}\\
   =\;&0.
\end{align*}
Similarly, the coefficient of $\mathcal{E}_{1, n+1}^2$ is equal to:
\begin{align*}
   & \Bigl[ \Bigl(rs^{-1}\Bigr)^{\rho_1}+\Bigl(r^{-1}s\Bigr)^{\rho_1}r^2s^{-2}\Bigr]B_{1, 1'}^+(-1)^{n-1}r^{-1}s^{-2n+2}(r-s) \\
   & +\sum_{k=2}^{n}\Bigl(rs^{-1}\Bigr)^{\rho_k}B_{1, k}^+ B_{1, k'}^+r^2(-1)^{n-k}r^{-1}s^{-2n+2k}(r-s)+rs^{-1} B_{1, n+1}^2 \\
  =\; & (-1)^{2n-1}(r-s)^2\Bigl\{ r^{2n-2}s^{-2n+1}+rs^{-2}-\sum_{k=2}^{n}(r^{2n+2-2k}s^{-2n-3+2k}-r^{2n-2k}s^{-2n-1+2k}) \\
   &\hspace{8em} -rs^{-2}-s^{-1}\Bigr\}\\
   =\;&0.
\end{align*}
Based on the previous calculations, we conclude that $A_{1, 1}^+=0$.
Thus we have checked that $L^+ C L^+=C$.
\end{proof}

\section*{AUTHOR DECLARATIONS}
\subsection*{Conflict of Interest}
The authors have no conflicts to disclose.

\bigskip
\noindent
{\bf Data Availability Statement}. All data generated during the study are included in the article.

\section*{Acknowledgements}
The authors are grateful to Professors D. Hernandez and M. Rosso for valuable conversations during the Sino-French Mathematical Cooperation Conference held at East China Normal University, Shanghai, from October 23 to 27, 2024. Special thanks are extended to Marc Rosso for numerous critical and constructive discussions via Zoom meetings.
The authors also thank Professors N. Jing and H. Yamane for their valuable comments.
Gratitude is further expressed to the anonymous referees for the kind comment concerning the Yang–Baxterization.
The second author additionally thanks Professor M. Liu and Hengyi Wang for helpful discussions.

\bibliographystyle{amsalpha}

\end{document}